\patchcmd{\ttlh@hang}{\parindent\z@}{\parindent\z@\leavevmode}{}{}
\patchcmd{\ttlh@hang}{\noindent}{}{}{}
\newcommand\numberthis{\addtocounter{equation}{1}\tag{\theequation}}
\theoremstyle{plain}
\newtheorem{theorem}{Theorem}[section]
\newtheorem{lemma}[theorem]{Lemma}
\newtheorem{proposition}[theorem]{Proposition}
\newtheorem{corollary}[theorem]{Corollary}
\providecommand{\customgenericname}{}
\newcommand{\newcustomtheorem}[2]{%
  \newenvironment{#1}[1]
  {%
   \renewcommand\customgenericname{#2}%
   \renewcommand\theinnercustomgeneric{##1}%
   \innercustomgeneric
  }
  {\endinnercustomgeneric}
}
\theoremstyle{definition}
\newtheorem{definition}[theorem]{Definition}
\theoremstyle{remark}
\newtheorem{remark}[theorem]{Remark}
\newtheorem*{remark*}{Remark}
\numberwithin{equation}{section}
\DeclareMathOperator*{\loc}{loc}
\DeclareMathOperator*{\Span}{span}
\DeclareMathOperator*{\supp}{supp}
\DeclareMathOperator*{\diag}{diag}
\DeclareMathOperator*{\esssup}{ess\,sup}
\DeclareMathOperator{\Co}{Co}
\newcommand{\WLwr}{\mathcal{W}(L^r_w)}
\newcommand{\Schwartz}{\mathcal{S}}
\newcommand{\SP}{\mathcal{S}' (\mathbb{R}^d) / \mathcal{P} (\mathbb{R}^d)}
\newcommand{\MQ}{M_Q}
\newcommand{\Indicator}{\mathds{1}}
\newcommand{\TL}{\dot{\mathbf{F}}^{\alpha}_{p,q}}
\NewDocumentCommand\TLseq{O{\alpha}}{\dot{\mathbf{f}}^{#1}_{p,q}}
\NewDocumentCommand\PT{O{\alpha}D<>{\beta}}{\dot{\mathbf{P}}^{#1, #2}_{p,q}}
\newcommand{\TLA}{\dot{\mathbf{F}}^{\alpha}_{p,q}(A)}
\newcommand{\TLseqa}{\dot{\mathbf{f}}^{\alpha}_{p,q}}
\newcommand{\TLseqA}{\dot{\mathbf{f}}^{\alpha}_{p,q}(A)}
\NewDocumentCommand\PTseq{O{\alpha}D<>{\beta}}{\dot{\mathbf{p}}^{#1, #2}_{p,q}}
\newcommand{\vertiii}[1]{{\left\vert \kern-0.25ex
                            \left\vert \kern-0.25ex
                              \left\vert #1\right\vert\kern-0.25ex
                            \right\vert \kern-0.25ex
                          \right\vert}}
\NewDocumentCommand\DoubleStar{O{\varphi}m}{#1_{#2,\beta}^{\ast\ast}}
\newcommand{\eps}{\varepsilon}
\renewcommand{\emptyset}{\varnothing}
\newcommand{\PosPart}[1]{#1^+}
\newcommand{\NegPart}[1]{#1^-}
\newcommand{\GroupMaximal}{(F^{**})_{\beta}}
\newcommand{\CalP}{\mathcal{P}}
\newcommand{\Measure}{\mathrm{m}}
\newcommand{\Lebesgue}[1]{\Measure(#1)}
\newcommand{\GL}{\operatorname{GL}}
\DeclareFontFamily{U}{mathx}{\hyphenchar\font45}
\DeclareFontShape{U}{mathx}{m}{n}{
      <5> <6> <7> <8> <9> <10>
      <10.95> <12> <14.4> <17.28> <20.74> <24.88>
      mathx10
      }{}
\DeclareSymbolFont{mathx}{U}{mathx}{m}{n}
\DeclareMathAccent{\widecheck}{0}{mathx}{"71}
\DeclareMathAccent{\wideparen}{0}{mathx}{"75}
\newcommand{\R}{\mathbb{R}}
\newcommand{\SC}{\mathcal{S}}
\newcommand{\CC}{\mathbb{C}}
\newcommand{\N}{\mathbb{N}}
\newcommand{\Z}{\mathbb{Z}}
\newcommand{\PTalt}[1]{\dot{\mathbf{P}}^{#1, \beta}_{p,q}}
\title[Anisotropic Triebel-Lizorkin spaces and wavelet coefficient decay, I]
      {Anisotropic Triebel-Lizorkin spaces and wavelet coefficient decay over one-parameter dilation groups, I}
\author[S. Koppensteiner]{Sarah Koppensteiner}
\address{Faculty of Mathematics,
University of Vienna,
Oskar-Morgenstern-Platz 1,
A-1090 Vienna, Austria}
\email{sarah.koppensteiner@univie.ac.at}
\email{jordy-timo.van-velthoven@univie.ac.at}
\email{felix@voigtlaender.xyz}
\author[J.T. van Velthoven]{Jordy Timo van Velthoven}
\author[F. Voigtlaender]{Felix Voigtlaender}
\address{Delft University of Technology,
Mekelweg 4, Building 36,
2628 CD Delft, The Netherlands.}
\email{j.t.vanvelthoven@tudelft.nl}
\address{
Katholische Universit\"at Eichst\"att-Ingolstadt,
Mathematical Institute for Machine Learning and Data Science (MIDS),
Research group \emph{Reliable Machine Learning},
Ostenstrasse 26,
85072 Eichst\"att,
Germany
}
\email{felix.voigtlaender@ku.de}
\subjclass[2020]{42B25, 42B35, 42C15, 42C40, 46B15}
\keywords{Anisotropic Triebel-Lizorkin spaces, Maximal functions, Anisotropic wavelet systems,
Coorbit molecules, Frames, Riesz sequences, One-parameter groups.}
\begin{document}

\maketitle

\begin{abstract}
This paper provides maximal function characterizations of anisotropic Triebel-Lizorkin spaces
associated to general expansive matrices for the full range of parameters $p \in (0,\infty)$,
$q \in (0,\infty]$ and $\alpha \in \mathbb{R}$.
The equivalent norm is defined in terms of the decay of wavelet coefficients,
quantified by a Peetre-type space over a one-parameter dilation group.
As an application, the existence of dual molecular frames and Riesz sequences is obtained;
the wavelet systems are generated by translations and anisotropic dilations of a single function,
where neither the translation nor dilation parameters are required to belong to a discrete subgroup.
Explicit criteria for molecules are given in terms of mild decay, moment, and smoothness conditions.
\end{abstract}

\section{Introduction}
Let $A \in \mathrm{GL}(d, \mathbb{R})$ be an expansive matrix; that is,
all eigenvalues $\lambda \in \CC$ of $A$ satisfy $|\lambda| > 1$.
Choose a Schwartz function $\varphi \in \mathcal{S} (\mathbb{R}^d)$
whose Fourier transform $\widehat{\varphi}$ has compact support
\begin{align}\label{eq:support1}
  \supp \widehat{\varphi}
  = \overline{
      \{
        \xi \in \mathbb{R}^d : \widehat{\varphi} (\xi) \neq 0
      \}
    }
  \subset \mathbb{R}^d \setminus \{ 0 \}
\end{align}
and satisfies
\begin{align}\label{eq:support2}
  \sup_{j \in \mathbb{Z}}
    | \widehat{\varphi} ((A^*)^j \xi) |
  > 0,
  \quad \xi \in \mathbb{R}^d \setminus \{0\},
\end{align}
where $A^*$ denotes the transpose of $A$.
Following Bownik and Ho \cite{bownik2006atomic},
we define the (homogeneous) \emph{anisotropic Triebel-Lizorkin space} $\TL (\mathbb{R}^d; A)$,
with $p \in (0, \infty)$, $q \in (0,\infty)$ and $\alpha \in \mathbb{R}$,
as the collection of all tempered distributions $f \in \mathcal{S}' (\mathbb{R}^d)$
(modulo polynomials) satisfying
\begin{align*}
  \| f \|_{\TL}
  := \bigg\|
       \bigg(
         \sum_{j \in \mathbb{Z}} (|\det A|^{j\alpha} |f \ast \varphi_j |)^q
       \bigg)^{1/q}
     \bigg\|_{L^p}
  < \infty,
\end{align*}
where $\varphi_j := |\det A|^j \varphi (A^j \cdot)$.
The space $\dot{\mathbf{F}}^{\alpha}_{p,\infty} (\mathbb{R}^d; A)$
is defined via the usual modifications.

The dilation group $\{A^j : j \in \mathbb{Z} \} \leq \mathrm{GL}(d, \mathbb{R})$
generated by an expansive matrix $A$ induces the structure
of a space of homogeneous type on $\mathbb{R}^d$,
which differs from the usual isotropic homogeneous structure on $\mathbb{R}^d$,
unless $A$ is $\mathbb{C}$-diagonalizable with all eigenvalues equal in absolute value,
 \cite{bownik2003anisotropic}.
A particular motivation for the study of function spaces defined
through such non-isotropic structures is the analysis of mixed homogeneity properties
of functions and operators.
The scale of spaces $\TL (\mathbb{R}^d; A)$ considered here contains, among others,
the anisotropic and parabolic Hardy spaces
$H^p(\mathbb{R}^d; A) \cong \dot{\mathbf{F}}^{0}_{p, 2} (\mathbb{R}^d; A)$ for $p \in (0,1]$
and the Lebesgue spaces $L^p(\mathbb{R}^d)  \cong \dot{\mathbf{F}}^{0}_{p, 2} (\mathbb{R}^d; A)$
for $ p \in (1,\infty)$; see Section \ref{sec:hardy}.
We refer to Bownik
\cite{bownik2003anisotropic,bownik2006atomic,bownik2005atomic,bownik2007anisotropic,bownik2008duality},
Calder\'on and Torchinsky \cite{calderon1975parabolic, calderon1977parabolic, calderon1977atomic},
and Stein and Wainger \cite{stein1978problems} for more background and motivation
regarding anisotropic dilations and associated function spaces.

The purpose of the present paper is to derive various characterizations of the spaces
$\TL (\R^d; A)$, with $p \in (0,\infty)$ and $q \in (0,\infty]$,
in terms of Peetre-type maximal functions.
Our main motivation for such characterizations is that they allow to identify
a Triebel-Lizorkin space as a coorbit space \cite{feichtinger1989banach}
associated with a Peetre-type space on an affine-type group.
This identification will be used to obtain decompositions of the spaces $\TL (\R^d; A)$
in which both the analyzing and synthesizing functions are ``molecular systems''
(see Section~\ref{sec:molecule_intro});
the recent discretization results \cite{romero2020dual, velthoven2022quasi}
are used for this purpose.

Similar results for the endpoint case of $p = \infty$ are obtained
in the subsequent paper \cite{koppensteiner2022anisotropic2}.

\subsection{Maximal characterizations}

Throughout, in addition to $A$ being expansive, we assume that $A$ is exponential,
i.e., $A = \exp (B)$ for some $B \in \mathbb{R}^{d \times d}$,
so that $A^s = \exp (s B)$ is well-defined for all $s \in \mathbb{R}$;
see Remark~\ref{rem:ExponentialRemark} for additional comments on this assumption.
Given $\varphi \in \mathcal{S} (\mathbb{R}^d)$, $s \in \mathbb{R}$ and $\beta > 0$,
we define the Peetre-type maximal function of $f \in \mathcal{S}' (\mathbb{R}^d)$ as
\[
  \DoubleStar{s} f (x)
  := \sup_{z \in \mathbb{R}^d}
       \frac{|f \ast \varphi_s (x + z)|}
            {(1 + \rho_A (A^{s} z))^{\beta}},
  \quad x \in \mathbb{R}^d,
\]
where $\varphi_s := |\det A|^s \varphi (A^s \cdot)$ and $\rho_A$
is an $A$-homogeneous quasi-norm on $\mathbb{R}^d$;
see Section~\ref{sec:expansive_triebel}.

Our first main result (Theorem~\ref{thm:norm_equiv}) is the following characterization.

\begin{theorem}\label{thm:intro1}
  Let $A \in \mathrm{GL}(d, \mathbb{R})$ be expansive and exponential.
  Suppose that $\varphi \in \mathcal{S} (\mathbb{R}^d)$
  has compact Fourier support and satisfies conditions \eqref{eq:support1} and \eqref{eq:support2}.
  Then, for all $p \in (0,\infty)$, $q \in (0, \infty]$,
  $\alpha \in \mathbb{R}$ and $\beta > \max \{1/p, 1/q\}$,
  the norm equivalences
  \begin{align} \label{eq:equiv_norms_intro}
    \| f \|_{\TL}
    &\asymp \bigg\|
              \bigg(
                \int_{\mathbb{R}}
                \big(
                  |\det A|^{\alpha s}
                   \DoubleStar{s} f
                \big)^q ds
              \bigg)^{1/q}
            \bigg\|_{L^p}
    \asymp \bigg\|
             \bigg(
               \sum_{j \in \mathbb{Z}}
               \big(
                 |\det A|^{\alpha j}
                  \DoubleStar{j} f
               \big)^q
             \bigg)^{1/q}
           \bigg\|_{L^p}
  \end{align}
hold for all $f \in \SP$, with the usual modification for $q = \infty$.
\end{theorem}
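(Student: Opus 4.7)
The plan is to follow the Rychkov--Bownik template: establish a pointwise Peetre-type inequality bounding $\DoubleStar{s} f$ by an anisotropic Hardy--Littlewood maximal function applied to the bare convolutions $f \ast \varphi_u$, and then feed this into a vector-valued maximal inequality. The trivial bound $\| f \|_{\TL} \le \bigl\| \bigl( \sum_{j \in \Z} (|\det A|^{\alpha j} \DoubleStar{j} f)^q \bigr)^{1/q} \bigr\|_{L^p}$ is immediate upon taking $z = 0$ in the supremum defining $\DoubleStar{j} f$.

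The equivalence between the continuous-parameter and discrete-parameter Peetre norms in \eqref{eq:equiv_norms_intro} I would handle by showing the pointwise comparison $\DoubleStar{s} f(x) \asymp \DoubleStar{\lfloor s \rfloor} f(x)$ for $s \in [j, j+1)$, up to possibly enlarging $\beta$ slightly. This reduces to writing $\varphi_s = \varphi_j \ast \eta_{s,j}$ for a kernel $\eta_{s,j} \in \mathcal{S}(\R^d)$ whose $\rho_A$-weighted decay is uniform in $s - j \in [0,1]$; since $A = \exp(B)$ is exponential, $A^s$ depends smoothly on $s$ and such kernels are easy to construct and control. Integrating in $s$ from $j$ to $j+1$ and summing in $j$ then exchanges the continuous integral for the discrete sum up to a constant.

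The substantive work is the reverse estimate
\[
  \bigg\| \bigg( \int_\R \bigl( |\det A|^{\alpha s} \DoubleStar{s} f \bigr)^q ds \bigg)^{1/q} \bigg\|_{L^p}
  \lesssim \| f \|_{\TL}.
\]
Fix $r \in (0, \min\{p, q\})$ with $r \beta > 1$, and write $M$ for the Hardy--Littlewood maximal operator adapted to $\rho_A$. The crucial pointwise lemma is a Peetre-type estimate of the form
\[
  \bigl( \DoubleStar{s} f(x) \bigr)^r
  \le C \int_\R |\det A|^{-\epsilon |t|} \, M \bigl[ |f \ast \varphi_{s+t}|^r \bigr](x) \, dt,
  \qquad x \in \R^d,\ s \in \R,
\]
with $C$ and $\epsilon > 0$ independent of $s$, $x$, and $f$. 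I would prove this by inserting a continuous Calder\'on reproducing identity $\delta_0 = \int_\R \widetilde{\psi}_{t} \ast \psi_{t}\, dt$ built from auxiliary functions sharing the Fourier support of $\varphi$, rewriting $f \ast \varphi_s$ accordingly, estimating the convolution kernels $\varphi_s \ast \widetilde{\psi}_{s+t}$ via their compact Fourier support and standard Schwartz decay, and then dominating $|f \ast \psi_{s+t}(x+y)|$ by $(1 + \rho_A(A^{s+t} y))^{\beta'} \, M[|f \ast \psi_{s+t}|^r](x)^{1/r}$ for a suitable $\beta'$. The exponential decay in $|t|$ arises from comparing Jacobians $|\det A|^t$ against the scale mismatch between $\varphi_s$ and $\widetilde\psi_{s+t}$.

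With this pointwise inequality in hand, I would raise both sides to the power $q/r$, weight by $|\det A|^{\alpha s q}$, and integrate in $s$; Minkowski's integral inequality in $L^{q/r}(ds)$ (using $q/r \ge 1$) interchanges the $s$-integration with the norm, and the anisotropic Fefferman--Stein vector-valued maximal inequality on $L^{p/r}(L^{q/r})$ (valid since $p/r, q/r > 1$) absorbs $M$. A final comparison of the continuous integral with the discrete sum, identical to that used for the continuous-versus-discrete step, reduces the right-hand side to $\| f \|_{\TL}$. The main obstacle is establishing the pointwise Peetre inequality uniformly in $s \in \R$ with genuine exponential decay in $|t|$: the exponential hypothesis on $A$ and the Fourier support conditions on $\varphi$ are precisely what make the kernel estimates tractable across a continuum of scales and what let the decay factor in $t$ survive the subsequent weighting by $|\det A|^{\alpha s}$.
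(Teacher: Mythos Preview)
Your proposal is correct in outline and would work, but it takes a different and somewhat more elaborate route than the paper. The main divergence is in the ``substantive'' reverse estimate. You propose a Rychkov-style pointwise sub-mean-value inequality
\[
  \bigl( \DoubleStar{s} f(x) \bigr)^r
  \le C \int_\R |\det A|^{-\epsilon |t|} \, M \bigl[ |f \ast \varphi_{s+t}|^r \bigr](x) \, dt,
\]
obtained from a continuous Calder\'on reproducing formula and kernel decay estimates. The paper instead exploits the \emph{compact Fourier support} of $\varphi$ much more directly: after rescaling, $g_j := (f \ast \varphi_j) \circ A^{-j}$ has Fourier support in the fixed compact set $\supp \widehat{\varphi}$ for every $j$, so the anisotropic Peetre inequality (Lemma~\ref{lem:peetre}) applies with a uniform constant and yields the one-step bound $\DoubleStar{j} f(x) \lesssim \bigl[ M_{\rho_A} |g_j|^{1/\beta} (A^j x) \bigr]^\beta$ immediately, with no $t$-integral and no auxiliary reproducing identity. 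Fefferman--Stein then finishes the discrete-to-$\TL$ estimate in one stroke. Your approach is the one needed when $\varphi$ lacks compact Fourier support (as in Rychkov's $A_p^{\mathrm{loc}}$ setting), and it is more robust, but here it introduces machinery the hypotheses make unnecessary.

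For the continuous-versus-discrete comparison, your idea of writing $\varphi_s = \varphi_j \ast \eta_{s,j}$ with uniformly controlled kernels is essentially what the paper does, though it packages this differently: it constructs a single auxiliary $\Phi \in \mathcal{S}(\R^d)$ (a finite sum $\sum_{|\ell| \le N} \varphi_\ell \ast \psi_\ell$ on the Fourier side) satisfying $\varphi_k \ast \Phi_{k+t} = \varphi_k$ and $\varphi_{k+t} \ast \Phi_k = \varphi_{k+t}$ for all $k \in \Z$ and $t \in [0,1]$, then inserts these identities and averages over $t$. This gives both the $\TL \lesssim$ continuous and continuous $\lesssim$ discrete estimates via finite sums over $|\ell| \le N$ rather than a decaying integral in $t$, which is again a simplification afforded by the compact support of $\widehat{\varphi}$.
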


Theorem~\ref{thm:intro1} is classical in the setting of \emph{isotropic} Triebel-Lizorkin spaces,
where it has been obtained under varying conditions on the multiplier
$\varphi \in \mathcal{S} (\mathbb{R}^d)$.
Among others, it can be found in
 Triebel \cite{triebel1988characterizations},
Bui, Paluszy\'{n}ski and Taibleson \cite{bui1996maximal,bui1997characterization},
and Rychkov \cite{rychkov1991on, rychkov2001littlewood};
see Ullrich \cite{ullrich2012continuous} for a self-contained overview of these characterizations.

In the setting of \emph{anisotropic} spaces, a maximal characterization of discrete type
(i.e., a characterization involving the right-most term in \eqref{eq:equiv_norms_intro})
was obtained by Farkas \cite{farkas2000atomic} for diagonal dilations $A = \diag(2^{ a_1}, ..., 2^{a_d})$
with anisotropy $(a_1, ..., a_d) \in (0,\infty)^d$.
For general expansive matrices, a discrete maximal characterization
of \emph{inhomogeneous} anisotropic Triebel-Lizorkin spaces
has been obtained by Liu, Yang, and Yuan \cite{liu2019littlewood1}.
However, in contrast to Theorem~\ref{thm:intro1}, the smoothness parameter $\alpha \in \mathbb{R}$
in \cite[Theorem~3.4]{liu2019littlewood1} is restricted to the range $0 < \alpha < \infty$.
In particular, the results in \cite{liu2019littlewood1} do not apply to the Lebesgue spaces $L^p$
for $1<p<\infty$ (which correspond to $\alpha = 0$),
whereas Theorem~\ref{thm:intro1} is applicable to these spaces.

Our proof of Theorem~\ref{thm:intro1} is inspired by the approach
in Rychkov \cite{rychkov2001littlewood} (see also \cite{ullrich2012continuous}),
which combines Fefferman-Stein vector-valued maximal inequalities
with a sub-mean-value property of the convolution products  $(f \ast \varphi_s)_{s \in \mathbb{R}}$
for $f \in \mathcal{S}' (\mathbb{R}^d)$ and $\varphi \in \mathcal{S} (\mathbb{R}^d)$.
This method is a variation of a technique originally due to Str\"omberg and Torchinsky
\cite[Chapter~V]{stroemberg1989weighted}, and is extended here to anisotropic matrix dilations.

In addition to \Cref{thm:intro1}, we also provide a maximal characterization
for the Triebel-Lizorkin sequence spaces; see Theorem~\ref{thm:maximal_sequence}.

\subsection{Wavelet transforms}
\label{sec:coorbit_intro}

The continuous maximal characterization provided by
Theorem~\ref{thm:intro1} can be naturally rephrased in terms of decay properties
of wavelet transforms associated to the quasi-regular representation
\begin{align} \label{eq:dilate_tranlate}
  \pi(x,s) f = |\det A|^{-s/2} f(A^{-s} (\cdot - x)),
  \quad (x,s) \in \mathbb{R}^d \times \mathbb{R}, \; f \in L^2 (\mathbb{R}^d),
\end{align}
of the semi-direct product group $G_A = \mathbb{R}^d \rtimes_A \mathbb{R}$;
see \Cref{sec:admissible} for basic properties.

To be more explicit, given an analyzing vector $\psi \in \mathcal{S} (\mathbb{R}^d)$,
the associated \emph{wavelet transform} of a distribution $f \in \mathcal{S}' (\mathbb{R}^d)$
is the function on $\mathbb{R}^d \times \mathbb{R}$ defined by
\[
  W_{\psi} f :
  G_A \to \mathbb{C}, \quad
  (x,s) \mapsto \langle f, \pi (x,s) \psi \rangle.
\]
Here, we use the sesquilinear dual pairing $\langle f, \varphi \rangle := f(\overline{\varphi})$
for $f \in \mathcal{S}'(\R^d)$ and $\varphi \in \mathcal{S}(\R^d)$.
A function $\psi$ is called \emph{admissible}
if $W_{\psi} : L^2 (\mathbb{R}^d) \to L^{\infty} (G_A)$ defines an isometry into $L^2 (G_A)$.
Given a suitable admissible vector $\psi \in \mathcal{S}(\mathbb{R}^d)$,
a common procedure for constructing an associated function space is by (formally) defining
\begin{align} \label{eq:coorbit_intro}
  \Co (Y)
  = \big\{ f \in \mathcal{S}'(\mathbb{R}^d) / \CalP(\R^d) \; : \; W_{\psi} f \in Y \big\},
\end{align}
where  $Y$ is an adequate
translation-invariant (quasi)-Banach function space on $G_A$.
The function spaces such defined form so-called \emph{coorbit spaces}, see, e.g.,
\cite{feichtinger1989banach,fuehr2015coorbit,velthoven2022quasi,rauhut2007coorbit,christensen2011coorbit}.
Generally, the definition of abstract coorbit spaces in the quasi-Banach range
\cite{rauhut2007coorbit, velthoven2022quasi} requires an additional local property
of the wavelet transform, but we show that it is automatically satisfied in the concrete setting
of the present paper (see \Cref{rem:coorbit_improving} for details).

In this paper we prove several admissibility properties
of functions $\psi \in \mathcal{S} (\mathbb{R}^d)$ and establish various decay and norm estimates
of their associated wavelet transforms.
In particular, it is shown in Proposition~\ref{prop:TL_coorbit} that membership
of $f \in \mathcal{S}'(\mathbb{R}^d)$ in the Triebel-Lizorkin space $\TL$
can be characterized trough decay properties of its wavelet transform $W_{\psi} f$,
in the sense that
\begin{align} \label{eq:TL_coorbit_intro}
    \TLA = \Co_\psi (Y^{\alpha}_{p,q}),
\end{align}
for a Peetre-type function space $Y^{\alpha}_{p,q}$ on $G_A$
and arbitrary $p \in (0, \infty)$, $q \in (0,\infty]$ and $\alpha \in \mathbb{R}$.
Such a coorbit realization is new for non-isotropic Triebel-Lizorkin spaces
and complements the realizations of anisotropic Besov spaces
\cite{cheshmavar2020classification,barrios2011characterizations,bownik2005atomic}
obtained in \cite{fuehr2020coorbit,FuehrVoigtlaenderCoorbitAsDecomposition}.

The isotropic Triebel-Lizorkin spaces have been identified as coorbit spaces \eqref{eq:coorbit_intro}
from the very beginning \cite{groechenig1991describing}.
The function spaces $Y$ used in the identification \cite{groechenig1991describing}
are the tent spaces of Coifman, Meyer and Stein \cite{coifman1985some}.
It was later shown by Ullrich \cite{ullrich2012continuous, liang2012new}
that alternatively one could use
so-called Peetre-type spaces, which allow for
a simpler and more transparent treatment (cf.\ \cite[Section~4.1]{ullrich2012continuous}).
Our use of Peetre spaces in Section~\ref{sec:coorbit} is inspired by \cite{ullrich2012continuous}.

Lastly, it is worth mentioning that the classical papers
\cite{feichtinger1989banach, groechenig1991describing} considered only coorbit spaces
associated with Banach spaces, while for treating Triebel-Lizorkin spaces $\TL$
in the range $\min \{ p,q \} < 1$ it is essential to deal with general \emph{quasi}-Banach spaces.
The framework \cite{rauhut2007coorbit, rauhutcoorbitpreprint} was used for this purpose in \cite{liang2012new}.
However, the theory\footnote{The published paper \cite{rauhut2007coorbit} is restricted
to so-called IN groups, in contrast to the preprint \cite{rauhutcoorbitpreprint}.
The affine group is not an IN group.}
\cite{rauhutcoorbitpreprint, rauhut2007coorbit} is based on an incorrect convolution relation
occurring in \cite{rauhut2007wiener}; in particular, it
does not apply to the affine group (cf.\ \cite[Example~3.13]{velthoven2022quasi}),
although it is used for this purpose in \cite{liang2012new}.
The present paper uses the framework \cite{velthoven2022quasi}
instead of \cite{rauhut2007coorbit}, and it is thus expected that our results in \Cref{sec:admissible}
and \Cref{sec:coorbit} provide a relevant contribution even for isotropic dilations.

\subsection{Molecular decompositions}
\label{sec:molecule_intro}

The identification \eqref{eq:TL_coorbit_intro} of anisotropic Triebel-Lizorkin spaces $\TL$
as suitable coorbit spaces $\Co_\psi (Y^{\alpha}_{p,q})$ (cf.\ Proposition~\ref{prop:TL_coorbit})
enables us to apply general results on the latter spaces
to obtain new molecular decompositions of $\TL$.
However, as was already observed in \cite{gilbert2002smooth},
the classical results \cite{feichtinger1989banach, groechenig1991describing} on coorbit spaces
do not guarantee the same form of localization of both the analyzing and synthesizing functions
as the decomposition theorems of Triebel-Lizorkin spaces in
\cite{gilbert2002smooth, frazier1990discrete, frazier1991littlewood} do.
For this reason, the recent results \cite{romero2020dual, velthoven2022quasi} on molecular decompositions
will be used, which bridge a gap between \cite{feichtinger1989banach, groechenig1991describing}
and \cite{gilbert2002smooth, frazier1990discrete, frazier1991littlewood}.

For $p \in (0,\infty), q \in (0, \infty]$, let $r = \min \{1, p , q\}$.
Given a countable, discrete set $\Gamma \subset G_A$,
a family $(\phi_{\gamma})_{\gamma \in \Gamma}$ of vectors $\phi_{\gamma} \in L^2 (\mathbb{R}^d)$
is a \emph{(coorbit) molecular system} (with respect to the window $\psi$)
if there exists an \emph{envelope} $\Phi \in \WLwr \subset L^1 (G_A)$
satisfying
\begin{align}\label{eq:molecule_intro}
  |W_{\psi} \phi_{\gamma} (g)|
  = | \langle \phi_{\gamma}, \pi(g) \psi \rangle|
  \leq \Phi (\gamma^{-1} g), \quad \gamma \in \Gamma,
  \; g \in G_A;
\end{align}
here, $\WLwr$ denotes a so-called \emph{Wiener amalgam space}
(cf.\ Section~\ref{sub:NormEstimates}).

This notion of molecules depends on a so-called \emph{control weight}
$w = w^{\alpha}_{p,q} : G_A \to [1,\infty)$ for the space $Y^{\alpha}_{p,q}$
occurring in \eqref{eq:TL_coorbit_intro}; see Sections~\ref{sub:ControlWeightConvolutionRelations}
and \ref{sub:MoleculesAndOperators} for details.
Note also that the functions $\phi_{\gamma}$ need not be of the simple form $\pi(\gamma) \phi$
given by translates and dilates of a fixed function (as in \eqref{eq:dilate_tranlate});
rather, the wavelet transform of $\phi_\gamma$ satisfies appropriate size estimates
\emph{as if} it was obtained in this manner.

\begin{theorem}\label{thm:intro2}
  Let $A \!\in\! \mathrm{GL}(d, \mathbb{R})$ be expansive and exponential.
  For $p \in (0, \infty)$, $q \in (0,\infty]$ and $\alpha \in \mathbb{R}$,
  let $r \!=\! \min\{1,p,q\}$, $\alpha' = \alpha + 1/2-1/q$, and $\beta > \max \{ 1/p, 1/q \}$.

  Suppose $\psi \in L^2 (\mathbb{R}^d)$ is an admissible vector satisfying
  $W_{\psi} \psi \in \WLwr$ for the standard control weight
  $w = w^{-\alpha',\beta}_{p,q} : G_A \to [1,\infty)$
  defined in Lemma~\ref{lem:ControlWeights}.
  Moreover, suppose $W_\varphi \psi \in \WLwr$ for some
  (thus all) admissible $\varphi \in \mathcal{S}_0(\R^d)$.
  Then there exists a compact unit neighborhood $U \subset G_A$ such that,
  for any $\Gamma \subset G_A$ satisfying
  \begin{align} \label{eq:udense_relatively}
   G_A = \bigcup_{\gamma \in \Gamma} \gamma U \quad
   \text{and} \quad \sup_{g \in G_A} \# (\Gamma \cap g U) < \infty,
   \end{align}
  there exist two molecular systems
  $(\phi_{\gamma})_{\gamma \in \Gamma} \subset L^2(\R^d)$
  and $(f_{\gamma})_{\gamma \in \Gamma} \subset L^2(\R^d)$ such that
  any $f \in \TL$  can be represented as
  \[
    f = \sum_{\gamma \in \Gamma} \langle f, \pi(\gamma) \psi \rangle \phi_{\gamma}
      = \sum_{\gamma \in \Gamma} \langle f, \phi_{\gamma} \rangle  \pi(\gamma) \psi
    \quad \text{and} \quad
    f = \sum_{\gamma \in \Gamma} \langle f, f_{\gamma} \rangle f_{\gamma},
  \]
  with unconditional convergence in the weak-$\ast$ topology
  of $\mathcal{S}'(\mathbb{R}^d) / \mathcal{P}(\mathbb{R}^d)$.

  (The dual pairings $\langle f, \pi(\gamma) \psi \rangle$
  and $\langle f, \phi_\gamma \rangle$ are defined suitably;
  see Definition~\ref{def:ExtendedDualPairing}.)
\end{theorem}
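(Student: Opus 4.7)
The plan is to reduce the existence of molecular decompositions to the abstract discretization theorems for coorbit spaces on locally compact groups \cite{romero2020dual, velthoven2022quasi}, via the identification $\TLA = \Co_\psi(Y^{\alpha}_{p,q})$ provided by Proposition~\ref{prop:TL_coorbit}. Once this identification is in place, the wavelet transform $W_\psi$ is a (quasi-)Banach isomorphism between $\TLA$ and a concrete coorbit space on $G_A$, so all structural claims about molecular analysis and synthesis systems can first be proved on the $G_A$-side and then pulled back through $W_\psi^{-1}$.

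First, I would verify the standing hypotheses required by the general existence theorems of \cite{velthoven2022quasi}. Concretely, this amounts to checking that $Y^{\alpha}_{p,q}$ is a solid, left- and right-translation invariant quasi-Banach function space on $G_A$, that the standard weight $w^{-\alpha',\beta}_{p,q}$ supplied by Lemma~\ref{lem:ControlWeights} is a submultiplicative control weight for $Y^{\alpha}_{p,q}$, that the analyzing vector $\psi$ qualifies as a ``better vector'' via the hypothesis $W_\psi \psi \in \WLwr$, and that the coorbit space is window-independent, which follows from the assumption $W_\varphi \psi \in \WLwr$ for some admissible $\varphi \in \mathcal{S}_0(\R^d)$. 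These properties are precisely the subject of Sections~\ref{sub:ControlWeightConvolutionRelations} and \ref{sub:MoleculesAndOperators} and so should be available at this point in the paper.

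With these hypotheses verified, the main existence theorem of \cite{velthoven2022quasi} produces a compact unit neighborhood $U \subset G_A$ such that for every $\Gamma$ satisfying the $U$-dense relatively separated condition \eqref{eq:udense_relatively} there exist (a) a family $(\phi_\gamma)_{\gamma \in \Gamma}$ dual to the analysis system $(\pi(\gamma)\psi)_{\gamma \in \Gamma}$ in the abstract coorbit space, and (b) a Parseval-type molecular family $(f_\gamma)_{\gamma \in \Gamma}$; in both cases the wavelet transforms satisfy the envelope bound \eqref{eq:molecule_intro} with a common $\Phi \in \WLwr$. Transferring the three abstract reproducing formulas through the isomorphism $W_\psi$ then yields the three expansions asserted in the theorem.

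The main obstacle will be matching the convergence topology. The abstract theorems give unconditional convergence in the coorbit quasi-norm, which through the wavelet isomorphism corresponds to convergence in the quasi-norm of $\TLA$; since $\TLA \hookrightarrow \SP$ continuously by the very definition of the space, this yields unconditional weak-$\ast$ convergence after pairing against the test class $\mathcal{S}(\R^d) / \CalP(\R^d)$. The remaining subtlety is the correct interpretation of the dual brackets $\langle f, \pi(\gamma)\psi\rangle$ and $\langle f, \phi_\gamma\rangle$ for general $f \in \TL$ in the quasi-Banach range, where $\pi(\gamma)\psi$ and $\phi_\gamma$ need not admit an a priori pairing with the distribution $f$; this is exactly what the extended dual pairing of Definition~\ref{def:ExtendedDualPairing} is designed to handle, so the three identities in the statement are ultimately the pullbacks of the abstract reproducing formulas under the wavelet isomorphism.
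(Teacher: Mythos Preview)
Your overall strategy is correct and matches the paper's: identify $\TLA$ with a coorbit space via Proposition~\ref{prop:TL_coorbit}, verify the standing hypotheses of \cite{velthoven2022quasi}, invoke the abstract frame theorem there (their Theorem~6.14), and translate the dual pairings through the extended pairing of Definition~\ref{def:ExtendedDualPairing}. The paper carries this out through Lemma~\ref{lem:abstract_identification}, which identifies the concrete coorbit space $\Co_\varphi(\PT[-\alpha'])$ (built on the reservoir $\mathcal{S}_0'(\R^d)$) with the abstract one $\Co_\psi^{\mathcal{H}}(\PT[-\alpha'])$ (built on $\mathcal{R}_w(\psi)$) via restriction, and shows that the two dual pairings agree.

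There is, however, one genuine gap in your convergence argument. You assert that the abstract discretization theorems of \cite{velthoven2022quasi} furnish unconditional convergence in the coorbit quasi-norm, and then deduce weak-$\ast$ convergence through the continuous embedding $\TLA \hookrightarrow \SP$. But the abstract theorem only gives unconditional convergence in the weak-$\ast$ topology of the reservoir $\mathcal{R}_w(\psi)$, not in the coorbit quasi-norm; indeed, for $q = \infty$ norm convergence typically fails since finitely supported sequences are not dense in the sequence space. The paper therefore does not pass through norm convergence at all: it takes the weak-$\ast$ convergence on $\mathcal{R}_w(\psi)$ directly and restricts the functionals to $\mathcal{S}_0(\R^d) \subset \mathcal{H}_w^1(\psi)$, which is precisely what yields weak-$\ast$ convergence in $\mathcal{S}_0'(\R^d) \cong \SP$. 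A secondary imprecision is your description of the transfer as going ``through $W_\psi^{-1}$'': the abstract coorbit space is already a space of functionals on $\R^d$ (living in $\mathcal{R}_w$), not a function space on $G_A$, so the identification with $\TLA$ is a restriction map $f \mapsto f|_{\mathcal{S}_0}$ rather than an inverse wavelet transform.
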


The novelty of Theorem~\ref{thm:intro2} is that it applies to possibly irregular sets $\Gamma$%
---i.e., arising from non-lattice translations---and that \emph{both}
$\{ \pi(\gamma)\psi  :  \gamma \in \Gamma \}$ and $\{ \phi_{\gamma} : \gamma \in \Gamma \}$
are molecular systems. It resembles the classical results for lattice translations
by Frazier and Jawerth \cite[Remark~9.17]{frazier1990discrete}
and Gilbert, Han, Hogan, Lakey, Weiland, and Weiss \cite[Theorem~1.5]{gilbert2002smooth},
and the work of Ho \cite{ho2003frames} for general expansive dilations.
In contrast to \Cref{thm:intro2}, the notion of molecules
used in \cite{frazier1990discrete,gilbert2002smooth,bownik2006atomic,ho2003frames}
is defined via explicit smoothness and moment conditions
rather than decay estimates of their wavelet transform as in  \Cref{eq:molecule_intro}.
For comparison, we provide explicit smoothness criteria
for coorbit molecular systems in Section~\ref{sec:criteria_molecules}.

It should be mentioned that for specific vectors $\psi$ and particular construction methods,
the validity of wavelet frame expansions in Hardy and Lebesgue spaces have,
among others, been obtained by Bui and Laugesen \cite{bui2013wavelet}
and Cabrelli, Molter and Romero \cite{cabrelli2013non}.
The results in \cite{bui2013wavelet,cabrelli2013non} provide criteria and constructions
that work for index sets $\Gamma$ satisfying \eqref{eq:udense_relatively}
for \emph{some} neighborhood $U$, whereas Theorem~\ref{thm:intro2} above requires $U$ to be
sufficiently small.
We mention that even for a molecular frame for $L^2 (\mathbb{R}^d)$, the extension
of the canonical $L^2$-frame expansions to Hardy and Lebesgue spaces is non-automatic in general,
and that such frames might fail to yield decompositions of $L^p$ for $p \neq 2$,
see, e.g., Tao \cite{tao} and Tchamitchian \cite{tchamitchian1987biorthogonalite}.

Lastly, we complement Theorem~\ref{thm:intro2} with a dual result on Riesz sequences. \Cref{thm:intro3}
shows that a solution to the interpolation or moment problem in discrete sequence spaces
$\dot{\mathbf{p}}^{-\alpha', \beta}_{p, q} (\Gamma) \leq \mathbb{C}^{\Gamma}$
associated to a discrete $\Gamma \subset G_A$ and the Triebel-Lizorkin spaces $\TL$ can be obtained using
molecular dual Riesz sequences; see \Cref{def:peetre_seq} and  Remark~\ref{rem:sequence_regular} for details.

\begin{theorem}\label{thm:intro3}
 Under the same assumptions of Theorem~\ref{thm:intro2}, the following holds:

  There exists a compact unit neighborhood $U \subset G_A$ such that,
  for any $\Gamma \subset G_A$ satisfying
  \begin{align} \label{eq:separated_intro3}
    \gamma U \cap \gamma' U = \emptyset,
    \quad \text{for all } \gamma, \gamma' \in \Gamma \text{ with } \gamma \neq \gamma',
  \end{align}
  there exists a molecular system
  \(
    (\phi_{\gamma})_{\gamma \in \Gamma}
    \subset \overline{\Span \{ \pi(\gamma) \psi : \gamma \in \Gamma \}}
    \subset L^2(\R^d)
  \)
  such that the moment problem
  \begin{align}\label{eq:momentproblem}
    \langle f, \pi (\gamma) \psi \rangle = c_{\gamma},
    \quad \gamma \in \Gamma,
  \end{align}
  admits the solution $f: = \sum_{\gamma \in \Gamma} c_{\gamma} \phi_{\gamma} \in \TL$ for any given
  \(
    (c_{\gamma})_{\gamma \in \Gamma}
    \in \dot{\mathbf{p}}^{- \alpha', \beta}_{p, q} (\Gamma)
    \leq \mathbb{C}^{\Gamma}.
  \)
\end{theorem}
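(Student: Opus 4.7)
The plan is to deduce Theorem~\ref{thm:intro3} from the coorbit realization $\TLA \cong \Co_\psi(Y^{\alpha}_{p,q})$ provided by Proposition~\ref{prop:TL_coorbit}, combined with the abstract Riesz sequence / moment-problem theorem for quasi-Banach coorbit spaces from \cite{velthoven2022quasi}. Under the standing hypotheses, the window $\psi$ is localized enough to enter that abstract framework: the membership $W_\psi \psi \in \WLwr$ with respect to the control weight $w = w^{-\alpha',\beta}_{p,q}$ is precisely the required envelope assumption, while $W_\varphi \psi \in \WLwr$ ensures compatibility of the coorbit description across admissible analyzing vectors.

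Next, I would apply the dual Riesz sequence construction of \cite{velthoven2022quasi} to the coorbit space $\Co_\psi(Y^{\alpha}_{p,q})$. That result furnishes a compact unit neighborhood $U \subset G_A$, whose size is dictated quantitatively by the oscillation of the envelope of $W_\psi \psi$ in $\WLwr$, such that for every $U$-separated $\Gamma$ as in \eqref{eq:separated_intro3} there is a family $(\phi_\gamma)_{\gamma \in \Gamma} \subset \overline{\Span\{\pi(\gamma)\psi : \gamma \in \Gamma\}}$ which is a coorbit molecular system and biorthogonal to $(\pi(\gamma)\psi)_{\gamma \in \Gamma}$. From this biorthogonality and the convergence statement in the abstract theorem, the synthesis $f := \sum_\gamma c_\gamma \phi_\gamma$ converges in $\Co_\psi(Y^{\alpha}_{p,q})$ and satisfies the moment identity \eqref{eq:momentproblem} for any coefficient sequence drawn from the discretization of $Y^{\alpha}_{p,q}$.

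The remaining step is to bridge the abstract discretization of $Y^{\alpha}_{p,q}$ with the concrete sequence space $\dot{\mathbf{p}}^{-\alpha',\beta}_{p,q}(\Gamma)$ of \Cref{def:peetre_seq}. This identification rests on the Peetre-type characterization of $Y^{\alpha}_{p,q}$ together with a standard relatively-separated sampling argument: for $U$ small, $\ell^q$-sums over $\gamma \in \Gamma$ are equivalent, uniformly in $\Gamma$, to the continuous Peetre integral defining $Y^{\alpha}_{p,q}$. The shift $\alpha' = \alpha + 1/2 - 1/q$ in the sequence exponent reflects the modular function of $G_A = \R^d \rtimes_A \R$ combined with the $L^2$-normalization in \eqref{eq:dilate_tranlate}, and parallels the exponent shift already used in the dual statement Theorem~\ref{thm:intro2}.

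\textbf{Main obstacle.} The delicate part is the uniform sampling identification $\dot{\mathbf{p}}^{-\alpha',\beta}_{p,q}(\Gamma) \cong Y^{\alpha}_{p,q}\text{-sampled on }\Gamma$ with constants independent of the specific $U$-separated set $\Gamma$; everything else is a direct packaging of the abstract coorbit theorem and the molecular envelope estimate, using the localization of $\psi$ inherited from the Wiener amalgam hypothesis. Once this uniform comparison is established, the conclusion of Theorem~\ref{thm:intro3}---existence of the molecular solution $f$ and its membership in $\TL$---follows immediately from the coorbit identification and the abstract dual Riesz sequence theorem.
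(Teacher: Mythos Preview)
Your approach is essentially the paper's: identify $\TL$ as a coorbit space via Proposition~\ref{prop:TL_coorbit} and then invoke the abstract Riesz sequence theorem \cite[Theorem~6.15]{velthoven2022quasi}. Two points of emphasis are misplaced, though. What you call the ``main obstacle''---matching the abstract discretization of $\PT[-\alpha']$ with $\dot{\mathbf{p}}^{-\alpha',\beta}_{p,q}(\Gamma)$---is not an obstacle at all: by Definition~\ref{def:peetre_seq}, the latter \emph{is} the standard coorbit sequence space $\|c\|=\bigl\|\sum_{\gamma}|c_\gamma|\,\mathds{1}_{\gamma U}\bigr\|_{\PT[-\alpha']}$, so no separate sampling comparison is needed and no uniformity in $\Gamma$ has to be argued. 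Conversely, the step you treat as routine---``direct packaging of the abstract coorbit theorem''---hides the actual technical content: the result \cite[Theorem~6.15]{velthoven2022quasi} lives in the reservoir $\mathcal{R}_w(\psi)=(\mathcal{H}^1_w(\psi))^*$ with the pairing $\langle\cdot,\cdot\rangle_{\mathcal{R}_w,\mathcal{H}^1_w}$, whereas the theorem is stated for $f\in\TL\subset\mathcal{S}'_0(\R^d)$ with the extended pairing of Definition~\ref{def:ExtendedDualPairing}. The paper closes this gap with Lemma~\ref{lem:abstract_identification}, which shows the restriction $f\mapsto f|_{\mathcal{S}_0}$ is a bijection $\Co_\psi^{\mathcal{H}}(\PT[-\alpha'])\to\Co_\varphi(\PT[-\alpha'])$ and that the two pairings agree; your writeup should cite this rather than the sampling argument.
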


Theorem~\ref{thm:intro3} seems to be the first result on Riesz sequences in
anisotropic Triebel-Lizorkin spaces
and it is new even for regular index sets arising from lattice translations.
We mention that for regular index sets, the sequence space appearing in \Cref{thm:intro3}
coincides with the standard anisotropic Triebel-Lizorkin sequence spaces
defined in \cite{bownik2006atomic}; see Remark~\ref{rem:sequence_regular}.

\subsection{General notation}

We write $\PosPart{s} := \max\{ 0, s \}$ and $\NegPart{s} := - \min \{ 0, s \}$
for $s \in \mathbb{R}$.

Given functions $f,g : X \to [0,\infty)$, we write $f \lesssim g$ if
there exists $C>0$ satisfying $f(x) \leq C g(x)$ for all $x \in X$.
We write $f \asymp g$ for $f \lesssim g$ and $g \lesssim f$.  The
notation $\lesssim_\alpha$ is sometimes used to indicate that the implicit
constant depends on a quantity $\alpha$.
If $G$ is a group, we write $f^{\vee} (x) = f(x^{-1})$  for $x \in G$.
The characteristic function of $\Omega \subset X$ is denoted by $\mathds{1}_{\Omega}$.
For a measurable $\Omega \subset \mathbb{R}^d$,
its Lebesgue measure is denoted by $\Lebesgue{\Omega}$.

For a matrix $A \in \R^{d \times d}$, its transpose is denoted by $A^*$.
The norm $\|A\|_{\infty}$ denotes the operator norm of the induced map
$A : \mathbb{R}^d \to \mathbb{R}^d$.
The function $\| \cdot   \| : \mathbb{R}^d \to \mathbb{R}$ will denote the Euclidean
norm on $\mathbb{R}^d$.

The space of Schwartz functions will be denoted by
$\mathcal{S} (\mathbb{R}^d)$ and the space of tempered distributions
by $\mathcal{S}' (\mathbb{R}^d)$. Moreover, the set
$\mathcal{P}(\mathbb{R}^d)$ denotes the space of all polynomials of
$d$ real variables, and $\SP$ denotes the space of equivalence classes
of tempered distributions modulo polynomials.  The Fourier transform
$\mathcal{F} : \mathcal{S}(\mathbb{R}^d) \to
\mathcal{S}(\mathbb{R}^d)$ is normalized as
$\widehat{f} (\xi) = \int_{\mathbb{R}^d} f(x) e^{-2\pi ix \cdot \xi} \;
dx$.  Its inverse $\mathcal{F}^{-1} f := \widehat{f}(- \, \cdot \,)$ will also
be denoted by $\widecheck{f}$.
Similar notations will be used for the unitary Fourier-Plancherel transform
$\mathcal{F} : L^2 (\mathbb{R}^d) \to L^2 (\mathbb{R}^d)$ and its inverse.
For $f : \R^d \to \CC$ and $y \in \R^d$, we define $T_y f : \R^d \to \CC, x \mapsto f(x - y)$.

Lastly, if $V$ is a topological vector space consisting of (equivalence classes of)
functions such that the conjugation map $V \to V, \; \varphi \mapsto \overline{\varphi}$
is a well-defined, continuous map, then the associated map
\[
  V' \to V^\ast, \quad f \mapsto \underline{f}
  \qquad \text{with} \qquad
  \underline{f}(\varphi) := f(\overline{\varphi})
\]
between the dual space $V'$ and the anti-dual space $V^\ast$ is a canonical isomorphism.
In this setting, we will not distinguish between $f \in V'$ and $\underline{f} \in V^\ast$.
In particular, the dual pairings $\langle \cdot,\cdot \rangle = \langle \cdot,\cdot \rangle_{V',V}$
and $\langle \cdot,\cdot \rangle = \langle \cdot,\cdot \rangle_{V^\ast,V}$ will always be taken
to be \emph{anti}-linear in the second component, i.e., $\langle f,\varphi \rangle := f(\overline{\varphi})$ for $f \in V'$
and $\langle f, \varphi \rangle := f(\varphi)$ for $f \in V^\ast$.
The two most important cases where this applies is for $V = \mathcal{S}(\R^d)$ and
$
  V
  = \mathcal{S}_0(\R^d)$ (cf. \Cref{def:VanishingMomentSchwartz}).

\section{Expansive matrices and Triebel-Lizorkin spaces}
\label{sec:expansive_triebel}

This section provides background on expansive matrices and associated
function spaces.

\subsection{Expansive matrices}
\label{sec:expansive}

A matrix $A \in \R^{d \times d}$ is called \emph{expansive}
if $\min_{\lambda \in \sigma(A)} |\lambda|>1$,
where $\sigma(A) \subset \CC$ denotes the spectrum of $A$.
The significance of an expansive matrix is that it induces the structure
of a space of homogeneous type on $\mathbb{R}^d$;
see \cite{CoifmanSpacesOfHomogeneousType,coifman1977extensions} for background.

The following lemma is collected from \cite[Definitions~2.3 and 2.5]{bownik2003anisotropic}
and \cite[Lemma~2.2]{bownik2003anisotropic}.

\begin{lemma}[\cite{bownik2003anisotropic}]  \label{lem:QuasiNorm}
Let $A \in \mathrm{GL}(d, \mathbb{R})$ be expansive.
\begin{enumerate}[(i)]
  \item There exist an ellipsoid $\Omega_A$ (i.e., $\Omega_A$ is the image of the
        open Euclidean unit ball under an invertible matrix) and $r > 1$ such that
        \[
          \Omega_A \subset r\Omega_A \subset A\Omega_A
        \]
        and $\Lebesgue{\Omega_A} = 1$.
        The map $\rho_A : \mathbb{R}^d \to [0,\infty)$ given by
        \begin{align}\label{eq:step_norm}
        \rho_A (x)
        = \begin{cases}
            |\det A|^j, \quad & \text{if} \;\; x \in A^{j+1} \Omega_A \setminus A^j \Omega_A, \\
            0, \quad          & \text{if} \;\; x = 0,
          \end{cases}
        \end{align}
        is called the \emph{step homogeneous quasi norm} associated to $A$.
        It is measurable and there exists $C \geq 1$ such that it satisfies the following properties:
        \begin{equation}
          \begin{alignedat}{3}
            \rho_A (-x)  & = \rho_A(x),
            && \quad x \in \mathbb{R}^d ,\\
            \rho_A (x)   &> 0,
            && \quad x \in \mathbb{R}^d \setminus \{0\} , \\
            \rho_A (Ax)  &= |\det A|  \rho_A (x),
            && \quad x \in \mathbb{R}^d , \\
            \rho_A (x+y) &\leq C \big( \rho_A (x) + \rho_A (y) \big),
            && \quad x,y \in \mathbb{R}^d.
          \end{alignedat}
          \label{eq:QuasiNormProperties}
        \end{equation}

  \item Define $d_A : \R^d \times \R^d \to [0,\infty), (x,y) \mapsto \rho_A(x-y)$
        and let $\Measure$ denote the Lebesgue measure on $\R^d$.
        Then the triple $(\mathbb{R}^d, d_A, \Measure)$
        is a \emph{space of homogeneous type}.
\end{enumerate}
\end{lemma}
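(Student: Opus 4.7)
The plan is to prove part (i) by constructing an adapted inner product under which $A$ acts as a strict expansion, then normalize to obtain $\Omega_A$, and to derive part (ii) from (i) together with standard doubling estimates.

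First I would construct the ellipsoid. Since all eigenvalues $\lambda$ of $A$ satisfy $|\lambda| > 1$, I can choose $r_0$ with $1 < r_0 < \min_{\lambda \in \sigma(A)} |\lambda|$. Spectral radius arguments applied to $A^{-1}$ give $\|A^{-n}\|_{\infty} \to 0$ at a geometric rate, so for a large integer $N$, the symmetric bilinear form
\[
  \langle x, y \rangle_A := \sum_{k=0}^{N-1} r_0^{2k} \, \langle A^{-k} x, A^{-k} y \rangle
\]
is an inner product satisfying $\langle Ax, Ax \rangle_A \geq r_0^2 \langle x, x \rangle_A$ for all $x$ (the ``outgoing'' term $r_0^{2N}\|A^{-(N-1)}(Ax)\|^2$ dominates the ``missing'' term $\|x\|^2$ once $N$ is large). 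Let $\Omega_0$ be the open unit ball of the norm induced by $\langle \cdot, \cdot \rangle_A$; then $r_0 \Omega_0 \subset A \Omega_0$. I rescale $\Omega_0$ by a positive constant to obtain $\Omega_A$ with $\Lebesgue{\Omega_A} = 1$, and set $r := r_0$.

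Next, I would verify the properties of the step quasi-norm $\rho_A$. Symmetry of $\Omega_A$ (as an ellipsoid centered at $0$) yields $\rho_A(-x) = \rho_A(x)$. Expansiveness of $A$ gives $\bigcap_{j \in \Z} A^j \Omega_A = \{0\}$ and $\bigcup_{j \in \Z} A^j \Omega_A = \R^d$, so $\rho_A(x) > 0$ for $x \neq 0$ and $\rho_A$ is well-defined by \eqref{eq:step_norm}. The homogeneity $\rho_A(Ax) = |\det A|\,\rho_A(x)$ follows because $Ax \in A^{j+1}\Omega_A \setminus A^j\Omega_A$ exactly when $x \in A^j\Omega_A \setminus A^{j-1}\Omega_A$. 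For the quasi-triangle inequality I would use the gap relation $\Omega_A \subset r\Omega_A \subset A\Omega_A$: if $\rho_A(x) \leq \rho_A(y)$ with $\rho_A(y) = |\det A|^j$, then $x, y \in A^{j+1}\Omega_A$, hence $x + y \in 2 A^{j+1}\Omega_A$. Iterating $2 \Omega_A \subset r^k \Omega_A \subset A^k \Omega_A$ for the smallest $k$ with $r^k \geq 2$, one concludes $x + y \in A^{j+1+k}\Omega_A$, so $\rho_A(x+y) \leq |\det A|^{j+k} = |\det A|^k \rho_A(y) \leq C(\rho_A(x) + \rho_A(y))$ with $C := |\det A|^k$. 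Measurability follows from the level-set description $\{\rho_A \leq |\det A|^j\} = A^{j+1}\Omega_A$.

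For part (ii), once $d_A(x,y) := \rho_A(x-y)$ is known to be a quasi-metric (a direct consequence of the listed properties of $\rho_A$), it remains to check that Lebesgue measure is doubling with respect to $d_A$-balls. The $d_A$-ball $B(x, |\det A|^j)$ is contained in $x + A^{j+1}\Omega_A$ and contains $x + A^j \Omega_A$, whose Lebesgue measures are $|\det A|^{j+1}$ and $|\det A|^j$ respectively. Doubling $d_A$-radii corresponds to enlarging the exponent $j$ by a fixed integer, which multiplies the measure by a fixed power of $|\det A|$, so $\Measure(B(x, 2\tau)) \lesssim \Measure(B(x,\tau))$ uniformly in $x, \tau$. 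This yields the space of homogeneous type structure.

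The main obstacle is the quasi-triangle inequality, since $\rho_A$ is only piecewise constant on the annuli $A^{j+1}\Omega_A \setminus A^j\Omega_A$ and one must exploit the strict inclusion $r\Omega_A \subset A\Omega_A$ quantitatively to absorb the factor $2$ when summing two vectors; choosing $k$ with $r^k \geq 2$ at the start of the argument is what makes this work cleanly.
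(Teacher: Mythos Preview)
The paper does not actually prove this lemma; it simply collects the statement from Bownik's memoir \cite{bownik2003anisotropic} (Definitions~2.3, 2.5 and Lemma~2.2 there) without reproducing any argument. Your proposal therefore supplies a self-contained proof where the paper offers none, and the approach you take---constructing an adapted inner product so that $A$ acts as a strict dilation, then reading off the quasi-norm properties and the doubling condition---is exactly the standard route used in \cite{bownik2003anisotropic}.

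The argument is correct, with two small imprecisions worth noting. First, in your parenthetical justification of $\langle Ax,Ax\rangle_A \geq r_0^2 \langle x,x\rangle_A$, the bookkeeping is slightly off: the actual difference is $\|Ax\|^2 - r_0^{2N}\|A^{-(N-1)}x\|^2$, and what one needs is $r_0^N\|A^{-N}\|_\infty \leq 1$, which indeed follows from the spectral radius formula for large $N$. The terms you label ``outgoing'' and ``missing'' don't quite match this, but the mechanism you describe is the right one. Second, your sandwiching of the $d_A$-ball between $x+A^j\Omega_A$ and $x+A^{j+1}\Omega_A$ is looser than necessary---in fact $B_{\rho_A}(x,|\det A|^j) = x + A^j\Omega_A$ exactly---but your weaker statement is enough for doubling, so this does not affect the conclusion.
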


For $y \in \mathbb{R}^d$ and $r>0$, the $d_A$-ball will be denoted by
$B_{\rho_A}(y, r) := \{ x \in \R^d \colon \rho_A (x-y) < r \}$.

The following lemma shows that the homogeneous quasi-norm can be
estimated from above and below by (powers of) the Euclidean norm;
cf.\ \cite[Equation~(2.7) and Lemma~3.2]{bownik2003anisotropic}.

\begin{lemma}[\cite{bownik2003anisotropic}]  \label{lem:quasi-norm-bounds}
  Let $A \in \mathrm{GL}(d, \mathbb{R})$ be expansive.
  Let $\lambda_-, \lambda_+$ satisfy
  $1 < \lambda_- < \min_{\lambda \in \sigma(A)} |\lambda|$ and
  $\lambda_+ > \max_{\lambda \in \sigma(A)} |\lambda|$.
  Define
  \[
  \zeta_- := \frac{\ln \lambda_-}{\ln |\det A|} \in (0, \tfrac{1}{d})
    \quad \text{and} \quad
     \zeta_+ := \frac{\ln \lambda_+}{\ln |\det A|} \in (\tfrac{1}{d}, \infty).
  \]
  Then there exists $C \geq 1$ such that for every $x \in \R^d$, we have
  \begin{align*}
    C^{-1}  [\rho_A (x)]^{\zeta_-} &\leq \| x \| \leq C  [\rho_A(x)]^{\zeta_+},
    \quad \text{if }\rho_A(x) \geq 1, \\
    C^{-1}  [\rho_A (x)]^{\zeta_+} &\leq \| x \| \leq C  [\rho_A(x)]^{\zeta_-},
    \quad \text{if }\rho_A(x) \leq 1.
  \end{align*}
\end{lemma}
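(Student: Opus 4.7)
The plan is to compare the Euclidean norm to the scale at which $A^j$ maps $x$ into or out of the reference ellipsoid $\Omega_A$. Since $\Omega_A$ is an ellipsoid centered at the origin, I would first fix Euclidean radii $0 < r_0 \leq R_0$ with
\[
  \{\, v \in \R^d : \|v\| < r_0 \,\}
  \;\subset\; \Omega_A \;\subset\;
  \{\, v \in \R^d : \|v\| < R_0 \,\}.
\]
Next, because the spectrum of $A$ is contained in the annulus $\{\lambda_- < |\lambda| < \lambda_+\}$, Gelfand's spectral radius formula applied to both $A$ and $A^{-1}$ (equivalently, the Jordan decomposition of $A$) furnishes a constant $C_A \geq 1$ such that, for all $j \geq 0$ and $v \in \R^d$,
\[
  C_A^{-1} \lambda_-^j \|v\|
  \;\leq\; \|A^j v\| \;\leq\;
  C_A \lambda_+^j \|v\|
  \qquad \text{and} \qquad
  C_A^{-1} \lambda_+^{-j} \|v\|
  \;\leq\; \|A^{-j} v\| \;\leq\;
  C_A \lambda_-^{-j} \|v\|.
\]

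Given $x \in \R^d \setminus \{0\}$, I would let $j \in \Z$ be the unique integer with $x \in A^{j+1} \Omega_A \setminus A^j \Omega_A$, so that $\rho_A(x) = |\det A|^j$ and, directly from the definition $\zeta_\pm = \ln \lambda_\pm / \ln |\det A|$, the clean identity $\lambda_\pm^j = |\det A|^{j\,\zeta_\pm} = [\rho_A(x)]^{\zeta_\pm}$ holds uniformly in $j \in \Z$. The inclusion $x \in A^{j+1}\Omega_A$ gives $\|A^{-j-1} x\| \leq R_0$, while $x \notin A^j \Omega_A$ together with the inclusion $\{v : \|v\| < r_0\} \subset \Omega_A$ forces $\|A^{-j} x\| \geq r_0$. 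Writing $\|x\| = \|A^{j+1}(A^{-j-1}x)\|$ and applying the spectral estimates yields the upper bound on $\|x\|$, while the chain $r_0 \leq \|A^{-j} x\| \leq \|A^{-j}\|_\infty \, \|x\|$ yields the lower bound. Depending on whether $j \geq 0$ (so $\rho_A(x) \geq 1$) or $j \leq -1$ (so $\rho_A(x) \leq 1$), the relevant operator-norm bound on $A^{j+1}$ and $A^{-j}$ introduces either a power of $\lambda_+$ or a power of $\lambda_-$; this sign-of-the-exponent dichotomy is exactly what swaps $\zeta_+$ and $\zeta_-$ between the two cases of the lemma.

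I do not expect a real obstacle beyond careful bookkeeping: once the spectral inequalities above are in place and the identity $\lambda_\pm^j = [\rho_A(x)]^{\zeta_\pm}$ is noted, each of the four desired inequalities reduces to a single line, and the final constant $C$ can be taken as a product of $C_A$, $R_0$, $r_0^{-1}$, and $|\det A|^{\max\{\zeta_+, -\zeta_-\}}$ to absorb the harmless shift by $\pm 1$ in the exponent. The only conceptual point worth emphasizing is that $\zeta_\pm$ is calibrated relative to $|\det A|$ rather than $\lambda_\pm$, which is precisely what makes the passage from $\lambda_\pm^j$ to $[\rho_A(x)]^{\zeta_\pm}$ exact with no residual $j$-dependent factors.
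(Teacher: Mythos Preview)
Your argument is correct and is essentially the standard proof of this estimate. Note, however, that the paper does not actually supply its own proof of this lemma: it is quoted verbatim from \cite[Equation~(2.7) and Lemma~3.2]{bownik2003anisotropic} without argument. Your write-up follows precisely the approach one expects (and which is implicit in Bownik's monograph): sandwich $\Omega_A$ between two Euclidean balls, use the spectral-radius bounds $C_A^{-1}\lambda_-^j \leq \|A^j\|_\infty \leq C_A\lambda_+^j$ for $j\geq 0$ (and the companion bounds for $j\leq 0$), and then read off $\|x\|$ from the defining relation $x\in A^{j+1}\Omega_A\setminus A^j\Omega_A$. The observation that $\lambda_\pm^j=[\rho_A(x)]^{\zeta_\pm}$ is exactly what makes the exponents come out right, and the absorption of the $+1$ shift into the constant is routine. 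There is nothing to add.
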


We will also need the following fact about the integrability of powers
of the quasi norm $\rho_A$.

\begin{lemma}\label{lem:QuasiNormIntegrability}
  Suppose $A \in \mathrm{GL}(d, \mathbb{R})$ is expansive.
  Then for all $\eps > 0$ we have
  \begin{equation*}
    \int_{B_{\rho_A}(0,1)} [\rho_A(x)]^{\eps-1} dx < \infty
    \quad \text{and} \quad
    \int_{\R^d \setminus B_{\rho_A}(0,1)} [\rho_A(x)]^{-1-\eps} dx < \infty.
  \end{equation*}
\end{lemma}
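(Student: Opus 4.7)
The plan is to partition $\R^d \setminus \{0\}$ into the annuli
\[
  E_j := A^{j+1}\Omega_A \setminus A^j \Omega_A, \quad j \in \Z,
\]
on which, by the very definition \eqref{eq:step_norm} of the step quasi-norm, $\rho_A$ takes the constant value $|\det A|^j$. The inclusion $\Omega_A \subset r\Omega_A \subset A\Omega_A$ from Lemma~\ref{lem:QuasiNorm} ensures that the sets $A^j\Omega_A$ form an increasing chain; because $A$ is expansive and $\Omega_A$ is a bounded open neighborhood of the origin, this chain exhausts $\R^d$ upwards and shrinks to $\{0\}$ downwards (a standard fact from \cite{bownik2003anisotropic}). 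Hence the $E_j$ are pairwise disjoint and $\R^d \setminus \{0\} = \bigsqcup_{j\in\Z} E_j$. Using $\Lebesgue{\Omega_A} = 1$ together with $\Lebesgue{A^j\Omega_A} = |\det A|^j$ yields the volume identity $\Lebesgue{E_j} = (|\det A|-1)\,|\det A|^j$.

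Next, I would observe that, since $|\det A| > 1$, we have $\rho_A(x) < 1$ if and only if $x \in E_j$ for some $j < 0$ (up to the null set $\{0\}$), whereas $\rho_A(x) \geq 1$ if and only if $x \in E_j$ for some $j \geq 0$. Splitting each integral as a sum over the corresponding annuli and factoring out the constant value of $\rho_A$ on each $E_j$ gives
\begin{align*}
  \int_{B_{\rho_A}(0,1)} [\rho_A(x)]^{\eps-1}\, dx
  &= (|\det A|-1) \sum_{j < 0} |\det A|^{j\eps},
  \\
  \int_{\R^d \setminus B_{\rho_A}(0,1)} [\rho_A(x)]^{-1-\eps}\, dx
  &= (|\det A|-1) \sum_{j \geq 0} |\det A|^{-j\eps}.
\end{align*}
Both series are geometric with common ratio of modulus less than $1$ whenever $\eps > 0$, since $|\det A|^{\eps} > 1$, and therefore converge.

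There is no real obstacle here: the proof is a mechanical reduction to a geometric series once the annular decomposition is in place. The only mildly nontrivial ingredients—constancy of $\rho_A$ on each $E_j$, the volume formula $\Lebesgue{E_j}=(|\det A|-1)|\det A|^j$, and the partition $\R^d\setminus\{0\} = \bigsqcup_{j\in\Z} E_j$—are all immediate consequences of the construction of $\rho_A$ recalled in Lemma~\ref{lem:QuasiNorm}.
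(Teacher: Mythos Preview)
Your proof is correct and follows essentially the same approach as the paper: both decompose the domain into the annuli $A^{j+1}\Omega_A \setminus A^j\Omega_A$ on which $\rho_A$ is constant, compute the resulting geometric series, and conclude from $|\det A|>1$. The only cosmetic difference is that the paper leaves the measure as $\Lebesgue{A\Omega_A \setminus \Omega_A}$ rather than evaluating it to $|\det A|-1$, and treats only one of the two integrals explicitly.
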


\begin{proof}
  Directly from the definition of $\rho_A$, we see
  \begin{align*}
    \int_{\R^d \setminus B_{\rho_A}(0,1)}
      [\rho_A(x)]^{-1-\eps}
    d x
    & = \sum_{j=0}^\infty
          |\det A|^{-j(1+\eps)}  \Lebesgue{A^{j+1} \Omega_A \setminus A^j \Omega_A} \\
    & = \sum_{j=0}^\infty
          |\det A|^{-\eps j}  \Lebesgue{A \Omega_A \setminus \Omega_A}
      < \infty ,
  \end{align*}
  since $|\det A| > 1$.
  The proof for $\int_{B_{\rho_A}(0,1)} [\rho_A(x)]^{\eps-1} dx$ is similar.
\end{proof}

\subsection{Exponential matrices}
\label{sec:exponential}

A matrix $A \in \R^{d \times d}$ is called \emph{exponential}
if $A = \exp (B)$ for a matrix $B \in \R^{d \times d}$;
here, $\exp(B) = \sum_{n=0}^\infty B^n/n!$ denotes the usual matrix exponential.
If $A$ is expansive and has only positive eigenvalues,
then $A$ is exponential by \cite[Lemma~7.8]{cheshmavar2020classification}.
See \cite[Theorem~1]{culver1966on} for a precise characterization.

For an exponential matrix ${A = \exp(B)}$, the power $A^s = \exp(sB)$ is defined
for all $s \in \mathbb{R}$.
We have $\det A^s = \det (\exp(sB)) = e^{\mathrm{tr}(sB)} = (e^{\mathrm{tr}(B)})^s = (\det A)^s$,
see, e.g., \cite[Theorem~2.12]{HallLieGroups}.
The family $\{ A^s : s \in \mathbb{R} \}$ forms a
continuous one-parameter subgroup of $\mathrm{GL}(d, \mathbb{R})$.

The next lemma provides norm bounds for the powers $A^s$ of an exponential matrix $A$.
For integral powers, these bounds are folklore%
\footnote{Alternatively, they can be easily derived from the spectral radius formula.};
see, e.g., \cite[Equations~(2.1) and (2.2)]{bownik2003anisotropic}.

\begin{lemma}\label{lem:expansive_cont_powers}
  Let $A \in \mathrm{GL}(d, \mathbb{R})$ be expansive and exponential.
  Let $\lambda_-, \lambda_+$ be constants such that
  $1 < \lambda_- < \min_{\lambda \in \sigma(A)} |\lambda|$ and
  $\lambda_+ > \max_{\lambda \in \sigma(A)} |\lambda|$.
  Then there exists $C \geq 1$ such that
  \begin{align*}
    &C^{-1} \, \lambda_-^s \, \| x \| \leq \| A^s x \| \leq C \, \lambda_+^s \, \| x \|,
    \quad s \geq 0, \\
    &C^{-1} \, \lambda_+^s \, \| x \| \leq \| A^s x \| \leq C \, \lambda_-^s \, \| x \|,
    \quad s \leq 0,
  \end{align*}
  for all $x \in \mathbb{R}^d$.
\end{lemma}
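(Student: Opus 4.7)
The plan is to reduce the continuous case to the known integer case by writing $s = n + t$ with $n \in \mathbb{Z}$ and $t \in [0,1)$, and controlling the fractional part uniformly via a compactness argument. The integer version of the bounds (for $s = n \in \mathbb{Z}$) is precisely the folklore statement recorded as \cite[Equations~(2.1) and (2.2)]{bownik2003anisotropic}, which the proof will take as a starting point; concretely, there exists $C_0 \geq 1$ such that $C_0^{-1} \lambda_-^n \|x\| \leq \|A^n x\| \leq C_0 \lambda_+^n \|x\|$ for $n \geq 0$, and the corresponding reversed bounds for $n \leq 0$.

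Next, since $A = \exp(B)$ is exponential, the map $\mathbb{R} \to \mathrm{GL}(d,\mathbb{R}), \, s \mapsto A^s = \exp(sB)$ is continuous. By compactness of $[0,1]$ both constants
\[
  M_+ := \sup_{t \in [0,1]} \|A^t\|_{\infty}
  \qquad \text{and} \qquad
  M_- := \sup_{t \in [0,1]} \|A^{-t}\|_{\infty}
\]
are finite, so every fractional power $A^t$ with $t \in [0,1]$ is invertible with operator norm and inverse norm bounded uniformly in $t$.

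For $s \geq 0$, write $s = n + t$ with $n = \lfloor s \rfloor \in \mathbb{Z}_{\geq 0}$ and $t \in [0,1)$, and use $A^s x = A^t (A^n x)$. The upper bound then follows from
\[
  \|A^s x\| \leq \|A^t\|_{\infty} \|A^n x\| \leq M_+ C_0 \lambda_+^n \|x\| \leq (M_+ C_0 \lambda_+^{-t}) \, \lambda_+^s \|x\|,
\]
where the factor $\lambda_+^{-t}$ is bounded on $t \in [0,1]$. For the lower bound, I would write $A^n x = A^{-t}(A^s x)$ to get
\[
  \|A^n x\| \leq \|A^{-t}\|_{\infty} \|A^s x\| \leq M_- \|A^s x\|,
\]
and combine with $\|A^n x\| \geq C_0^{-1} \lambda_-^n \|x\| \geq C_0^{-1} \lambda_-^{-1} \lambda_-^s \|x\|$ (using $\lambda_- > 1$ and $n \geq s - 1$). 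This yields $\|A^s x\| \geq (C_0 M_- \lambda_-)^{-1} \lambda_-^s \|x\|$.

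Finally, the case $s \leq 0$ is obtained by a duality step rather than by repeating the argument: set $t = -s \geq 0$ and $y = A^s x$, so that $x = A^t y$. Applying the bounds for $t \geq 0$ already proved to $\|x\| = \|A^t y\|$ gives $C^{-1} \lambda_-^{t} \|y\| \leq \|x\| \leq C \lambda_+^{t} \|y\|$, which rearranges to $C^{-1} \lambda_+^{s} \|x\| \leq \|y\| = \|A^s x\| \leq C \lambda_-^{s} \|x\|$, exactly the stated inequalities. Choosing the larger of the constants from the two cases yields a single $C \geq 1$ that works for all $s \in \mathbb{R}$. The only non-trivial ingredient is the uniform control of $A^t$ on $[0,1]$, so there is no real obstacle here; the argument is essentially bookkeeping between the integer-exponent estimates and the continuous one-parameter group structure supplied by exponentiality.
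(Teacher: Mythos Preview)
Your proof is correct and follows essentially the same approach as the paper: decompose $s = n + t$ with $n \in \mathbb{Z}$ and $t \in [0,1)$, control the fractional part via continuity of $t \mapsto A^t$ on a compact interval, and invoke the known integer-power estimates. The only cosmetic difference is that you handle $s \leq 0$ by the substitution $t = -s$, $y = A^s x$ rather than by repeating the argument, which is a perfectly clean alternative.
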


\begin{proof}
  Since $t \mapsto A^t$ is continuous,
  there exists $C_A > 0$ such that $\|A^t\|_\infty \leq C_A$ for $t \in [-1,1]$.
  For $s \geq 0$, we write $ s = k + t$ with $k \in \N_0$ and
  $t \in [0,1)$, and use the result for integral powers
  \cite{bownik2003anisotropic} to conclude
  \begin{equation*}
    \|A^sx\|
    = \|A^tA^k x\|
    \leq \|A^t\|_{\infty} \, \|A^kx\|
    \leq C_A \, C \, \lambda_{+}^k \, \|x\|
    \leq C_A \, C \, \lambda_{+}^s \, \|x\| .
  \end{equation*}
  Similarly,
  \[
    C_A \, \| A^s x \|
    \geq \| A^{-t} \|_{\infty} \, \| A^s x \|
    \geq \| A^{-t} A^s x \|
    =    \| A^k x \|
    \geq C^{-1} \, \lambda_-^k \, \| x \|
    \geq (C \, \lambda_-)^{-1} \, \lambda_-^s \, \| x \| .
  \]
  The estimate for $s \leq 0$ is shown using similar arguments.
\end{proof}

\begin{corollary}\label{cor:quasi-norm_bound}
  Let $A \in \mathrm{GL}(d, \mathbb{R})$ be expansive and exponential.
  Then there exists $C \geq 1$ such that
  \begin{equation*}
    C^{-1} |\det A|^s \rho_A (x)
    \leq \rho_A (A^s x)
    \leq C  |\det A|^s \, \rho_A(x)
    \quad  x \in \R^d, \, s \in \R.
  \end{equation*}
\end{corollary}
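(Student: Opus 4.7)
The plan is to reduce the claim to a uniform comparison on the compact parameter range $s \in [0,1]$, and then to use compactness combined with \Cref{lem:quasi-norm-bounds} to handle that restricted range. The $x = 0$ case is trivial, so I assume $x \neq 0$ throughout.

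First, I would decompose $s = n + t$ with $n \in \Z$ and $t \in [0,1)$. Since $\{A^s : s \in \R\}$ is an abelian one-parameter group, $A^s = A^n A^t$, and iterating the integer identity $\rho_A(Az) = |\det A|\,\rho_A(z)$ from \Cref{lem:QuasiNorm} yields $\rho_A(A^s x) = |\det A|^n \, \rho_A(A^t x)$ together with $|\det A|^s = |\det A|^n \, |\det A|^t$. Since $|\det A|^t \in [1, |\det A|]$ for $t \in [0,1)$, it suffices to establish $\rho_A(A^t x) \asymp \rho_A(x)$ uniformly in $x \in \R^d \setminus \{0\}$ and $t \in [0,1]$.

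Next, I would select the unique $j \in \Z$ with $x \in A^{j+1}\Omega_A \setminus A^j \Omega_A$, so that $y := A^{-j}x \in A\Omega_A \setminus \Omega_A$ and $\rho_A(x) = |\det A|^j$. Using that $A^t$ commutes with $A^j$,
\[
  \rho_A(A^t x) = \rho_A(A^j A^t y) = |\det A|^j \, \rho_A(A^t y) = \rho_A(x)\,\rho_A(A^t y),
\]
which reduces the problem to producing two-sided positive bounds on $\rho_A(A^t y)$ that are uniform in $(t,y) \in [0,1] \times \overline{A \Omega_A \setminus \Omega_A}$.

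Finally, the set $K := \{A^t y : t \in [0,1],\ y \in \overline{A\Omega_A \setminus \Omega_A}\}$ is a compact subset of $\R^d \setminus \{0\}$, since $\overline{A\Omega_A \setminus \Omega_A}$ is compact and avoids $0$ (because $\Omega_A$ is an open neighborhood of $0$), each $A^t$ is invertible, and $(t,y) \mapsto A^t y$ is continuous. Consequently $K$ is contained in a Euclidean annulus $\{z : r \leq \|z\| \leq R\}$ for some $0 < r \leq R$. The one subtlety, which is really the main (mild) obstacle, is that $\rho_A$ is merely measurable, so compactness alone does not yield bounds on $\rho_A|_K$. I would resolve this by invoking \Cref{lem:quasi-norm-bounds}: splitting into the regimes $\rho_A(z) \geq 1$ and $\rho_A(z) \leq 1$ and using the power-type comparisons with $\|z\|$ in each case, the Euclidean bounds on $K$ translate into uniform positive upper and lower bounds on $\rho_A|_K$, completing the proof.
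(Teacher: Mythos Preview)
Your proof is correct and follows essentially the same strategy as the paper's: reduce via $A$-homogeneity of $\rho_A$ to $s \in [0,1]$ and $x$ in the annulus $A\Omega_A \setminus \Omega_A$, then exploit compactness to obtain uniform Euclidean bounds on $\{A^t y\}$. The only minor difference is in the final conversion from Euclidean to $\rho_A$ bounds: the paper argues directly by choosing $k \in \N$ with $A^{-k}\Omega_A \cap \{\|y\| \geq R_1\} = \emptyset$ (so $\rho_A(A^s x) \geq |\det A|^{-k}$), whereas you invoke \Cref{lem:quasi-norm-bounds}; both accomplish the same thing.
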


\begin{proof}
  Due to the $A$-homogeneity of $\rho_A$, it suffices to verify the
  claim for ${x \in A \, \Omega_A \setminus \Omega_A}$
  (with $\Omega_A$ as in Lemma~\ref{lem:QuasiNorm}) and $s \in [0,1]$.
  By Lemma~\ref{lem:expansive_cont_powers} and by the compactness of
  ${\overline{A \, \Omega_A \setminus \Omega_A} \subset \R^d \setminus \{0\}}$,
  there exist $R_1,R_2 > 0$ such that
  \begin{equation*}
    R_1 \leq C^{-1} \, \lambda_-^s \, \| x \|
        \leq \| A^s x \|
        \leq C \, \lambda_+^s \, \| x \|
        \leq R_2
  \end{equation*}
  uniformly for all $x \in A\Omega_A \setminus \Omega_A$ and $s \in [0,1]$.
  Furthermore, there exists $k \in \N$ such that
  $A^{-k}\Omega_A \cap \{y \in \R^d : \|y\| \geq R_1 \} = \emptyset$.
  Thus, we see for $s \in [0,1]$ and $x \in A \Omega_A \setminus \Omega_A$
  that $A^s x \notin A^{-k} \Omega_A$ and hence
  \begin{equation*}
    \rho_A(A^sx)
    \geq |\det A|^{-k}
    = |\det A|^{-k} \rho_A(x)
    \geq |\det A|^{-k-1} |\det A|^s \rho_A(x),
  \end{equation*}
  where we have used that $\rho_A(x) = 1$ for all $x \in A\Omega_A \setminus \Omega_A$.
  This gives the lower bound with $C:= |\det A|^{k+1} \geq 1$.
  The upper bound follows by replacing $x$ with $A^{-s}x$.
\end{proof}

An alternative proof of \Cref{cor:quasi-norm_bound} can be obtained by using a homogeous quasi-norm associated to
the continuous one-parameter group $\{A^s : s \in \mathbb{R} \}$ (cf. \cite[Proposition 1-9]{stein1978problems}) and
the equivalence of all homogeneous quasi-norms associated to $A$ (cf. \cite[Lemma 2.4]{bownik2003anisotropic}).

\subsection{Analyzing vectors}
\label{sec:analyzing}

Let $A \in \mathrm{GL}(d, \mathbb{R})$ be expansive.
Suppose $\varphi \in \mathcal{S} (\mathbb{R}^d)$ is such that
 $\varphi$ has compact Fourier support
\begin{align}\label{eq:analyzing_support}
  \supp \widehat{\varphi}
  := \overline{\{ \xi \in \mathbb{R}^d : \widehat{\varphi}(\xi) \neq 0 \}}
  \subset \mathbb{R}^d \setminus \{0\}
\end{align}
and satisfies
\begin{align}\label{eq:analyzing_positive}
  \sup_{j \in \mathbb{Z}}
    \big| \widehat{\varphi} ((A^*)^j \xi) \big| > 0,
  \quad \xi \in \mathbb{R}^d \setminus \{0\}.
\end{align}
Then the function $\psi \in \SC(\R^d)$
defined through its Fourier transform as
\begin{equation*}
  \widehat\psi(\xi)
  = \begin{cases}
      \overline{\widehat\varphi(\xi)} / \sum_{k \in \Z}
                                          |\widehat{\varphi} ((A^*)^k \xi)|^2,
      \quad
      & \text{if} \;\; \xi \in \R^d \setminus \{0\}, \\
      0,
      \quad
      & \text{if} \;\; \xi = 0,
    \end{cases}
\end{equation*}
is well-defined and satisfies
\begin{align}\label{eq:analyzing_calderon}
  \sum_{j \in \mathbb{Z}}
    \widehat{\varphi} ((A^*)^j \xi) \, \widehat{\psi} ((A^*)^j \xi)
  = 1,
  \quad \xi \in \mathbb{R}^d \setminus \{0\}.
\end{align}
We refer to \cite[Lemma~3.6]{bownik2006atomic} for more details.

\subsection{Triebel-Lizorkin spaces}
\label{sub:TLSpaces}

Let $A \in \mathrm{GL}(d, \mathbb{R})$ be expansive and
suppose that $\varphi \in \Schwartz(\R^d)$
has compact Fourier support satisfying
\eqref{eq:analyzing_support} and \eqref{eq:analyzing_positive}.
For given $\alpha \in \mathbb{R}$, $0 < p < \infty$ and $0 < q \leq \infty$,
the associated (homogeneous) anisotropic Triebel-Lizorkin space $\TL = \TL(A,\varphi)$
is defined as in \cite{bownik2006atomic} as the set of all $f \in \SP$ for which
\[
  \| f \|_{\TL}
  := \bigg\|
       \bigg(
         \sum_{j \in \mathbb{Z}}
           (|\det A|^{j\alpha} |f \ast \varphi_j |)^q
       \bigg)^{1/q}
     \bigg\|_{L^p}
  < \infty,
 \]
where $\varphi_j := |\det A|^j \, \varphi(A^j \cdot)$,
with the usual modification for $q = \infty$.

As shown in \cite[Proposition~3.2]{bownik2006atomic},
the inclusion map $\TL \hookrightarrow \SP$ is continuous
and $\TL$ is complete with respect to the quasi-norm $\| \cdot \|_{\TL}$.
Moreover, \cite[Corollary~3.7]{bownik2006atomic} shows that the space $\TL$
is independent of the choice of $\varphi$;
we will thus simply write $\TLA$ instead of $\TL(A,\varphi)$.

The sequence space $\TLseq = \TLseqA$ on $\mathbb{Z} \times \mathbb{Z}^d$
associated to $\TL$ is defined as the collection of all
$c \in \mathbb{C}^{\mathbb{Z} \times \mathbb{Z}^d}$ satisfying
\begin{align} \label{eq:TL_sequence}
  \| c \|_{\TLseq}
  := \bigg\|
       \bigg(
         \sum_{j \in \mathbb{Z}}
           \sum_{k \in \mathbb{Z}^d}
             \big(
              |\det A|^{j (\alpha + 1/2)}
              |c_{j,k}|
              \mathds{1}_{A^{-j} ([0,1)^d +k)}
             \big)^q
       \bigg)^{1/q}
     \bigg\|_{L^p}
  < \infty,
\end{align}
with the usual modification for $q = \infty$.

\subsection{Anisotropic Hardy spaces}
\label{sec:hardy}

Denoting by $H^p_A$ the anisotropic Hardy space introduced in
\cite{bownik2003anisotropic}, it follows by
\cite[Theorem~7.1]{bownik2007anisotropic} and \cite[Remark on p.~16]{bownik2003anisotropic} that
\begin{align*}
  H^p_A &= \dot{\mathbf{F}}^{0}_{p, 2} (A), \quad p \in (0,1] \\
  L^p = H^p_A   &= \dot{\mathbf{F}}^{0}_{p, 2} (A), \quad p \in (1,\infty).
\end{align*}
Two expansive matrices $A_1, A_2 \in \mathrm{GL}(d, \mathbb{R})$ are said to be \emph{equivalent} if
$H^p_{A_1} = H^p_{A_2}$ for all $p \in (0,1]$. Given an expansive $A_1$,
there exists an equivalent matrix $A_2$ with all eigenvalues positive
and such that $\det A_2 = |\det A_1|$; see \cite[Lemma~7.7]{cheshmavar2020classification}
and \cite[Theorem~2.3 and Lemma~3.6]{bownik2020pde}.
Recall that such a matrix $A_2$ is exponential (cf.\ Section~\ref{sec:exponential}).

\section{Maximal function characterizations}
\label{sec:maximal}

This section provides maximal function characterizations of Triebel-Lizorkin spaces.
In Section \ref{sub:MaximalFunction} we provide preliminaries on maximal functions.
The characterizations of distribution and sequence spaces
will be proven in Sections~\ref{sec:distribution} and \ref{sec:sequence}, respectively.

\subsection{Anisotropic maximal functions}
\label{sub:MaximalFunction}

Let $A \in \mathrm{GL}(d, \mathbb{R})$ be expansive.
For $f : \R^d \to \CC$ measurable, the \emph{(anisotropic) Hardy-Littlewood maximal operator}
$M_{\rho_A}$ is defined as
\begin{equation}
  M_{\rho_A} f (x)
  = \sup_{B \ni x}
      \frac{1}{\Lebesgue{B}}
      \int_B
        |f(y)|
      \; dy,
  \qquad x \in \mathbb{R}^d,
  \label{eq:HardyLittlewoodMaximalFunctionDefinition}
\end{equation}
where the supremum is taken over all $\rho_A$-balls $B = B_{\rho_A}(y, r)$ that contain $x$.

The following simple observation is central for the remainder of this article.

\begin{lemma} \label{lem:maximal_dilation}
  Let $A \in \mathrm{GL}(d, \mathbb{R})$ be expansive.
  For $f : \R^d \to \CC$ measurable, it holds
  \begin{equation}
    M_{\rho_A} [f \circ A^j] = [M_{\rho_A} f] \circ A^j,
    \qquad  j \in \Z .
    \label{eq:MaximalFunctionDilation}
  \end{equation}
\end{lemma}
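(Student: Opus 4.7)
The plan is a direct change of variables, exploiting the fact that the map $x \mapsto A^j x$ permutes the family of $\rho_A$-balls while scaling Lebesgue measure by $|\det A|^j$.

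First I would establish the key geometric fact: for any $y \in \R^d$, $r>0$, and $j \in \Z$,
\[
  A^j \big( B_{\rho_A}(y,r) \big)
  = B_{\rho_A} \big( A^j y, \, |\det A|^j r \big).
\]
This follows immediately from the $A$-homogeneity property $\rho_A(Az) = |\det A| \, \rho_A(z)$ in \eqref{eq:QuasiNormProperties}, applied iteratively (and its inverse for negative $j$), since $\rho_A(A^j z) = |\det A|^j \rho_A(z)$ then yields
$\rho_A(A^j y' - A^j y) = |\det A|^j \rho_A(y'-y) < |\det A|^j r$
iff $y' \in B_{\rho_A}(y,r)$. In particular, the assignment $B \mapsto A^j B$ is a bijection from the set of $\rho_A$-balls containing $x$ onto the set of $\rho_A$-balls containing $A^j x$.

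Next, starting from the definition \eqref{eq:HardyLittlewoodMaximalFunctionDefinition}, I would write
\[
  M_{\rho_A}[f \circ A^j](x)
  = \sup_{B \ni x} \frac{1}{\Lebesgue{B}} \int_B |f(A^j y)| \, dy
\]
and perform the linear change of variables $z = A^j y$, so that $dy = |\det A|^{-j} dz$ and $y \in B$ iff $z \in A^j B$. Combined with $\Lebesgue{A^j B} = |\det A|^j \Lebesgue{B}$, this turns the right-hand side into
\[
  \sup_{B \ni x} \frac{1}{\Lebesgue{A^j B}} \int_{A^j B} |f(z)| \, dz.
\]
Finally, invoking the bijection established above, the supremum over $\rho_A$-balls $B$ containing $x$ is the same as the supremum over $\rho_A$-balls $B' = A^j B$ containing $A^j x$, yielding $M_{\rho_A} f(A^j x) = ([M_{\rho_A} f] \circ A^j)(x)$.

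There is no real obstacle here; the only point that requires a moment's care is making sure the bijection between balls through $x$ and balls through $A^j x$ is used correctly on both sides (i.e., that no extra balls are introduced or lost), which is immediate from the geometric fact above. Measurability of $f \circ A^j$ and of both maximal functions is standard and can be invoked without comment.
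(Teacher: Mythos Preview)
Your proof is correct and follows essentially the same approach as the paper's: both exploit the $A$-homogeneity $\rho_A(A^j z) = |\det A|^j \rho_A(z)$ to set up a bijection between $\rho_A$-balls through $x$ and $\rho_A$-balls through $A^j x$, then use the linear change of variables $z = A^j y$ in the average. The only cosmetic difference is that the paper starts from $(M_{\rho_A} f)(A^j x)$ and unwinds to $M_{\rho_A}[f\circ A^j](x)$, whereas you go in the opposite direction.
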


\begin{proof}
For $z \in \R^d$, the property
$A^j z \in B_{\rho_A} (y, r)$ is equivalent to $ z \in B_{\rho_A}(A^{-j} y, r / |\det A|^j)$.
Hence, the substitutions $z = A^{-j} y$ and $s = r / |\det A|^j$
and the change-of-variable ${v = A^{-j} w}$ show
\begin{align*}
  (M_{\rho_A} f)(A^j x)
  & = \sup_{\substack{y \in \R^d, r > 0 \\ A^j x \in B_{\rho_A} (y,r)}}
        \frac{1}{\Lebesgue{B_{\rho_A}(y,r)}}
        \int_{B_{\rho_A}(y,r)} |f(w)| \, d w \\
  & = \sup_{\substack{z \in \R^d, s > 0 \\ x \in B_{\rho_A}(z, s)}}
        \frac{1}{\Lebesgue{B_{\rho_A}(A^j z, |\det A|^j s)}}
        \int_{B_{\rho_A}(A^j z,|\det A|^j s)} |f(w)| \, d w \\
  & = \sup_{\substack{z \in \R^d, s > 0 \\ x \in B_{\rho_A}(z, s)}}
        \frac{|\det A|^j}{\Lebesgue{B_{\rho_A}(A^j z, |\det A|^j s)}}
        \int_{B_{\rho_A}(z,s)} |f(A^j v)| \, d v \\
   &= (M_{\rho_A} [f \circ A^j]) (x) ,
\end{align*}
as desired.
\end{proof}

A further central property is the vector-valued Fefferman-Stein inequality
\cite{fefferman1971some}, in the form stated in the following theorem.
It follows, e.g., from \cite[Theorem~1.2]{grafakos2009vector},
by using that $(\R^d, d_A, \Measure)$ is a space of homogeneous type.

\begin{theorem}[\cite{grafakos2009vector}]\label{thm:fefferman-stein}
  Let $A \in \GL(d,\R)$ be expansive.
  For $p \in (1, \infty), q \in (1, \infty]$, there exists $C=C(p,q,A,d) > 0$ such that
  \[
    \bigg\|
      \bigg(
        \sum_{i \in \mathbb{N}}
          [ M_{\rho_A} f_i ]^q
      \bigg)^{1/q}
    \bigg\|_{L^p }
    \leq C \bigg\|
             \bigg(
               \sum_{i \in \mathbb{N}}
                 |f_i |^q
             \bigg)^{1/q}
           \bigg\|_{L^p}
  \]
  for any sequence of measurable functions $f_i : \R^d \to \CC$, $i \in \N$,
  with the usual modification for $q = \infty$.
\end{theorem}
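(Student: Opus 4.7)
The plan is to invoke \cite[Theorem~1.2]{grafakos2009vector} directly, once we have checked that the hypotheses of that theorem are met in our setting. The cited result establishes the vector-valued Fefferman--Stein inequality for the Hardy--Littlewood maximal operator on an \emph{arbitrary} space of homogeneous type, for the full parameter range $p \in (1,\infty)$ and $q \in (1,\infty]$. Consequently, the proof is essentially one of bookkeeping: identifying the present setting as an instance of the abstract setup in \cite{grafakos2009vector}.

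First I would recall that, by Lemma~\ref{lem:QuasiNorm}(ii), the triple $(\mathbb{R}^d, d_A, \Measure)$ is a space of homogeneous type, with $d_A(x,y) = \rho_A(x-y)$ the quasi-metric induced by the step homogeneous quasi-norm and $\Measure$ Lebesgue measure. The doubling property of $\Measure$ with respect to the $d_A$-balls $B_{\rho_A}(y,r)$ follows from the scaling relation $\rho_A(Ax) = |\det A|\, \rho_A(x)$ together with $A$-invariance of the Lebesgue measure up to the factor $|\det A|$; the corresponding doubling constant depends only on $A$ and $d$. In this setting, the operator $M_{\rho_A}$ defined in \eqref{eq:HardyLittlewoodMaximalFunctionDefinition} is precisely the uncentered Hardy--Littlewood maximal operator of the space of homogeneous type $(\mathbb{R}^d, d_A, \Measure)$.

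Second, I would simply apply \cite[Theorem~1.2]{grafakos2009vector} to the sequence $(f_i)_{i \in \mathbb{N}}$ in this space. The resulting constant depends only on $p$, $q$, and structural constants of $(\mathbb{R}^d, d_A, \Measure)$, i.e., on $p$, $q$, $A$, and $d$. The case $q = \infty$ follows either by the same reference or by reducing to the standard scalar strong $(p,p)$-bound $\|M_{\rho_A} f\|_{L^p} \lesssim \|f\|_{L^p}$ on a doubling metric measure space, which is classical.

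The only step that requires a minor check rather than a verbatim citation is the doubling property of $\Measure$ with respect to the $\rho_A$-balls, since the authors state the homogeneous type structure in Lemma~\ref{lem:QuasiNorm}(ii) without explicitly computing the doubling constant; however, this is immediate from \eqref{eq:QuasiNormProperties} and the normalization $\Lebesgue{\Omega_A} = 1$, and is already part of the framework established in \cite{bownik2003anisotropic}. Thus there is no genuine obstacle, and the theorem follows as a direct application of \cite{grafakos2009vector}.
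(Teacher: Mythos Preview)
Your proposal is correct and matches the paper's own treatment exactly: the paper does not give an independent proof but simply notes that the result follows from \cite[Theorem~1.2]{grafakos2009vector} by using that $(\R^d, d_A, \Measure)$ is a space of homogeneous type, which is precisely the verification you carry out.
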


The following majorant property of the anisotropic maximal operator
can be found in \cite[Lemma~3.1]{borup2008on} in a slightly different setting.
Nevertheless, the proof given in \cite{borup2008on} applies verbatim in our setting.

\begin{lemma}[\cite{borup2008on}]\label{lem:majorant}
  Let $\theta : [0,\infty) \to [0,\infty)$ be non-increasing, and assume that
  ${\Theta : \R^d \to [0,\infty)}$ given by $\Theta(x) = \theta(\rho_A (x))$ is integrable.
  Suppose that $g \in L^1 (\mathbb{R}^d)$ satisfies $|g(x)| \leq \Theta(x)$
  for almost all $x \in \R^d$.
  Then, for $f \in L^1 (\mathbb{R}^d)$,
  \[
    |(f \ast g) (x) | \leq \|\Theta \|_{L^1} M_{\rho_A} f(x)
  \]
  for all $x \in \mathbb{R}^d$.
\end{lemma}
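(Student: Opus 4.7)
The plan is to follow the classical radial-majorant argument, adapted from the Euclidean setting to the anisotropic setting using that level sets of $\Theta$ are $\rho_A$-balls centered at the origin.

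First, since $|g(x)| \leq \Theta(x)$ almost everywhere, I would immediately reduce to bounding the convolution of $|f|$ with $\Theta$, i.e.
\[
  |(f \ast g)(x)| \leq \int_{\R^d} |f(x-y)| \, \Theta(y) \, dy .
\]
Next I apply the layer-cake representation $\Theta(y) = \int_0^{\infty} \Indicator_{\{\Theta > s\}}(y) \, ds$ and interchange the order of integration (by Tonelli), obtaining
\[
  \int_{\R^d} |f(x-y)| \, \Theta(y) \, dy
  = \int_0^{\infty} \int_{\{y : \Theta(y) > s\}} |f(x-y)| \, dy \, ds .
\]

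The key geometric observation is that, because $\theta$ is non-increasing, one has $\{y \in \R^d : \Theta(y) > s\} = \{y \in \R^d : \rho_A(y) < r_s\} = B_{\rho_A}(0, r_s)$, where $r_s := \sup\{t \geq 0 : \theta(t) > s\}$ (with the convention $\sup \varnothing = 0$). With the change of variables $z = x - y$ together with the symmetry $\rho_A(-y) = \rho_A(y)$ from \Cref{lem:QuasiNorm}, the inner integral becomes $\int_{B_{\rho_A}(x, r_s)} |f(z)| \, dz$, which by the very definition of $M_{\rho_A}$ in \eqref{eq:HardyLittlewoodMaximalFunctionDefinition} is bounded by $\Lebesgue{B_{\rho_A}(x, r_s)} \, M_{\rho_A} f(x) = \Lebesgue{B_{\rho_A}(0, r_s)} \, M_{\rho_A} f(x)$.

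Finally I pull $M_{\rho_A} f(x)$ out of the outer integral and recognize, by a second application of the layer-cake formula, that
\[
  \int_0^{\infty} \Lebesgue{B_{\rho_A}(0, r_s)} \, ds
  = \int_0^{\infty} \Lebesgue{\{y : \Theta(y) > s\}} \, ds
  = \| \Theta \|_{L^1},
\]
which yields the desired estimate $|(f \ast g)(x)| \leq \|\Theta\|_{L^1} \, M_{\rho_A} f(x)$ for every $x \in \R^d$. I do not anticipate a substantial obstacle; the only step requiring care is verifying that the superlevel sets $\{\Theta > s\}$ are indeed $\rho_A$-balls centered at the origin, which follows from the monotonicity of $\theta$ together with the structure of $\rho_A$ from \Cref{lem:QuasiNorm}.
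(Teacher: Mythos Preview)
Your layer-cake argument is correct and is the standard route to such radial majorant bounds. The paper does not supply its own proof of this lemma; it simply cites \cite[Lemma~3.1]{borup2008on} and remarks that the argument there applies verbatim in the anisotropic setting, so there is nothing to compare against. One small point worth tightening: since $\theta$ is merely non-increasing, the superlevel set $\{\Theta > s\}$ can be a \emph{closed} $\rho_A$-ball $\{y:\rho_A(y)\le r_s\}$ rather than the open one; this is harmless, because $\int_{\{\rho_A(\cdot - x)\le r_s\}}|f| \le \Lebesgue{\{\rho_A(\cdot - x)\le r_s\}}\, M_{\rho_A} f(x)$ follows by taking open balls of radius $r_s+\eps$ and letting $\eps\downarrow 0$.
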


Given an exponential matrix $A \in \mathrm{GL}(d, \mathbb{R})$ and $s \in \mathbb{R}$,
we define the dilation of a function $\varphi : \R^d \to \CC$ by
$
  \varphi_s (x) := |\det A|^s  \varphi(A^s x).
$
For $\beta > 0$,
the \emph{Peetre-type maximal function} of $f \in \mathcal{S}' (\mathbb{R}^d)$
with respect to $\varphi \in \Schwartz(\R^d)$ is defined as
\begin{align}\label{eq:peetre_maximal}
  \DoubleStar{s} f (x)
  := \sup_{z \in \mathbb{R}^d}
       \frac{|(f \ast \varphi_{s}) (x+z)|}
            {(1 + \rho_{A}(A^s z))^{\beta}}
   = \esssup_{z \in \mathbb{R}^d}
       \frac{|(f \ast \varphi_{s}) (x+z)|}
            {(1 + \rho_{A}(A^s z))^{\beta}},
  \quad x \in \mathbb{R}^d;
\end{align}
see Lemma~\ref{lem:PeetreSupEssup} for the validity of the second equality
for the step homogeneous quasi-norm $\rho_A$.
If $A$ is not exponential, we define $\DoubleStar{s}$ also by \eqref{eq:peetre_maximal},
but only for $s \in \Z$.

The Peetre-type maximal function and the Hardy-Littlewood operator are related by
Peetre's inequality, cf.\ \cite[Lemma~3.4]{bownik2006atomic} for a proof.

\begin{lemma}[Anisotropic Peetre inequality]\label{lem:peetre}
  Let $K \subset \mathbb{R}^d$ be compact and $\beta > 0$.
  There exists $C = C(K,\beta,A) > 0$ such that for any $g \in \mathcal{S}'(\mathbb{R}^d)$
  with $\supp \widehat{g} \subset K$, we have
  \begin{align} \label{eq:peetre_inequality}
    \sup_{z \in \mathbb{R}^d}
      \frac{|g (x-z)|}{(1+\rho_{A} (z))^{\beta}}
    \leq C \bigl[(M_{\rho_A} |g|^{1/\beta})(x)\bigr]^{\beta}
  \end{align}
  for all $x \in \mathbb{R}^d$.
\end{lemma}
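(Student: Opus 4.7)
The plan is to follow the classical strategy for anisotropic Peetre-type inequalities; indeed, the statement is the standard anisotropic version established in Bownik \cite[Lemma~3.4]{bownik2006atomic}. The proof rests on three ingredients: a reproducing formula for bandlimited tempered distributions, the Schwartz decay of the associated kernel expressed in terms of the quasi-norm $\rho_A$, and an algebraic manipulation that exchanges a linear integrand for one involving $|g|^{1/\beta}$, so that the right-hand side can be controlled by a Hardy--Littlewood maximal function at $x$.

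Concretely, I would first choose $\phi \in \Schwartz(\R^d)$ with $\widehat{\phi}$ compactly supported and $\widehat{\phi} \equiv 1$ on an open neighborhood of $K$; this gives $g = g \ast \phi$ as tempered distributions, and by Paley--Wiener $g$ is smooth with polynomial growth, so the identity holds pointwise. Combined with the Schwartz decay of $\phi$ and Lemma~\ref{lem:quasi-norm-bounds}, one has $|\phi(u)| \leq C_N (1 + \rho_A(u))^{-N}$ for every $N$, whence
\[
  |g(x - z)| \leq C_N \int_{\R^d} |g(w)| \, (1 + \rho_A(x - z - w))^{-N} \, dw.
\]
Writing $|g(w)| = |g(w)|^{1/\beta} \cdot |g(w)|^{1 - 1/\beta}$ and using the Peetre maximum $g_\beta^{\ast}(x) := \sup_z |g(x - z)| / (1 + \rho_A(z))^{\beta}$ to bound the second factor by $[g_\beta^{\ast}(x)]^{1-1/\beta}(1+\rho_A(x-w))^{\beta-1}$, the integral transforms into a convolution of $|g|^{1/\beta}$ against a kernel with polynomial decay in $\rho_A$, centred near $x$ after a quasi-triangle manipulation that extracts a factor $(1 + \rho_A(z))^{\beta}$. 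Lemma~\ref{lem:majorant} then dominates this convolution by $M_{\rho_A} |g|^{1/\beta}(x)$, yielding an estimate of the form
\[
  g_\beta^{\ast}(x) \leq C \cdot [g_\beta^{\ast}(x)]^{1 - 1/\beta} \cdot M_{\rho_A} |g|^{1/\beta}(x),
\]
from which the claim follows by dividing and raising to the $\beta$-th power.

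The main obstacle is that this absorption step requires $g_\beta^{\ast}(x) < \infty$, which does not obviously hold for a general bandlimited $g$ of polynomial growth, and that the naive choice of $N$ does not by itself produce an arbitrarily small constant in the tail of the integral. Both issues are handled by the standard truncation device: one splits the range of integration into a near-zone (where $\rho_A(x-w) \lesssim 1 + \rho_A(z)$) that directly produces $M_{\rho_A}|g|^{1/\beta}(x)$, and a far-zone where the decay of the kernel beats the growth of $g$; the inequality is then established first for a regularized version of $g$ (for instance, $g$ convolved with a smooth compactly supported mollifier, so that $g_\beta^{\ast}(x)$ is manifestly finite), uniformly in the regularization, and extended to general $g$ by a Fatou-type limiting argument. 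Careful bookkeeping of the constants shows the final $C$ depends only on $K$, $\beta$, and $A$, as required.
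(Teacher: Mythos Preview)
The paper does not give its own proof of this lemma; it simply cites \cite[Lemma~3.4]{bownik2006atomic}. Your sketch is a recognizable ``absorption'' argument, and it is essentially correct when $\beta \geq 1$, but as written it breaks down for $0 < \beta < 1$, which the statement explicitly allows.

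The problem is the step where you write $|g(w)| = |g(w)|^{1/\beta}\,|g(w)|^{1-1/\beta}$ and bound the second factor via $|g(w)| \leq g_\beta^{\ast}(x)\,(1+\rho_A(x-w))^{\beta}$. When $\beta < 1$ the exponent $1 - 1/\beta$ is negative, so raising the last inequality to that power reverses it; you would need a \emph{lower} bound on $|g(w)|$, which is not available (and fails wherever $g$ vanishes). Your near-zone/far-zone splitting and the regularization device address the finiteness of $g_\beta^{\ast}(x)$, but they do not repair this sign issue. Hence the absorption inequality $g_\beta^{\ast}(x) \leq C\,[g_\beta^{\ast}(x)]^{1-1/\beta}\,M_{\rho_A}|g|^{1/\beta}(x)$ is not justified in the full range $\beta > 0$.

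The route taken in \cite{bownik2006atomic} (and in the isotropic precursors, e.g.\ Frazier--Jawerth--Weiss or Triebel) avoids this entirely by first proving a Plancherel--P\'olya--Nikol'ski\u{\i} type sub-mean-value estimate: for any $r>0$ there is $C=C(K,r,A)$ with
\[
  |g(y)|^{r} \;\leq\; C \int_{B_{\rho_A}(y,1)} |g(u)|^{r}\,du
  \qquad\text{for all } y\in\R^d,
\]
valid for every $g\in\Schwartz'(\R^d)$ with $\supp\widehat{g}\subset K$. Applying this with $r=1/\beta$ at $y=x-z$, enlarging the ball to $B_{\rho_A}\bigl(x,\,C'(1+\rho_A(z))\bigr)$ via the quasi-triangle inequality, and using $\Lebesgue{B_{\rho_A}(x,R)}\asymp R$, one gets
\[
  |g(x-z)|^{1/\beta} \;\lesssim\; (1+\rho_A(z))\, M_{\rho_A}|g|^{1/\beta}(x),
\]
which is exactly \eqref{eq:peetre_inequality} after raising to the power $\beta$. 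This argument treats all $\beta>0$ uniformly and needs neither the absorption step nor the a~priori finiteness of $g_\beta^{\ast}(x)$. If you want to salvage your approach, you could run it for $\beta\geq 1$ and reduce the case $\beta<1$ to the sub-mean-value estimate; but then you are essentially using the standard proof anyway.
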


The expression $g(x)$  in \eqref{eq:peetre_inequality} makes sense,
since every tempered distribution with compact Fourier support is given by
(integration against) a smooth function, cf.\ \cite[Theorem~7.23]{RudinFA}.

\subsection{Function spaces}
\label{sec:distribution}

The following theorem is one of the main results of this paper.
It provides an anisotropic extension of corresponding results in
\cite{ullrich2012continuous, bui1996maximal, triebel1988characterizations}.

\begin{theorem}\label{thm:norm_equiv}
  Let $A \in \mathrm{GL}(d, \mathbb{R})$ be expansive and exponential.
  Assume that $\varphi \in \Schwartz(\R^d)$ has compact Fourier support
  and satisfies \eqref{eq:analyzing_support} and \eqref{eq:analyzing_positive}.
  Then, for all $p \in (0,\infty)$, $q \in (0, \infty]$, $\alpha \in \mathbb{R}$
  and $\beta > \max \{1/p, 1/q\}$, the norm equivalences
  \begin{align}\label{eq:norm_equiv}
    \| f \|_{\TL}
    & \asymp \bigg\|
               \bigg(
                 \int_{\mathbb{R}}
                   \big(
                     |\det A|^{\alpha s}
                      \DoubleStar{s} f
                   \big)^q
                 ds
               \bigg)^{1/q}
             \bigg\|_{L^p}
    &\asymp \bigg\|
              \bigg(
                \sum_{j \in \mathbb{Z}}
                  \big(
                    |\det A|^{\alpha j}
                     \DoubleStar{j} f
                  \big)^q
              \bigg)^{1/q}
            \bigg\|_{L^p}
  \end{align}
  hold for all $f \in  \SP$, with the usual modifications for $q = \infty$.

  (The function $\DoubleStar{s}f : \mathbb{R}^d \to [0,\infty]$  is well-defined
  for $f \in \SP$, since $\varphi$ has infinitely many vanishing moments and hence
  $P \ast \varphi_s = 0$ for every $P \in \CalP(\R^d)$.)
\end{theorem}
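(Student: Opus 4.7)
The plan is to establish the chain $\|f\|_{\TL} \leq N_D(f) \lesssim N_C(f) \lesssim \|f\|_{\TL}$, where $N_C(f)$ and $N_D(f)$ denote the continuous and discrete Peetre-type norms appearing in \eqref{eq:norm_equiv}. The bound $\|f\|_{\TL} \leq N_D(f)$ follows immediately from $|(f \ast \varphi_j)(x)| \leq \DoubleStar{j}f(x)$, obtained by setting $z = 0$ in \eqref{eq:peetre_maximal}.

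The heart of the argument is the crucial inequality $N_C(f) \lesssim \|f\|_{\TL}$, which I would prove following the Rychkov--Ullrich strategy. Let $\psi \in \SC(\R^d)$ be the analyzing vector from \Cref{sec:analyzing}, so that the Calder\'on identity \eqref{eq:analyzing_calderon} yields the reproducing formula $f = \sum_{k \in \Z} f \ast \varphi_k \ast \psi_k$ in $\SP$, and hence $f \ast \varphi_s = \sum_k (f \ast \varphi_k) \ast (\psi_k \ast \varphi_s)$ for each $s \in \R$. Because $\widehat\varphi$ and $\widehat\psi$ have compact Fourier support away from the origin, the convolution $\psi_k \ast \varphi_s$ satisfies, after the appropriate anisotropic renormalization, Schwartz-type decay estimates that deteriorate by a geometric factor $|\det A|^{-\eps|s-k|}$ for any desired $\eps > 0$. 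Combined with the anisotropic Peetre inequality (\Cref{lem:peetre}) and the dilation covariance of $M_{\rho_A}$ (\Cref{lem:maximal_dilation}), this leads to the key pointwise estimate
\begin{equation*}
  \bigl[ \DoubleStar{s}f(x) \bigr]^r
  \lesssim \sum_{k \in \Z}
             |\det A|^{-\eps r|s-k|} \,
             M_{\rho_A}\bigl(|f \ast \varphi_k|^r\bigr)(x),
\end{equation*}
valid for any $r \in (0, 1]$ with $r\beta > 1$. The hypothesis $\beta > \max\{1/p, 1/q\}$ guarantees the existence of such an $r$ with the additional requirement $r < \min\{p, q\}$.

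Multiplying the above by $|\det A|^{\alpha s r}$, taking the $L^{q/r}(ds)$-norm in $s$, applying a continuous Young convolution inequality to decouple $s$ from $k$ (absorbing the geometric factor into a constant), taking $L^{p/r}$-norms in $x$, and finally invoking the vector-valued Fefferman--Stein inequality (\Cref{thm:fefferman-stein}) in $L^{p/r}(\ell^{q/r})$---which is applicable since $r < \min\{p,q\}$ guarantees $p/r, q/r > 1$---yields $N_C(f) \lesssim \|f\|_{\TL}$. The remaining inequality $N_D(f) \lesssim N_C(f)$ follows by a comparison on unit intervals: for $s \in [j, j+1)$, one has $\varphi_j = |\det A|^{j-s}(\varphi \circ A^{j-s})_s$ with $\varphi \circ A^{j-s}$ lying in a bounded family of Schwartz functions (by \Cref{lem:expansive_cont_powers}) and $\rho_A(A^j z) \asymp \rho_A(A^s z)$ uniformly (by \Cref{cor:quasi-norm_bound}); a minor adaptation of the multi-scale estimate above then produces $\DoubleStar{j}f(x) \lesssim \DoubleStar{s}f(x)$ for a positive-measure set of $s \in [j, j+1)$, which upon integrating and summing in $j$ gives the claim.

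The main obstacle is the derivation of the displayed pointwise estimate, in particular establishing the geometric decay factor $|\det A|^{-\eps|s-k|}$: the almost-orthogonality of $\psi_k \ast \varphi_s$ must be tracked carefully in the anisotropic setting, using the norm bounds of \Cref{lem:expansive_cont_powers} and the quasi-norm scaling of \Cref{cor:quasi-norm_bound} to pass from integer to continuous scales. Once this pointwise estimate is secured, the Fefferman--Stein theorem and standard functional-analytic manipulations conclude the proof.
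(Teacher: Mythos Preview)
Your main step $N_C \lesssim \|f\|_{\TL}$ via the Rychkov--Ullrich pointwise estimate is sound (and under the compact-Fourier-support hypothesis the sum over $k$ is in fact finite, so the geometric decay factor plays no role). The gap is in your proof of $N_D \lesssim N_C$. Rewriting $\varphi_j = |\det A|^{j-s}(\varphi \circ A^{j-s})_s$ only identifies $\DoubleStar{j}f$, up to constants, with the Peetre maximal function of $f$ at scale $s$ \emph{with respect to the modified window} $\varphi \circ A^{j-s}$, whereas $\DoubleStar{s}f$ is defined with the \emph{fixed} window $\varphi$. There is no automatic reason the former is dominated by the latter: that requires reproducing $f \ast \varphi_j$ from the family $\{f \ast \varphi_s\}_{s}$, which would need either $\supp\widehat{\varphi_j} \subset \{\widehat{\varphi_s} \neq 0\}$ (false in general) or a Calder\'on-type identity at the continuous scales $s$. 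Your ``minor adaptation'' of the integer-scale multi-scale estimate does not supply this, and without it the chain does not close.

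The paper runs the cycle in the opposite direction, $\|f\|_{\TL} \lesssim N_C \lesssim N_D \lesssim \|f\|_{\TL}$, and the device that makes the first two steps work is an auxiliary $\Phi \in \Schwartz(\R^d)$ with $\widehat\Phi \equiv 1$ on a suitable annulus (built as a finite truncation of the Calder\'on sum), giving the \emph{exact} identities $\varphi_k = \varphi_k \ast \Phi_{k+t}$ and $\varphi_{k+t} = \varphi_{k+t} \ast \Phi_k$ for all $k \in \Z$ and $t \in [0,1]$. These let one pass between integer and continuous scales with no almost-orthogonality bookkeeping. The final bound $N_D \lesssim \|f\|_{\TL}$ is then a direct application of \Cref{lem:peetre} to $g_j := (f \ast \varphi_j) \circ A^{-j}$ (whose Fourier support lies in the fixed set $\supp\widehat\varphi$) followed by Fefferman--Stein, with no reproducing formula needed at that stage. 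Your gap can be repaired using the same $\Phi$-identity---it yields $\DoubleStar{j}f(x) \lesssim \sum_{|\ell|\leq N} \DoubleStar{j+\ell+t}f(x)$ for every $t \in [0,1]$, which one averages in $t$---but this is precisely the missing ingredient, not the adaptation you sketch.
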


\begin{remark}\label{rem:ExponentialRemark}
  Let $A \in \mathrm{GL}(d, \mathbb{R})$ be expansive.
  \begin{enumerate}[(a)]
  \item The proof of Theorem~\ref{thm:norm_equiv} shows that the characterization
        \[
          \| f \|_{\TL}
          \asymp \bigg\|
                   \bigg(
                     \sum_{j \in \mathbb{Z}}
                     \big(
                       |\det A|^{\alpha j}
                        \DoubleStar{j} f
                     \big)^q
                   \bigg)^{1/q}
                 \bigg\|_{L^p},
          \quad f \in \SP,
        \]
        does not require $A$ to be exponential.
        Instead, it holds for arbitrary expansive matrices:
        the estimate ``$\lesssim$'' is trivial, whereas Step 3 of the proof shows ``$\gtrsim$''.

  \item For anisotropic Hardy spaces $H^p_A$ with $p \in (0,\infty)$, the matrix
        $A$ may be assumed to be exponential by the discussion in Section \ref{sec:hardy}.
  \end{enumerate}
\end{remark}

\begin{proof}[Proof of Theorem~\ref{thm:norm_equiv}]
  As seen in Section~\ref{sec:analyzing}, there exists $\psi \in \SC(\R^d)$
  with $\supp \widehat{\psi} \subset \supp \widehat{\varphi}$ and such that
  \[
    \sum_{j \in \mathbb{Z}}
      \widehat{\varphi}\bigl((A^*)^j \xi\bigr) \, \widehat{\psi}\bigl((A^*)^j \xi\bigr)
    = 1,
    \quad \; \xi \in \mathbb{R}^d \setminus \{0\}.
  \]
  Note that with $A$, also $A^\ast$ is expansive and exponential.
  By Lemma~\ref{lem:expansive_cont_powers}, it follows that there exist
  $0 < R_1 \leq R_2 < \infty$ such that
  \[
    R_1
    \leq \| (A^*)^{-t} \xi \|
    \leq R_2,
    \qquad t \in [-1,1], \quad \xi \in \supp \widehat{\varphi}.
  \]
  Choose $N > 0$ such that
  \(
    (A^*)^j \supp \widehat{\varphi}
    \cap \{ \xi \in \mathbb{R}^d : R_1 \leq \| \xi \| \leq R_2 \}
    = \emptyset
  \)
  for $|j|\geq N$, and define $ \Phi \in \Schwartz(\R^d)$ via its Fourier transform as
  \[
    \widehat{\Phi}(\xi)
    := \sum_{\ell = -N}^N
         \widehat{\varphi} \bigl((A^*)^\ell \xi\bigr) \, \widehat{\psi} \bigl((A^*)^\ell \xi\bigr) ,
  \]
  noting that $\widehat{\Phi}(\xi) = 1$ for $R_1 \leq \| \xi \| \leq R_2$.
  A direct calculation based on the preceding observations
  and using the convolution theorem shows that
  \begin{align}\label{eq:convolver}
    \varphi_{k} \ast \Phi_{{k+t}} = \varphi_{k}
    \quad \text{and} \quad
    \varphi_{k+t} \ast \Phi_k = \varphi_{k+t},
    \quad \; k \in \mathbb{Z}, \; t \in [0,1].
  \end{align}
  The remainder of the proof is split into three steps.
  For notational simplicity, we write throughout $\nu_{\beta} (y) := (1+\rho_{A} (y))^{\beta}$
  for $y \in \mathbb{R}^d$.
  By Equation~\eqref{eq:QuasiNormProperties},
  it follows that $\nu_{\beta}$ satisfies
  $\nu_{\beta}(x + y) \lesssim \nu_{\beta}(x) \; \nu_{\beta}(y)$
  for $x, y \in \mathbb{R}^d$, with implicit constant only depending on $A, \beta$.

  Let $f \in \SP$ be arbitrary.
  We prove the equivalences in~\eqref{eq:norm_equiv} in several steps.
 \\\\
  \textbf{Step 1.}
  In this step we show that $\| f \|_{\TL}$ can be estimated
  by the middle term of \eqref{eq:norm_equiv}.
  For arbitrary, but fixed $t \in [0,1]$, a direct calculation using \eqref{eq:convolver} gives
  \begin{align*}
    \hspace*{-0.28cm}
    \Big\| \!
      \Big(
        |\det A|^{\alpha j} \, |(f \ast \varphi_j)(x)|
      \Big)_{j \in \Z}
    \Big\|_{\ell^q} \!
     & = \Big\|
           \Big(
             |\det A|^{\alpha j}
             |(f \ast \Phi_{{j + t}} \ast \varphi_{j}) (x) |
           \Big)_{j \in \mathbb{Z}}
         \Big\|_{\ell^q} \\
     & \lesssim \!\! \sum_{\ell = - N}^N \!
                  \Big\| \!
                    \Big(
                      |\det A|^{\alpha j} \,
                      |
                       (
                        f
                        \ast \varphi_{{j + \ell + t}}
                        \ast \psi_{{j + \ell + t}}
                        \ast \varphi_{j}
                       ) (x)
                      |
                    \Big)_{\!j \in \mathbb{Z}}
                  \Big\|_{\ell^q}
     \numberthis \label{eq:triangle_sum}.
  \end{align*}
  To estimate \eqref{eq:triangle_sum}, note that for arbitrary $x \in \mathbb{R}^d$,
  \begin{align*}
    & |(f \ast \varphi_{{j + \ell + t}} \ast \psi_{{j + \ell + t}} \ast \varphi_{j}) (x) | \\
    & \leq \int_{\mathbb{R}^d}
             \frac{|(f \ast \varphi_{{j+\ell+t}} )(x+y)|}
                  {\nu_{\beta} (A^{j+\ell+t} y)}
             \nu_{\beta} (A^{j+\ell+t} y)
             |(\psi_{{j + \ell + t}} \ast \varphi_{j}) (-y) |
           \; dy \\
    & \leq \sup_{y \in \mathbb{R}^d}
             \frac{|(f \ast \varphi_{{j+\ell+t}} )(x+y)|}
                  {\nu_{\beta} (A^{j+\ell+t} y)}
             \int_{\mathbb{R}^d}
               \nu_{\beta} (A^{j+\ell+t} y)
               |(\psi_{{j + \ell + t}} \ast \varphi_{j}) (-y) |
             \; dy \\
             &= \DoubleStar{j+\ell+t} f (x)
                 \int_{\mathbb{R}^d}
                        \nu_{\beta} (A^{j+\ell+t} y)
                        |(\psi_{{j + \ell + t}} \ast \varphi_{j}) (-y) |
                      \; dy
    ,
    \numberthis \label{eq:pointwise}
  \end{align*}
  where $\DoubleStar{s} f$ is as in \eqref{eq:peetre_maximal}.
  We can estimate the integral in \eqref{eq:pointwise} by change-of-variables as
  \begin{align*}
    & \int_{\mathbb{R}^d}
        \nu_{\beta} (A^{j+\ell+t} y)
        |(\psi_{{j + \ell + t}} \ast \varphi_{j}) (-y) |
      \; dy \\
    & \leq \int_{\mathbb{R}^d}
             \nu_{\beta} (A^{j+\ell+t} y)
             \int_{\R^d}
               |\det A|^{{j + \ell + t}} \;
               |\psi( A^{j + \ell + t} w)| \;
               |\det A|^j  \;
               | \varphi( A^j( -y -w ) ) |
             \; dw
           \; dy \\
    & = \int_{\mathbb{R}^d}
          \int_{\R^d}
            \nu_{\beta} (A^{\ell+t} z) \;
            |\psi( v) | \;
            | \varphi( -z - A^{-(\ell + t)} v ) |
          \; d z
        \; d v \\
    & = \int_{\mathbb{R}^d}
          \int_{\R^d}
            \nu_{\beta} (A^{\ell+t} y - v) \;
            |\psi(v) | \;
            | \varphi(  -y ) |
          \; d y
        \; d v \\
    & \lesssim \int_{\mathbb{R}^d}
                 \nu_{\beta} (A^{\ell+t} y) \;
                 | \varphi(  -y ) |\;
               dy
               \int_{\R^d}
                 \nu_{\beta}(v) \;
                 |\psi(v) |
               \; d v
      \numberthis \label{eq:step1integral}.
  \end{align*}
  By Corollary~\ref{cor:quasi-norm_bound} and Lemma~\ref{lem:quasi-norm-bounds},
  we see for $-N \leq \ell \leq N$ and $t \in [0,1]$ that
  \begin{equation}
    \nu_{\beta} (A^{\ell + t} y)
    \lesssim |\det A|^{\beta(N+1)}
             (1+\rho_A(y))^{\beta}
    \lesssim |\det A|^{\beta(N+1)}
             (1 + \| y \| )^{\beta / \zeta_-}.
    \label{eq:weight_estimate}
  \end{equation}
  The integrals in \eqref{eq:step1integral} can therefore be bound independently of
  $-N \leq \ell \leq N$ and $t \in [0,1]$.
  Thus, \eqref{eq:pointwise} implies
  \begin{align*}
    |\det A|^{\alpha j} \,
    |(f \ast \varphi_{{j + \ell + t}} \ast \psi_{j+\ell+t} \ast \varphi_{j}) (x) |
    \lesssim |\det A|^{-\alpha (\ell+t)}
             |\det A|^{\alpha (j + \ell + t)}
              \DoubleStar{j+\ell+t} f(x)
  \end{align*}
  where we can estimate $|\det A|^{-\alpha (\ell+t)} \lesssim 1$ with
  implicit constants independent of $\ell, t$.
  Combining this with \eqref{eq:triangle_sum} gives
  \begin{align*}
    \Big\| \!
      \Big(
        |\det A|^{\alpha j}  |(f \ast \varphi_j)(x)|
      \Big)_{j \in \Z}
    \Big\|_{\ell^q} \!
    & \lesssim \sum_{\ell =-N}^N
                 \bigg\|
                   \bigg(
                     |\det A|^{\alpha(j+\ell+t)} \,
                      \DoubleStar{j+\ell+t} f (x)
                   \bigg)_{j \in \mathbb{Z}}
                 \bigg\|_{\ell^q} \\
    &\lesssim \bigg\|
                \bigg(
                  |\det A|^{\alpha(j+t)}
                   \DoubleStar{j+t} f (x)
                \bigg)_{j \in \mathbb{Z}}
              \bigg\|_{\ell^q}.
    \numberthis \label{eq:LHSindependt}
  \end{align*}

  Lastly, the left-hand side of \eqref{eq:LHSindependt} being independent of $t$,
  we average over $t \in [0,1]$.
  For this, let us assume $q < \infty$.
  Taking the $q$-th power of \eqref{eq:LHSindependt} and integrating gives
  \begin{align*}
    \sum_{j \in \mathbb{Z}}
      \bigl(|\det A|^{j\alpha}  |(f \ast \varphi_j)(x)|\bigr)^q
    & \lesssim \int_{0}^1
                 \sum_{j \in \mathbb{Z}}
                 \big(
                   |\det A|^{\alpha(j+t)}
                    \DoubleStar{j+t} f(x)
                 \big)^q  dt \\
    &= \int_{\mathbb{R}}
         \big(
           |\det A|^{\alpha s}
            \DoubleStar{s} f (x)
         \big)^q
       ds,
  \end{align*}
  and thus
  \[
    \bigg\|
      \bigg(
        \sum_{j \in \mathbb{Z}}
          (|\det A|^{j\alpha} |f \ast \varphi_j |)^q
      \bigg)^{1/q} \bigg\|_{L^p}
    \lesssim \bigg\|
               \bigg(
                 \int_{\mathbb{R}}
                   \big(
                     |\det A|^{\alpha s}
                      \DoubleStar{s} f
                   \big)^q
                 ds
               \bigg)^{1/q}
             \bigg\|_{L^p}.
  \]
  The case $q = \infty$ follows by the usual modifications.
\\\\
  \textbf{Step 2.} This step will show that the middle term can be bounded
  by the right-most term in \eqref{eq:norm_equiv}.
  Using the convolution identity \eqref{eq:convolver},
  we calculate for $x, z \in \R^d$, $j \in \Z$, and $t \in [0,1]$,
  \begin{align*}
    & \frac{|(f \ast \varphi_{{j+t}})(x+z)|}
           {\nu_{\beta} (A^{j+t} z)}
    \leq \sum_{\ell = -N}^N
           \int_{\mathbb{R}^d}
             \frac{|(f \ast \varphi_{{j+\ell}}) (x+y+z) |}
                  {\nu_{\beta} (A^{j+t} z)}
             |(\psi_{j+\ell} \ast \varphi_{{j+t}}) (-y)|
           \; dy \\
    & \quad \leq \sum_{\ell = -N}^N
                   \sup_{w \in \mathbb{R}^d}
                     \frac{|(f \ast \varphi_{{j+\ell}} ) (x+w)|}
                          {\nu_{\beta} (A^{j+\ell}w)}
                     \int_{\mathbb{R}^d}
                       \frac{\nu_{\beta} (A^{j + \ell} (z+y))}
                            {\nu_{\beta} (A^{j + t} z)}
                       | (\psi_{j+\ell} \ast \varphi_{{j+t}})(-y)|
                     \; dy.
    \numberthis \label{eq:integral_division}
  \end{align*}
  To estimate the integral in \eqref{eq:integral_division},
  note that the essential submultiplicativity of $\nu_{\beta}$ and a change-of-variable gives
  \begin{align*}
    & \int_{\mathbb{R}^d}
        \frac{\nu_{\beta} (A^{j + \ell} (z+y))}
             {\nu_{\beta} (A^{j + t} z)}
        | (\psi_{j+\ell} \ast \varphi_{{j+t}}) (-y)|
      \; dy \\
    & \quad \leq \int_{\mathbb{R}^d}
                   \frac{\nu_{\beta} (A^{j + \ell} (z+y))}
                        {\nu_{\beta} (A^{j + t} z)}
                   \int_{\R^d}
                     |\det A|^{j+\ell}
                     |\psi (A^{j+\ell}w)| \,
                     |\det A|^{j+t}
                     |\varphi (A^{j+t}(-y-w))|
                   \; dw
                 \; dy\\
    & \quad \leq \int_{\mathbb{R}^d}
                   \int_{\R^d}
                     \frac{\nu_{\beta} (A^{j + \ell} z+ A^{\ell - t} \zeta)}
                          {\nu_{\beta}(A^{j + t} z)} \;
                     |\psi (v)| \;
                     |\varphi (- \zeta - A^{t -\ell} v)|
                   \; d v
                 \; d  \zeta\\
    & \quad \lesssim \int_{\mathbb{R}^d}
                       \int_{\R^d}
                         \frac{\nu_{\beta} (A^{j + \ell}z)}
                              {\nu_{\beta}(A^{j + t} z)} \;
                         \nu_{\beta}(A^{\ell - t} \zeta) \;
                         |\psi (v)| \;
                         |\varphi (-\zeta)|
                       \; dv
                     \; d\zeta.
    \numberthis \label{eq:integral_quotient}
  \end{align*}

  Next, by Corollary~\ref{cor:quasi-norm_bound}, we have $\rho_A (A^t z) \gtrsim \rho_A (z)$
  for $t \in [0,1]$ and $z \in \mathbb{R}^d$.
  Therefore, we see for $-N \leq \ell \leq N$ and $t \in [0,1]$ that
  \begin{equation}
    \frac{1+ \rho_A (A^{\ell} z)}{1+\rho_A (A^t z)}
    \leq |\det A|^N \frac{1+ \rho_A (z)}{1+\rho_A (A^t z)}
    \lesssim 1,
    \label{eq:quotient_bound}
  \end{equation}
  with an implicit constant independent of $j, \ell, t$ and $z$.
  Combining \eqref{eq:quotient_bound} with \eqref{eq:weight_estimate},
  we then see that the integral \eqref{eq:integral_quotient} can
  be estimated independently of $j, \ell, t$.
  Therefore, \eqref{eq:integral_division} shows for $q < \infty$ that
  \begin{align*}
    \bigg(
      |\det A|^{\alpha (j+t)}  \DoubleStar{j+t} f (x)
    \bigg)^q
    & \lesssim \sum_{\ell = -N}^N
               \bigg(
                 \sup_{w \in \mathbb{R}^d}
                   \frac{|(f \ast \varphi_{{j+\ell}})(x+w)|}
                        {\nu_{\beta} (A^{j+\ell} w)}
                   |\det A|^{\alpha(j + \ell + t - \ell)}
               \bigg)^q \\
    &\lesssim  \sum_{\ell = -N}^N
               \bigg(
                 |\det A|^{\alpha (j + \ell)}
                  \DoubleStar{j+\ell} f (x)
               \bigg)^q.
    \numberthis \label{eq:det_Peetre}
  \end{align*}
  The right-hand side of \eqref{eq:det_Peetre} being independent of $t$,
  integrating \eqref{eq:det_Peetre} over $[0,1]$ shows that
  \begin{align*}
    \int_{\mathbb{R}}
      \big(
        |\det A|^{\alpha s}
         \DoubleStar{s} f(x)
      \big)^q
    \; ds
    &= \sum_{j \in \mathbb{Z}}
         \int_0^1
           \big(
             |\det A|^{\alpha (j+t)}
              \DoubleStar{j+t} f(x)
           \big)^q
         dt \\
     &\lesssim \sum_{j \in \mathbb{Z}}
                 \sum_{\ell = -N}^N
                   \bigg(
                     |\det A|^{\alpha (j + \ell)}
                      \DoubleStar{j+\ell} f(x)
                   \bigg)^q \\
   & \lesssim \sum_{j \in \mathbb{Z}}
                \big(
                  |\det A|^{\alpha j}
                   \DoubleStar{j} f(x)
                \big)^q, \numberthis \label{eq:det_Peetre2}
  \end{align*}
  and thus
  \[
    \bigg\|
      \bigg(
        \int_{\mathbb{R}}
          \big(
            |\det A|^{\alpha s}
             \DoubleStar{s} f
          \big)^q
        ds
      \bigg)^{1/q}
    \bigg\|_{L^p}
    \lesssim \bigg\|
               \bigg(
                 \sum_{j \in \mathbb{Z}}
                   \big(
                     |\det A|^{\alpha j}
                      \DoubleStar{j} f
                   \big)^q
               \bigg)^{1/q}
             \bigg\|_{L^p}.
  \]
  The case $q = \infty$ follows by the usual modifications.
\\\\
  \textbf{Step 3.} This final step will show that the right-most term in \eqref{eq:norm_equiv}
  can be estimated by $\| f \|_{\TL}$.
  Note first that
  \begin{align}\label{eq:peetre_dilation}
    \DoubleStar{j} f(x)
    = \sup_{z \in \mathbb{R}^d}
        \frac{|(f \ast \varphi_j) (x + A^{-j}z)|}
             {(1 + \rho_A (z))^{\beta}}
    =  \sup_{z \in \mathbb{R}^d}
         \frac{\bigl|[(f \ast \varphi_j) \circ A^{-j}] (A^j x + z)\bigr|}
              {(1 + \rho_A (-z))^{\beta}},
  \end{align}
  where the symmetry of $\rho_A$ is used.
  In order to estimate \eqref{eq:peetre_dilation}, we apply Peetre's inequality
  in Lemma~\ref{lem:peetre} to $g_j := (f \ast \varphi_j) \circ A^{-j}$.
  To this end, note with the (bilinear) dual pairing
  $\langle \cdot,\cdot \rangle_{\Schwartz',\Schwartz}$ that
  \begin{align*}
    \langle \widehat{g_j} , \phi \rangle_{\Schwartz', \Schwartz}
    & = \big\langle
          f \ast \varphi_j , \,\,
          |\det A^j| \, \widehat{\phi} \circ A^j
        \big\rangle_{\Schwartz',\Schwartz}
      = \big\langle
          \widehat{f \ast \varphi_j}, \,\,
          \phi \circ (A^*)^{-j}
        \big\rangle_{\Schwartz',\Schwartz} \\
    & = \big\langle
          \widehat{f}, \,\,
          (\widehat{\varphi} \circ \! (A^*)^{-j})
          \cdot (\phi \circ \! (A^*)^{-j})
        \big\rangle_{\Schwartz',\Schwartz}
      = 0
  \end{align*}
  for all $ \phi \in \Schwartz(\R^d)$
  with $\supp \phi \subset \R^d \!\setminus\! \supp \widehat{\varphi}$ .
  Thus, $\supp \widehat{g_j} \subseteq \supp \widehat{\varphi}$
  is contained in the same compact set for all $j \in \mathbb{Z}$.
  An application of Lemma~\ref{lem:peetre} therefore provides a uniform constant $C > 0$
  such that, for all $j \in \mathbb{Z}$,
  \[
    \DoubleStar{j} f(x)
    = \sup_{z \in \mathbb{R}^d}
        \frac{|g_j (A^j x + z)|}{(1 + \rho_A (-z))^{\beta}}
    \leq C \big[(M_{\rho_A} |g_j|^{1/\beta}) (A^j x) \big]^{\beta},
  \]
  where $M_{\rho_A}$ is as in~\eqref{eq:HardyLittlewoodMaximalFunctionDefinition}.
  Therefore, the right-hand side of \eqref{eq:norm_equiv} can be
  estimated using Lemma \ref{lem:maximal_dilation} and the vector-valued Fefferman-Stein inequality
  (Theorem~\ref{thm:fefferman-stein}) as follows:
  \begin{align*}
    \bigg\|
      \bigg(
        \sum_{j \in \mathbb{Z}}
          \big(
            |\det A|^{\alpha j}
             \DoubleStar{j} f
          \big)^q
      \bigg)^{1/q}
    \bigg\|_{L^p}
    & \lesssim \bigg\|
                 \bigg(
                   \sum_{j \in \mathbb{Z}}
                     \bigg(
                       |\det A|^{\alpha j}
                       \big[
                         (M_{\rho_A} |g_j|^{1/\beta}) (A^j \cdot)
                       \big]^{\beta}
                     \bigg)^q
                 \bigg)^{1/q}
               \bigg\|_{L^p} \\
    & = \bigg\|
          \bigg(
            \sum_{j \in \mathbb{Z}}
            \big(
              M_{\rho_A} \big[ (|\det A|^{\alpha j} |g_j|)^{1/\beta} \big] (A^j \cdot)
            \big)^{\beta q}
          \bigg)^{1/q}
        \bigg\|_{L^p} \\
    & = \bigg\|
          \bigg(
            \sum_{j \in \mathbb{Z}}
            \big(
              M_{\rho_A} (|\det A|^{\alpha j} |f \ast \varphi_j|)^{1/\beta}
            \big)^{\beta q}
          \bigg)^{1/(q\beta)}
        \bigg\|_{L^{p\beta}}^{\beta} \\
    & \lesssim \bigg\|
                 \bigg(
                   \sum_{j \in \mathbb{Z}}
                   \big(
                     |\det A|^{\alpha j}
                     |f \ast \varphi_j|
                   \big)^{ q}
                 \bigg)^{1/q}
               \bigg\|_{L^{p}}.
  \end{align*}
  The last step used that $p\beta, q \beta > 1$,
  so that Theorem~\ref{thm:fefferman-stein} is applicable.
\end{proof}

\subsection{Sequence spaces}
\label{sec:sequence}

This section provides a maximal function characterization of the sequence spaces
$\TLseq$ defined in Section~\ref{sub:TLSpaces}.
We start with a simple lemma.

\begin{lemma}\label{lem:pointwise_indicator}
  Let $A \in \GL(d,\R)$ be expansive,
  let $K \subset \R^d$ be bounded and measurable with positive measure,
  and let $\beta \geq 0$.
  For $\ell \in \Z$ and $z \in \R^d$, set $K_{\ell,z} := A^{-\ell} (K + z)$.
  Then
  \[
    \bigl(1 + \rho_A (A^\ell x - z)\bigr)^{-\beta}
    \lesssim \bigg(
               \Indicator_{K_{\ell,z}}
               \ast \frac{|\det A|^\ell}{(1 + \rho_A (A^\ell \cdot))^\beta}
             \bigg) (x)
    \qquad \, x \in \R^d ,
  \]
  where the implied constant only depends on $K, \beta, A$.
\end{lemma}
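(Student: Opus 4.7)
The plan is to reduce the convolution on the right-hand side to a simple integral over $K$ via a change of variables, and then apply the quasi-triangle inequality for $\rho_A$ together with the fact that $\rho_A$ is bounded on bounded sets.

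More precisely, writing out the convolution and substituting $y = A^{-\ell}(u + z)$ (so that $dy = |\det A|^{-\ell}\, du$ and $y \in K_{\ell,z}$ iff $u \in K$), one obtains
\[
  \bigg( \Indicator_{K_{\ell,z}} \ast \frac{|\det A|^\ell}{(1 + \rho_A(A^\ell \cdot))^\beta} \bigg)(x)
  = \int_K \frac{1}{(1 + \rho_A(A^\ell x - z - u))^\beta} \, du.
\]
Setting $v := A^\ell x - z$, the claim therefore reduces to the pointwise estimate
\[
  (1 + \rho_A(v))^{-\beta} \lesssim \int_K (1 + \rho_A(v - u))^{-\beta} \, du,
\]
uniformly in $v \in \R^d$, with constant depending on $K, \beta, A$.

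For this, I would apply the quasi-triangle inequality from \eqref{eq:QuasiNormProperties} together with the symmetry $\rho_A(-u) = \rho_A(u)$ to obtain $\rho_A(v - u) \leq C(\rho_A(v) + \rho_A(u))$ for $u \in K$. Since $K$ is bounded, Lemma~\ref{lem:quasi-norm-bounds} (applied case by case depending on whether $\rho_A(u) \leq 1$ or $\rho_A(u) \geq 1$) yields a constant $M = M(K, A) > 0$ with $\rho_A(u) \leq M$ for all $u \in K$. Combining these gives
\[
  1 + \rho_A(v - u) \leq 1 + C \rho_A(v) + C M \leq C'(1 + \rho_A(v)),
\]
with $C' = C'(K, \beta, A)$ independent of $v$ and $u \in K$. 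Raising this to the power $-\beta$ and integrating over $K$, which has positive Lebesgue measure, yields
\[
  \int_K (1 + \rho_A(v - u))^{-\beta} \, du
  \geq (C')^{-\beta} \, \Lebesgue{K} \, (1 + \rho_A(v))^{-\beta},
\]
which is the required estimate.

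There is no serious obstacle here: the only points requiring care are that the change of variables is carried out correctly (so that the factor $|\det A|^\ell$ cancels) and that Lemma~\ref{lem:quasi-norm-bounds} is indeed strong enough to yield boundedness of $\rho_A$ on bounded Euclidean sets. Both are routine.
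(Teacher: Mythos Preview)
Your proof is correct and follows essentially the same route as the paper's: both reduce to the core estimate $(1+\rho_A(v))^{-\beta} \lesssim \int_K (1+\rho_A(v-u))^{-\beta}\,du$ via the quasi-triangle inequality and boundedness of $\rho_A$ on $K$. The only cosmetic difference is that you carry out a single explicit change of variables, whereas the paper first strips off $z$ and $\ell$ via translation and dilation identities before proving $\nu \lesssim \Indicator_K \ast \nu$; your direct computation is arguably cleaner.
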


\begin{proof}
  Define $\nu(x) := (1 + \rho_A(x))^{-\beta}$.
    Note that
  \begin{align*}
    \Indicator_{K_{\ell,z}} (x)
     = |\det A|^{-\ell}  (\Indicator_K)_\ell (x - A^{-\ell} z)
      = |\det A|^{-\ell}  \big[ T_{A^{-\ell} z} (\Indicator_K)_\ell \big] (x).
  \end{align*}
  By applying similar manipulations to the left-hand side of the target estimate,
  and multiplying both sides of the target estimate by $|\det A|^\ell$,
  it is easily seen that the claim is equivalent to
  \[
    T_{A^{-\ell} z }\nu_\ell
    \lesssim [T_{A^{-\ell} z} (\Indicator_K)_\ell] \ast \nu_\ell .
  \]
  Since convolution commutes with translation,
  we can assume that $z = 0$, i.e., we need to show that
  $\nu_\ell \lesssim (\Indicator_K)_\ell \ast \nu_\ell$.
  Furthermore, using the identity
  $(f \circ A) \ast (g \circ A) = |\det A|^{-1} \cdot (f \ast g) \circ A$,
  it follows that it suffices to prove $\nu \lesssim \Indicator_K \ast \nu$.
  For this, note that
  since $\rho_A$ is  bounded on $K$, we have
  $1 + \rho_A(x - y) \lesssim 1 + \rho_A (x) + \rho_A(-y) \lesssim 1 + \rho_A(x)$ and hence
  $\bigl(1 + \rho_A(x-y)\bigr)^{-\beta} \gtrsim \bigl(1 + \rho_A(x)\bigr)^{-\beta}$
  for $x \in \R^d$ and $y \in K$.
  Therefore,
  \[
    \Indicator_K \ast \nu (x)
    = \int_{K}
        \bigl(1 + \rho_A (x-y)\bigr)^{-\beta}
      \, d y
    \gtrsim \int_K
              \bigl(1 + \rho_A(x)\bigr)^{-\beta}
            \, d y
    = \Lebesgue{K} \cdot \nu(x) ,
  \]
  which completes the proof.
\end{proof}

The following is a discrete counterpart of Theorem~\ref{thm:norm_equiv}
and will be used in Section~\ref{sec:peetre_discrete}.

\begin{theorem}\label{thm:maximal_sequence}
  Let $A \in \mathrm{GL}(d, \mathbb{R})$ be expansive and exponential.
  Then, for all $p \in (0, \infty)$, $q \in (0,\infty]$,
  $\alpha \in \mathbb{R}$ and $\beta > \max\{1/p,1/q\}$, the (quasi)-norm equivalence
  \begin{align*}
    &\| c \|_{\TLseqa}
     \asymp
    \\
    &\bigg\|
               \bigg(
                 \int_{\mathbb{R}}
                 \bigg( \!
                   \esssup_{z \in \mathbb{R}^d}
                     \frac{|\det A|^{-(\alpha + 1/2) s}}
                          {(1+\rho_A (A^{-s} z))^{\beta}}
                     \sum_{\ell \in \mathbb{Z}, k \in \mathbb{Z}^d} \!\!
                       |c_{\ell, k}| \,
                       \mathds{1}_{A^{-\ell} ([-1,1)^d + k)} (\cdot +z) \,
                       \mathds{1}_{-\ell + [-1, 1)} (s) \!
                 \bigg)^{\!q} ds
               \bigg)^{\!\frac{1}{q}}
             \bigg\|_{L^p}
  \end{align*}
  holds for all
  \(
    c
    = (c_{\ell, k})_{\ell \in \mathbb{Z}, k \in \mathbb{Z}^d}
    \in \mathbb{C}^{\mathbb{Z} \times \mathbb{Z}^d}
    ,
  \)
  with the usual modifications for $q = \infty$.
\end{theorem}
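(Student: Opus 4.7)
\emph{Plan.} The argument mirrors the proof of Theorem~\ref{thm:norm_equiv} but is technically simpler: no Calder\'on reproducing formula is required, only the Fefferman--Stein inequality (Theorem~\ref{thm:fefferman-stein}), the dilation equivariance of $M_{\rho_A}$ (Lemma~\ref{lem:maximal_dilation}), and a pointwise estimate in the spirit of Lemma~\ref{lem:pointwise_indicator}. The key structural observation is that $\mathds{1}_{-\ell+[-1,1)}(s)\neq 0$ forces $|s+\ell|<1$; hence for each continuous scale $s\in\R$ the inner sum is supported on at most two values of $\ell\in\Z$, and by Corollary~\ref{cor:quasi-norm_bound} the quantities $|\det A|^{-(\alpha+1/2)s}$ and $1+\rho_A(A^{-s}z)$ are uniformly comparable to $|\det A|^{(\alpha+1/2)\ell}$ and $1+\rho_A(A^{\ell}z)$, respectively, for all such $\ell$.

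\emph{Lower bound.} To prove $\|c\|_{\TLseqa}\lesssim\text{RHS}$, set $z=0$ in the essential supremum and, for each $\ell_0\in\Z$, restrict the $s$-integral to $s\in -\ell_0+[0,1)$, on which $\ell_0\in I(s):=\{\ell\in\Z:s+\ell\in[-1,1)\}$ so that the $\ell=\ell_0$ contribution is present. The integrand is then $\gtrsim |\det A|^{(\alpha+1/2)\ell_0 q}\bigl(\sum_k|c_{\ell_0,k}|\mathds{1}_{A^{-\ell_0}([-1,1)^d+k)}(x)\bigr)^q$. Because the intervals $-\ell_0+[0,1)$ tile $\R$ disjointly and $\mathds{1}_{A^{-\ell_0}([0,1)^d+k)}\leq \mathds{1}_{A^{-\ell_0}([-1,1)^d+k)}$, summing over $\ell_0$ and taking the $L^p$-norm gives the required bound.

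\emph{Upper bound.} At fixed $s$ and $\ell\in I(s)$, the substitution $y=x+z$ together with the boundedness of $\rho_A$ on $[-1,1)^d$ reduces the inner Peetre-type expression to a quantity $\asymp |\det A|^{(\alpha+1/2)\ell}\sup_{k\in\Z^d}|c_{\ell,k}|/(1+\rho_A(k-A^\ell x))^\beta$. An elementary ball-averaging (using $|B_{\rho_A}(y,r)|\asymp r$ and that a $\rho_A$-ball centered at $y$ of radius comparable to $1+\rho_A(k-y)$ contains the unit cube $[-1,1)^d+k$) bounds this discrete Peetre supremum by $[M_{\rho_A}G_\ell(A^\ell x)]^\beta$, where $G_\ell:=\sum_k|c_{\ell,k}|^{1/\beta}\mathds{1}_{[-1,1)^d+k}$; Lemma~\ref{lem:maximal_dilation} then rewrites this as $[M_{\rho_A}h_\ell(x)]^\beta$ with $h_\ell:=|\det A|^{(\alpha+1/2)\ell/\beta}\sum_k|c_{\ell,k}|^{1/\beta}\mathds{1}_{A^{-\ell}([-1,1)^d+k)}$. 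Integrating the $q$-th power in $s$ (each $\ell$ contributes on an interval of length~$2$) and applying Theorem~\ref{thm:fefferman-stein} at exponents $\beta p,\beta q>1$ (legitimate since $\beta>\max\{1/p,1/q\}$) yields $\text{RHS}\lesssim\|(\sum_\ell h_\ell^{\beta q})^{1/q}\|_{L^p}$. The bounded overlap (at most $2^d$) of the cubes $[-1,1)^d+k$ gives the pointwise estimate $h_\ell^{\beta q}\lesssim |\det A|^{(\alpha+1/2)\ell q}\sum_k|c_{\ell,k}|^q\mathds{1}_{A^{-\ell}([-1,1)^d+k)}$.

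\emph{Reduction to $[0,1)^d$ and main obstacle.} The disjoint partition $[-1,1)^d=\bigsqcup_{j\in\{-1,0\}^d}([0,1)^d+j)$, together with the index shift $k\mapsto k-j$, expresses the last bound as a finite sum of ``shifted'' $\TLseqa$-quasi-norms of the sequences $c^{(j)}_{\ell,k}:=c_{\ell,k-j}$, $j\in\{-1,0\}^d$. The main technical step is then the discrete shift-invariance $\|c^{(j)}\|_{\TLseqa}\lesssim \|c\|_{\TLseqa}$: setting $F_\ell(x):=|\det A|^{(\alpha+1/2)\ell}|c_{\ell,\lfloor A^\ell x\rfloor}|$ (so that $\|c\|_{\TLseqa}=\|(\sum_\ell F_\ell^q)^{1/q}\|_{L^p}$) and using $\lfloor A^\ell x\rfloor-j=\lfloor A^\ell(x-A^{-\ell}j)\rfloor$, the shift translates $x$ by $A^{-\ell}j$, an amount comparable in $\rho_A$-norm to the scale $|\det A|^{-\ell}$ of the constancy cubes of $F_\ell$; a one-ball averaging gives $F_\ell(x-A^{-\ell}j)\lesssim[M_{\rho_A}(F_\ell^{1/\beta})(x)]^\beta$, and a second application of Theorem~\ref{thm:fefferman-stein} (again using $\beta p,\beta q>1$) concludes. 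The case $q=\infty$ is handled throughout by the standard modification, replacing integrals/sums by essential suprema.
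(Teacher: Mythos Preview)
Your argument is correct and complete; the overall strategy (reduce the continuous $s$-integral to at most two $\ell$'s, bound the Peetre-type supremum by a Hardy--Littlewood maximal function, then invoke Fefferman--Stein at exponents $\beta p,\beta q>1$) matches the paper's. The execution of the key step, however, is genuinely different.

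The paper exploits that $\mathds{1}^r=\mathds{1}$ to introduce a flexible exponent $r\in(0,\beta)$, then uses Lemma~\ref{lem:pointwise_indicator} together with the majorant property (Lemma~\ref{lem:majorant}) to bound $\esssup_z\,\mathds{1}_{Q_{j,k}}(\cdot+z)/(1+\rho_A(A^j z))^{\beta/r}$ directly by $M_{\rho_A}\mathds{1}_{P_{j,k}}$, i.e.\ by the maximal function of the \emph{small} cube $[0,1)^d$. A single application of Fefferman--Stein then yields $\|c\|_{\TLseqa}$ immediately. By contrast, you use an elementary ball-averaging estimate (a discrete Peetre inequality) to reach $[M_{\rho_A}G_\ell]^\beta$ with $G_\ell$ built from the \emph{enlarged} cubes $[-1,1)^d+k$; after Fefferman--Stein you are therefore left with the norm over $Q_{\ell,k}$ rather than $P_{\ell,k}$, and you close the gap via a finite shift decomposition and the shift-invariance $\|c^{(j)}\|_{\TLseqa}\lesssim\|c\|_{\TLseqa}$, itself proved by a second pass through Fefferman--Stein. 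Your route avoids Lemmas~\ref{lem:majorant} and \ref{lem:pointwise_indicator} entirely and is in that sense more self-contained, at the price of two maximal inequalities instead of one and the additional shift-invariance lemma.

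One small technical point: ``set $z=0$ in the essential supremum'' is not literally admissible since a single point is null. The paper handles this via Lemma~\ref{lem:GroupMaximalProperties}, which gives $|F(x,s)|\lesssim \esssup_z |F(x+z,s)|/(1+\rho_A(A^{-s}z))^\beta$ pointwise a.e.; you should invoke that lemma (or the lower semicontinuity of the integrand) rather than setting $z=0$.
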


\begin{proof}
  We only prove the case $q < \infty$;
  the case $q = \infty$ can be proven by the usual modifications.
  For $\ell \in \mathbb{Z}$ and $k \in \mathbb{Z}^d$,
  define $Q_{\ell,k} := A^{-\ell} ([-1,1)^d + k)$ and $P_{\ell,k} := A^{-\ell} ([0,1)^d + k)$.
  Given $c = (c_{\ell,k})_{\ell \in \Z, k \in \Z^d} \in \CC^{\Z \times \Z^d}$,
  let $F : \mathbb{R}^d \times \mathbb{R} \to [0,\infty]$ be defined by
  \[
    F(x,s)
    := \sum_{\ell \in \mathbb{Z}, k \in \mathbb{Z}^d}
         |c_{\ell, k}| \,
         \mathds{1}_{Q_{\ell, k}} (x) \,
         \mathds{1}_{-\ell + [-1, 1)} (s),
    \quad (x,s) \in \mathbb{R}^d \times \mathbb{R}.
  \]
  Then we can re-write
  \begin{align*}
    I
    & := \int_{\mathbb{R}}
           \bigg(
             \esssup_{z \in \mathbb{R}^d}
               \frac{|\det A|^{-(\alpha + 1/2) s}}
                    {(1+\rho_A (A^{-s} z))^{\beta}}
               \sum_{\ell \in \mathbb{Z}, k \in \mathbb{Z}^d}
                 |c_{\ell, k}|
                 \mathds{1}_{A^{-\ell} ([-1,1)^d + k)} (\cdot +z)
                 \mathds{1}_{-\ell + [-1, 1)} (s)
           \bigg)^q
         ds \\
    & = \int_{\mathbb{R}}
          \bigg(
            |\det A|^{-(\alpha + 1/2)s } \esssup_{z \in \mathbb{R}^d}
            \frac{|F(\cdot +z, s)|}{(1+\rho_A (A^{-s} z))^{\beta}}
          \bigg)^q
        ds \\
    & = \sum_{j \in \mathbb{Z}}
          \int_{(0,1)}
            \bigg(
              |\det A|^{(\alpha + 1/2) (j+t)} \esssup_{z \in \mathbb{R}^d}
              \frac{|F(\cdot +z, -(j+t))|}{(1+\rho_A (A^{j+t} z))^{\beta}}
            \bigg)^q
          dt .
      \numberthis
      \label{eq:equality_periodizing}
  \end{align*}
  Note that for $j \in \mathbb{Z}$ and $t \in (0,1)$, we have
  \(
    F(x+z,-( j+t))
    \leq \sum_{\ell=j}^{j+1}
           \sum_{k \in \mathbb{Z}^d}
             |c_{\ell,k} | \mathds{1}_{Q_{\ell,k}} (x+z)
  \)
  for $x,z \in \mathbb{R}^d$.
  Moreover, for fixed $j \in \Z$, each $y \in \R^d$
  belongs to at most a fixed number of sets from the family $(Q_{j,k})_{k \in \Z^d}$; thus,
  \begin{equation}
    \sum_{k \in \Z^{d}}
      |c_{j,k}| \, \Indicator_{P_{j,k}}(x+z)
    \lesssim \bigl| F(x+z, -( j+t)) \bigr|^q
    \lesssim \sum_{\ell=j}^{j+1}
               \sum_{k \in \mathbb{Z}^d}
                 |c_{\ell,k} |^q \; \mathds{1}_{Q_{\ell,k}} (x+z).
    \label{eq:SequenceSpaceEquivalentNormFEstimate}
  \end{equation}
  Therefore,
  \begin{align*}
    I
    & \lesssim \sum_{m=0}^1
                 \sum_{j \in \mathbb{Z}}
                   \int_{(0,1)}
                     |\det A|^{(\alpha + 1/2) (j+m-m+t)q}
                     \esssup_{z \in \mathbb{R}^d}
                       \frac{
                              \sum_{k \in \mathbb{Z}^d}
                                |c_{j+m,k} |^q \mathds{1}_{Q_{j+m,k}} (\cdot+z)
                            }
                            {(1+\rho_A (A^{j+m-m+t} z))^{\beta q}}
                   \, d t\\
    &
    \lesssim \sum_{\ell \in \mathbb{Z}}
               |\det A|^{(\alpha + 1/2) \ell q}
               \esssup_{z \in \mathbb{R}^d}
                 \frac{ \sum_{k \in \mathbb{Z}^d} |c_{\ell,k} |^q \mathds{1}_{Q_{\ell,k}} (\cdot+z)}
                      {(1+\rho_A (A^{\ell} z))^{\beta q}} ,
   \numberthis \label{eq:withoutR}
  \end{align*}
  where the last step follows by using Corollary~\ref{cor:quasi-norm_bound}
  and noting that $|\det A|^{t-m}, |\det A|^{m-t} \lesssim 1$,
  with implicit constants independent of $t \in (0,1)$ and $m \in \{ 0,1 \}$.

  Next, since $\beta \min \{p,q\} > 1$, we can choose $r \in (0,\beta)$
  such that $r \, \min\{p,q\} > 1$, and estimate
  \begin{align*}
    I
    \lesssim \sum_{j \in \mathbb{Z}}
               \sum_{k \in \mathbb{Z}^d}
                 |\det A|^{(\alpha + 1/2) j q} \,
                 |c_{j,k} |^q \,
                 \bigg(
                   \esssup_{z \in \mathbb{R}^d}
                     \frac{ \mathds{1}_{Q_{j,k}} (\cdot+z)}
                          {(1+\rho_A (A^j z))^{\beta / r}}
                 \bigg)^{q r} .
  \numberthis \label{eq:withoutMaximal}
  \end{align*}

  To estimate \eqref{eq:withoutMaximal} further,
  note that $x + z \in Q_{j,k}$ for $x \in \mathbb{R}^d$,
  implies  $A^j(x+z) - k \in [-1,1]^d$, hence
  \begin{align*}
    1 + \rho_A (A^j x - k)
    & = 1 + \rho_A \bigl(A^j x + A^j z - k + (-A^j z)\bigr) \\
    & \lesssim (1 + \rho_A(A^j (x + z) - k))  (1 + \rho_A (-A^j z)) \\
    &  \lesssim 1 + \rho_A (A^j z) .
  \end{align*}
  Therefore, for arbitrary $x \in \mathbb{R}^d$,
  \begin{align*}
    \esssup_{z \in \mathbb{R}^d}
      \frac{ \mathds{1}_{Q_{j,k}} (x+z)}
           {(1+\rho_A (A^{j} z))^{\beta / r}}
    &\lesssim \frac{1}{(1+\rho_A (A^j x - k))^{\beta / r}} \\
   & \lesssim \bigg(
               \mathds{1}_{P_{j, k}} \ast \frac{|\det A|^j}{(1+\rho_A (A^j \cdot))^{\beta / r}}
             \bigg) (x), \numberthis \label{eq:upper_convolution}
  \end{align*}
  where the last inequality follows from Lemma~\ref{lem:pointwise_indicator}.
  The function $g_j := |\det A|^j  (1+\rho_A (A^j \cdot))^{-\beta / r}$ is in $L^1 (\mathbb{R}^d)$
  by Lemma~\ref{lem:QuasiNormIntegrability}.
  Moreover, we have $\| g_j \| = \|g_0 \|_{L^1}$ for every $j \in \mathbb{Z}$.
  Therefore, noting that $g_j(x) = |\det A|^j  (1 + |\det A|^j \, \rho_A(x))^{-\beta / r}$
  and applying the majorant property of the Hardy-Littlewood maximal function
  (see Lemma~\ref{lem:majorant}) to the right-hand side of \eqref{eq:upper_convolution} gives
  \begin{align}\label{eq:majorant_maximal}
    \esssup_{z \in \mathbb{R}^d}
      \frac{\mathds{1}_{Q_{j,k}} (x+z)}
           {(1+\rho_A (A^{j} z))^{\beta / r}}
    \lesssim M_{\rho_A} \mathds{1}_{P_{j,k}} (x),
    \quad x \in \mathbb{R}^d.
  \end{align}

  Combining \eqref{eq:withoutMaximal} and \eqref{eq:majorant_maximal} yields
  \begin{align*}
    I
    & \lesssim \sum_{j \in \mathbb{Z}}
                 \sum_{k \in \mathbb{Z}^d}
                   |\det A|^{(\alpha + 1/2) j q}
                    |c_{j,k} |^q
                    \Big( M_{\rho_A} \mathds{1}_{P_{j,k}} (\cdot) \Big)^{q r} \\
    & =        \sum_{j \in \mathbb{Z}}
                 \sum_{k \in \mathbb{Z}^d}
                   \bigg(
                     M_{\rho_A}
                     \Big[
                       |\det A|^{(\alpha + 1/2) j / r}
                        |c_{j,k} |^{1/r}
                        \mathds{1}_{P_{j,k}}
                     \Big] (\cdot)
                   \bigg)^{q r}.
  \end{align*}
  This, together with an application of the Fefferman-Stein inequality
  of Theorem~\ref{thm:fefferman-stein}, gives
  \begin{align*}
    \big\| I^{1/q} \big\|_{L^p}
    &\lesssim \bigg\|
                \bigg(
                  \sum_{j \in \mathbb{Z}}
                    \sum_{k \in \mathbb{Z}^d}
                       \bigg(
                         M_{\rho_A}
                         \big[
                           |\det A|^{(\alpha + 1/2) j / r}
                           |c_{j,k} |^{1/r}
                           \mathds{1}_{P_{j,k}}
                         \big] (\cdot)
                       \bigg)^{q r}
                \bigg)^{1/q}
              \bigg\|_{L^p} \\
    &= \bigg\|
         \bigg(
           \sum_{j \in \mathbb{Z}}
             \sum_{k \in \mathbb{Z}^d}
               \bigg(
                 M_{\rho_A}
                 \big[
                   |\det A|^{(\alpha + 1/2) j / r} \,
                   |c_{j,k} |^{1/r} \,
                   \mathds{1}_{P_{j,k}}
                 \big] (\cdot)
               \bigg)^{q r}
         \bigg)^{1 /(r q)}
       \bigg\|^{r}_{L^{p r}} \\
    & \lesssim \bigg\|
                 \bigg(
                   \sum_{j \in \mathbb{Z}} \sum_{k \in \mathbb{Z}^d}
                   \big(
                     |\det A|^{(\alpha + 1/2) j } |c_{j,k} | \mathds{1}_{P_{j,k}}  (\cdot)
                   \big)^{q}
                 \bigg)^{1/q}
               \bigg\|_{L^p}
      =        \| c \|_{\TLseqa}.
  \end{align*}

  The reverse estimate follows easily by combining the lower bound
  \[
    F(x,s)
    \lesssim \esssup_{z \in \mathbb{R}^d}
               \frac{|F(x+z, s)|}
                    {(1+\rho_A (A^{-s} z))^{\beta}},
    \quad (x, s) \in \mathbb{R}^d \times \mathbb{R},
  \]
  (see Lemma~\ref{lem:GroupMaximalProperties})
  with \eqref{eq:equality_periodizing} and \eqref{eq:SequenceSpaceEquivalentNormFEstimate}.
\end{proof}

\section{Admissible Schwartz functions and wavelet coefficient decay}
\label{sec:admissible}

Let $A \in \mathrm{GL}(d, \mathbb{R})$ be an exponential matrix.
Define the associated semi-direct product
\begin{equation}
  G_A = \mathbb{R}^d \rtimes_A \mathbb{R}
      = \{
          (x, s)
          :
          x \in \mathbb{R}^d, s \in \mathbb{R}
        \}
  \label{eq:SemidirectProductDefinition}
\end{equation}
with multiplication $(x,s) (y,t) = (x + A^sy, s+t)$ and inversion $(x,s)^{-1} = (-A^{-s} x, -s)$.
Left Haar measure on $G_A$ is given by $d \mu_{G_A} (x,s) = |\det A|^{-s} ds dx$,
and the modular function on $G_A$ is $\Delta_{G_A} (x,s) = |\det A|^{-s}$.
To ease notation, we often simply write $\mu := \mu_{G_A}$.

For $p \in (0, \infty)$, the Lebesgue space on $G_A$
is denoted by $L^p (G_A) = L^p (G_A, \mu_{G_A})$.
The left and right translation by $h \in G_A$ of a function $F : G_A \to \mathbb{C}$ are defined by
\[
  L_h F = F(h^{-1} \cdot)
  \qquad \text{and} \qquad
  R_h F = F (\cdot \, h)
\]
respectively.

\subsection{Admissible vectors}
\label{sub:AdmissibleVectors}

The \emph{quasi-regular representation} $(\pi, L^2 (\mathbb{R}^d))$ of
$G_A = \mathbb{R}^d \rtimes_A \mathbb{R}$ is given by
\[
  \pi(x,s) f = |\det A|^{-s/2} f (A^{-s} (\, \cdot \, - x)),
  \quad f \in L^2 (\mathbb{R}^d).
\]
For fixed $\psi \in L^2 (\mathbb{R}^d)$, the associated \emph{wavelet transform} is defined as
\[
  W_{\psi} : \quad
  L^2 (\mathbb{R}^d) \to L^{\infty} (G_A), \quad W_{\psi} f
  (x,s) = \langle f, \pi(x,s) \psi \rangle,
  \quad (x,s) \in G_A ,
\]
and $\psi$ is \emph{admissible} if $W_{\psi}$ defines an isometry into $L^2 (G_A)$.
This implies ${W_\psi^\ast W_\psi = \mathrm{id}_{L^2(\R^d)}}$, which gives rise
to the \emph{reconstruction formula}
\begin{equation}
  f
  = W_\psi^\ast W_\psi f
  = \int_{G_A}
      W_\psi f (g)
      \pi(g) \psi
    \, d \mu_{G_A} (g),
  \qquad  \, f \in L^2(\R^d) ,
  \label{eq:ReconstructionFormula}
\end{equation}
with the integral interpreted in the weak sense.
Furthermore, the \emph{reproducing formula}
\begin{equation}
  \label{eq:reproducing-formula-L2}
  W_{\varphi}f = W_{\psi}f \ast W_{\varphi}\psi, \quad f,\varphi \in L^2(\R^d)
\end{equation}
follows directly from the isometry of $W_\psi$ and the intertwining
property $W_\psi [\pi(g) f] = L_g [W_\psi f]$.

Admissibility of a vector can be conveniently characterized in terms of its Fourier transform,
see, e.g., \cite[Theorem~1.1]{laugesen2002characterization}
and \cite[Theorem~1]{fuehr2002continuous}.

\begin{lemma}[{\cite{laugesen2002characterization,fuehr2002continuous}}]
  \label{lem:AdmissibilityFourierCharacterization}
  A vector $\psi \in L^2 (\mathbb{R}^d)$ is admissible if, and only if,
  \begin{equation}
    \label{eq:admissible-vector}
    \int_{\mathbb{R}} \bigl|\widehat{\psi} ((A^*)^s \xi )\bigr|^2 ds = 1,
    \quad \text{a.e.} \; \xi \in \mathbb{R}^d.
  \end{equation}
\end{lemma}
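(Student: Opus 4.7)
The plan is to verify the admissibility characterization via an explicit Plancherel computation of $\|W_\psi f\|_{L^2(G_A)}^2$, reducing the isometry condition to a pointwise condition on the Calderón-type integral $\int_\mathbb{R} |\widehat\psi((A^\ast)^s \xi)|^2 \, ds$.

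\medskip

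First, I would rewrite $W_\psi f(x,s)$ as a convolution in the $x$-variable for each fixed $s \in \mathbb{R}$. Setting $\psi_s(y) := |\det A|^{-s/2} \psi(A^{-s} y)$, the definition of $\pi$ gives $\pi(x,s)\psi = T_x \psi_s$, and hence
\[
  W_\psi f(x,s)
  = \langle f, T_x \psi_s \rangle
  = \bigl(f \ast \overline{\psi_s^\vee}\bigr)(x),
\]
where $\psi_s^\vee(y) = \psi_s(-y)$. The Fourier transform of $\overline{\psi_s^\vee}$ equals $\overline{\widehat{\psi_s}}$, and a routine change of variables yields $\widehat{\psi_s}(\xi) = |\det A|^{s/2} \, \widehat\psi((A^\ast)^s \xi)$.

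\medskip

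Next, I would apply Plancherel's theorem in the $x$-variable for each fixed $s$:
\[
  \int_{\mathbb{R}^d} |W_\psi f(x,s)|^2 \, dx
  = \int_{\mathbb{R}^d} |\widehat{f}(\xi)|^2 \, |\widehat{\psi_s}(\xi)|^2 \, d\xi
  = |\det A|^{s} \int_{\mathbb{R}^d} |\widehat{f}(\xi)|^2 \, \bigl|\widehat\psi((A^\ast)^s \xi)\bigr|^2 \, d\xi.
\]
Integrating against the left Haar measure $d\mu_{G_A}(x,s) = |\det A|^{-s} \, ds \, dx$ produces the key cancellation of the factor $|\det A|^s$. Fubini's theorem (which is justified since all integrands are non-negative) then yields
\[
  \|W_\psi f\|_{L^2(G_A)}^2
  = \int_{\mathbb{R}^d} |\widehat{f}(\xi)|^2 \, \Psi(\xi) \, d\xi,
  \qquad
  \Psi(\xi) := \int_\mathbb{R} \bigl|\widehat\psi((A^\ast)^s \xi)\bigr|^2 \, ds.
\]

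\medskip

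Finally, I would conclude via a standard density argument. If $\Psi(\xi) = 1$ for a.e. $\xi$, then Plancherel's theorem immediately gives $\|W_\psi f\|_{L^2(G_A)}^2 = \|\widehat{f}\|_{L^2}^2 = \|f\|_{L^2}^2$, so $\psi$ is admissible. Conversely, if $W_\psi$ is an isometry, then $\int |\widehat f(\xi)|^2 (\Psi(\xi) - 1) \, d\xi = 0$ for every $f \in L^2(\mathbb{R}^d)$; since $\{\widehat f : f \in L^2\} = L^2(\mathbb{R}^d)$ and $|\widehat f|^2$ can approximate any nonnegative integrable function (e.g., by taking $\widehat f = \Indicator_E$ for bounded measurable sets $E$), this forces $\Psi = 1$ almost everywhere.

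\medskip

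There is no real obstacle here beyond careful bookkeeping of the various factors of $|\det A|^s$ coming from the $L^2$-normalized dilation, the Fourier transform, and the Haar measure; the delicate cancellation between these is the heart of why the Calderón admissibility condition is exactly $\int_\mathbb{R} |\widehat\psi((A^\ast)^s \xi)|^2 \, ds = 1$.
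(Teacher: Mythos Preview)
Your argument is correct and is the standard Plancherel computation for this result. Note that the paper does not give its own proof of this lemma; it simply cites \cite{laugesen2002characterization,fuehr2002continuous}, where exactly this type of argument appears. Your write-up matches the expected derivation, including the cancellation of the $|\det A|^s$ factors between the dilation normalization and the Haar measure, and the converse via testing against $\widehat{f} = \Indicator_E$.
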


The significance of $A$ being expansive is that this guarantees the existence of admissible
vectors with convenient additional properties:

\begin{theorem}[\cite{groechenig1992compact,kaniuth1996minimal,currey2016integrable, bownik2003anisotropic}]
  \label{thm:AdmissibleVectorsExistence}
  Let $A \in \mathrm{GL}(d, \mathbb{R})$ be an exponential matrix.
  Then the following assertions are equivalent:
  \begin{enumerate}[(i)]
    \item Either $A$ or $A^{-1}$ is expansive.

    \item There exists an admissible vector $\psi \in L^2 (\mathbb{R}^d)$ such that
          $\widehat{\psi} \in C_c^{\infty} (\mathbb{R}^d)$.
  \end{enumerate}
  If $A$ is expansive, there exists an admissible $\varphi \in \Schwartz(\R^d)$
  satisfying $\widehat{\varphi} \in C_c^\infty(\R^d \setminus \{ 0 \})$.
  In addition, it can be chosen to satisfy the support condition \eqref{eq:analyzing_positive}.
\end{theorem}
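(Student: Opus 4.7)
The plan is to handle the equivalence by citation in the direction that relies on deep structural results, and by explicit construction in the direction that produces the vector with the required properties; the additional assertion then falls out of the construction.

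For $(ii) \Rightarrow (i)$, if $\widehat{\psi} \in C_c^\infty(\R^d)$ satisfies the admissibility integral \eqref{eq:admissible-vector} of \Cref{lem:AdmissibilityFourierCharacterization}, then for a.e.\ $\xi \neq 0$ the continuous orbit $\{(A^*)^s \xi : s \in \R\}$ must traverse the compact set $\supp \widehat{\psi}$ on a set of finite, nonzero $s$-measure, which forces $\|(A^*)^s \xi\|$ to escape both $0$ and $\infty$ as $s \to \pm\infty$. This rules out $A^*$ having eigenvalues on the unit circle, and the full conclusion that either $A$ or $A^{-1}$ has all eigenvalues of modulus strictly greater than one is established in \cite{laugesen2002characterization,kaniuth1996minimal,currey2016integrable,bownik2003anisotropic}; I would cite these references for this direction.

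For $(i) \Rightarrow (ii)$ together with the additional claim, assume $A$ is expansive, so that $A^*$ is expansive and exponential as well. Using the ellipsoid $\Omega_{A^*}$ from \Cref{lem:QuasiNorm} applied to $A^*$, I would fix a nonnegative $\eta \in C_c^\infty(\R^d \setminus \{0\})$ that is identically $1$ on the fundamental annulus $\mathcal{A} := (A^*)\Omega_{A^*} \setminus \Omega_{A^*}$ and supported in the slightly larger annulus $(A^*)^2 \Omega_{A^*} \setminus (A^*)^{-1} \Omega_{A^*}$. By the very definition of the step quasi-norm $\rho_{A^*}$, each $\xi \neq 0$ admits a unique $j \in \Z$ with $(A^*)^{-j} \xi \in \mathcal{A}$, so the integer orbit $\{(A^*)^j \xi : j \in \Z\}$ always meets $\{\eta = 1\}$. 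Setting
\[
  \Phi(\xi) := \int_\R \bigl|\eta((A^*)^s \xi)\bigr|^2 \, ds,
  \qquad \xi \in \R^d \setminus \{0\},
\]
one verifies that $\Phi$ is finite (the integrand has bounded $s$-support by expansiveness and \Cref{lem:expansive_cont_powers}), smooth (by differentiation under the integral), strictly positive (by continuity combined with the integer-orbit property), and $A^*$-invariant via the substitution $s \mapsto s - t$, which gives $\Phi((A^*)^t \xi) = \Phi(\xi)$ for every $t \in \R$. Defining $\widehat{\varphi} := \eta/\sqrt{\Phi}$ on $\R^d \setminus \{0\}$ and $\widehat{\varphi}(0) := 0$ yields $\widehat{\varphi} \in C_c^\infty(\R^d)$ with $\supp \widehat{\varphi} \subset \R^d \setminus \{0\}$; the admissibility integral collapses to $\Phi(\xi)^{-1} \Phi(\xi) = 1$ via the $A^*$-invariance of $\Phi$, so $\varphi$ is admissible by \Cref{lem:AdmissibilityFourierCharacterization}; and condition \eqref{eq:analyzing_positive} is automatic, since the integer orbit meets $\mathcal{A} \subset \{\widehat{\varphi} > 0\}$.

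The main technical obstacle is arranging the support of $\eta$ so that the \emph{integer} orbit, and not merely the continuous one, enters $\{\eta > 0\}$; otherwise \eqref{eq:analyzing_positive} could fail even when the admissibility integral is correct. This is resolved precisely by aligning $\supp \eta$ with a tiling annulus of $\rho_{A^*}$, which in turn requires the ellipsoid calibration of \Cref{lem:QuasiNorm}. Once this alignment is in place, everything else reduces to routine manipulations with the change of variables $s \mapsto s - t$ and standard differentiation under the integral.
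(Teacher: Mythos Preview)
Your proof is correct. The paper's own argument is entirely by citation---to \cite{groechenig1992compact,kaniuth1996minimal} for the equivalence and to \cite{currey2016integrable,bownik2003anisotropic} for the final claim---so you have in effect unpacked those citations. The construction you give for $(i)\Rightarrow(ii)$---choosing a bump $\eta$ adapted to a tiling annulus for $A^*$ and normalizing by the square root of the continuous orbit integral $\Phi$---is precisely the standard one appearing in those references, so in substance the approaches coincide. One small omission: your $(i)\Rightarrow(ii)$ paragraph treats only the case that $A$ itself is expansive, whereas $(i)$ allows $A^{-1}$ expansive; this follows immediately from the case you handled via the isomorphism $(x,s)\mapsto(x,-s)$ of $G_A$ onto $G_{A^{-1}}$, which preserves admissibility.
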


\begin{proof}
  The claimed equivalence is proven in \cite{groechenig1992compact,kaniuth1996minimal},
  see also \cite[p.~319]{schulz2004projections}.
  The final claim easily follows from \cite[Proposition~10]{currey2016integrable} or \cite[Chapter II, Theorem 4.2]{bownik2003anisotropic} and their proofs.
\end{proof}

In the sequel, a matrix $A \in \mathrm{GL}(d, \mathbb{R})$ will be assumed
to be expansive and exponential.

\subsection{Decay estimates}

This section concerns decay properties of the wavelet transform.
The derived decay estimates will play an important role in the subsequent sections,
but are also of independent interest.

We recall the following Fr\'echet space of Schwartz functions
with all moments vanishing.

\begin{definition}\label{def:VanishingMomentSchwartz}
  Let $\mathcal{S}_0 (\mathbb{R}^d)$ denote the space of all
  $\varphi \in \mathcal{S} (\mathbb{R}^d)$ satisfying
  \[
    \int_{\mathbb{R}^d} \varphi (x) x^{\alpha} dx = 0
  \]
  for all multi-indices $\alpha \in \mathbb{N}_0^d$.
  The space $\mathcal{S}_0 (\mathbb{R}^d)$ will be equipped with the subspace topology
  coming from $\Schwartz(\R^d)$.
  Its (topological) dual space will be denoted by $\mathcal{S}_0' (\mathbb{R}^d)$.
\end{definition}

The dual space $\mathcal{S}_0' (\mathbb{R}^d)$ can be identified with $\SP$;
see, e.g.,~\cite[Proposition~1.1.3]{grafakos2014modern}.

The following lemma will be helpful in establishing decay of the wavelet transform.
It is a generalization to the anisotropic setting of a well-known estimate,
see, e.g.,~\mbox{\cite[Appendix~B.1]{grafakos2014modern}}.

\begin{lemma}\label{lem:GrafakosStyleEstimate}
  If $s \geq 0$ and $L > 1$, then
  \[
    \int_{\R^d}
      \bigl(1 + \rho_A(y)\bigr)^{-L}
      \bigl(1 + \rho_A (A^{-s} (y - x))\bigr)^{-L}
    \, d y
    \lesssim_{d, A, L} \bigl(1 + \rho_A (A^{-s} x)\bigr)^{-L}
  \]
   for all $x \in \R^d$.
\end{lemma}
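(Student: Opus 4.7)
The plan is to apply a standard Grafakos-style splitting argument, adapted to the anisotropic setting.  Set $R := \rho_A(A^{-s}x)$.  Applying the quasi-triangle inequality for $\rho_A$ to the identity $A^{-s}x = A^{-s}y - A^{-s}(y-x)$ gives
\[
  R
  \leq C \bigl( \rho_A(A^{-s}y) + \rho_A(A^{-s}(y-x)) \bigr),
\]
where $C \geq 1$ denotes the quasi-triangle constant.  This motivates the decomposition $\R^d = E_1 \cup E_2$ with $E_1 := \{y \in \R^d : \rho_A(A^{-s}(y-x)) \geq R/(2C) \}$ and $E_2 := \R^d \setminus E_1$.

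On $E_1$, one has the uniform estimate $(1+\rho_A(A^{-s}(y-x)))^{-L} \lesssim (1+R)^{-L}$.  Since $\int_{\R^d}(1+\rho_A(y))^{-L}\,dy < \infty$ for $L > 1$ by Lemma~\ref{lem:QuasiNormIntegrability}, the contribution from $E_1$ is controlled by $(1+R)^{-L}$, as required.

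On $E_2$, the defining condition $\rho_A(A^{-s}(y-x)) < R/(2C)$ combined with the quasi-triangle estimate above forces $\rho_A(A^{-s}y) > R/(2C)$.  Corollary~\ref{cor:quasi-norm_bound} (which yields $\rho_A(A^{-s}y) \asymp |\det A|^{-s}\rho_A(y)$) then gives $\rho_A(y) \gtrsim |\det A|^s R$ on $E_2$, so $(1+\rho_A(y))^{-L} \lesssim (1+|\det A|^s R)^{-L}$ uniformly on $E_2$.  Substituting $w = A^{-s}(y-x)$ (with $dy = |\det A|^s\,dw$), one obtains
\[
  \int_{E_2}(1+\rho_A(y))^{-L}(1+\rho_A(A^{-s}(y-x)))^{-L}\,dy
  \lesssim (1+|\det A|^s R)^{-L}\,|\det A|^s \int_{\{\rho_A(w) < R/(2C)\}}(1+\rho_A(w))^{-L}\,dw.
\]
The last integral is bounded above simultaneously by the finite constant $\int_{\R^d}(1+\rho_A(w))^{-L}\,dw$ and by the measure $|B_{\rho_A}(0, R/(2C))|$, which by the construction of $\rho_A$ is comparable to $R$; hence it is $\lesssim \min\{1, R\}$.

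It remains to verify $|\det A|^s (1+|\det A|^s R)^{-L}\min\{1, R\} \lesssim (1+R)^{-L}$.  For $R \geq 1$, this follows from $(1+|\det A|^s R)^{-L} \leq (|\det A|^s R)^{-L}$ combined with $|\det A|^{s(1-L)} \leq 1$ (using $s \geq 0$ and $L > 1$).  For $R < 1$, a sub-case distinction on whether $|\det A|^s R \leq 1$ or $|\det A|^s R > 1$ reduces the estimate to either $|\det A|^s R \leq 1$ or $(|\det A|^s R)^{1-L} \leq 1$; in both cases the bound is $\lesssim 1 \asymp (1+R)^{-L}$.  The main obstacle is not conceptual but lies in the careful book-keeping of this final case analysis, combining the small-scale asymptotic $|B_{\rho_A}(0,r)| \asymp r$ with the monotonicity $|\det A|^{s(1-L)} \leq 1$.
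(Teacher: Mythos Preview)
Your proof is correct and follows essentially the same Grafakos-style splitting as the paper's proof: your region $E_1$ coincides with the paper's set $V$, and on the complement both arguments deduce $\rho_A(y)\gtrsim |\det A|^s R$ and integrate the remaining factor after the substitution $w=A^{-s}(y-x)$. The only organizational difference is that the paper disposes of the case $R\leq 1$ trivially at the outset (the integral is then $\lesssim 1$) and works only with $R>1$, whereas you treat all $R$ uniformly by using the sharper ball-measure bound $\lesssim\min\{1,R\}$ on the restricted integral and deferring the case analysis on $R$ to the end.
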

\begin{proof}
  Since $L > 1$, an application of Lemma~\ref{lem:QuasiNormIntegrability} shows
  $\int_{\R^d} (1 + \rho_A (y))^{-L} \, d y \lesssim 1$.
  Therefore, if $\rho_A (A^{-s} x) \leq 1$, then
  \begin{align*}
     \int_{\R^d}
      \bigl(1 + \rho_A(y)\bigr)^{-L}
      \bigl(1 + \rho_A (A^{-s} (y - x))\bigr)^{-L}
    \, d y
    &\leq \int_{\R^d} (1 + \rho_A (y))^{-L} \, d y \\
    &\lesssim (1 + \rho_A (A^{-s} x))^{-L}.
  \end{align*}
  In the remainder of the proof, it may therefore be assumed that $\rho_A (A^{-s} x) > 1$.

  Let $C_1 \geq 1$ with $\rho_A (x + y) \leq C_1  (\rho_A (x) + \rho_A (y))$,
  and let $C_2 \geq 1$ denote the constant in \Cref{cor:quasi-norm_bound},
  so that $\rho_A(A^s x) \leq C_2 \, |\det A|^s \rho_A(x)$ for all $x,y \in \R^d$ and $s \in \R$.
  Define
  \[
    U
    :=
    \bigl\{
      y \in \R^d
      \, \colon
      \rho_A(y) \geq (2 C_1C_2)^{-1} |\det A|^s  \rho_A(A^{-s} x)
    \bigr\}
  \]
  and
  $
    V
    :=
    \bigl\{
      y \in \R^d
      \, \colon
      \rho_A (A^{-s} (y-x)) \geq \rho_A (A^{-s} x) / (2 C_1)
    \bigr\} .
  $
  Then $\R^d = U \cup V$; otherwise,
  \begin{align*}
    \rho_A (A^{-s} x)
    & \leq C_1  \big( \rho_A (A^{-s} (x-y)) + \rho_A (A^{-s} y) \big) \\
    & \leq C_1  \big( \rho_A (A^{-s} (x-y)) + C_2 \, |\det A|^{-s} \rho_A (y) \big) \\
    & <    C_1  \big(
                     \rho_A(A^{-s} x) / (2C_1)
                     + C_2 \, |\det A|^{-s} (2 C_1 C_2)^{-1} |\det A|^s \rho_A(A^{-s} x)
                   \big) \\
     &= \rho_A(A^{-s} x),
  \end{align*}
  for any $y \in \mathbb{R}^d \setminus (U \cup V)$.

  On the one hand, it follows by $\rho_A(A^{-s} x) \geq 1$ and a change-of-variable that
  \begin{align*}
    & \int_U
        \bigl(1 + \rho_A (y)\bigr)^{-L} \,
        \bigl(1 + \rho_A (A^{-s} (y - x))\bigr)^{-L}
      \, d y \\
    & \leq \frac{(2 C_1C_2)^L \cdot |\det A|^{-L s}}{\rho_A (A^{-s} x)^L}
           \int_{\R^d}
             \big( 1 + \rho_A (A^{-s} (y - x)) \big)^{-L}
           \, d y \\
    & \leq
           \frac{(4 C_1C_2)^L \, |\det A|^{-(L-1)s}}{(1 + \rho_A(A^{-s} x))^L}
           \int_{\R^d}
             (1 + \rho_A(z))^{-L}
           \, d z \\
    &  \lesssim \frac{1}{(1 + \rho_A(A^{-s} x))^L} ,
  \end{align*}
  where the last inequality uses Lemma~\ref{lem:QuasiNormIntegrability} and
  ${|\det A|^{-(L-1)s} \leq 1}$ since $L > 1$ and $s \geq 0$.
  On the other hand, if $y \in V$, then
  \(
    1 + \rho_A(A^{-s} (y - x))
    \geq (2C_1)^{-1}  (1 + \rho_A(A^{-s} x)) .
  \)
  Therefore,
  \begin{align*}
    & \int_V
        \bigl(1 + \rho_A(y)\bigr)^{-L}
        \bigl(1 + \rho_A (A^{-s} (y - x))\bigr)^{-L}
      \, d y
      \lesssim \frac{1}{(1 + \rho_A(A^{-s} x))^L}
        \end{align*}
  by Lemma~\ref{lem:QuasiNormIntegrability}.
  Combining these estimates yields the claim.
\end{proof}

\begin{lemma}\label{lem:DiagonalWaveletDecay}
  Let
  $f_1,f_2 \in L^2(\R^d)$.
  \begin{enumerate}[(i)]
  \item If $|f_i(\cdot)| \leq C_i  (1 + \rho_A(\cdot))^{-L}$ a.e. for some $L > 1$ and
        all $i \in \{ 1, 2 \}$, then
        \begin{equation}
          \label{eq:wavelet-decay-x}
          |W_{f_1} f_2 (x,s)|
          \lesssim C_1 C_2 \,
                   |\det A|^{-|s|/2}
                    \big(1 + \rho_A (A^{- \PosPart{s}} x)\big)^{-L}
        \end{equation}
        for all $s \in \R$, where the implied constant only depends on $d,L, A$.

  \item If $f_1 \in C^N(\R^d)$ satisfies $|\partial^\alpha f_1 (x)| \leq C_3$
        for all $\alpha \in \N_0^d$ such that $|\alpha| \leq N$, and
        \[
          \int_{\R^d} \|x\|^N \, |f_2(x)| \, d x \leq C_4,
          \quad \text{and} \quad
          \int_{\R^d} x^\alpha \, f_2(x) \, d x = 0
          \text{ for } |\alpha| < N,
        \]
        then
        \begin{equation}\label{eq:wavelet-decay-s}
          |W_{f_1}f_2 (x,s)| \lesssim C_3 C_4 \, |\det A|^{-s/2} \, \| A^{-s} \|_{\infty}^N,
        \end{equation}
        for all $s \in \R$, where the implied constant only depends on $d,N$.
  \end{enumerate}
\end{lemma}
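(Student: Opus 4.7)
The plan is to unfold the definition of the wavelet transform as
$$W_{f_1} f_2(x,s) = |\det A|^{-s/2} \int_{\R^d} f_2(y) \, \overline{f_1(A^{-s}(y-x))} \, dy$$
and then estimate this integral using the hypotheses appropriate to each part.

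For part (i), I would plug in the pointwise decay bounds on $|f_1|$ and $|f_2|$ to obtain
$$|W_{f_1} f_2(x,s)| \leq C_1 C_2 \, |\det A|^{-s/2} \int_{\R^d} \bigl(1+\rho_A(y)\bigr)^{-L} \bigl(1+\rho_A(A^{-s}(y-x))\bigr)^{-L} \, dy.$$
For $s \geq 0$, Lemma~\ref{lem:GrafakosStyleEstimate} controls this integral by $\bigl(1+\rho_A(A^{-s}x)\bigr)^{-L}$, matching the claim since $s^+ = s$ and $|s| = s$. For $s < 0$, I would verify the symmetry $\overline{W_{f_1} f_2(x,s)} = W_{f_2} f_1\bigl((x,s)^{-1}\bigr) = W_{f_2} f_1(-A^{-s}x,-s)$ by a direct change of variables, and then apply the just-proved $s \geq 0$ case to $W_{f_2} f_1(\cdot,-s)$. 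This produces the prefactor $|\det A|^{s/2} = |\det A|^{-|s|/2}$ and a decay term $\bigl(1+\rho_A(A^{s}(-A^{-s}x))\bigr)^{-L} = \bigl(1+\rho_A(x)\bigr)^{-L}$, which is exactly $\bigl(1+\rho_A(A^{-s^+}x)\bigr)^{-L}$ because $s^+ = 0$.

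For part (ii), I would combine the smoothness of $f_1$ with the moment conditions on $f_2$ via a Taylor expansion. Setting $g_{x,s}(y) := \overline{f_1(A^{-s}(y-x))}$, the chain rule gives the uniform bound $|\partial^\alpha_y g_{x,s}(y)| \lesssim C_3 \,\|A^{-s}\|_\infty^{|\alpha|}$ for $|\alpha| \leq N$. The key step is to Taylor expand $g_{x,s}$ around $y=0$ (not $y=x$) to order $N-1$: the resulting polynomial is a linear combination of monomials $y^\alpha$ with $|\alpha| < N$ whose coefficients depend on $(x,s)$, and this polynomial is annihilated by $f_2$ under the moment hypothesis. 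The integral form of Taylor's remainder combined with the uniform derivative bound yields $|g_{x,s}(y) - P_{N-1}(y)| \lesssim C_3 \,\|A^{-s}\|_\infty^N \,\|y\|^N$, so the hypothesis $\int \|y\|^N |f_2(y)| \, dy \leq C_4$ produces the target bound after multiplying by the prefactor $|\det A|^{-s/2}$.

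The main subtlety lies in the choice of expansion center in part (ii): expanding at $y=x$ would leave a remainder controlled by $\int \|y-x\|^N |f_2(y)| \, dy$, which depends on $x$ and is not bounded by $C_4$ uniformly. Expanding at $y=0$ pushes all $x$-dependence into the (vanishing) Taylor coefficients and leaves only $\|y\|^N$ in the remainder, matching the moment hypothesis exactly. In part~(i), the $s<0$ case similarly requires care in tracking how group inversion transforms the dilation powers so that the estimate reassembles into the symmetric form $|\det A|^{-|s|/2}\bigl(1+\rho_A(A^{-s^+}x)\bigr)^{-L}$.
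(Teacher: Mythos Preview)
Your proposal is correct and follows essentially the same approach as the paper. The only cosmetic difference is in part~(ii): the paper Taylor expands $f_1$ itself around the point $-A^{-s}x$ and then evaluates at $A^{-s}y$ (so the remainder is $\lesssim C_3\|A^{-s}y\|^N \leq C_3\|A^{-s}\|_\infty^N\|y\|^N$), whereas you expand the composed function $g_{x,s}$ around $y=0$ and extract $\|A^{-s}\|_\infty^N$ from the chain-rule derivative bounds---these are two bookkeepings of the same computation.
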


\begin{proof}
  (i)  $s \geq 0$, Lemma~\ref{lem:GrafakosStyleEstimate} implies
  \begin{align*}
    |W_{f_1} f_2(x,s)|
    & \leq C_1 C_2
           \int_{\R^d}
             (1 + \rho_A(y))^{-L} \,
             |\det A|^{-s/2} \,
             \big(1 + \rho_A(A^{-s} (y-x))\big)^{-L}
           \, d y \\
    & \lesssim C_1 C_2 \, |\det A|^{-s/2} \, \big(1 + \rho_A(A^{-s} x)\big)^{-L},
  \end{align*}
  as claimed.
  For $s \leq 0$, note that
  \begin{align*}
    \big| W_{f_1} f_2 (x, s) \big|
    & = \big| W_{f_2} f_1 (-A^{-s} x, - s) \big| \\
    & \lesssim C_1 C_2
                |\det A|^{-|-s|/2}
                \big( 1 + \rho_A(-A^{-\PosPart{(-s)}} A^{-s} x) \big)^{-L} \\
    & =        C_1 C_2
                |\det A|^{-|s|/2}
                \big( 1 + \rho_A (A^{-\PosPart{s}} x) \big)^{-L} .
  \end{align*}
  \medskip{}
  (ii) By Taylor's theorem, there exists a polynomial $P_x$ of degree $N-1$
  that satisfies
  \[ |f_1(x + z) - P_x(z)| \lesssim C_3  \|z\|^N \quad \text{for all} \quad z \in \R^d, \]
  with implied constant only depending on $d,N$.
  Since $\int_{\R^d} P(y) f_2(y) \, d y = 0$ for any polynomial $P$ with degree at most $N-1$,
  it follows that
   \begin{align*}
    |W_{f_1}f_2(x,s)|
    & = \Big|
          \int_{\R^d}
            f_2(y)
            |\det A|^{-s/2} \,
            \overline{f_1(A^{-s} (y - x))}
          \, d y
        \Big| \\
    & = \Big|
          \int_{\R^d}
            f_2(y)
            |\det A|^{-s/2}
            \overline{
              [f_1(A^{-s}y - A^{-s}x) - P_{-A^{-s}x}(A^{-s}y)]
            }
          \, d y
        \Big| \\
    & \lesssim C_3 \, |\det A|^{-s/2}
               \int_{\R^d}
                 |f_2(y)|
                  \|A^{-s}y\|^N
               \, d y \\
     & \leq C_3 C_4 \, |\det A|^{-s/2} \, \|A^{-s}\|_{\infty}^N,
  \end{align*}
  as required.
 \end{proof}

The following consequence is what we will actually use in most applications.

\begin{corollary}\label{cor:wavelet-decay}
  Let $\psi, \varphi \in \SC_0(\R^d)$ and  $1 < \lambda_{-} < \min_{\lambda \in \sigma(A)}|\lambda|$ be as
  in Lemma~\ref{lem:expansive_cont_powers}.
  Then, for every $L, N \in \N$,
  \begin{equation}
    |W_{\psi}\varphi (x,s)|
    \lesssim  \bigl(1 + \rho_A(x)\bigr)^{-L} \lambda_{-}^{- |s| N}  \| \psi \| \, \|\varphi\|,
  \end{equation}
  where  $\|\,\cdot\,\|$ is a suitable
  continuous Schwartz semi-norm.
  The implied constant and the choice of the semi-norms depend only on $L ,N, A, d,\lambda_-$.
\end{corollary}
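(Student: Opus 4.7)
The plan is to deduce \Cref{cor:wavelet-decay} from \Cref{lem:DiagonalWaveletDecay} by applying parts (i) and (ii) with suitably enlarged exponents and combining them via a geometric-mean argument, treating $s \geq 0$ first and then reducing $s \leq 0$ to it by the symmetry $|W_\psi \varphi(x,s)| = |W_\varphi \psi(-A^{-s}x, -s)|$ already used in the proof of \Cref{lem:DiagonalWaveletDecay}.

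For $s \geq 0$, I apply \Cref{lem:DiagonalWaveletDecay}(i) with exponent $2L$ and \Cref{lem:DiagonalWaveletDecay}(ii) with some parameter $N^\ast \in \N$ still to be fixed, using in each case that suitable Schwartz semi-norms of $\psi, \varphi$ control all relevant constants and that $\|A^{-s}\|_{\infty} \lesssim \lambda_-^{-s}$ by \Cref{lem:expansive_cont_powers}. This yields the two upper bounds
\[
  |W_\psi \varphi(x,s)|
  \lesssim |\det A|^{-s/2} \bigl(1 + \rho_A(A^{-s} x)\bigr)^{-2L}
  \quad \text{and} \quad
  |W_\psi \varphi(x,s)|
  \lesssim |\det A|^{-s/2} \lambda_-^{-s N^\ast},
\]
and taking their geometric mean gives
\[
  |W_\psi \varphi(x,s)|
  \lesssim |\det A|^{-s/2} \bigl(1 + \rho_A(A^{-s} x)\bigr)^{-L} \lambda_-^{-s N^\ast / 2}.
\]
The crucial step is then to exchange $(1 + \rho_A(A^{-s} x))^{-L}$ for $(1 + \rho_A(x))^{-L}$: by \Cref{cor:quasi-norm_bound}, $\rho_A(A^{-s} x) \gtrsim |\det A|^{-s} \rho_A(x)$, and since $|\det A|^{s} \geq 1$ for $s \geq 0$ this upgrades to $1 + \rho_A(A^{-s} x) \gtrsim |\det A|^{-s} (1 + \rho_A(x))$, so $(1 + \rho_A(A^{-s} x))^{-L} \lesssim |\det A|^{sL} (1 + \rho_A(x))^{-L}$. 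Substituting yields
\[
  |W_\psi \varphi(x,s)|
  \lesssim |\det A|^{s(L - 1/2)} \lambda_-^{-s N^\ast / 2} (1 + \rho_A(x))^{-L},
\]
and choosing $N^\ast \in \N$ with $N^\ast / 2 - N \geq (L - 1/2) \log |\det A| / \log \lambda_-$ makes $|\det A|^{s(L - 1/2)} \lambda_-^{-s N^\ast / 2} \leq \lambda_-^{-s N}$ uniformly in $s \geq 0$.

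For $s \leq 0$, I apply the just-established $s \geq 0$ estimate to the swapped pair $W_\varphi \psi$ at the point $(-A^{-s} x, -s)$, obtaining $|W_\psi \varphi(x,s)| \lesssim (1 + \rho_A(A^{-s} x))^{-L} \lambda_-^{-|s| N}$; another appeal to \Cref{cor:quasi-norm_bound}---now giving $\rho_A(A^{-s} x) \gtrsim |\det A|^{|s|} \rho_A(x) \geq \rho_A(x)$---converts this into the target bound $(1 + \rho_A(x))^{-L} \lambda_-^{-|s| N}$. The only non-routine step is the geometric-mean combination together with the quantitative choice of $N^\ast$; absorbing the bad factor $|\det A|^{s(L - 1/2)}$ into the $\lambda_-$-decay is the main obstacle and is precisely why the suitable Schwartz semi-norm in the statement must be allowed to depend on all of $L, N, A, d$, and $\lambda_-$.
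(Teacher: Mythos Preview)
Your proof is correct and follows essentially the same approach as the paper: both use parts (i) and (ii) of \Cref{lem:DiagonalWaveletDecay} together with the bound $\|A^{-s}\|_\infty \lesssim \lambda_-^{-s}$ from \Cref{lem:expansive_cont_powers}, and combine them via a geometric-mean interpolation, choosing the auxiliary exponent large enough to absorb the loss $|\det A|^{sL}$ incurred when passing from $\rho_A(A^{-s}x)$ to $\rho_A(x)$. The only difference is organizational: the paper first establishes both the spatial-decay estimate and the $\lambda_-^{-|s|M}$ estimate for \emph{all} $s \in \R$ (invoking the symmetry $W_\psi\varphi(x,s) = \overline{W_\varphi\psi(-A^{-s}x,-s)}$ inside the second estimate) and only then interpolates, whereas you interpolate first for $s \geq 0$ and invoke the symmetry afterwards to handle $s \leq 0$; this is a cosmetic rearrangement of the same argument.
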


\begin{proof}
  Note that $\psi, \varphi \in \SC_0(\R^d)$ guarantees that
  all assumptions of Lemma~\ref{lem:DiagonalWaveletDecay} are satisfied and
  the bounds $C_1, \dots, C_4$ can be replaced by suitable Schwartz
  semi-norms of $\psi$ or $\varphi$.

  We first use the estimate \eqref{eq:wavelet-decay-x}.
  Note that $\rho_A(A^{-\PosPart{s}}x) \gtrsim |\det A|^{- \PosPart{s}} \rho_A(x)$ by
  Corollary~\ref{cor:quasi-norm_bound}.
  Therefore, we see for any $K > 1$ that
  \begin{align}
    \label{eq:wavelet-decay-proof-eq1}
    |W_{\psi}\varphi(x,s)|
    & \lesssim \|\psi\| \, \|\varphi\|
               \, |\det A|^{-|s|/2}
                \bigl( 1 + |\det A|^{- \PosPart{s}} \rho_A(x) \bigr)^{-K}
               \notag \\
    & \lesssim \|\psi\| \, \|\varphi\|
               \, |\det A|^{-|s|/2}
               \, \max\bigl\{1, |\det A|^{K \, \PosPart{s}}\bigr\}
               \, \bigl(1 + \rho_A(x)\bigr)^{-K}
               \notag \\
    & \lesssim \|\psi\| \, \|\varphi\|
               \, |\det A|^{K \, \PosPart{s}}
               \bigl(1 \!+\! \rho_A(x)\bigr)^{-K}
      \leq     \|\psi\| \, \|\varphi\|
               \, |\det A|^{|s| K}
               \bigl(1 \!+\! \rho_A(x)\bigr)^{-K} ,
  \end{align}
  where  $\|\,\cdot\,\|$ is a suitable Schwartz semi-norm depending on $K,A$.

  We now show for arbitrary $M \in \N$ that
  \begin{equation}\label{eq:wavelet-decay-proof-eq2}
    |W_{\psi}\varphi (x,s)| \lesssim \lambda_{-}^{- |s| M} \| \psi \| \, \|\varphi\| .
  \end{equation}
  Indeed, if $s \geq 0$, then $\|A^{-s}\|_{\infty} \lesssim \lambda_{-}^{-s}$
  by Lemma~\ref{lem:expansive_cont_powers} and
  the claim follows immediately from \eqref{eq:wavelet-decay-s}.
  The claim for $s \leq 0$ follows from the case $s \geq 0$ via
  $ W_{\psi}\varphi(x,s)= \overline{W_{\varphi}\psi(-A^{-s}x,-s)}$.

  Finally, we interpolate between \eqref{eq:wavelet-decay-proof-eq1}
  and \eqref{eq:wavelet-decay-proof-eq2}.
  To this end, note that a priori the seminorms in \eqref{eq:wavelet-decay-proof-eq1}
  and \eqref{eq:wavelet-decay-proof-eq2} are distinct, but that we can assume
  that they are equal by possibly enlarging them.
  Now, since $\lambda_{-} > 1$, we can choose $H = H(A,\lambda_-) \in \N$
  such that $\lambda_{-}^H \geq |\det A|$.
  Taking $K = 2L$ and $M= 2(H L + N)$ yields that
  \begin{align*}
    |W_{\psi}\varphi(x,s)|
    & =        |W_{\psi}\varphi(x,s)|^{1/2}  |W_{\psi}\varphi(x,s)|^{1/2} \\
    & \lesssim \|\psi\| \, \|\varphi\|
               \, |\det A|^{|s| L}
               \, \bigl(1 + \rho_A(x)\bigr)^{-L}
               \, \lambda_{-}^{- |s| (H L + N)} \\
    & \lesssim \|\psi\| \, \|\varphi\|
                 (1 + \rho_A(x))^{-L}
                 \lambda_{-}^{- |s| N} ,
  \end{align*}
  as claimed.
\end{proof}

\subsection{Extended wavelet transform}
\label{sub:ExtendedWaveletTransform}

The wavelet transform can be extended via duality to $\SC_0'(\R^d) \cong \SP$.
Throughout, we will use the dual bracket defined by
\begin{equation}
  \langle \cdot , \cdot \rangle : \quad
  \SC_0'(\R^d) \times \SC_0(\R^d) \to \CC, \qquad
  \langle f, \varphi \rangle := f(\overline{\varphi}) .
  \label{eq:SesquilinearDualBracket}
\end{equation}
The bracket is a sesquilinear form naturally extending the $L^2$-inner product.

If $\psi \in \SC_0(\R^d)$, then the \emph{(extended) wavelet transform}
\begin{equation}
  W_{\psi} : \quad
  \SC_0'(\R^d) \to C (G_A), \quad
  W_{\psi} f (x,s) = \langle f, \pi(x,s) \psi \rangle,
  \quad (x,s) \in \mathbb{R}^d \times \mathbb{R},
  \label{eq:ExtendedWaveletTransform}
\end{equation}
is well-defined.
Here, we implicitly use the continuity of
$\R^d \times \R \to \Schwartz(\R^d), (x,s) \mapsto \pi(x,s) \psi$.
In addition to the wavelet transform, we also extend the representation $\pi$ to $\SC_0'(\R^d)$
by defining
\[
  \langle \pi(h)f, \varphi \rangle
  := \langle f, \pi(h^{-1})\varphi \rangle
  \qquad \text{for } f \in \SC_0'(\R^d) \text{ and } \varphi \in \SC_0(\R^d).
\]
The following lemma extends the reconstruction formula \eqref{eq:ReconstructionFormula} to
all of $\SC_0'(\R^d)$.

\begin{lemma}\label{lem:weak-continuity}
  Let $\psi \in \SC_0(\R^d)$ be admissible.
  Then
  \begin{equation}
    \label{eq:weak-continuity}
    \int_{G_A}
      W_\psi f (g) \,
      \overline{W_\psi \varphi (g)}
    \, d \mu_{G_A}(g)
    = \langle f, \varphi \rangle
  \end{equation}
  for all $f \in \SC_0'(\R^d)$ and  $\varphi \in \SC_0(\R^d) $.
\end{lemma}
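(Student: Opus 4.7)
The plan is to promote the weak $L^2$-reconstruction formula~\eqref{eq:ReconstructionFormula} to a strong Bochner identity
\[
\varphi = \int_{G_A} W_\psi\varphi(g)\,\pi(g)\psi\,d\mu_{G_A}(g)
\]
holding in the Fréchet space $\mathcal{S}_0(\mathbb{R}^d)$ itself, and then to apply the (continuous, $\mathbb{C}$-linear) functional $f \in \mathcal{S}_0'(\mathbb{R}^d)$ to its conjugate. Since $f$ commutes with Bochner integrals, this will yield
\[
\langle f,\varphi\rangle = f(\overline{\varphi}) = \int_{G_A} \overline{W_\psi\varphi(g)}\,f(\overline{\pi(g)\psi})\,d\mu_{G_A}(g) = \int_{G_A} W_\psi f(g)\,\overline{W_\psi\varphi(g)}\,d\mu_{G_A}(g),
\]
which is the claimed identity.

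To establish the Bochner integrability in $\mathcal{S}_0(\mathbb{R}^d)$, I would combine two estimates. First, for any continuous Schwartz seminorm $p$, a direct translation-dilation computation combined with the operator-norm bounds of Lemma~\ref{lem:expansive_cont_powers} yields
\[
p(\pi(x,s)\psi) \lesssim_{p,\psi} (1+\|x\|)^{N}\,|\det A|^{C|s|}
\]
for suitable $N, C$ depending only on $p$. Second, Corollary~\ref{cor:wavelet-decay} furnishes
\[
|W_\psi\varphi(x,s)| \lesssim_{L,M} (1+\rho_A(x))^{-L}\,\lambda_-^{-|s|M}
\]
for arbitrary $L, M \in \mathbb{N}$. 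Since $d\mu_{G_A}(x,s) = |\det A|^{-s}\,dx\,ds$, and $(1+\|x\|)^N$ can be dominated by a power of $(1+\rho_A(x))$ using Lemma~\ref{lem:quasi-norm-bounds}, choosing $L, M$ sufficiently large renders $g \mapsto |W_\psi\varphi(g)|\,p(\pi(g)\psi)$ integrable over $G_A$ for every prescribed seminorm $p$, via Lemma~\ref{lem:QuasiNormIntegrability}. As $\mathcal{S}_0(\mathbb{R}^d)$ is Fréchet (a closed subspace of $\mathcal{S}(\mathbb{R}^d)$), this ensures the Bochner integral $\Phi := \int_{G_A} W_\psi\varphi(g)\,\pi(g)\psi\,d\mu_{G_A}(g)$ converges in $\mathcal{S}_0(\mathbb{R}^d)$. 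The continuous embedding $\mathcal{S}_0(\mathbb{R}^d) \hookrightarrow L^2(\mathbb{R}^d)$ then forces $\Phi$ to coincide with the corresponding $L^2$-Bochner integral, which by~\eqref{eq:ReconstructionFormula} and admissibility of $\psi$ equals $\varphi$.

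The main technical obstacle is the interplay between the exponential seminorm growth $|\det A|^{C|s|}$, the modular weight $|\det A|^{-s}$ in Haar measure, and the decay of $W_\psi\varphi$; the decisive feature is that Corollary~\ref{cor:wavelet-decay} delivers \emph{stretched exponential} decay $\lambda_-^{-|s|M}$ with $\lambda_- > 1$ and $M$ arbitrary, which dominates any fixed positive power of $|\det A|^{|s|}$ and therefore absorbs both the seminorm growth in $s$ and the modular factor. Spatial integrability is more routine. Finally, since complex conjugation is a continuous antilinear endomorphism of $\mathcal{S}_0(\mathbb{R}^d)$, the conjugated identity $\overline{\varphi} = \int_{G_A} \overline{W_\psi\varphi(g)}\,\overline{\pi(g)\psi}\,d\mu_{G_A}(g)$ also holds as a Bochner integral in $\mathcal{S}_0(\mathbb{R}^d)$, legitimizing the exchange with $f$ in the display above.
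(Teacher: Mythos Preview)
Your proposal is correct and follows essentially the same strategy as the paper: establish the reconstruction identity $\varphi = \int_{G_A} W_\psi\varphi(g)\,\pi(g)\psi\,d\mu_{G_A}(g)$ as a Bochner integral by pairing the decay of Corollary~\ref{cor:wavelet-decay} against the seminorm growth of $\pi(g)\psi$, and then commute $f$ with the integral. The only packaging difference is that the paper carries out the Bochner integration in auxiliary \emph{Banach} spaces $\mathcal{S}_{M,N}$ (functions with $N$ bounded derivatives and order-$M$ polynomial decay) rather than directly in the Fr\'echet space $\mathcal{S}_0(\R^d)$, and then invokes Hahn--Banach to extend $f$ to a bounded functional on the relevant $\mathcal{S}_{M,N}$; this keeps all Bochner-integral manipulations in the standard Banach setting.
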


\begin{proof}
  The proof follows \cite[Lemma~2.11]{fuehr2020coorbit}
  and \cite[Lemma~40]{FuehrVoigtlaenderCoorbitAsDecomposition},
  with suitable modifications.

  For $M, N \in \N_{\geq d+1}$, let $\SC_{M,N}(\R^d)$ denote the space of all functions
  $f \in C^N(\R^d)$ satisfying
  \begin{equation}\label{eq:SN-family}
    \|f\|_{M,N}
    := \max_{\beta \in \N_0^d, |\beta| \leq N}
        \sup_{x \in \R^d}
          (1+\|x\|)^{M} |\partial^{\beta} f(x)|
    < \infty .
  \end{equation}
  The function space $\SC_{M,N}(\R^d)$ equipped
  with the norm in \eqref{eq:SN-family} is a Banach space.
  Furthermore, $\SC(\R^d) \hookrightarrow \SC_{M,N}(\R^d)$.
  Since $G_A \to \SC(\R^d), g \mapsto \pi(g) \psi$ is continuous and $G_A$ is $\sigma$-compact,
  this implies that the map
  \begin{equation}
    G_A \to \SC_{M,N}(\R^d), \quad
    g \mapsto W_{\psi}\varphi (g) \, \pi(g) \psi
    \label{eq:ReproducingFormulaBochnerMap}
  \end{equation}
  is continuous  and has a $\sigma$-compact (and hence separable) range.
  Moreover, the decay estimates of Corollary~\ref{cor:wavelet-decay} show
  ${\int_{G_A} |W_\psi \varphi(g)| \, \| \pi(g) \psi \|_{M,N} \, d \mu_{G_A}(g) < \infty}$.
  Overall, this shows that the map in \eqref{eq:ReproducingFormulaBochnerMap}
  is Bochner integrable, for arbitrary $M,N \in \N_{\geq d+1}$.

  The reconstruction formula \eqref{eq:ReconstructionFormula} shows for
  $\varphi \in \SC_0(\R^d) \subset L^2(\R^d) \cap \SC_{M,N}(\R^d)$ that
  \begin{equation}\label{eq:weak-cont-pr1}
    \varphi
    = \int_{G_A}
        W_{\psi}\varphi(g) \, [\pi(g)\psi]
      \, d\mu_{G_A}(g)
  \end{equation}
  where the integral is understood in the weak sense in $L^2(\R^d)$.
  As shown above, the right-hand side also exists as a Bochner integral in $\SC_{M,N} (\R^d)$.
  Since $M \geq d+1$, we have
  $\SC_{M,N}  \hookrightarrow L^2(\R^d)$.
   Furthermore, if $\varphi \in \SC_{M,N}$ satisfies
  $\langle \varphi, f \rangle = 0$ for all $f \in L^2(\R^d)$, then $\varphi \equiv 0$.
  Hence the identity \eqref{eq:weak-cont-pr1} also holds in $\SC_{M,N}(\R^d)$.

  Lastly, if $f \in \SC_0'(\R^d)$, then $f$ extends to a continuous linear functional
  on $\SC(\R^d)$ by~\cite[Proposition~1.1.3]{grafakos2014modern}.
  Hence, there are $M,N \in \N_{\geq d+1}$, such that the restriction of $f$
  to $\SC_0(\R^d)$ is continuous with respect to $\| \cdot \|_{M,N}$;
  see \cite[Proposition~2.3.4]{GrafakosClassicalFourier}.
  Using the Hahn-Banach theorem, we can extend $f$ to a bounded linear functional $\widetilde{f}$
  on $\SC_{M,N}(\R^d)$.
  In view of \eqref{eq:weak-cont-pr1}, and using that the Bochner-integral
  can be interchanged with bounded linear functionals
  by \cite[V.5, Corollary~2]{yosida1980functional}, we obtain that
  \[
    \langle f, \varphi \rangle
    = \widetilde{f}(\overline{\varphi})
    = \widetilde{f}
      \Big(
        \int_{G_A}
          \overline{W_{\psi}\varphi(g)}  \overline{\pi(g)\psi}
        \, d\mu_{G_A}(g)
      \Big)
    = \int_{G_A}
        \overline{W_{\psi}\varphi(g)}
        \langle f, \pi(g) \psi \rangle
      \, d\mu_{G_A}(g)
  \]
  for any $\varphi \in \SC_0(\R^d)$.
\end{proof}

\begin{corollary}[Reproducing formula]\label{cor:reproducing-formula-distr}
  Let $\psi \in \SC_0(\R^d)$ be admissible.
  Then
  \begin{equation}
    \label{eq:reproducing-formula-distr}
    W_{\varphi}f = W_{\psi}f \ast W_{\varphi}\psi
  \end{equation}
  holds for all $f \in \SC_0'(\R^d)$ and $\varphi \in \SC_0(\R^d)$.
\end{corollary}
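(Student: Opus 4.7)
The plan is to derive the identity by applying Lemma~\ref{lem:weak-continuity} to the vector $\pi(h)\varphi \in \SC_0(\R^d)$, for each fixed $h \in G_A$, and then rewriting the resulting integral as a group convolution.

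First I would check that $\SC_0(\R^d)$ is invariant under $\pi$. This is a brief computation: after a change of variables, $\int (\pi(x,s)\varphi)(y)\, y^\alpha \, dy$ is a finite linear combination of moments of $\varphi$, all of which vanish because $\varphi \in \SC_0(\R^d)$. So $\pi(h)\varphi \in \SC_0(\R^d)$ for every $h \in G_A$, which makes Lemma~\ref{lem:weak-continuity} applicable to the pair $(f,\pi(h)\varphi)$. Using the definition of the extended wavelet transform in \eqref{eq:ExtendedWaveletTransform}, the left-hand side $W_\varphi f(h)$ equals $\langle f, \pi(h)\varphi\rangle$, so by that lemma,
\[
W_\varphi f(h)
= \int_{G_A} W_\psi f(g) \, \overline{W_\psi[\pi(h)\varphi](g)} \, d\mu_{G_A}(g).
\]

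Next I would use the standard intertwining identity. Since $\pi$ is unitary on $L^2(\R^d)$ and extends to $\SC_0(\R^d)$ by restriction, one has
\[
W_\psi[\pi(h)\varphi](g)
= \langle \pi(h)\varphi, \pi(g)\psi\rangle
= \langle \varphi, \pi(h^{-1}g)\psi\rangle
= W_\psi\varphi(h^{-1}g),
\]
and, taking complex conjugates and using unitarity of $\pi$ once more,
\[
\overline{W_\psi\varphi(h^{-1}g)}
= \langle \pi(h^{-1}g)\psi, \varphi\rangle
= \langle \psi, \pi(g^{-1}h)\varphi\rangle
= W_\varphi\psi(g^{-1}h).
\]
Plugging this into the integral and recognizing the definition of convolution on $G_A$, $(F_1\ast F_2)(h) = \int_{G_A} F_1(g)\, F_2(g^{-1}h)\, d\mu_{G_A}(g)$, immediately yields
\[
W_\varphi f(h)
= \int_{G_A} W_\psi f(g)\, W_\varphi\psi(g^{-1}h)\, d\mu_{G_A}(g)
= (W_\psi f \ast W_\varphi\psi)(h),
\]
which is the claimed reproducing formula.

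The only potentially delicate point is to ensure all manipulations are legitimate, but each step is already covered by previous results: integrability/absolute convergence of the integral defining $(W_\psi f)\ast(W_\varphi\psi)$ follows from the decay estimates of Corollary~\ref{cor:wavelet-decay} (applied to $W_\varphi\psi$) combined with the polynomial bound on $W_\psi f$ implicit in Lemma~\ref{lem:weak-continuity}, and the invariance $\pi(h)\varphi \in \SC_0(\R^d)$ justifies invoking that lemma in the first place. Thus the proof reduces to these two short ingredients plus an algebraic reorganization; I do not anticipate any substantial obstacle.
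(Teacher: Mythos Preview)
Your proposal is correct and follows exactly the route indicated in the paper's own proof, which simply says to replace $\varphi$ by $\pi(h)\varphi$ in Lemma~\ref{lem:weak-continuity}. You have merely spelled out the intertwining computation and the identification with the group convolution that the paper leaves implicit.
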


\begin{proof}
  Replacing $\varphi$ by $\pi(h) \varphi$ in Lemma~\ref{lem:weak-continuity}
  easily yields the claim.
\end{proof}

\section{Coorbit spaces associated to Peetre-type spaces}
\label{sec:coorbit}

This section is devoted to characterizations of anisotropic Triebel-Lizorkin spaces
in terms of wavelet transforms. Explicitly, it will be shown that Triebel-Lizorkin spaces can be identified with coorbit spaces associated to so-called Peetre-type spaces.

\subsection{Peetre-type spaces}
\label{sub:PeetreSpaces}

For $p,q \in (0,\infty]$, the mixed-norm Lebesgue space $L^{p,q}(G_A)$ consists
of all (equivalence classes of a.e.\ equal) measurable functions ${F : G_A \to \CC}$
satisfying
\begin{align} \label{eq:mixed_norm}
  \| F \|_{L^{p,q}}
  := \big\| x \mapsto \| F(x,\cdot) \|_{L^q(\nu)}  \big\|_{L^p(\R^d)} < \infty,
\end{align}
relative to the Borel measure $\nu$ on $\mathbb{R}$
defined by $\nu(M) = \int_M \frac{d s}{|\det A|^s}$.
The weighted space is given by
$L_w^{p,q}(G_A) = \{ F : G_A \to \CC \colon w \cdot F \in L^{p,q}(G_A) \}$,
with norm $\| F \|_{L_w^{p,q}} := \| w \cdot F \|_{L^{p,q}}$.

\begin{definition}
For $\alpha \in \mathbb{R}, \beta > 0$, and $p \in (0, \infty)$ and $q \in (0, \infty]$,
the \emph{Peetre-type space} $\PT (G_A)$ on $G_A$ is defined
as the collection of all (equivalence classes of a.e.\ equal)
measurable $F : G_A \to \mathbb{C}$ satisfying
\[
  \| F \|_{\PT}
  := \bigg\|
       x \mapsto
       \bigg(
         \int_{\mathbb{R}}
           \bigg(
             |\det A|^{\alpha s} \,
             \esssup_{z \in \mathbb{R}^d}
               \frac{|F(x+z,s)|}
                    {(1 + \rho_A (A^{-s} z))^{\beta}}
           \bigg)^q
         \frac{ds}{|\det A|^{s}}
       \bigg)^{1/q}
     \bigg\|_{L^p}
  < \infty,
\]
with the usual modification for $q = \infty$.
\end{definition}

An essential property of the Peetre-type spaces for our purposes is their two-sided translation invariance.
For proving this, the following lemma will be used.
Its proof is deferred to Appendix~\ref{sub:SpecialWeightProof}.

\begin{lemma}\label{lem:SpecialWeight}
  The weight function
  \begin{equation}
    v : \quad
    G_A \to [0,\infty), \quad
    (y,t) \mapsto \sup_{(z, u) \in G_A}
                    \frac{1 + \rho_A (A^{-u} z)}{1 + \rho_A (A^{-u} A^t z - y)}
    \label{eq:SpecialWeightDefinition}
  \end{equation}
  is well-defined, measurable, and submultiplicative.
  Furthermore, we have
  \begin{equation}
    v(y,t)
    \asymp \max \bigl\{ 1, |\det A|^{-t} \bigr\}
            \bigl(1 + \min \{ \rho_A (y), \rho_A (A^{-t} y) \}\bigr)
    \asymp 1 + |\det A|^{-t} + \rho_A (A^{-t} y) .
    \label{eq:VExplicitBound}
  \end{equation}
\end{lemma}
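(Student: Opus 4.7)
The plan is to exploit the commutativity of the one-parameter group $\{A^s\}_{s \in \R}$, which is available because $A$ is exponential, in order to rewrite $v$ in a drastically simpler form from which all of the claimed properties become nearly algebraic. For each fixed $u \in \R$ the substitution $w := A^{-u} z$ is a bijection of $\R^d$, and the identity $A^{-u} A^t A^u = A^t$ shows that the quotient in \eqref{eq:SpecialWeightDefinition} depends on $(z,u)$ only through $w$. This will yield
\[
  v(y,t) = \sup_{w \in \R^d} \frac{1+\rho_A(w)}{1+\rho_A(A^t w - y)},
\]
and I will use this as the working form of $v$ throughout the remainder of the argument.

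Submultiplicativity is then immediate via the standard composition trick for weights controlled by a quasi-triangle inequality. Given $(y_1, t_1), (y_2, t_2) \in G_A$ and $w \in \R^d$, I first apply the defining inequality for $v(y_2, t_2)$ to $w$, and then the defining inequality for $v(y_1, t_1)$ to $A^{t_2} w - y_2$, obtaining
\[
  1 + \rho_A(w) \leq v(y_1, t_1)\, v(y_2, t_2)\, \bigl(1 + \rho_A(A^{t_1+t_2}w - A^{t_1} y_2 - y_1)\bigr).
\]
Taking $\sup_w$ and combining with the group law $(y_1,t_1)(y_2,t_2) = (y_1 + A^{t_1} y_2, t_1+t_2)$ gives the claim. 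Measurability will be deduced from the quantitative estimates below: once $v$ is shown to be pointwise equivalent to an explicitly measurable function, a standard approximation using a countable dense subset of $\R^d$, together with the fact that $\rho_A$ is a measurable step function taking only countably many values, will render $v$ Borel measurable.

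For the upper bound in \eqref{eq:VExplicitBound}, I will write $w = A^{-t}(A^t w - y) + A^{-t} y$, apply the quasi-triangle inequality in \eqref{eq:QuasiNormProperties}, and use Corollary~\ref{cor:quasi-norm_bound} to convert $\rho_A(A^{-t}(A^t w - y))$ into a constant multiple of $|\det A|^{-t} \rho_A(A^t w - y)$; dividing by $1 + \rho_A(A^t w - y)$ then produces $v(y,t) \lesssim 1 + |\det A|^{-t} + \rho_A(A^{-t} y)$ uniformly in $w$. For the matching lower bound, two test points suffice: the choice $w = A^{-t} y$ forces $A^t w - y = 0$ and yields $v(y,t) \geq 1 + \rho_A(A^{-t} y)$, while taking $w$ with $\rho_A(w)$ sufficiently large relative to $|\det A|^{-t}\rho_A(y)$ gives $\rho_A(A^t w - y) \asymp |\det A|^t \rho_A(w)$ by the quasi-triangle inequality and Corollary~\ref{cor:quasi-norm_bound}, so the corresponding ratio is $\asymp |\det A|^{-t}$. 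Finally, the equivalence with the first expression in \eqref{eq:VExplicitBound} is a brief case analysis on the sign of $t$: Corollary~\ref{cor:quasi-norm_bound} forces $\min\{\rho_A(y), \rho_A(A^{-t} y)\} = \rho_A(y)$ for $t \leq 0$ and $=\rho_A(A^{-t} y)$ for $t \geq 0$, and in each case $\max\{1, |\det A|^{-t}\}\bigl(1 + \min\{\cdots\}\bigr)$ reduces to $1 + |\det A|^{-t} + \rho_A(A^{-t} y)$ up to absolute constants. The main conceptual hurdle is the initial commutativity reduction; once the simplified form of $v$ is in hand, every remaining step is a routine quasi-metric computation.
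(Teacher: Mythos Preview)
Your argument is essentially the paper's. The simplification $v(y,t) = \sup_{w} \frac{1+\rho_A(w)}{1+\rho_A(A^t w - y)}$ via the substitution $w = A^{-u} z$ and the commutativity of $\{A^s\}$ is exactly what the paper does in Step~4 of its proof (writing $a$ for your $w$); you merely front-load it, which streamlines the presentation. The submultiplicativity via the composition trick, the upper bound via the quasi-triangle inequality and \Cref{cor:quasi-norm_bound}, the lower bound via the test point $w = A^{-t} y$ together with a large-$w$ limit, and the final case analysis on the sign of $t$ all match the paper's Steps~1, 3, and~4. (Minor quibble: \Cref{cor:quasi-norm_bound} only gives $\min\{\rho_A(y),\rho_A(A^{-t}y)\} \asymp \rho_A(y)$ for $t \leq 0$, not equality; this is harmless since the target is an $\asymp$ statement.)

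The one soft spot is measurability. Pointwise equivalence to a measurable function does not imply measurability, and replacing $\sup_{\R^d}$ by $\sup_{\Q^d}$ is not automatic here: for fixed $(y,t)$ the map $w \mapsto \frac{1+\rho_A(w)}{1+\rho_A(A^t w - y)}$ is a ratio of an upper semi-continuous numerator and a lower semi-continuous reciprocal, so it is neither u.s.c.\ nor l.s.c.\ in $w$, and the countable-supremum reduction needs a genuine argument about the annular level sets of $\rho_A$. The paper's route is cleaner and works verbatim for your simplified form: since $\{x : \rho_A(x) < \lambda\}$ is always open (immediate from the nested-ellipsoid definition of $\rho_A$), for each fixed $w$ the map $(y,t) \mapsto \bigl(1+\rho_A(A^t w - y)\bigr)^{-1}$ is lower semi-continuous, and hence $v$, as a supremum of l.s.c.\ functions, is l.s.c.\ and therefore Borel.
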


The basic properties of Peetre-type spaces are collected in the following lemma.

\begin{lemma}\label{lem:TranslationNormBounds}
  Let $\alpha \in \mathbb{R}, \beta > 0$, and $p \in (0, \infty)$ and $q \in (0, \infty]$.
  Then the Peetre-type space $\PT(G_A)$ is a
  solid quasi-Banach function space (Banach function space if $p,q \geq 1$).
  Furthermore, the operator norms of the translation operators $L_g$ and $R_g$ acting on $\PT(G_A)$
  can be bounded by
  \begin{align*}
    \vertiii{L_{(y,t)}}
    = |\det A|^{t (\alpha+1/p-1/q)}
    \quad \text{and} \quad
    \vertiii{R_{(y,t)}}
    \leq |\det A|^{- t ( \alpha-1/q)} (v(y,t))^{\beta},
  \end{align*}
  where $\vertiii{\,\cdot\,}:= \|\,\cdot\,\|_{\PT \to \PT}$ and $v$ is the weight function
  defined in Lemma~\ref{lem:SpecialWeight}.
\end{lemma}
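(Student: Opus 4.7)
The plan is to isolate the ``Peetre functional''
\(
F^\#(x,s) := \esssup_{z \in \R^d} |F(x+z,s)|  (1 + \rho_A(A^{-s}z))^{-\beta},
\)
so that $\| F \|_{\PT}$ coincides with the $L^p$-norm (in $x$) of the weighted $L^q$-norm (in $s$) of $|\det A|^{\alpha s} F^\#(x,s)$ with respect to the Haar density $|\det A|^{-s} \, ds$. Solidity is then immediate from $|F| \leq |G| \Rightarrow F^\# \leq G^\#$ pointwise, and completeness follows the standard Fatou-lemma argument for quasi-Banach function spaces: a Cauchy sequence admits an a.e.\ convergent subsequence, and the lower semicontinuity of $F \mapsto F^\#$ under a.e.\ convergence identifies the limit as a member of $\PT$. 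The only delicate point here is the joint measurability of $F^\#$ in $(x,s)$, which will be established separately (cf.\ Lemma~\ref{lem:PeetreSupEssup} and Lemma~\ref{lem:GroupMaximalProperties}) from the continuity of the weight in $z$.

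For the left translation, a direct computation from $(y,t)^{-1}(x,s) = (A^{-t}(x-y), s-t)$ gives $L_{(y,t)} F(x,s) = F(A^{-t}(x-y), s-t)$. Substituting $w := A^{-t} z$ inside the defining essential supremum, and using $\rho_A(A^{-s} z) = \rho_A(A^{-(s-t)} w)$, yields the clean identity
\[
(L_{(y,t)} F)^\#(x,s) = F^\#(A^{-t}(x-y),\, s-t).
\]
The substitution $s \mapsto s-t$ in the inner weighted $L^q$-integral then produces a factor of $|\det A|^{(\alpha q - 1) t}$ (combining the Peetre weight $|\det A|^{\alpha q s}$ with the Haar density $|\det A|^{-s}$), and the subsequent substitution $x \mapsto A^{-t}(x-y)$ in the outer $L^p$-integral produces a factor of $|\det A|^t$; extracting a $p$-th root gives exactly $|\det A|^{t(\alpha + 1/p - 1/q)}$, as claimed. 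Note that equality, not inequality, is achieved here because the transformation on $F^\#$ is a genuine change of variable with no stray weights.

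For the right translation, the group law gives $R_{(y,t)} F(x,s) = F(x + A^s y, s+t)$. Substituting $z' := z + A^s y$ inside the defining essential supremum,
\[
\bigl(R_{(y,t)} F\bigr)^\#(x,s)
= \esssup_{z' \in \R^d}
    \frac{|F(x+z',\, s+t)|}{(1 + \rho_A(A^{-s} z' - y))^\beta}.
\]
Taking $(z, u) := (z', s+t)$ in the definition \eqref{eq:SpecialWeightDefinition} of $v$ yields $A^{-u} A^t z = A^{-s} z'$, hence
\(
(1 + \rho_A(A^{-(s+t)} z'))/(1 + \rho_A(A^{-s} z' - y)) \leq v(y,t),
\)
and so $(R_{(y,t)} F)^\#(x,s) \leq (v(y,t))^\beta F^\#(x, s+t)$. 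Only the inner integral is affected by the subsequent shift $s \mapsto s+t$, producing the factor $|\det A|^{-(\alpha - 1/q) t}$ and establishing the claimed inequality. The computations are mechanical once $F^\#$ is isolated; the only real obstacle is correctly invoking Lemma~\ref{lem:SpecialWeight}, which is engineered precisely to control the discrepancy between the denominators $1 + \rho_A(A^{-s} z' - y)$ and $1 + \rho_A(A^{-(s+t)} z')$ appearing on the right, and the inequality (rather than equality) reflects that the Peetre weight is not right-invariant on $G_A$.
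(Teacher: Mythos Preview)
Your proposal is correct and follows essentially the same approach as the paper: the same substitutions for left and right translation, the same invocation of Lemma~\ref{lem:SpecialWeight} to control the denominator mismatch in the right-translation case, and the same Fatou-property route to completeness. The only cosmetic difference is that you isolate the functional $F^\#$ explicitly, which streamlines the presentation but does not alter the argument.
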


\begin{proof}
  It is easy to see that $\| \cdot \|_{\PT}$ is a solid quasi-norm,
  as defined in \cite[Chapter~2]{VoigtlaenderPhDThesis}, and a solid \emph{norm} if $p,q \geq 1$.
  The positive definiteness of $\| \cdot \|_{\PT}$ follows from
  \Cref{lem:GroupMaximalProperties}.

  For the completeness of $\PT$, suppose that $(F_n)_{n \in \mathbb{N}}$ satisfies
  $\liminf_{n \to \infty} \| F_n \|_{\PT} < \infty$, and let $F \in \PT(G_A)$ be such that
  $|F(x, s)| \leq \liminf_{n \to \infty} |F_n (x, s)|$
  for a.e.\ $(x,s) \in \mathbb{R}^d \times \mathbb{R}$.
  Then it follows directly from Fatou's lemma and the definition of $\| \cdot \|_{\PT}$ that
  \begin{align}\label{eq:Fatou}
    \| F \|_{\PT}
    \leq \liminf_{n \to \infty}
           \| F _n \|_{\PT},
  \end{align}
  and thus $\PT$ satisfies the so-called \emph{Fatou property}, which
  in particular implies that $\PT$ is complete; see
  \cite[Section~65, Theorem~1]{zaanen1967integration}
  and \cite[Lemma~2.2.15]{VoigtlaenderPhDThesis} .

  We show the translation-invariance for $q \in (0, \infty)$.
  Let $F \in \PT(G_A)$ and ${(y,t) \in \mathbb{R}^d \times \mathbb{R}}$ be arbitrary.
  Then a direct calculation using the substitutions $\widetilde{x} = x - y$
  and $x = A^{-t} \widetilde{x}$, as well as $\widetilde{z} = A^{-t} z$ shows
  \begin{align*}
    \| L_{(y,t)} F \|_{\PT}
    &= \bigg\|
         x \mapsto
         \bigg(
           \int_{\mathbb{R}}
             \bigg[
               |\det A|^{\alpha s}
               \esssup_{z \in \mathbb{R}^d}
                 \frac{|F(A^{-t} (x+z-y), s-t) |}
                      {(1+\rho_A (A^{-s} z))^{\beta}}
             \bigg]^q
           \frac{ds}{|\det A|^{s}}
         \bigg)^{1/q}
       \bigg\|_{L^p} \\
    &= \bigg\|
         x \mapsto
         \bigg(
           \int_{\mathbb{R}}
             \bigg[
               |\det A|^{\alpha s}
               \esssup_{z \in \mathbb{R}^d}
                 \frac{|F(A^{-t} \widetilde{x} + A^{-t} z, s-t) |}
                      {(1+\rho_A (A^{-s} z))^{\beta}}
             \bigg]^q
           \frac{ds}{|\det A|^{s}}
         \bigg)^{1/q}
       \bigg\|_{L^p} \\
    &= |\det A|^{\frac{t}{p}}
       \bigg\|
         x \! \mapsto \!
         \bigg(
           \int_{\mathbb{R}}
             \bigg[
               |\det A|^{\alpha s}
               \esssup_{\widetilde{z} \in \mathbb{R}^d}
                 \frac{|F( x + \widetilde{z}, s-t) |}
                      {(1+\rho_A (A^{-(s-t)} \widetilde{z}))^{\beta}}
             \bigg]^q
           \frac{ds}{|\det A|^{s}}
         \bigg)^{1/q}
       \bigg\|_{L^p} \\
    &= |\det A|^{t/p} |\det A|^{t(\alpha-1/q)} \| F \|_{\PT}.
  \end{align*}

  For the right-translation, the substitutions $\widetilde{z} = z + A^s y$
  and $\widetilde{s} = s + t$ show that
  \begin{align*}
    \big\| R_{(y,t)} F \big\|_{\PT}
    &= \bigg\|
         x \! \mapsto \!
         \bigg(
           \int_{\mathbb{R}}
             \bigg[
               |\det A|^{\alpha s}
               \esssup_{z \in \mathbb{R}^d}
                 \frac{|F(x+z+A^sy, s+t) |}
                      {(1+\rho_A (A^{-s} z))^{\beta}}
             \bigg]^q
           \frac{ds}{|\det A|^{s}}
         \bigg)^{1/q}
       \bigg\|_{L^p} \\
    &= \bigg\|
         x \! \mapsto \!
         \bigg(
           \int_{\mathbb{R}}
             \bigg[
               |\det A|^{\alpha (\widetilde{s} - t)}
               \esssup_{\widetilde{z} \in \mathbb{R}^d}
                 \frac{|F(x+\widetilde{z}, \widetilde{s}) |}
                      {(1+\rho_A (A^{-\widetilde{s}}A^t \widetilde{z} - y))^{\beta}}
             \bigg]^q
           \frac{d\widetilde{s}}{|\det A|^{\widetilde{s}-t}}
         \bigg)^{1/q}
       \bigg\|_{L^p}.
  \end{align*}
  By \Cref{lem:SpecialWeight},
  $\bigl(1 + \rho_A (A^{-s} A^t z - y)\bigr)^{-1} \leq \frac{v(y,t)}{1 + \rho_A (A^{-s} z)}$
  for all $(z,s), (y,t) \in \R^d \times \R$, showing the desired estimate.
  The case $q = \infty$ follows via the usual modifications.
\end{proof}

Lastly, the following simple observation allows to apply
results of \cite{velthoven2022quasi} in the remainder.

\begin{lemma}\label{lem:PeetreNormIsRNorm}
  Let $\alpha \in \mathbb{R}, \beta > 0$.
  For $p \in (0, \infty)$, $q \in (0, \infty]$, let $r := \min\{1, p, q \}$.
  The quasi-norm $\| \cdot \|_{\PT}$ is an $r$-norm, i.e.,
  \[
    \|F_1 + F_2 \|^r_{\PT}
    \leq \| F_1 \|_{\PT}^r + \| F_2 \|^r_{\PT}
    \quad \text{for} \quad F_1, F_2 \in \PT.
  \]
\end{lemma}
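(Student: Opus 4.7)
The plan is to build the estimate by iterating two standard power-subadditivity facts — one for the inner $L^q(\nu)$-norm and one for the outer $L^p$-norm — and then verify that the exponent $r = \min\{1,p,q\}$ is precisely what is needed to make both steps compatible. Define
\[
  \mathcal{M} F(x,s) := \esssup_{z \in \R^d} \frac{|F(x+z,s)|}{(1+\rho_A(A^{-s}z))^{\beta}},
  \qquad
  g_F(x) := \bigg(\int_{\R} \bigl(|\det A|^{\alpha s} \mathcal{M}F(x,s)\bigr)^{q} \frac{ds}{|\det A|^s}\bigg)^{1/q},
\]
so that $\|F\|_{\PT} = \|g_F\|_{L^p}$. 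The ordinary triangle inequality applied inside the essential supremum yields $\mathcal{M}(F_1+F_2) \leq \mathcal{M}F_1 + \mathcal{M}F_2$ pointwise, and multiplication by the nonnegative weight $|\det A|^{\alpha s}$ preserves this.

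Next, I would invoke that for any $q \in (0,\infty]$ the weighted quasi-norm $\|\cdot\|_{L^q(\nu)}$ satisfies the $r$-power triangle inequality for every $r \leq \min\{1,q\}$: when $q \leq 1$ this is the elementary inequality $(a+b)^q \leq a^q + b^q$ applied under the integral, giving a $q$-norm, while for $q \geq 1$ it is just Minkowski's inequality (with the customary modification when $q = \infty$); in either case, descent to any smaller exponent $r$ follows from subadditivity of $t \mapsto t^{r/\min\{1,q\}}$ on $[0,\infty)$. Since $r = \min\{1,p,q\} \leq \min\{1,q\}$, applying this pointwise in $x$ to the subadditivity of $|\det A|^{\alpha s}\mathcal{M}F(x,s)$ yields
\[
  g_{F_1+F_2}(x)^{r}
  \leq g_{F_1}(x)^{r} + g_{F_2}(x)^{r},
  \qquad x \in \R^d.
\]

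Finally, I would take $\|\cdot\|_{L^{p/r}}$ of both sides, observing that $r \leq p$ forces $p/r \geq 1$, so $L^{p/r}$ is a genuine Banach space and the ordinary triangle inequality is at one's disposal. Combining with the identity $\|g\|_{L^p}^{r} = \|g^r\|_{L^{p/r}}$ (valid for nonnegative $g$) then gives
\[
  \|F_1 + F_2\|_{\PT}^{r}
  = \|g_{F_1+F_2}^{r}\|_{L^{p/r}}
  \leq \|g_{F_1}^{r} + g_{F_2}^{r}\|_{L^{p/r}}
  \leq \|F_1\|_{\PT}^{r} + \|F_2\|_{\PT}^{r},
\]
which is the desired $r$-norm property. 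There is no real obstacle here; the argument is a routine nested quasi-norm computation, and the only point worth emphasising is that $r = \min\{1,p,q\}$ is exactly the exponent that makes the inner step (requiring $r \leq \min\{1,q\}$) and the outer step (requiring $r \leq p$) simultaneously valid.
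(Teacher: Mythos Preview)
Your proposal is correct and follows essentially the same route as the paper's proof: both use the pointwise subadditivity of the Peetre maximal function, then exploit that $q/r \geq 1$ and $p/r \geq 1$ to apply Minkowski's inequality in the inner and outer integrals. The paper writes the inner step out explicitly as $(H_1^r + H_2^r)^{q/r}$ followed by the $L^{q/r}$ triangle inequality, while you package the same computation as the general fact that $L^q(\nu)$ is a $\min\{1,q\}$-norm; the content is identical.
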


\begin{proof}
  The case $p,q \geq 1$ follows directly by Lemma~\ref{lem:TranslationNormBounds},
  so let $p, q < 1$ throughout the proof.
  For $F_i \in \PT$ with $i = 1, 2$, define
  \[
    H_i (x, s)
    = |\det A|^{\alpha s}
      \esssup_{z \in \mathbb{R}^d}
        \frac{| F_i (x+z, s) |}{(1 + \rho_A (A^{-s} z))^{\beta}},
    \quad (x,s) \in \mathbb{R}^d \times \mathbb{R}.
  \]
  Using this notation and the inequalities $r = \min\{1, p, q\} < 1$ and $q/r, p/r \geq 1$,
  a direct calculation yields
  \begin{align*}
    \| F_1 + F_2 \|_{\PT}^r
    &= \bigg\|
         \bigg(
           \int_{\mathbb{R}}
             \bigg(
               |\det A|^{\alpha s} \,
               \esssup_{z \in \mathbb{R}^d}
                 \frac{|F_1(\cdot +z,s) + F_2 (\cdot+z, s)|}
                      {(1 + \rho_A (A^{-s} z))^{\beta}}
             \bigg)^{r \cdot \frac{q}{r}}
           \frac{ds}{|\det A|^{s}}
         \bigg)^{ \frac{r}{q} \cdot \frac{1}{r}}
       \bigg\|^r_{L^p}
       \\
    & \leq
      \bigg\|
        \bigg(
          \int_{\mathbb{R}}
            \bigg(
              H_1 (\cdot, s)^r + H_2 (\cdot, s)^r
            \bigg)^{\frac{q}{r}}
          \frac{ds}{|\det A|^{s}}
        \bigg)^{ \frac{r}{q} \cdot \frac{1}{r}}
      \bigg\|^r_{L^p} \\
    & \leq
      \bigg\|
        \big\| H_1^r \big \|_{L^{q/r} (\nu)}
        + \big\| H_2^r \big \|_{L^{q/r} (\nu)}
      \bigg\|_{L^{p/r}} \\
    & \leq \| F_1 \|_{\PT}^r + \| F_2 \|_{\PT}^r,
  \end{align*}
  where $\nu$ denotes the Borel measure on $\mathbb{R}$
  given by $\nu(M) = \int_M \frac{d s}{|\det A|^s}$ as in \Cref{eq:mixed_norm}.
\end{proof}

\subsection{Standard envelope and control weight}
\label{sub:ControlWeightConvolutionRelations}
The notion of a control weight plays an essential role in coorbit theory, see, e.g., \cite{feichtinger1989banach, groechenig1991describing, fuehr2015coorbit, velthoven2022quasi}. For the study of control weights in the setting of the present paper, the class of functions will be useful.

\begin{definition}
  \label{def:standard-env}
  For $\sigma = (\sigma_1, \sigma_2) \in (0,\infty)^2$ and $L \in \R$,
   define $\eta_L : G_A \to (0,\infty)$ and $\theta_\sigma : \R \to (0,\infty)$ by
  \begin{equation*}
    \eta_L(x,s)
    := \big(
         1 + \min
         \{
           \rho_A (x), \rho_A(A^{-s} x)
         \}
       \big)^{-L}
    \qquad \text{and} \qquad
    \theta_\sigma (s)
    := \begin{cases}
        \sigma_1^s , & \text{if } s \geq 0, \\
        \sigma_2^s , & \text{if } s < 0.
       \end{cases}
  \end{equation*}
  The \emph{standard envelope} $\Xi_{\sigma,L} : G_A \to (0,\infty)$ is given by
  $\Xi_{\sigma,L}(x,s):=  \theta_\sigma (s)  \eta_L(x,s)$.
\end{definition}

\begin{lemma}\label{lem:HFunctionAlternative}
  For each $L \in \R$,
  we have $\eta_L (x,s) \asymp \bigl(1 + \rho_A (A^{-\PosPart{s}} x)\bigr)^{-L}$
  for all $(x,s) \in G_A$.
\end{lemma}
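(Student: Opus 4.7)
The plan is to prove this by a case split on the sign of $s$, using \Cref{cor:quasi-norm_bound} to compare $\rho_A(x)$ and $\rho_A(A^{-s} x)$ up to a multiplicative constant depending on $s$ and $|\det A|$.

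First I would observe that \Cref{cor:quasi-norm_bound} yields a constant $C \geq 1$ such that
\[
  C^{-1}  |\det A|^{-s}  \rho_A(x)
  \leq \rho_A(A^{-s} x)
  \leq C  |\det A|^{-s}  \rho_A(x),
  \qquad (x,s) \in \R^d \times \R .
\]
For $s \geq 0$ we have $|\det A|^{-s} \leq 1$, which upon rearranging gives $\rho_A(A^{-s} x) \leq C \rho_A(x)$ and hence also $\rho_A(x) \geq C^{-1} \rho_A(A^{-s} x)$. Since $C \geq 1$, both quantities $\rho_A(x)$ and $\rho_A(A^{-s} x)$ then lie between $C^{-1} \rho_A(A^{-s} x)$ and $C \rho_A(A^{-s} x)$, so
\[
  \min \bigl\{ \rho_A(x), \rho_A(A^{-s} x) \bigr\}
  \asymp \rho_A(A^{-s} x) .
\]
Adding $1$ preserves the equivalence (with the same implicit constants, since $C \geq 1$), and this matches $\rho_A (A^{-s^+} x)$ because $s^+ = s$ when $s \geq 0$.

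Symmetrically, for $s < 0$ we have $|\det A|^{-s} \geq 1$, which gives $\rho_A(A^{-s} x) \geq C^{-1} \rho_A(x)$ and therefore $\rho_A(x) \leq C \rho_A(A^{-s} x)$; arguing as above yields
\[
  \min \bigl\{ \rho_A(x), \rho_A(A^{-s} x) \bigr\}
  \asymp \rho_A(x) = \rho_A(A^{-s^+} x) ,
\]
since $s^+ = 0$ in this case. Combining the two cases gives $1 + \min\{\rho_A(x), \rho_A(A^{-s} x)\} \asymp 1 + \rho_A(A^{-s^+} x)$.

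Finally, raising to the power $-L$ preserves the equivalence: if $C_1^{-1} a \leq b \leq C_1 a$ with $a,b \geq 1$, then $C_1^{-|L|} a^{-L} \leq b^{-L} \leq C_1^{|L|} a^{-L}$, and the implicit constants depend only on $L$, $A$, and the constant from \Cref{cor:quasi-norm_bound}. This step is essentially bookkeeping, and there is no real obstacle: the entire argument reduces to the uniform comparison of $\rho_A(x)$ and $\rho_A(A^{-s} x)$ provided by \Cref{cor:quasi-norm_bound}, together with the observation that the factor $|\det A|^{-s}$ loses its role once we take the minimum and restrict to the side where it is bounded by $1$.
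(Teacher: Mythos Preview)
Your approach is the same as the paper's: both reduce to \Cref{cor:quasi-norm_bound} and the fact that $\min\{1,|\det A|^{-s}\}=|\det A|^{-\PosPart{s}}$. There is, however, a small slip in your $s\geq 0$ case: from $\rho_A(A^{-s}x)\leq C\rho_A(x)$ you correctly get $\rho_A(x)\geq C^{-1}\rho_A(A^{-s}x)$, but you then assert that \emph{both} quantities lie in $[C^{-1}\rho_A(A^{-s}x),\,C\rho_A(A^{-s}x)]$, which is false for $\rho_A(x)$ when $s$ is large. This does not harm the conclusion, since for the minimum you only need the lower bound $\rho_A(x)\geq C^{-1}\rho_A(A^{-s}x)$ together with the trivial bound $\min\{\rho_A(x),\rho_A(A^{-s}x)\}\leq \rho_A(A^{-s}x)$; just rephrase that sentence accordingly. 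The paper avoids the explicit case split by writing $\min\{\rho_A(x),\rho_A(A^{-s}x)\}\asymp\min\{1,|\det A|^{-s}\}\rho_A(x)=|\det A|^{-\PosPart{s}}\rho_A(x)\asymp\rho_A(A^{-\PosPart{s}}x)$ in one line, but the content is identical.
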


\begin{proof}
  Corollary~\ref{cor:quasi-norm_bound} shows $\rho_A(A^{-s} x) \asymp |\det A|^{-s} \rho_A(x)$.
  Because of $|\det A| > 1$, this implies
  \[
    \min \{ \rho_A(x), \rho_A(A^{-s} x) \}
    \asymp \min \{ \rho_A(x), |\det A|^{-s} \rho_A(x) \}
    \asymp |\det A|^{-\PosPart{s}} \rho_A(x)
    \asymp \rho_A (A^{-\PosPart{s}} x) ,
  \]
  where Corollary~\ref{cor:quasi-norm_bound} was again used in the last step.
  This estimate easily implies the claim.
\end{proof}

The next lemma provides the existence of a so-called control weight for $\PT$
and shows how to estimate it by a standard envelope.

\begin{lemma}\label{lem:ControlWeights}
  Let $\alpha \in \R$, and $\beta > 0$. For $p \in (0, \infty)$, $q \in (0, \infty]$,
  let $r := \min\{1, p, q \}$.
  As in \Cref{lem:TranslationNormBounds},
  write $\vertiii{\,\cdot\,}:= \|\,\cdot\,\|_{\PT \to \PT}$.
  There exists a continuous, submultiplicative weight
  $w = w^{\alpha,\beta}_{p,q} : G_A \to [1,\infty)$ such that
  \begin{equation*}
    w(g) = \Delta^{1/r} (g^{-1}) \, w(g^{-1}),
     \qquad
     \vertiii{L_{g^{-1}}} \leq w(g),
    \qquad
    \vertiii{R_{g}}
    \leq w(g), \quad g \in G_A,
    \label{eq:ControlWeightProperties}
  \end{equation*}
  with implicit constant depending on $A,\beta$.
  The weight $w$ is called a \emph{standard control weight}.

  Furthermore, define
  $\sigma_1 := |\det A|^{1/r + |\alpha+1/p-1/q|}$ and
  $\sigma_2 := |\det A|^{-|\alpha+1/p-1/q|}$, as well as
  \begin{equation*}
    \kappa_1
    := \begin{cases}
         |\det A|^{1/r+\alpha+\beta-1/q} & \text{if } \alpha \geq -\frac{1/r+\beta-2/q}{2}, \\[0.1cm]
         |\det A|^{-(\alpha-1/q)}        & \text{otherwise} ,
       \end{cases}
  \end{equation*}
  and
  \begin{equation*}
    \kappa_2
    := \begin{cases}
         |\det A|^{-(\alpha+\beta-1/q)} & \text{if } \alpha \geq -\frac{1/r+\beta-2/q}{2}, \\[0.1cm]
         |\det A|^{1/r + \alpha - 1/q}      & \text{otherwise} .
       \end{cases}
    \label{eq:KappaDefinition}
  \end{equation*}
  Then the standard control weight $w$ satisfies
  \(
    w \asymp \Xi_{\sigma, 0} + \Xi_{\kappa, -\beta} .
  \)
\end{lemma}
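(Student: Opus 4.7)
The plan is to construct $w$ as a symmetrization of upper bounds for the translation operator norms from Lemma~\ref{lem:TranslationNormBounds}. Writing $c := \alpha+1/p-1/q$ and $\mu := \alpha-1/q$, define
\[
  f(y,t)
   := |\det A|^{tc} + |\det A|^{-tc}
   + |\det A|^{-t\mu} \, v(y,t)^\beta ,
\]
where $v$ is the weight of Lemma~\ref{lem:SpecialWeight}. The first two summands dominate $\vertiii{L_g}$ and $\vertiii{L_{g^{-1}}}$; the third dominates $\vertiii{R_g}$ at $g=(y,t)$. Including $|\det A|^{tc}$ is not needed for the required bound $\vertiii{L_{g^{-1}}} \leq w(g)$, but is crucial to later produce the absolute value $|c|$ in $\sigma_1,\sigma_2$. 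Submultiplicativity of $f$ is inherited from that of its summands---the characters $|\det A|^{\pm tc}, |\det A|^{-t\mu}$ are multiplicative and $v^\beta$ is submultiplicative by Lemma~\ref{lem:SpecialWeight}. Continuity is arranged by replacing $\rho_A$ inside $v$ by a continuous $A$-homogeneous quasi-norm (cf.\ the remark after Corollary~\ref{cor:quasi-norm_bound}), which changes $f$ only up to multiplicative constants.

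Next, set $w(g) := f(g) + \Delta^{1/r}(g^{-1}) f(g^{-1})$. The identity $\Delta(g)\Delta(g^{-1}) = 1$ immediately yields the inversion $w(g) = \Delta^{1/r}(g^{-1}) w(g^{-1})$; submultiplicativity of $w$ follows by expanding $w(gh)$ and using multiplicativity of $\Delta^{1/r}$ together with submultiplicativity of $f$; continuity is inherited from $f$; and AM-GM on $|\det A|^{\pm tc}$ gives $w \geq f \geq 2 \geq 1$. The bounds $\vertiii{L_{g^{-1}}}, \vertiii{R_g} \leq f(g) \leq w(g)$ hold by construction.

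It remains to verify $w \asymp \Xi_{\sigma,0} + \Xi_{\kappa,-\beta}$. Using $v(y,t)^\beta \asymp \max\{1,|\det A|^{-t\beta}\}\,\eta_{-\beta}(y,t)$ from Lemma~\ref{lem:SpecialWeight}, together with the symmetry $\eta_{-\beta}(-A^{-t}y,-t) = \eta_{-\beta}(y,t)$, the six summands of $w$ separate into a pure-$t$ ``$L$-part'' (four terms) and an $\eta_{-\beta}(y,t)\cdot(\text{pure-}t)$ ``$R$-part'' (two terms). The four $L$-exponents $\pm tc$ and $t/r \pm tc$ have pointwise maximum $t(1/r+|c|)$ for $t \geq 0$ and $-t|c|$ for $t \leq 0$, exactly matching $\theta_\sigma(t)$. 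The two $R$-contributions for $t \geq 0$ give exponents $-t\mu$ and $t(1/r+\mu+\beta)$, whose maximum depends on the sign of $2\mu + 1/r + \beta$ (equivalently, on whether $\alpha \gtrless -(1/r+\beta-2/q)/2$), producing precisely the case split defining $\kappa_1$; the analysis for $t \leq 0$ is symmetric and yields $\kappa_2$. The main obstacle is this final case analysis---the exponent bookkeeping in each regime and checking that the two competing exponents cross exactly at the stated threshold; the abstract construction and verification of $w$'s formal properties are otherwise routine.
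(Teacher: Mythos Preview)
Your approach is essentially the same as the paper's: you build the weight from the translation bounds of Lemma~\ref{lem:TranslationNormBounds}, symmetrize to enforce $w(g)=\Delta^{1/r}(g^{-1})w(g^{-1})$, and then read off the exponents. The paper uses a \emph{max} of eight terms (your four $L$-terms plus the redundant $1$ and $a_{1/r}$, and the two $R$-terms), whereas you use a \emph{sum}; for finitely many nonnegative terms these are equivalent up to constants, and the case analysis for $\kappa_1,\kappa_2$ is identical.

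One technical point: your continuity argument is not quite complete. Replacing $\rho_A$ by a continuous $A$-homogeneous quasi-norm $\tilde\rho_A$ in the \emph{defining supremum} of $v$ yields a $\tilde v$ that is still submultiplicative (the proof of Lemma~\ref{lem:SpecialWeight} carries over), but as a supremum of continuous functions it is only \emph{lower} semicontinuous, not continuous. If instead you replace $\rho_A$ in the explicit formula $v(y,t)\asymp 1+|\det A|^{-t}+\rho_A(A^{-t}y)$, you get continuity but lose exact submultiplicativity. The paper handles this by invoking a result of Reiter--Stegeman: any measurable, locally bounded, submultiplicative weight is equivalent to a continuous submultiplicative one. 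You should either cite that, or note that your $\tilde v$ is lower semicontinuous and locally bounded and then apply the same smoothing.
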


\begin{proof}
  The weight $v : G_A \to [0,\infty)$ constructed in
  Lemma~\ref{lem:SpecialWeight} is submultiplicative, measurable, and
  locally bounded; see \Cref{eq:VExplicitBound}.
  Furthermore, $v \geq 1$.
  Thus, $v$ is a weight function in the sense of \cite[Definition~3.7.1]{reiter2000}
  and by the proof of \cite[Theorem~3.7.5]{reiter2000}, there exists a \emph{continuous},
  submultiplicative function $v_0 : G_A \to [1,\infty)$ satisfying $v \asymp v_0$.

  Let $\tau \in \R$ and set $a_{\tau}(g) = a_{\tau} (x,s) := |\det A|^{s \tau}$ for
  $g = (x,s) \in G_A$.
  Note that $a_{\tau}$ is multiplicative and that $\Delta = a_{-1}$.
  For $\gamma, \delta \in \R$, define the
  function $w_{\gamma, \delta} : G_A \to [1,\infty)$ by
  \begin{equation*}
    w_{\gamma, \delta} := \max \big\{ 1, \,\,\, a_{1/r}, \,\,\, a_{\gamma}, \,\,\,
    a_{-\gamma}, \,\,\, a_{\gamma + 1/r}, \,\,\, a_{1/r - \gamma}, \,\,\,
    a_{\delta + 1/r} \cdot (v_0^\vee)^\beta, \,\,\, a_{-\delta} \cdot
    v_0^\beta \big\}.
  \end{equation*}
  Then $w_{\gamma, \delta}$ is again continuous and submultiplicative.
  Since $a_{\tau}^{\vee} = a_{-\tau}$, it follows easily that
  $
    (\Delta^{1/r})^{\vee} \cdot w^{\vee}_{\gamma, \delta}
    = w_{\gamma, \delta} .
  $
  Choosing $\gamma:= \alpha + 1/p - 1/q$ and $\delta:= \alpha - 1/q$ and
  setting $w= w^{\alpha, \beta}_{p,q}:=w_{\gamma, \delta}$ yields,
  by Lemma~\ref{lem:TranslationNormBounds}, that
   $\vertiii{L_{g^{-1}}} = a_{-\gamma}(g) \leq w(g)$
  and
  \(
    \vertiii{R_{g}}
    \lesssim  \, a_{-\delta}(g) \, v_0(g)^{\beta}
    \leq w(g).
  \)

  For proving the second part of the lemma, note that $w \asymp w_1 + w_2$ for the weights given by
  ${w_1 := \max \{ a_0, a_{1/r}, a_\gamma, a_{-\gamma}, a_{1/r+\gamma}, a_{1/r-\gamma} \}}$ and
  \(
    w_2 := \max
           \bigl\{
              a_{\delta + 1/r} \cdot (v_0^{\vee})^\beta , \,\,
              a_{-\delta} \cdot  v_0^\beta
           \bigr\}
  \).
  It remains therefore to show that
  $w_1 \asymp \Xi_{\sigma,0}$ and $w_2 \asymp \Xi_{\kappa,-\beta}$, with
  $\kappa$ and $\sigma$ as in the statement of the lemma.
  To estimate $w_1$, note that if $I = \{ 0, 1/r, \gamma, -\gamma, 1/r + \gamma, 1/r - \gamma \}$,
  then
  \begin{align*}
    \max_{\tau \in I}
    a_{\tau} (x,s)
    &= \begin{cases}
      |\det A|^{s \cdot \max I}, & \text{if } s \geq 0, \\
      |\det A|^{s \cdot \min I}, & \text{if } s <    0, \\
    \end{cases} \\
     &= \begin{cases}
      |\det A|^{s \cdot (1/r + |\gamma|)}, & \text{if } s \geq 0, \\
      |\det A|^{- s |\gamma|}, & \text{if } s <    0. \\
    \end{cases}
  \end{align*}
  Hence, by the choice of $\gamma$ and $\sigma$, this yields
  $w_1 (x,s) = \max_{\tau \in I} a_{\tau} (x,s) = \theta_\sigma (s) = \Xi_{\sigma,0}(x,s)$.
  Lastly, for estimating $w_2$, note that the estimate for $v$
  in Lemma~\ref{lem:SpecialWeight} implies
  \begin{align*}
    v_0^\vee (x,s)
    &   \asymp \max \{ 1, |\det A|^s \}
              \big( 1 + \min \{ \rho_A (-A^{-s} x), \,\, \rho_A(-A^s A^{-s} x) \} \big) \\
    & =    |\det A|^{\PosPart{s}}  \big( 1 + \min \{ \rho_A(x), \rho_A(A^{-s} x) \} \big) \\
    &  =    |\det A|^{\PosPart{s}}  \eta_{-1} (x,s) .
  \end{align*}
  Similarly, one can show that $v_0(x,s) \asymp |\det A|^{\NegPart{s}} \, \eta_{-1} (x,s)$.
  In case of $s \geq 0$, this gives
  \begin{align*}
    w_2(x,s)
    &\asymp \big( \eta_{-1}(x,s) \big)^{\beta}
     \max \big\{
            |\det A|^{(1/r + \delta + \beta) s} ,
            |\det A|^{-\delta s}
          \big\} \\
    &= \eta_{-\beta}(x,s)  \kappa_1^s \\
    &= \Xi_{\kappa,-\beta} (x,s) ,
  \end{align*}
  since
  \(
    \max \{
           1/r + \delta + \beta,
           -\delta
         \}
    = \max
      \{
        1/r + \alpha + \beta - 1/q,
        -\alpha + 1/q
      \}
    = 1/r + \alpha + \beta - 1/q
  \)
  if and only if $\alpha \geq - \frac{1/r+\beta-2/q}{2}$.
  The estimate for $s < 0$ follows similarly.
\end{proof}

\subsection{Norm estimates}
\label{sub:NormEstimates}

Let $Q \subset G_A $ be a relatively compact unit-neighborhood.
The \emph{two-sided local maximal function} $M_Q F$ of a measurable
function $F \!:\! G_A \! \to \! \CC$ is defined by
\begin{equation}
  M_Q F (g)
  := \esssup_{u,v \in Q} |F(u g v)|.
  \label{eq:LocalMaximalFunction}
\end{equation}
Two properties of this maximal function that will be used below are its measurability
(see, e.g.,~\cite[Lemma~B.4]{holighaus2020schur})
and the estimate $|F| \leq M_Q F$ a.e.\ (see,  e.g., \cite[Lemma~2.3.3]{VoigtlaenderPhDThesis}).

For $p \in (0, \infty)$, $q \in (0, \infty]$, let $r := \min\{ 1, p, q\}$.
The (weighted) \emph{Wiener amalgam space} $\WLwr$ is defined by
\[
  \WLwr
  := \mathcal{W}_Q (L_w^r)
  := \bigg\{
       F \in C (G_A)
       :
       \MQ F \in L^r_w (G_A)
     \bigg\},
\]
where $w : G_A \to [1,\infty)$ is a standard control weight
for $\PT$ as provided by Lemma~\ref{lem:ControlWeights}.

The space $\WLwr$ is independent of the choice of $Q$.%
\footnote{Given two relatively compact unit neighborhoods $P,Q \subset G_A$, one can write
$Q \subset \bigcup_{i=1}^N x_i P$ and $Q \subset \bigcup_{j=1}^M P y_j$, and this easily implies
$M_Q F \leq \sum_{i,j} R_{x_i} L_{y_j^{-1}} (M_P F)$.
The result then follows by noting that $L_w^r(G_A)$ is invariant under left- and right-translations,
since $w$ is submultiplicative.}
In particular, this implies that
\begin{equation}
  F \in \WLwr
  \quad \text{if and only if} \quad
  F^{\vee} \in \WLwr;
  \label{eq:WienerSpaceSymmetry}
\end{equation}
since the condition $w(g) = \Delta^{1/r} (g^{-1}) w(g^{-1})$
in Lemma~\ref{lem:ControlWeights} implies $\| F^\vee \|_{L_w^r} = \| F \|_{L_w^r}$,
and by choosing $Q$ to be symmetric it follows that $M_Q (\Phi^{\vee}) = (M_{Q} \Phi)^{\vee}$.

The following norm estimate will be used repeatedly in the remainder.

\begin{lemma}\label{lem:wavelet-Lpq}
  Let $Q \subset G_A$
  be a relatively compact unit neighborhood.
  Let $\psi \in \SC_0(\R^d)$ and let $w: G_A \to [0, \infty)$ be any weight
  such that $w \lesssim \Xi$, where $\Xi$ is a linear combination of standard envelopes
  (see Definition~\ref{def:standard-env}).

  Then, for all $p,q \in (0,\infty]$, there exists a continuous Schwartz seminorm $\| \cdot \|$
  such that
  \[
    \| W_\psi \varphi \|_{L_w^{p,q}}
    \leq \| M_Q (W_\psi \varphi) \|_{L_w^{p,q}}
    \lesssim \| \varphi \|
  \]
  for all $\varphi \in \Schwartz_0(\R^d)$;
  in particular, $ W_{\psi} \varphi \in \WLwr$ for all $r \in (0, \infty]$.
\end{lemma}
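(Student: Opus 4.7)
My plan begins by reducing the two-sided local maximal function $M_Q(W_\psi\varphi)$ to a family of ordinary wavelet-transform decay estimates, to which Corollary~\ref{cor:wavelet-decay} will then apply. The starting point is the identity
\[
  W_\psi\varphi(u g v)
  = \langle \varphi, \pi(u) \pi(g) \pi(v) \psi \rangle
  = W_{\pi(v)\psi} [\pi(u^{-1}) \varphi] (g),
  \qquad u,v \in Q, \; g \in G_A,
\]
obtained via the unitarity of $\pi$ on $L^2(\R^d)$. Since the vanishing-moment condition defining $\SC_0(\R^d)$ is preserved under the action of $\pi$ (as follows by expanding $(A^s z + x)^\alpha$ after changing variables), both $\pi(v)\psi$ and $\pi(u^{-1})\varphi$ remain in $\SC_0(\R^d)$, so Corollary~\ref{cor:wavelet-decay} will indeed be applicable. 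The first inequality $\|W_\psi\varphi\|_{L^{p,q}_w} \leq \|M_Q(W_\psi\varphi)\|_{L^{p,q}_w}$ is then immediate from the pointwise estimate $|F| \leq M_Q F$ a.e.

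To pass to an estimate uniform in $u,v \in \overline{Q}$, I will use the strong continuity of $\pi$ on the Fr\'echet space $\SC_0(\R^d)$ together with the Banach-Steinhaus theorem: for every continuous seminorm $\|\cdot\|_1$ on $\SC_0(\R^d)$ there exist another continuous seminorm $\|\cdot\|_2$ and a constant $C_Q > 0$ such that $\|\pi(h)\phi\|_1 \leq C_Q \|\phi\|_2$ uniformly for $h \in \overline{Q \cup Q^{-1}}$ and $\phi \in \SC_0(\R^d)$. Combining this equicontinuity with Corollary~\ref{cor:wavelet-decay}, applied with free parameters $L, N \in \N$ to be chosen later, will yield
\[
  M_Q(W_\psi\varphi)(x,s)
  \lesssim (1 + \rho_A(x))^{-L} \, \lambda_-^{-|s| N} \, \|\varphi\|_\ast,
  \qquad (x,s) \in G_A,
\]
for some continuous Schwartz seminorm $\|\cdot\|_\ast$ depending on $L, N, Q, \psi$; measurability of $M_Q(W_\psi\varphi)$ is standard.

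The last step is to choose $L, N$ large enough so that the product of this pointwise bound with $w$ lies in $L^{p,q}(G_A)$. By hypothesis, $w$ is dominated by a finite linear combination of standard envelopes $\Xi_{\sigma, K}$; each of these satisfies $\Xi_{\sigma, K}(x,s) \lesssim |\det A|^{c_\sigma |s|} (1+\rho_A(x))^{\max\{0, -K\}}$ via Definition~\ref{def:standard-env} and Corollary~\ref{cor:quasi-norm_bound}. Choosing $L$ larger than $\max\{0,-K\} + 1/p$ over all envelopes appearing in $w$ ensures, via Lemma~\ref{lem:QuasiNormIntegrability}, that the $x$-factor lies in $L^p(\R^d)$, while taking $N$ sufficiently large absorbs the exponential $|\det A|^{c_\sigma |s|}$ and the modular-type factor $|\det A|^{-s}$ from $d\mu_{G_A}$ into the convergent decay $\lambda_-^{-|s| N}$. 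The main obstacle I foresee is essentially organizational: balancing the three competing exponential rates in $s$ (envelope growth, Corollary decay, Haar measure) and the two competing polynomial rates in $\rho_A(x)$ so that a single pair $(L,N)$ works for all envelopes appearing in the decomposition of $w$. Once this is arranged, the desired estimate $\|M_Q(W_\psi\varphi)\|_{L^{p,q}_w} \lesssim \|\varphi\|_\ast$ follows, and the concluding assertion $W_\psi\varphi \in \WLwr$ is the special case $p = q = r$ with $w$ taken to be the standard control weight of Lemma~\ref{lem:ControlWeights}.
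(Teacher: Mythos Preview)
Your proposal is correct and reaches the same pointwise bound and integration step as the paper, but handles the two-sided maximal function $M_Q$ by a genuinely different mechanism. The paper first bounds $|W_\psi\varphi|$ pointwise by a standard envelope $\Xi_{\tau,L}$ (via Corollary~\ref{cor:wavelet-decay} and Lemma~\ref{lem:HFunctionAlternative}) and then invokes Lemma~\ref{lem:StandardEnvelopeWienerMaximalFunction}, which asserts $M_Q\,\Xi_{\tau,L}\lesssim\Xi_{\tau,L}$; only afterwards does it unpack the envelope back into the product form $(1+\rho_A(x))^{-L}(|\det A|^{L}/\lambda_-^{N})^{|s|}$. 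You instead absorb $M_Q$ directly into the wavelet transform via the intertwining identity $W_\psi\varphi(ugv)=W_{\pi(v)\psi}[\pi(u^{-1})\varphi](g)$, observe that $\pi$ preserves $\SC_0(\R^d)$, and use Banach--Steinhaus equicontinuity over the compact set $\overline{Q\cup Q^{-1}}$ to obtain a \emph{uniform} instance of Corollary~\ref{cor:wavelet-decay}; this yields the cleaner bound $(1+\rho_A(x))^{-L}\lambda_-^{-|s|N}$ without ever passing through the envelope machinery of Lemma~\ref{lem:StandardEnvelopeWienerMaximalFunction}. The paper's route is more self-contained within its envelope calculus and keeps Lemma~\ref{lem:StandardEnvelopeWienerMaximalFunction} available as a reusable tool, whereas your route is shorter and more representation-theoretic but imports the Fr\'echet-space Banach--Steinhaus principle and the (easy but not explicitly stated in the paper) $\pi$-invariance of $\SC_0(\R^d)$. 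The final $L^{p,q}_w$ integration and the choice of $L,N$ are essentially identical in both arguments.
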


\begin{proof}
  Let $1 < \lambda_- < \min_{\lambda \in \sigma(A)} |\lambda|$.
  By Corollaries~\ref{cor:wavelet-decay} and \ref{cor:quasi-norm_bound}
  and Lemma~\ref{lem:HFunctionAlternative}, it follows that
  for all $L,N \in \N$ and $\varphi \in \SC_0(\R^d)$,
  \begin{align*}
    |W_{\psi}\varphi (x,s)|
    & \lesssim \|\varphi\|
                (1 + \rho_A(x))^{-L} \,
                \lambda_{-}^{-|s| N}
      \lesssim \| \varphi \|
                \bigl(1 + |\det A|^{-\PosPart{s}} \, \rho_A(x)\bigr)^{-L}
                \lambda_{-}^{-|s| N} \\
    & \lesssim \| \varphi \|  \Xi_{L, \tau} (x,s),
  \end{align*}
  where $\tau := (\lambda_-^{-N}, \lambda_-^{N})$ and a suitable continuous Schwartz seminorm
  $\| \cdot \|$, depending on $L, N$.
  Lemma~\ref{lem:StandardEnvelopeWienerMaximalFunction}
  yields $M_Q \Xi_{L,\tau} \lesssim \Xi_{L,\tau}$, and hence
  $M_Q [W_\psi \varphi] \lesssim \| \varphi \|  \Xi_{L,\tau}$.
  In addition, Lemma~\ref{lem:HFunctionAlternative} shows that
  \(
    \eta_L (x,s)
    \asymp \bigl(1 + |\det A|^{-\PosPart{s}} \rho_A(x)\bigr)^{-L}
    \leq |\det A|^{|s| L}  (1 + \rho_A (x))^{-L} .
  \)
  Therefore,
  \begin{equation}
    M_Q [W_\psi \varphi] (x,s)
    \lesssim \| \varphi \|
              (1 + \rho_A(x))^{-L}
              (|\det A|^{|L|} / \lambda_{-}^N)^{|s|} ,
    \label{eq:WaveletLpqMainEstimate}
  \end{equation}
  where $L,N \in \N$ are arbitrary and $\| \cdot \|$ is a continuous Schwartz semi-norm
  depending on $N,L$.

  \smallskip{}

  It clearly suffices to prove the claim for the case where
  $w = \theta_{\sigma} \cdot \eta_M$ is a standard envelope, with
  $\sigma = (\sigma_1, \sigma_2) \in (0, \infty)^2$ and $M \in \R$.
  Define $\widetilde{\sigma}:= \max \{ \sigma_1, \sigma_2^{-1} \}$;
  then $\theta_{\sigma}(s) \leq \widetilde{\sigma}^{|s|}$ for all $s \in \R$.
  Since $\eta_M \leq 1 = \eta_0$ for $M \geq 0$, we may assume that $M \leq 0$.
  Then, Lemma~\ref{lem:HFunctionAlternative} and Corollary~\ref{cor:quasi-norm_bound} imply
  \[
    \eta_M(x,s)
    \lesssim \bigl(1 + \rho_A(A^{-\PosPart{s}} x)\bigr)^{-M}
    \lesssim \bigl(1 + |\det A|^{-\PosPart{s}} \rho_A(x) \bigr)^{|M|}
    \leq (1 + \rho_A(x))^{|M|}.
  \]
 Since $|\det A|^{-\frac{s}{q}} \leq |\det A|^{|s|/q}$, we thus have
  \begin{align} \label{eq:RHS_F}
    |\det A|^{-\frac{s}{q}}  w(x,s)  M_Q [W_\psi \varphi] (x,s)
    \lesssim \| \varphi \|
              \bigl(1 + \rho_A(x)\bigr)^{|M|-L}
              \bigl(
                     |\det A|^{|L| + \frac{1}{q}} \, \widetilde{\sigma} / \lambda_{-}^N
                   \bigr)^{|s|}.
  \end{align}
  Therefore, choosing $L, N$  sufficiently large,
  it is an easy consequence of Lemma~\ref{lem:QuasiNormIntegrability} that
  \[
    \| M_Q [W_\psi \varphi] \|_{L_w^{p,q}}
    = \big\|
        |\det A|^{-\frac{s}{q}} \,
        w(\cdot) \,
        M_Q[W_\psi \varphi] (\cdot)
      \big\|_{L^{p,q}}
    \lesssim \| \varphi \| ,
  \]
  which completes the proof.
\end{proof}

\subsection{Coorbit spaces}
\label{sub:CoorbitSpaces}

This section proves wavelet characterizations of anisotropic Triebel-Lizorkin spaces
by identifying them with so-called \emph{coorbit spaces}
(cf.\ \cite{feichtinger1989banach, velthoven2022quasi}).

For technical reasons, coorbit spaces associated with quasi-Banach function spaces
are commonly defined in terms of merely \emph{left} local maximal functions.
For a function $F \in L^{\infty}_{\loc} (G_A)$, its left maximal function is defined by
\[
  M_Q^L F (g) = \esssup_{u \in Q} |F(g u)|, \quad g \in G_A,
\]
where $Q \subset G_A$ is a relatively compact unit neighborhood.

\begin{definition}\label{def:PeetreCoorbitDefinition}
  Let $p \in (0,\infty),q \in (0,\infty]$, $\alpha \in \R$, and $\beta > 0$.
  Let $A \in \mathrm{GL}(d, \mathbb{R})$ be expansive  exponential
  and let $Q \subset G_A$ be a relatively compact, symmetric unit neighborhood.

  For an admissible vector $\psi \in \SC_0(\mathbb{R}^d)$,
  the \emph{coorbit space} $\Co (\PT) = \Co_{\psi} (\PT)$ is the collection
  of all $f \in \SC_0'(\R^d)$ satisfying
  \[
    \| f \|_{\Co(\PT)} = \| f \|_{\Co_{\psi} (\PT)}
    = \big\| M^L_Q (W_{\psi} f) \big\|_{\PT}
    < \infty,
  \]
  and equipped with the norm $\| \, . \, \|_{\Co(\PT)}$.
\end{definition}

In the above definition, note that there exist admissible vectors $\psi \in \SC_0(\R^d)$
by Theorem~\ref{thm:AdmissibleVectorsExistence}.

\begin{remark} \label{rem:basic_coorbit}
The space $\Co(\PT)$ defined in \Cref{def:PeetreCoorbitDefinition}
can be identified with the abstract coorbit spaces defined in \cite[Definition~4.7]{velthoven2022quasi}.
In particular, several basic properties of coorbit spaces, such as independence
of the defining vector $\psi \in \mathcal{S}_0 (\mathbb{R}^d)$ and neighborhood $Q$,
follow directly from the theory \cite{velthoven2022quasi}. See \Cref{lem:abstract_identification}
for details on the identification.
\end{remark}

Anisotropic Triebel-Lizorkin spaces are identified with coorbit spaces by the following proposition.

\begin{proposition}\label{prop:TL_coorbit}
  Let $p \in (0, \infty)$, $q \in (0, \infty]$, $\alpha \in \mathbb{R}$,
  and $\beta > \max \{ 1/p, 1/q \}$.
  Then
  \begin{equation*}
    \TL = \Co \bigl(\PTalt{-\alpha'}\bigr)
    \qquad \text{for} \qquad
    \alpha' = \alpha + \frac{1}{2} - \frac{1}{q}
    .
  \end{equation*}
\end{proposition}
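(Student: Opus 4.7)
The plan is to combine the maximal-function characterization of \Cref{thm:norm_equiv} with the abstract coorbit identification recalled in \Cref{rem:basic_coorbit} to remove the left local maximal function from the coorbit norm. Throughout I identify $\SP$ with $\mathcal{S}_0'(\R^d)$.

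First, by \Cref{thm:AdmissibleVectorsExistence} I can choose an admissible $\psi \in \mathcal{S}_0(\R^d)$ with $\widehat{\psi} \in C_c^\infty(\R^d \setminus \{0\})$ satisfying \eqref{eq:analyzing_positive}. Setting $\varphi := \widetilde{\psi}$ with $\widetilde{\psi}(x) := \overline{\psi(-x)}$, a brief Fourier calculation yields $\widehat{\varphi}(\xi) = \overline{\widehat{\psi}(\xi)}$, so $\varphi$ has the same Fourier support and thus fulfills the hypotheses \eqref{eq:analyzing_support}--\eqref{eq:analyzing_positive} of \Cref{thm:norm_equiv}. A direct computation based on the definition of $\pi$ shows
\begin{equation*}
  W_{\psi}f(x,s) = |\det A|^{s/2}\,(f \ast \varphi_{-s})(x),
\end{equation*}
so the Peetre-type inner supremum appearing in the $\PTalt{-\alpha'}$-norm equals $|\det A|^{s/2}\,\DoubleStar{-s} f(x)$. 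Substituting this into $\|W_{\psi}f\|_{\PTalt{-\alpha'}}$ and changing variables $u = -s$ (which turns the reference measure $|\det A|^{-s}\,ds$ into $|\det A|^{u}\,du$), the aggregated prefactor collapses to $|\det A|^{((\alpha'-1/2)q + 1)u} = |\det A|^{\alpha q u}$ precisely because $\alpha' = \alpha + \tfrac{1}{2} - \tfrac{1}{q}$. The resulting expression is the middle term in the equivalence \eqref{eq:norm_equiv}, so \Cref{thm:norm_equiv} (which is applicable because $\beta > \max\{1/p,1/q\}$) delivers
\begin{equation*}
  \|W_{\psi}f\|_{\PTalt{-\alpha'}} \asymp \|f\|_{\TL},
\end{equation*}
with the usual modification for $q = \infty$.

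The second step is to establish $\|M_Q^L W_{\psi}f\|_{\PTalt{-\alpha'}} \asymp \|W_{\psi}f\|_{\PTalt{-\alpha'}}$. The direction $\gtrsim$ is immediate from $|F| \leq M_Q^L F$. For the direction $\lesssim$, I would apply the reproducing formula $W_{\psi}f = W_{\psi}f \ast W_{\psi}\psi$ of \Cref{cor:reproducing-formula-distr} together with the pointwise estimate $M_Q^L(G \ast H) \leq |G| \ast M_Q^L H$ to obtain $M_Q^L W_{\psi}f \leq |W_{\psi}f| \ast M_Q^L(W_{\psi}\psi)$. Since $\psi \in \mathcal{S}_0(\R^d)$, \Cref{lem:wavelet-Lpq} controls $M_Q^L(W_{\psi}\psi)$ by a standard control weight for $\PTalt{-\alpha'}$ (cf.\ \Cref{lem:ControlWeights}); combined with the right-translation bound from \Cref{lem:TranslationNormBounds} and the $r$-norm structure from \Cref{lem:PeetreNormIsRNorm}, this yields the desired convolution estimate on $\PTalt{-\alpha'}$. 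This is exactly the abstract convolution mechanism of \cite{velthoven2022quasi}, whose inputs the paper has verified in the previous section. Chaining the two equivalences and invoking the independence of the coorbit norm from the window (\Cref{rem:basic_coorbit}) gives $\|f\|_{\Co_{\psi}(\PTalt{-\alpha'})} \asymp \|f\|_{\TL}$ and hence the identification of the two spaces.

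The main obstacle is the quasi-Banach range $\min\{p,q\} < 1$, where a one-dimensional Hardy-Littlewood-type maximal argument cannot be used to absorb the $s$-translations encoded in $M_Q^L$ into the Peetre-type inner supremum, which only handles translations in the $x$-variable. The vertical variation must therefore be absorbed through the reproducing formula and the Wiener-amalgam convolution calculus of \cite{velthoven2022quasi}; the compatibility of the standard control weight $w^{-\alpha',\beta}_{p,q}$ with the right-translation action on $\PTalt{-\alpha'}$, and the verification that the mirrored window $\varphi = \widetilde{\psi}$ satisfies all conditions of \Cref{thm:norm_equiv}, are the two bookkeeping tasks that require care.
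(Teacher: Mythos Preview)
Your first step is correct and matches the paper's Step~1: the identity $W_\psi f(x,s)=|\det A|^{s/2}(f\ast\psi^\ast_{-s})(x)$ together with \Cref{thm:norm_equiv} (applied with $\psi^\ast$) gives $\|W_\psi f\|_{\PTalt{-\alpha'}}\asymp\|f\|_{\TL}$, and the inequality $|W_\psi f|\leq M_Q^L(W_\psi f)$ handles one direction of the remaining equivalence.

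The gap is in your proposed treatment of $\|M_Q^L W_\psi f\|_{\PTalt{-\alpha'}}\lesssim\|W_\psi f\|_{\PTalt{-\alpha'}}$. The pointwise bound $M_Q^L(W_\psi f)\leq |W_\psi f|\ast M_Q^L(W_\psi\psi)$ is fine, but in the quasi-Banach range $\min\{p,q\}<1$ the convolution estimate $\|F\ast H\|_{\PT}\lesssim\|F\|_{\PT}\,\|H\|_{\WLwr}$ that you invoke is \emph{not} available from \cite{velthoven2022quasi}: the $r$-norm discretization argument there unavoidably produces the local maximal function on the \emph{left} factor, i.e.\ one only gets $\|F\ast H\|_{\PT}\lesssim\|M_Q^L F\|_{\PT}\,\|H\|_{\WLwr}$, which is circular when $F=W_\psi f$. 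This is precisely why the abstract coorbit norm in \Cref{def:PeetreCoorbitDefinition} carries the maximal function, and why \Cref{rem:coorbit_improving} singles out the removal of $M_Q^L$ in the quasi-Banach range as a consequence of the present proof rather than of the abstract machinery.

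The paper's Step~3 avoids this circularity by a direct argument tailored to the Peetre structure. After a change of variable one sees that the inner essential supremum over $z\in\R^d$ already absorbs the spatial part of $Q$, so only the vertical shift $t\in[-1,1)$ remains:
\[
  \esssup_{z\in\R^d}\frac{|M_Q^L(W_\psi f)(x+z,s)|}{(1+\rho_A(A^{-s}z))^\beta}
  \lesssim \esssup_{\substack{z\in\R^d\\ t\in[-1,1)}}\frac{|W_\psi f(x+z,s+t)|}{(1+\rho_A(A^{-s}z))^\beta}.
\]
This last expression is then controlled by $\sup_{t\in[-1,1)}(\psi^\ast)^{**}_{-(s+t),\beta}f(x)$, and the $t$-dependence is eliminated by reusing the finite overlap estimate \eqref{eq:det_Peetre} from Step~2 of the proof of \Cref{thm:norm_equiv} (via the auxiliary function $\Phi$ with $\varphi_{s+t}\ast\Phi_s=\varphi_{s+t}$). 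The resulting bound is a finite sum of continuous Peetre maximal functions, which \Cref{thm:norm_equiv} converts back to $\|f\|_{\TL}$. In short, the paper replaces your appeal to an abstract convolution inequality by the concrete band-limited mechanism already developed for \Cref{thm:norm_equiv}; this is what makes the argument go through for all $p\in(0,\infty)$, $q\in(0,\infty]$.
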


\begin{proof}
Throughout, let $\psi \in \SC_0(\R^d)$ be admissible \eqref{eq:admissible-vector}
with compact Fourier support in $\R^d \setminus \{0\}$
satisfying the support condition \eqref{eq:analyzing_positive}.
The existence of such vectors is guaranteed by Theorem~\ref{thm:AdmissibleVectorsExistence}.
Furthermore, let $Q := [-1, 1)^d \times [-1,1)$.
The space $\Co(\PT)$ is independent of these choices by Remark~\ref{rem:basic_coorbit}.
To ease notation, set $\alpha' := \alpha+1/2-1/q$.

The proof is split into three steps.
\\\\
\textbf{Step 1.}
  Let $\psi \in \SC_0(\R^d)$ be an admissible vector \eqref{eq:admissible-vector}
  with compact Fourier support in $\R^d \setminus \{0\}$ satisfying \eqref{eq:analyzing_positive}.
  Then also $\psi^\ast \in \SC_0(\R^d)$ satisfies \eqref{eq:analyzing_positive},
  where $\psi^\ast (t) := \overline{\psi}(-t)$.
  Since
  \begin{equation*}
    \pi(x,s)\psi
    = |\det A|^{-s/2} \, \psi(A^{-s}( \,\cdot - x))
    = |\det A|^{s/2} \, T_x \psi_{-s},
  \end{equation*}
  it follows that
  \begin{align} \label{eq:wavelet_conv}
    W_{\psi}f(x,s) = \langle f, \pi(x,s) \psi \rangle
    = \langle f , |\det A|^{s/2} T_x \psi_{-s} \rangle
    = |\det A|^{s/2} f \ast \psi^\ast_{-s}(x) .
  \end{align}
  Therefore, using $\psi^\ast$ as the analyzing vector in Theorem~\ref{thm:norm_equiv} yields
  \begin{align*}
    \|f\|_{\TL}
    &\asymp \bigg\|
              x \mapsto
              \bigg(
                \int_{\mathbb{R}}
                  \bigg(
                    |\det A|^{-\alpha s}
                    \sup_{z \in \mathbb{R}^d}
                      \frac{|(f \ast \psi^\ast_{-s}) (x+z)|}
                           {(1 + \rho_{A}(A^{-s} z))^{\beta}}
                  \bigg)^q
                ds
              \bigg)^{1/q}
            \bigg\|_{L^p}  \\
       & = \bigg\|
             x \mapsto
             \bigg(
               \int_{\mathbb{R}}
                 \bigg(
                   |\det A|^{-(\alpha+1/2-1/q) s}
                   \sup_{z \in \mathbb{R}^d}
                     \frac{|W_\psi f (x+z,s)|}
                          {(1 + \rho_{A}(A^{-s} z))^{\beta}}
                 \bigg)^q
               \frac{ds}{|\det A|^{s}}
             \bigg)^{1/q}
           \bigg\|_{L^p} \numberthis \label{eq:TL_normequiv} \\
     &= \|W_\psi f\|_{\PTalt{-\alpha'}}
  \end{align*}
  for any $f \in \SC_0'(\R^d)$.
\\\\
\textbf{Step 2.} Since $|F| \leq M^L_Q F$ a.e.\ on $G_A$ for $F \in L^1_{\loc} (G_A)$
(see, e.g., \cite[Lemma~2.3.3]{VoigtlaenderPhDThesis}), it follows by Step~1 that
\[
  \| f \|_{\TL}
  \asymp \|W_\psi f\|_{\PTalt{-\alpha'}}
  \leq   \| M^L_Q (W_{\psi} f) \|_{\PTalt{-\alpha'}}
  =      \| f \|_{\Co (\PTalt{-\alpha'})},
\]
for $f \in \SC_0'(\R^d)$.
\\\\
\textbf{Step 3.} This step will show the remaining estimate
$\| f \|_{\Co (\PTalt{-\alpha'})} \lesssim \| f \|_{\TL}$ for $f \in \SC_0'(\R^d)$.
Parts of the used arguments resemble Step~2 in the proof of Theorem~\ref{thm:norm_equiv}
and will for this reason only be sketched.

First, a direct calculation using the involved definitions and a
change-of-variable yields that
\begin{align*}
  \esssup_{z \in \mathbb{R}^d}
  \frac{|M^L_Q (W_\psi f) (x+z,s)|}
  {(1 + \rho_{A}(A^{-s} z))^{\beta}}
  &= \esssup_{\substack {z \in \mathbb{R}^d \\  (y,t) \in Q}}
  \frac{|W_\psi f (x + z + A^s y,s+t)|}
  {(1 + \rho_{A}(A^{-s} z))^{\beta}} \\
  &= \esssup_{\substack {z \in \mathbb{R}^d \\  (y,t) \in Q}}
  \frac{|W_\psi f (x + z,s+t)|}
  {(1 + \rho_{A}(A^{-s} z - y))^{\beta}} \\
  &\lesssim_{A, \beta} \esssup_{\substack {z \in \mathbb{R}^d \\  t \in [-1,1)}}
  \frac{|W_\psi f (x + z,s+t)|}
  {(1 + \rho_{A}(A^{-s} z))^{\beta}},
  \numberthis \label{eq:maxfunction-estimate}
\end{align*}
where the last inequality follows from
$(1+ \rho_A (A^s z - y))^{-\beta}  \lesssim (1+ \rho_A (A^s z))^{-\beta}$ for $ y \in [-1,1)^d$.

For fixed $s \in \mathbb{R}$ and $t \in [-1,1)$, the identity \eqref{eq:wavelet_conv}
and \Cref{cor:quasi-norm_bound} allows to estimate
\begin{align*}
  |\det A|^{-s/2} \esssup_{\substack {z \in \mathbb{R}^d}}
  \frac{ |W_{\psi} f (x + z, s + t)|}
  {(1 + \rho_{A}(A^{-s} z))^{\beta}}
  &\lesssim_A
    \esssup_{\substack {z \in \mathbb{R}^d}}
    \frac{ |(f \ast \psi^*_{-(s+t)}) (x + z)|}
    {(1 + \rho_{A}(A^{-s} z))^{\beta}} \\
  &\lesssim_{A, \beta}
    \esssup_{\substack {z \in \mathbb{R}^d}}
    \frac{ |(f \ast \psi^*_{-(s+t)}) (x + z)|}
    {(1 + \rho_{A}(A^{-(s+t)} z))^{\beta}} \\
  &=  (\psi^*)^{**}_{-(s+t), \beta} f(x).
\end{align*}
This, combined with \eqref{eq:maxfunction-estimate} and
$|\det A|^{-\alpha t} \lesssim_{A,\alpha} 1$ for $t \in [-1,1)$,
yields that
\begin{align*}
  |\det A|^{-s/2}
  \esssup_{z \in \mathbb{R}^d}
  \frac{|M^L_Q (W_\psi f) (x+z,s)|}
  {(1 + \rho_{A}(A^{-s} z))^{\beta}}
  & \lesssim_{A,\beta}
    |\det A|^{-s/2}
    \esssup_{\substack {z \in \mathbb{R}^d \\  t \in [-1,1)}}
  \frac{|W_\psi f (x + z,s+t)|}
  {(1 + \rho_{A}(A^{-s} z))^{\beta}} \\
  & \lesssim_{A,\beta}
    \esssup_{t \in [-1,1)}
    (\psi^*)^{**}_{-(s+t), \beta} f(x) \\
  &\lesssim_{A, \alpha}
    \esssup_{t \in [-1,1)}
     |\det A|^{-\alpha t}
    (\psi^*)^{**}_{-(s+t), \beta} f(x).
\end{align*}

Now let $q < \infty$.  Then arguments similar to proving
\eqref{eq:det_Peetre} yield $N \in \mathbb{N}$ such that
\begin{align*}
  \bigg(
    |\det A|^{-\alpha (s+t)}
                 (\psi^*)^{**}_{-(s+t), \beta} f(x)
  \bigg)^q
  \lesssim \sum_{\ell = - N}^N
           \bigg(
             |\det A|^{- \alpha (s+\ell)} (\psi^*)^{**}_{-(s+\ell), \beta} f(x)
           \bigg)^q.
\end{align*}
The right-hand side being independent of $t \in [-1,1)$, it follows that
\begin{align*}
  &\bigg(|\det A|^{-(\alpha+1/2)s} \esssup_{\substack {z \in \mathbb{R}^d}}
  \frac{ | M^L_Q(W_{\psi} f) (x + z, s)|}
  {(1 + \rho_{A}(A^{-s} z))^{\beta}}  \bigg)^q \\
  &\quad \quad \quad \quad \quad \quad \quad \quad \lesssim \!\esssup_{t \in [-1,1)}
    \bigg(|\det A|^{-\alpha(s+t)}
    (\psi^*)^{**}_{-(s+t), \beta} f(x)
    \bigg)^q \\
  &\quad \quad \quad \quad \quad \quad \quad \quad \lesssim \sum_{\ell = - N}^N \bigg ( |\det A|^{- \alpha (s+\ell)} (\psi^*)^{**}_{-(s+\ell), \beta} f (x)
    \bigg)^q.
    \numberthis \label{eq:estimate-for-part2}
\end{align*}
Combining this estimate with \Cref{thm:norm_equiv} gives
\begin{align*}
   \| f \|_{\Co(\PTalt{-\alpha'})}
    &= \bigg\|
    \bigg(
    \int_{\mathbb{R}}
    \bigg(
    |\det A|^{-(\alpha+1/2-1/q) s}
    \esssup_{\substack {z \in \mathbb{R}^d}}
    \frac{ | M^L_Q(W_{\psi} f) (\cdot + z, s)|}
    {(1 + \rho_{A}(A^{-s} z))^{\beta}}
    \bigg)^q
    \frac{ds}{|\det A|^{s}}
    \bigg)^{1/q}
    \bigg\|_{L^p} \\
  & \quad \quad \quad
    \lesssim \bigg\|
    \bigg(
    \sum_{\ell = - N}^N
    \int_{\mathbb{R}}
    \bigg ( |\det A|^{- \alpha (s+\ell)} (\psi^*)^{**}_{-(s+\ell), \beta} f
    \bigg)^q
    ds
    \bigg)^{1/q}
    \bigg\|_{L^p}    \\
  & \quad \quad \quad
     \lesssim \bigg\|
             \bigg(
               \int_{\mathbb{R}}
                 \big ( |\det A|^{ \alpha s} (\psi^*)^{**}_{s, \beta} f  \big)^q
               ds
             \bigg)^{1/q}
           \bigg\|_{L^p} \\
  & \quad \quad \quad
    \asymp \| f \|_{\TL}.
\end{align*}
The case $q = \infty$ follows from \eqref{eq:estimate-for-part2} by similar arguments.
\end{proof}

\begin{remark} \label{rem:coorbit_improving}
For $p \in [1,\infty)$ and $q \in [1,\infty]$, the coorbit spaces $\Co(\PT)$
of \Cref{def:PeetreCoorbitDefinition} are genuine Banach spaces,
which are well-known to admit the equivalent norm
\[
  \| f \|_{\Co (\PT)}
  := \| M^L_Q (W_{\psi} f ) \|_{\PT}
  \asymp \|W_{\psi} f  \|_{\PT},
\]
see, e.g., \cite[Theorem~8.3]{feichtinger1989banach2} and \cite[Proposition~4.10]{velthoven2022quasi}.
The proof of Proposition~\ref{prop:TL_coorbit} shows that the same holds for the Peetre-type spaces
$\PT$ in the quasi-Banach range $\min \{ p,q \} < 1$.
\end{remark}

\section{Molecular characterizations}
\label{sec:molecular}

This section provides new molecular characterizations of anisotropic Triebel-Lizorkin spaces.
The results will be obtained from \cite{velthoven2022quasi, romero2020dual}
by exploiting the coorbit identification of Triebel-Lizorkin spaces
provided by \Cref{prop:TL_coorbit}.

\subsection{Peetre-type sequence space}
\label{sec:peetre_discrete}

Let $\Gamma \subset G_A$ be arbitrary and let $U \subset G_A$
be a relatively compact unit neighborhood.
The set $\Gamma$ is \emph{relatively separated} if
\[
  \sup_{g \in G}
    \# \big( \Gamma \cap g U)
  = \sup_{g \in G}
      \sum_{\gamma \in \Gamma}
        \Indicator_{\gamma U^{-1}} (g)
  < \infty
\]
and is called \emph{$U$-dense} if $G = \bigcup_{\gamma \in \Gamma} \gamma U$.
The set $\Gamma$ is \emph{$U$-separated} in $G$ if $\mu_{G_A} (\gamma U \cap \gamma' U) = 0$
for all $\gamma, \gamma' \in \Gamma$ with $\gamma \neq \gamma'$
and is \emph{separated} if it is $U$-separated for some unit neighborhood $U$.
Any separated set is relatively separated.
Furthermore, the notion of being relatively separated is independent of
the choice of the relatively compact unit neighborhood $U$.

\begin{definition} \label{def:peetre_seq}
  Let $\Gamma \subset G_A$ be relatively separated and
  let $U \subset G_A$ be a relatively compact unit neighborhood.
  For $p \in (0,\infty),q \in (0,\infty]$, $\alpha \in \R$, and $\beta > 0$,
  the sequence space $\PTseq (\Gamma, U)$ associated to the Peetre-type space $\PT (G_A)$
  is defined as the set of all
  $c = (c_{\gamma})_{\gamma \in \Gamma} \in \mathbb{C}^{\Gamma}$ such that
  \[
    \| c \|_{\PTseq}
    := \bigg\|
         \sum_{\gamma \in \Gamma}
           |c_{\gamma}| \Indicator_{\gamma U}
       \bigg\|_{\PT}
    < \infty
  \]
  and equipped with the (quasi)-norm $\| \cdot \|_{\PTseq}$.
\end{definition}

The sequence space $\PTseq (\Gamma, U)$ is a well-defined quasi-Banach space,
independent of the choice of the defining neighborhood $U$;
see, e.g.,~\cite{rauhut2007wiener,feichtinger1989banach}
or \cite[Lemma~2.3.16]{VoigtlaenderPhDThesis}.

\begin{remark}\label{rem:sequence_regular}
  The Triebel-Lizorkin space $\TLseq$ defined in \eqref{eq:TL_sequence}
  can be identified with a sequence space $\PTseq$ via Theorem~\ref{thm:maximal_sequence}.
  To be more explicit, if $\Gamma = \{(A^{-j} k, -j) : j \in \mathbb{Z}, k \in \mathbb{Z}^d \}$,
  then the map
  \[
    \PTseq\big(\Gamma, [-1,1)^d \times [-1,1)\big) \to \TLseq[-(\alpha + \frac{1}{2} - \frac{1}{q})],
    \quad
    (c_\gamma)_{\gamma \in \Gamma}
    \mapsto \big( c_{(A^{-j} k, -j)} \big)_{(j,k) \in \Z \times \Z^d}
  \]
  is an isomorphism of (quasi)-Banach spaces, for
  any $p \in (0,\infty)$, $q \in (0,\infty]$, $\alpha \in \mathbb{R}$
  and $\beta > \max\{1/p, 1/q \}$.
\end{remark}

\subsection{Molecular systems and the extended pairing}
\label{sub:MoleculesAndOperators}

Following \cite{grochenig2009molecules, velthoven2022quasi, romero2020dual},
the notion of molecular systems used in this paper is defined through properties
of the associated wavelet transform.

\begin{definition}
  Let $\Gamma \subset G_A$ be  relatively separated
  and let $\psi \in L^2 (\mathbb{R}^d)$ be an admissible vector
  such that $W_{\psi} \psi \in \WLwr$, where $w = w_{p,q}^{\alpha, \beta} : G_A \to [1, \infty)$
  is the standard control weight of Lemma~\ref{lem:ControlWeights}.

  A family $(\phi_{\gamma})_{\gamma \in \Gamma}$ of vectors $\phi_{\gamma} \in L^2 (\mathbb{R}^d)$
  is an \emph{$L^r_w$-molecular system} if there exists an \emph{envelope}
  $\Phi \in \WLwr$ such that
  \begin{align} \label{eq:molecule_condition}
    |W_{\psi} \phi_{\gamma} (x) |
    \leq L_{\gamma} \Phi (x),
    \end{align}
  for $x \in G_A$ and $\gamma \in \Gamma$.
\end{definition}

\begin{remark} \label{rem:molecule_independent}
  The condition \eqref{eq:molecule_condition} is independent
  of the choice of the window $\psi$ in the following sense:
  If $\psi,\varphi \in L^2(\R^d)$ are both admissible satisfying
  $W_\psi \varphi, W_\psi \psi, W_\varphi \varphi \in \WLwr$, then
  $(\phi_\gamma)_{\gamma \in \Gamma} \subset L^2(\R^d)$ is a molecular system
  with respect to the window $\psi$ if and only if the same holds with respect to the window
  $\varphi$; see \cite[Lemma~6.3]{velthoven2022quasi}.
\end{remark}

In order to treat molecular systems consisting of general vectors in $L^2 (\mathbb{R}^d)$
in a meaningful manner, we define the following extended dual pairing.

\begin{definition}\label{def:ExtendedDualPairing}
Let $\psi \in \mathcal{S}_0 (\mathbb{R}^d)$ be admissible.
For $f \in \mathcal{S}'_0 (\mathbb{R}^d)$ and $\phi \in L^2 (\mathbb{R}^d)$,
define the \emph{extended pairing} as
\[
  \langle f, \phi \rangle_{\psi}
  := \int_{G_{A}}
       \langle f, \pi(x,s) \psi \rangle \,\,
       \overline{\langle \phi, \pi (x,s) \psi \rangle_{L^2}} \,\,
     d\mu_{G_A} (x,s) ,
\]
provided that the integral converges.
\end{definition}

\begin{remark}
  Let $\psi \in \mathcal{S}_0 (\mathbb{R}^d)$ be admissible.
  \begin{enumerate}[(a)]
    \item If $f \in \mathcal{S}'_0 (\mathbb{R}^d)$ and $\phi \in \mathcal{S}_0 (\mathbb{R}^d)$,
          then the extended pairing $\langle f, \phi \rangle_{\psi}$ coincides with the standard
          conjugate linear pairing $\langle f, \phi \rangle := f (\overline{\phi})$
          by Lemma~\ref{lem:weak-continuity}.

    \item If both $f, \phi \in L^2 (\mathbb{R}^d)$, then $\langle f, \phi \rangle_{\psi}$ coincides
          with the $L^2$-inner product $\langle f, \phi \rangle$
          by Equation~\eqref{eq:ReconstructionFormula}.
  \end{enumerate}
\end{remark}

For showing that the extended pairing defined in \Cref{def:ExtendedDualPairing}
is well-defined, in the sense that it does not depend on the choice of admissible vectors,
the following approximation property will be used.

\begin{lemma}\label{lem:PseudoWeakStarDensity}
  Let $f \in \mathcal{S}_0' (\mathbb{R}^d)$ and let $\psi \in \mathcal{S}_0 (\mathbb{R}^d)$
  be admissible with $W_\psi f \in L_{1/w}^\infty(G_A)$,
  where $w : G \to [1,\infty)$ denotes the standard control weight provided by
  \Cref{lem:ControlWeights}.

  There exists a sequence $(f_n)_{n \in \N}$ of functions $f_n \in L^2(\R^d)$
  with the following properties:
  \begin{enumerate}[(i)]
    \item As $n \to \infty$, $f_n \to f$ with weak-$\ast$-convergence
          in $\mathcal{S}_0' (\mathbb{R}^d)$;

    \item For each $\varphi \in \mathcal{S}_0 (\mathbb{R}^d)$,
          there is a constant $C = C(\varphi,\psi,f) > 0$ such that
          \[ |W_\varphi f_n (g)| \leq C  w(g), \quad g \in G_A.\]
  \end{enumerate}
\end{lemma}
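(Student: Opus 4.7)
The natural strategy is to approximate $f$ by truncating the reconstruction formula from Lemma~\ref{lem:weak-continuity} / Corollary~\ref{cor:reproducing-formula-distr}. Choose an exhausting sequence $K_1 \subset K_2 \subset \cdots$ of relatively compact Borel sets in $G_A$ with $\bigcup_n K_n = G_A$, and define
\[
  f_n
  := \int_{K_n}
       W_\psi f(g) \, \pi(g)\psi
     \, d\mu_{G_A}(g),
\]
interpreted as a Bochner integral in $L^2(\R^d)$. It is well-defined because $W_\psi f$ is continuous (Section~\ref{sub:ExtendedWaveletTransform}), $g \mapsto \pi(g)\psi$ is continuous into $L^2(\R^d)$, and $K_n$ is relatively compact; in particular $f_n \in L^2(\R^d)$.

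For (i), given $\varphi \in \SC_0(\R^d)$, the $L^2$-inner product yields
\[
  \langle f_n,\varphi \rangle
  = \int_{K_n}
      W_\psi f(g) \, \overline{W_\psi \varphi(g)}
    \, d\mu_{G_A}(g).
\]
Applying Lemma~\ref{lem:wavelet-Lpq} with $p=q=1$ (noting that $L^{1,1}_w$ coincides with $L^1_w(G_A,\mu_{G_A})$), we obtain $W_\psi\varphi \in L^1_w(G_A)$. Since $|W_\psi f(g)| \leq \|W_\psi f\|_{L^\infty_{1/w}} \cdot w(g)$ by hypothesis, the integrand is dominated by the $L^1$-function $\|W_\psi f\|_{L^\infty_{1/w}} \cdot w \cdot |W_\psi \varphi|$. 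Letting $n \to \infty$, dominated convergence combined with Lemma~\ref{lem:weak-continuity} gives $\langle f_n,\varphi \rangle \to \langle f,\varphi \rangle$, which is the desired weak-$\ast$ convergence in $\SC_0'(\R^d)$.

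For (ii), given $\varphi \in \SC_0(\R^d)$ and $h \in G_A$, the intertwining property $\pi(h^{-1})\pi(g) = \pi(h^{-1}g)$ together with sesquilinearity yields $\langle \pi(g)\psi, \pi(h)\varphi \rangle_{L^2} = \overline{W_\psi\varphi(h^{-1}g)}$, so that
\[
  W_\varphi f_n(h)
  = \langle f_n, \pi(h)\varphi \rangle_{L^2}
  = \int_{K_n}
      W_\psi f(g) \, \overline{W_\psi\varphi(h^{-1}g)}
    \, d\mu_{G_A}(g).
\]
Invoking submultiplicativity $w(g) \leq w(h) \, w(h^{-1}g)$ (Lemma~\ref{lem:ControlWeights}) and the left-invariance of $\mu_{G_A}$, the substitution $g' := h^{-1}g$ gives
\[
  |W_\varphi f_n(h)|
  \leq \|W_\psi f\|_{L^\infty_{1/w}} \, w(h)
       \int_{G_A}
         w(g') \, |W_\psi\varphi(g')|
       \, d\mu_{G_A}(g')
  = C \, w(h),
\]
with $C := \|W_\psi f\|_{L^\infty_{1/w}} \cdot \|W_\psi\varphi\|_{L^1_w}$ independent of $n$ and of $h$.

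The main obstacle is not any single step but rather ensuring all integrability assumptions align: the bound on $W_\psi f$ is only by $w$ (not better), so obtaining an $L^1$ dominating function in part (i) and the uniform pointwise bound in part (ii) both rely crucially on $W_\psi\varphi \in L^1_w$, which is precisely what Lemma~\ref{lem:wavelet-Lpq} delivers for Schwartz functions with vanishing moments. A minor care is needed to verify that the $\SC_0'$--$\SC_0$ pairing of the $L^2$-function $f_n$ against $\varphi$ reduces to the $L^2$ inner product used in the computation, which is immediate from the sesquilinear convention $\langle f,\varphi \rangle := f(\overline{\varphi})$ fixed in the paper.
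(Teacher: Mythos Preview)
Your proposal is correct and follows essentially the same approach as the paper: truncate the reconstruction formula over an exhausting family of relatively compact sets, verify Bochner integrability, and use dominated convergence together with $W_\psi\varphi \in L^1_w$ from Lemma~\ref{lem:wavelet-Lpq}. Your treatment of (i) is in fact slightly more direct than the paper's, which first establishes pointwise convergence $W_\psi f_n \to W_\psi f$ via the reproducing formula before applying dominated convergence a second time, whereas you pass to the limit in $\langle f_n,\varphi\rangle$ in one step.
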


\begin{proof}
  For $n \in \N$, define $\Omega_n := [-n,n]^d \times [-n,n]$ and
  $F_n := W_\psi f \cdot \Indicator_{\Omega_n}$.
  Note that since $w$ is continuous and $\Omega_n \subset G_A$ is compact,
  for each $n \in \N$ there is $C_n > 0$ satisfying $w(x) \leq C_n$ for all $x \in \Omega_n$.
  This implies
  \(
    |F_n(\cdot)|
    \leq C_n
         \| W_\psi f \|_{L_{1/w}^\infty}
         \Indicator_{\Omega_n}(\cdot)
    \in L^1(G_A)
    .
  \)
  Since  $g \mapsto \pi(g) \psi$ is continuous from $G_A$ into $L^2(\R^d)$ and
  $\| F_n(\cdot) \, \pi(\cdot)\psi \|_{L^2} \leq \| \psi \|_{L^2}  |F_n(\cdot)| \in L^1(G_A)$,
  this shows that ${f_n := \int_{G_A} F_n(g) \, \pi(g) \psi \, d \mu_{G_A}(g) \in L^2(\R^d)}$
  is well-defined as a Bochner integral.

  Let $\varphi \in \mathcal{S}_0 (\mathbb{R}^d)$ be arbitrary.
  For $h \in G_A$, a direct calculation gives
  \begin{align*}
    |W_\varphi f_n (h)|
          & \leq \int_{G_A}
             |F_n(g)|  |\langle \pi(g) \psi, \pi(h) \varphi \rangle|
           \, d \mu_{G_A}(g)  \\
           &\leq \int_{G_A}
             |W_\psi f(g)|  |W_\psi \varphi (h^{-1} g)|
           \, d \mu_{G_A}(g) \\
    & \leq \| W_\psi f \|_{L_{1/w}^{\infty}} \!
           \int_{G_A} \!\!\!\!
             w(h) \, w(h^{-1} \! g) \, |W_\psi \varphi (h^{-1} g)|
           \, d \mu_{G_A}(g) \\
    & \leq w(h) \, \| W_\psi f \|_{L_{1/w}^{\infty}} \, \| W_\psi \varphi \|_{L_w^1} ,
  \end{align*}
  where $\| W_\psi \varphi \|_{L_w^1} < \infty$ by \Cref{lem:wavelet-Lpq}.
  This proves (ii).

  To prove (i), applying the dominated convergence theorem
  and Corollary~\ref{cor:reproducing-formula-distr} gives
  \begin{align*}
   \lim_{n \to \infty} W_\psi f_n (h)
    &= \lim_{n \to \infty} \int_{G_A}
          F_n (g)  \langle \pi(g) \psi, \pi(h) \psi \rangle
        \, d \mu_{G_A} (g) \\
    &=
      (W_\psi f \ast W_\psi \psi)(h)
      = W_\psi f (h).
  \end{align*}
  As shown above, $W_\psi f_n \to W_\psi f$
  pointwise and $|W_\psi f_n (g)| \leq C  w(g)$. On the other hand,
  given $\varphi \in \mathcal{S}_0 (\mathbb{R}^d)$,
  \Cref{lem:wavelet-Lpq} shows that $W_\psi \varphi \in L_w^1(G_A)$.
  Therefore, a combination of Corollary~\ref{cor:reproducing-formula-distr}
  with the dominated convergence theorem shows that
  \[
    \langle f, \varphi \rangle
    = \lim_{n \to \infty}
        \int_{G_A}
          W_\psi f_n (g)  \overline{W_\psi \varphi (g)}
        \, d \mu_{G_A} (g)
    = \lim_{n \to \infty}
        \langle f_n, \varphi \rangle ,
  \]
  proving that $f_n \to f$ with respect to the weak-$\ast$-topology on $\mathcal{S}'_0 (\R^d)$.
\end{proof}

\begin{lemma}\label{lem:ExtendedDualPairing}
  Let $w : G_A \to [1,\infty)$ be a standard control weight as in \Cref{lem:ControlWeights}.
  Let $\psi \in \mathcal{S}_0 (\mathbb{R}^d)$ be admissible.

 If $f \in \mathcal{S}_0'(\mathbb{R}^d)$ satisfies $W_\psi f \in L_{1/w}^\infty(G_A)$
 and $\phi \in L^2(\R^d)$ satisfies $W_\psi \phi \in L_w^1(G_A)$,
 then $\langle f, \phi \rangle_{\psi}$ is well-defined
 and independent of the choice of $\psi \in \mathcal{S}_0 (\mathbb{R}^d)$.
\end{lemma}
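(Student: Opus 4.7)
The plan is to first verify convergence of the defining integral via an immediate estimate, and then establish $\psi$-independence by a weak-$\ast$ approximation of $f$ through $L^2$-functions supplied by \Cref{lem:PseudoWeakStarDensity}, for which the $L^2$ identity reduces the extended pairing to the ordinary $L^2$-inner product.

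For well-definedness, the hypotheses $W_\psi f \in L_{1/w}^\infty(G_A)$ and $W_\psi \phi \in L_w^1(G_A)$ give at once
\[
  \int_{G_A}
    |W_\psi f(g)| \, |W_\psi \phi(g)|
  \, d\mu_{G_A}(g)
  \leq \| W_\psi f \|_{L_{1/w}^\infty} \cdot \| W_\psi \phi \|_{L_w^1}
  < \infty,
\]
so the integral defining $\langle f, \phi \rangle_\psi$ is absolutely convergent.

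For independence, let $\psi_1, \psi_2 \in \mathcal{S}_0(\mathbb{R}^d)$ both be admissible vectors satisfying the hypotheses. Apply \Cref{lem:PseudoWeakStarDensity} with the window $\psi_1$ to produce a sequence $(f_n)_{n \in \mathbb{N}} \subset L^2(\mathbb{R}^d)$ such that $f_n \to f$ in the weak-$\ast$ topology of $\mathcal{S}_0'(\mathbb{R}^d)$ and such that, for each $\varphi \in \mathcal{S}_0(\mathbb{R}^d)$, there is a constant $C_\varphi > 0$ with $|W_\varphi f_n(g)| \leq C_\varphi \, w(g)$ for all $g \in G_A$ and $n \in \mathbb{N}$. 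Since $f_n \in L^2(\mathbb{R}^d)$, the reconstruction formula \eqref{eq:ReconstructionFormula} identifies $\langle f_n, \phi \rangle_{\psi_j}$ with the $L^2$-inner product $\langle f_n, \phi \rangle_{L^2}$ for each $j \in \{1,2\}$, so in particular $\langle f_n, \phi \rangle_{\psi_1} = \langle f_n, \phi \rangle_{\psi_2}$ for every $n$.

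It then remains to pass to the limit on each side. For fixed $j$ and $g \in G_A$, the weak-$\ast$ convergence of $f_n$ applied to $\pi(g)\psi_j \in \mathcal{S}_0(\mathbb{R}^d)$ gives the pointwise convergence $W_{\psi_j} f_n(g) \to W_{\psi_j} f(g)$. The product $|W_{\psi_j} f_n(g)| \, |W_{\psi_j} \phi(g)|$ is dominated by the integrable function $C_{\psi_j} \, w(g) \, |W_{\psi_j} \phi(g)|$, so Lebesgue's dominated convergence theorem yields $\langle f_n, \phi \rangle_{\psi_j} \to \langle f, \phi \rangle_{\psi_j}$. Letting $n \to \infty$ in $\langle f_n, \phi \rangle_{\psi_1} = \langle f_n, \phi \rangle_{\psi_2}$ concludes the proof. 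The main work has already been absorbed into \Cref{lem:PseudoWeakStarDensity}; the delicate point is precisely its quantitative conclusion that $|W_\varphi f_n(g)|$ is controlled by $w(g)$ for \emph{every} test window $\varphi$, since independence of $\psi$ requires simultaneous control along two different wavelet transforms.
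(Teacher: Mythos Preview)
Your argument follows the same strategy as the paper---approximate $f$ by the $L^2$-functions from \Cref{lem:PseudoWeakStarDensity}, use that the extended pairing collapses to the $L^2$-inner product on $L^2 \times L^2$, and pass to the limit by dominated convergence---and this part is correct.

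There is, however, one genuine gap. You write ``let $\psi_1, \psi_2 \in \mathcal{S}_0(\mathbb{R}^d)$ both be admissible vectors \emph{satisfying the hypotheses}.'' But the lemma only assumes $W_\psi f \in L_{1/w}^\infty$ and $W_\psi \phi \in L_w^1$ for \emph{one} fixed $\psi$; the independence claim is that the value is unchanged when $\psi$ is replaced by \emph{any} other admissible $\psi' \in \mathcal{S}_0(\R^d)$, without assuming a priori that $W_{\psi'} f \in L_{1/w}^\infty$ or $W_{\psi'} \phi \in L_w^1$. Your dominated-convergence step for $\psi_2$ needs $w \cdot |W_{\psi_2}\phi| \in L^1$, and the very definition of $\langle f, \phi\rangle_{\psi_2}$ needs both conditions for $\psi_2$; you have simply assumed these.

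The paper supplies exactly this missing step as its opening paragraph: using the reproducing formula $W_\varphi f = W_\psi f \ast W_\varphi \psi$ (\Cref{cor:reproducing-formula-distr}) together with $W_\varphi \psi \in L_w^1$ (\Cref{lem:wavelet-Lpq}), one checks that $W_\psi f \in L_{1/w}^\infty$ implies $W_\varphi f \in L_{1/w}^\infty$ (via $w(h)/w(g) \leq w(g^{-1}h)$) and that $W_\psi \phi \in L_w^1$ implies $W_\varphi \phi \in L_w^1$ (via $L_w^1 \ast L_w^1 \subset L_w^1$), for every $\varphi \in \mathcal{S}_0(\R^d)$. Once this transference is established, your argument goes through verbatim and coincides with the paper's.
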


\begin{proof}
We first show for $Y = L_{1/w}^\infty(G_A)$ or $Y = L_w^1(G_A) $ that
if $f \in \mathcal{S}_0' (\mathbb{R}^d)$ satisfies $W_\psi f \in Y$,
then $W_\varphi f \in Y$ for every $\varphi \in \mathcal{S}_0(\mathbb{R}^d)$.
For this, first note that ${W_\varphi f = W_\psi f \ast W_\varphi \psi}$; see
Corollary~\ref{cor:reproducing-formula-distr}.
If $Y = L_w^1(G_A)$, the submultiplicativity of $w$  implies
$Y \ast L_w^1(G_A) \subset L_w^1(G_A)$, while Lemma~\ref{lem:wavelet-Lpq} shows
that $W_\varphi \psi \in L_w^1(G_A)$.
Thus, $W_\varphi f \in L^1_{w} (G_A)$.
In case of $Y = L_{1/w}^\infty (G_A)$, note that
$|(W_\varphi \psi) (h)| = |(W_\psi \varphi) (h^{-1})|$, and
\begin{align*}
  \frac{1}{w(g)} \, \bigl| W_\varphi f (g) \bigr|
   &\leq \int_{G_A}
           \frac{1}{w(h)} \, \bigl|W_\psi f (h)\bigr|
            \frac{w(h)}{w(g)} \, \bigl|W_\psi \varphi (g^{-1} h)\bigr|
         \, d \mu_{G_A} (h) \\
    &\leq \| W_\psi f \|_{L_{1/w}^\infty}
          \| W_\psi \varphi \|_{L_w^1}
\end{align*}
for all $g \in G_A$; here, we used that
\(
  \frac{w(h)}{w(g)}
  \leq \frac{w(g)w(g^{-1}h)}{w(g)}
  =    w(g^{-1} h)
  .
\)
Thus, also $W_\varphi f \in L_{1/w}^\infty(G_A)$.

Since $W_\psi f \in L_{1/w}^\infty(G_A) $ and $W_\psi \phi \in L_w^1(G_A)$ by assumption,
it is clear that $\langle f, \phi \rangle_{\psi} \in \CC$ is well-defined.
Next, let $\varphi \in \mathcal{S}_0 (\mathbb{R}^d)$ be admissible,
and let $(f_n)_{n \in \N} \subset L^2(\R^d)$ as provided by Lemma~\ref{lem:PseudoWeakStarDensity}.
Note that
\(
  W_\phi f_n (g)
  = \langle f_n, \pi(g) \phi \rangle \to \langle f, \pi(g) \phi \rangle
  = W_\phi f (g)
\)
for all $\phi \in \mathcal{S}_0 (\mathbb{R}^d)$ and $g \in G_A$.
Therefore, an application of the dominated convergence theorem implies
\begin{align*}
  \langle f, \phi \rangle_{\psi}
  & = \langle W_\psi f, W_\psi \phi \rangle
    = \lim_{n \to \infty}
        \langle W_\psi f_n , W_\psi \phi \rangle_{L^2}
   = \lim_{n \to \infty}
        \langle W_\varphi f_n , W_\varphi \phi \rangle_{L^2}
    = \langle f, \phi \rangle_{\varphi} ,
\end{align*}
where we used the isometry of $W_\varphi, W_\psi : L^2(\R^d) \to L^2(G_A)$.
\end{proof}

\subsection{Molecular decompositions}
\label{sub:MolecularDecompositions}

This section provides the proofs of Theorems~\ref{thm:intro2} and \ref{thm:intro3}.

We first show the following auxiliary claim which is implicit in the statements of
\Cref{thm:intro2,thm:intro3}.

\begin{lemma}\label{lem:SmoothWindowIndependence}
  Let $p \in (0,\infty)$, $q \in (0,\infty]$, $\alpha \in \R$, and $\beta > 0$.
  Let $w = w_{p,q}^{\alpha,\beta} : G_A \to [1,\infty)$ be a standard control weight
  as defined in \Cref{lem:ControlWeights} and let $r = \min \{ 1,p,q \}$.
  If $\psi \in L^2(\R^d)$ and if $\varphi \in \mathcal{S}_0(\R^d)$ is admissible with
  $W_{\varphi} \psi \in \WLwr$, then $W_\phi \psi \in \WLwr$
  for all $\phi \in \mathcal{S}_0 (\R^d)$.
\end{lemma}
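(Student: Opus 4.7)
The plan is to change the analyzing window from $\varphi$ to $\phi$ via the reproducing formula. Since $\varphi \in \mathcal{S}_0(\R^d)$ is admissible and $\psi \in L^2(\R^d)$ embeds canonically into $\mathcal{S}_0'(\R^d)$, Corollary~\ref{cor:reproducing-formula-distr}, applied with $\varphi$ playing the role of the admissible window and $\phi$ playing the role of the new analyzer, yields the identity
\[
  W_\phi \psi = W_\varphi \psi \ast W_\phi \varphi .
\]

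First I would verify that both factors on the right-hand side live in suitable Wiener amalgam spaces. The hypothesis gives $W_\varphi \psi \in \WLwr$ directly. For the second factor, both $\phi$ and $\varphi$ belong to $\mathcal{S}_0(\R^d)$, so Lemma~\ref{lem:wavelet-Lpq} (with the standard control weight $w$) produces $W_\phi \varphi \in \mathcal{W}(L^s_w)$ for every $s \in (0,\infty]$; in particular $W_\phi \varphi \in \mathcal{W}(L^1_w) \cap \WLwr$.

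Next, I would invoke the standard convolution relation for Wiener amalgam spaces on the locally compact group $G_A$: since $w$ is submultiplicative and $w \geq 1$, the inclusion
\[
  \WLwr \ast \mathcal{W}(L^1_w) \hookrightarrow \WLwr
\]
holds in the full quasi-Banach range $r \in (0,\infty]$, with a quantitative operator norm bound. Applying this to $W_\varphi \psi$ and $W_\phi \varphi$ immediately gives $W_\phi \psi \in \WLwr$ together with an estimate of the form
\[
  \| W_\phi \psi \|_{\WLwr}
  \lesssim \| W_\varphi \psi \|_{\WLwr} \, \| W_\phi \varphi \|_{\mathcal{W}(L^1_w)} .
\]

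The only ingredient that is not immediate is the convolution relation in the last step; this is the Young-type estimate for Wiener amalgams developed in the coorbit framework of \cite{velthoven2022quasi}, so the main ``obstacle'' is essentially bookkeeping---checking that the two-sided Wiener amalgam $\WLwr$ (defined via the maximal function $M_Q$ of~\eqref{eq:LocalMaximalFunction}) together with the control weight $w$ from Lemma~\ref{lem:ControlWeights} fits into that framework. Everything else reduces to the reproducing formula and Lemma~\ref{lem:wavelet-Lpq}.
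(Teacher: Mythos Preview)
Your approach is essentially the same as the paper's: reproducing formula, then a Wiener amalgam convolution relation. One caution: the inclusion $\WLwr \ast \mathcal{W}(L^1_w) \hookrightarrow \WLwr$ that you invoke is \emph{not} the one established in \cite[Corollary~3.9]{velthoven2022quasi} and is precisely the kind of Young-type estimate that can fail for $r<1$ on non-unimodular groups (the introduction of the present paper flags exactly this issue with \cite{rauhut2007wiener}). The paper instead uses $\WLwr \ast \WLwr \hookrightarrow \WLwr$, which \cite[Corollary~3.9]{velthoven2022quasi} does prove once one checks that $w$ is an $r$-weight (continuous, submultiplicative, $w\geq 1$, and $w(g)=\Delta^{1/r}(g^{-1})w(g^{-1})$). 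Since you already observed $W_\phi\varphi\in\WLwr$, simply replace your $\mathcal{W}(L^1_w)$ factor by $\WLwr$ and cite that result; the rest of your argument goes through unchanged.
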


\begin{proof}
  By \Cref{eq:reproducing-formula-L2}, the identity
  \(
    W_{\phi} \psi
    = W_{\varphi} \psi \ast W_{\phi} \varphi
  \)
  holds.
  Note that $W_\phi \varphi \in \WLwr$ by \Cref{lem:wavelet-Lpq} and $W_\varphi \psi \in \WLwr$
  by assumption.
  The weight $w := w_{p,q}^{\alpha,\beta}$ is continuous
  and submultiplicative with $w \geq 1$
  and satisfies $w(g) = w(g^{-1}) \Delta^{1/r}(g^{-1})$, meaning that it is an \emph{$r$-weight}
  in the terminology of \cite[Definition~3.1]{velthoven2022quasi}.
  Therefore, using the convolution relation $\WLwr \ast \WLwr \hookrightarrow \WLwr$
  from \cite[Corollary~3.9]{velthoven2022quasi}, we see that $W_\phi \psi \in \WLwr$, as claimed.
\end{proof}

\begin{proof}[Proof of \Cref{thm:intro2}]
 Let $\varphi \in \mathcal{S} (\R^d)$ be an admissible vector satisfying $\widehat{\varphi} \in C_c^{\infty} (\mathbb{R}^d \setminus \{0\})$ and the support condition
  \eqref{eq:analyzing_positive}; see \Cref{thm:AdmissibleVectorsExistence}.  Then an application of \Cref{prop:TL_coorbit} yields that $\TL = \Co_{\varphi} \bigl(\PTalt{-\alpha'}\bigr)$.
  Furthermore, since $\psi, \varphi \in L^2 (\mathbb{R}^d)$ are admissible and satisfy
  $W_\psi \psi, W_{\varphi} \varphi, W_{\varphi} \psi \in \WLwr$
  (see \Cref{lem:wavelet-Lpq,lem:SmoothWindowIndependence}), it follows by Lemma \ref{lem:abstract_identification} that $\TL$ can be identified with the abstract coorbit space
  $\Co_{\psi}^{\mathcal{H}} \bigl(\PTalt{-\alpha'}\bigr)$ used in \cite{velthoven2022quasi}.

  An application of \cite[Theorem~6.14]{velthoven2022quasi} yields
  a compact unit neighborhood $U \subset G_A$ such that for any $\Gamma \subset G_A$
  satisfying condition \eqref{eq:udense_relatively}, there exist molecular systems
  $(\phi_\gamma)_{\gamma \in \Gamma} \subset L^2(\R^d)$
  and $(f_\gamma)_{\gamma \in \Gamma} \subset L^2(\R^d)$,
  such that every $f \in \Co_\psi^{\mathcal{H}} (\PT[-\alpha'])$ can be represented as
  \begin{equation}
    f
    = \sum_{\gamma \in \Gamma}
        \langle f, \pi(\gamma) \psi \rangle_{\mathcal{R}_w, \mathcal{H}_w^1} \, \phi_\gamma
    = \sum_{\gamma \in \Gamma}
        \langle f, \phi_\gamma \rangle_{\mathcal{R}_w, \mathcal{H}_w^1} \, \pi(\gamma) \psi
    = \sum_{\gamma \in \Gamma}
        \langle f, f_\gamma \rangle_{\mathcal{R}_w, \mathcal{H}_w^1} \, f_\gamma
    ,
    \label{eq:FrameProofTargetIdentity}
  \end{equation}
  with unconditional convergence of the series in the weak-$\ast$-topology on the space
  $\mathcal{R}_w = \mathcal{R}_w (\psi)$ introduced in  \Cref{sec:abstract_reservoir}.
  By \Cref{lem:abstract_identification}, any $f \in \TL$ can be extended uniquely to an element
  $\widetilde{f} \in \Co_\psi^{\mathcal{H}} (\PT[-\alpha'])$.
  Since $(\phi_\gamma)_{\gamma \in \Gamma}$ and $(f_\gamma)_{\gamma \in \Gamma}$
  are molecules with respect to $\psi$, they also satisfy the molecule condition with respect to $\varphi$
  by \Cref{rem:molecule_independent}.
  Therefore,
  \Cref{eq:DualPairingClarification} shows that the dual pairings occurring in \Cref{eq:FrameProofTargetIdentity}
  coincide with the extended dual pairing from \Cref{def:ExtendedDualPairing}.
  Lastly, applying \Cref{eq:FrameProofTargetIdentity} to $\widetilde{f}$,
  using \Cref{eq:DualPairingClarification}, and restricting
  the domain of both sides of \Cref{eq:FrameProofTargetIdentity}
  to $\Schwartz_0(\R^d) \subset \mathcal{H}_w^1(\varphi) = \mathcal{H}_w^1(\psi)$,
  we see that \Cref{eq:FrameProofTargetIdentity} holds for all $f \in \TL$,
  with unconditional convergence of the series in the weak-$\ast$-topology on
  $\Schwartz'(\R^d) / \CalP(\R^d) = \Schwartz_0 ' (\R^d) \hookrightarrow \mathcal{R}_w (\psi)$.
\end{proof}

\begin{proof}[Proof of \Cref{thm:intro3}]
 As in the proof of \Cref{thm:intro2}, the Triebel-Lizorkin space $\TL$ can be identified with
 $\Co_{\psi}^{\mathcal{H}} (\PT[-\alpha'])$ of \Cref{lem:abstract_identification}. An application of
  \cite[Theorem~6.15]{velthoven2022quasi} yields a compact unit neighborhood $U \subset G_A$ such that
  for any $\Gamma \subset G_A$ satisfying condition \eqref{eq:separated_intro3}, there exists a molecular systems $(\phi_{\gamma})_{\gamma \in \Gamma}$ in $\overline{\Span \{ \pi (\gamma) \psi : \gamma \in \Gamma\} }$ such that, given $(c_{\gamma})_{\gamma \in \Gamma} \in \dot{\mathbf{p}}^{- \alpha', \beta}_{p, q}$, the vector $\widetilde{f} := \sum_{\gamma \in \Gamma} c_{\gamma} \phi_{\gamma} \in \Co_{\psi}^{\mathcal{H}} (\PT[-\alpha'])$ satisfies
  \begin{align} \label{eq:RieszProofTargetIdentity}
\langle \widetilde{f}, \pi (\gamma) \psi \rangle_{\mathcal{R}_w, \mathcal{H}^1_w} = c_{\gamma}, \quad \gamma \in \Gamma.
\end{align}
Arguing as in the proof of  \Cref{thm:intro2}, another application of \Cref{lem:abstract_identification} yields that the restriction $f = \widetilde{f}|_{\mathcal{S}_0} \in \TL$ satisfies $\langle f, \pi(\gamma) \psi \rangle_{\varphi} = \langle \widetilde{f}, \pi (\gamma) \psi \rangle_{\mathcal{R}_w, \mathcal{H}^1_w}$ for all $\gamma \in \Gamma$.
\end{proof}

\subsection{Explicit criteria}
\label{sec:criteria_molecules}

This section provides explicit criteria for coorbit molecules.
The proof relies on the following lemma concerning the standard envelope
from Definition~\ref{def:standard-env}.

\begin{lemma}\label{lem:StandardEnvelopeIntegrability}
  Let $r \in (0,1]$.
  If  $\sigma \in (0,\infty)^2$ satisfies $\sigma_1 < 1$, $\sigma_2 > |\det A|^{1/r}$
  and if $L > 1/r$, then $\Xi_{\sigma,L} \in L^r(G_A)$.
\end{lemma}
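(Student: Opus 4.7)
The plan is to compute $\|\Xi_{\sigma,L}\|_{L^r(G_A)}^r$ directly using the explicit form of the Haar measure $d\mu_{G_A}(x,s) = |\det A|^{-s}\,ds\,dx$ and to split the integration over $s$ into the two regimes $s \geq 0$ and $s < 0$, which is natural because both $\theta_\sigma$ and (via Lemma~\ref{lem:HFunctionAlternative}) the effective form of $\eta_L$ change at $s = 0$.

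More precisely, by Lemma~\ref{lem:HFunctionAlternative} we have
\[
  \eta_L(x,s) \asymp \bigl(1 + \rho_A(A^{-\PosPart{s}} x)\bigr)^{-L},
\]
so that $\eta_L(x,s)^r \asymp (1 + \rho_A(A^{-\PosPart{s}} x))^{-rL}$. Since $L > 1/r$, we have $rL > 1$, hence Lemma~\ref{lem:QuasiNormIntegrability} (applied with $\eps = rL - 1$) yields
\[
  C_0 := \int_{\R^d} \bigl(1 + \rho_A(y)\bigr)^{-rL}\,dy < \infty.
\]
For $s \geq 0$, the substitution $y = A^{-s} x$ (with Jacobian $|\det A|^s$) gives
\[
  \int_{\R^d} \bigl(1 + \rho_A(A^{-s} x)\bigr)^{-rL}\,dx
  = |\det A|^{s} \, C_0,
\]
while for $s < 0$ we have $\PosPart{s} = 0$, so
\[
  \int_{\R^d} \bigl(1 + \rho_A(x)\bigr)^{-rL}\,dx = C_0.
\]

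Combining these with the Haar density $|\det A|^{-s}$, the contribution from $s \geq 0$ becomes (up to a constant)
\[
  C_0 \int_{0}^{\infty} \sigma_1^{rs} \, |\det A|^{s} \cdot |\det A|^{-s}\,ds
  = C_0 \int_{0}^{\infty} \sigma_1^{rs}\,ds,
\]
which is finite since $\sigma_1 < 1$ forces $\sigma_1^r < 1$. The contribution from $s < 0$ becomes
\[
  C_0 \int_{-\infty}^{0} \sigma_2^{rs} \, |\det A|^{-s}\,ds
  = C_0 \int_{-\infty}^{0} \bigl(\sigma_2^{r}/|\det A|\bigr)^{s}\,ds,
\]
which is finite precisely because the hypothesis $\sigma_2 > |\det A|^{1/r}$ is equivalent to $\sigma_2^{r}/|\det A| > 1$, making the integrand decay as $s \to -\infty$.

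There is no real obstacle here: the two hypotheses on $\sigma_1$ and $\sigma_2$ are tailored exactly so that, after the mandatory change of variables in the $s\geq0$ piece (which causes the spatial integral to produce a factor $|\det A|^{s}$ that cancels the Haar factor) and in the $s<0$ piece (where no such cancellation occurs and the Haar factor $|\det A|^{-s}$ must be absorbed by $\sigma_2^{rs}$), both $s$-integrals converge. Assembling the two pieces and invoking Lemma~\ref{lem:HFunctionAlternative} to pass from $\eta_L$ to its standard upper bound completes the proof.
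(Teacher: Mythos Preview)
Your proof is correct and follows essentially the same approach as the paper: both use Lemma~\ref{lem:HFunctionAlternative} to rewrite $\eta_L$, apply Lemma~\ref{lem:QuasiNormIntegrability} together with a change of variables to evaluate the spatial integral as $|\det A|^{\PosPart{s}}$ times a finite constant, and then split the $s$-integral at $0$ to verify convergence from the hypotheses on $\sigma_1$ and $\sigma_2$.
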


\begin{proof}
   Using Lemma~\ref{lem:HFunctionAlternative}, a change-of-variable yields
  \begin{align*}
    \int_{\R^d}
      |\eta_{L} (x,s)|r
    \, d x
    &\asymp \int_{\R^d}
             \bigl(1 + \rho_A (A^{-\PosPart{s}} \, x)\bigr)^{-Lr}
           \, d x
    = |\det A|^{\PosPart{s}}
      \int_{\R^d}
        \bigl(1 + \rho_A (y) \bigr)^{-Lr}
      \, d y \\
    &\asymp |\det A|^{\PosPart{s}},
  \end{align*}
  where the last step follows from Lemma~\ref{lem:QuasiNormIntegrability}
  and the assumption $Lr > 1$.
  Therefore,
  \begin{align*}
    \| \Xi_{\sigma,L} \|_{L^r(G_A)}
  & = \int_{\R}
        |\det A|^{-s}
        (\theta_\sigma (s))^r
        \int_{\R^d}
          (\eta_L (x,s))^r
        \, d x
      \, d s \\
    &\asymp \int_{\R}
             |\det A|^{\PosPart{s} - s}  (\theta_\sigma (s))^r
           \, d s \\
  & = \int_0^\infty
        \sigma_1^{rs}
      \, d s
      + \int_{-\infty}^0
          |\det A|^{-s}  \sigma_2^{rs}
        \, d s \\
        &= \int_0^\infty \!
        e^{s  r \ln \sigma_1}
      d s + \int_{-\infty}^0
        e^{s(r \ln \sigma_2 -  \ln |\det A|)}
      \, d s < \infty
  \end{align*}
  since $\ln \sigma_1 < 0$ and $r \ln \sigma_2 > \ln |\det A|$ by assumption.
\end{proof}

\begin{theorem}\label{thm:BetterVectorConditions}
  For $p \in (0, \infty)$, $q \in (0, \infty]$, let $r = \min \{p,q, 1\}$.
  Let $\alpha \in \R$, $\beta > 0$.
  Choose constants $L > 1$, $N \in \N_0$
  and $\delta \in (0,1)$ such that  $L \cdot (1 - \delta) > 1/r + \beta$ and
  \begin{align} \label{eq:lambdaN_condition}
    \lambda_-^{\delta N}
    > \max
      \bigg\{
        |\det A|^{\frac{1}{r} - \frac{1}{2} + |\alpha+\frac{1}{p}-\frac{1}{q}|}, \;
        |\det A|^{-\frac{1}{2} + \frac{1}{r} + \alpha + \beta - \frac{1}{q}}, \;
        |\det A|^{-\frac{1}{2} - (\alpha - \frac{1}{q})}
      \bigg\},
    \end{align}
  where $\lambda_{-} \in \mathbb{R}$ satisfies
  $1 < \lambda_{-} < \min_{\lambda \in \sigma(A)} | \lambda|$
  as in Section~\ref{sec:expansive}.

  Suppose $f \in L^2(\R^d) \cap C^N(\R^d)$ satisfies
  \begin{equation}\label{eq:assumption1}
    |f(x)| \lesssim (1 + \rho_A(x))^{-L},
    \quad
    \int_{\R^d} \|x\|^N |f(x)| \, d x < \infty,
    \quad \text{and} \quad
    \max_{|\alpha| \leq N}
      \sup_{x \in \R^d}
        |\partial^\alpha f (x)|
    < \infty,
  \end{equation}
  as well as
  \begin{equation}\label{eq:assumption2}
    \int_{\R^d} x^\alpha \, f(x) \, d x = 0
    \quad \text{for all } \alpha \in \N_0^d \text{ with } |\alpha| < N .
  \end{equation}
  Then $W_f f \in \WLwr$ for the control weight $w = w^{\alpha,\beta}_{p,q}$
  provided by Lemma~\ref{lem:ControlWeights}.
\end{theorem}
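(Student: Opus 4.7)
The plan is to combine the two wavelet decay estimates of Lemma~\ref{lem:DiagonalWaveletDecay} by geometric interpolation, realize the resulting pointwise bound as a standard envelope, and then verify integrability against the control weight $w = w^{\alpha,\beta}_{p,q}$ using Lemma~\ref{lem:StandardEnvelopeIntegrability}.

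The hypotheses \eqref{eq:assumption1}, \eqref{eq:assumption2} on $f$ guarantee that both parts of Lemma~\ref{lem:DiagonalWaveletDecay} apply with $f_1 = f_2 = f$. Part~(i) yields
\[
  |W_f f(x,s)|
  \lesssim |\det A|^{-|s|/2}\,(1 + \rho_A(A^{-\PosPart{s}} x))^{-L},
\]
while part~(ii), together with the symmetry $|W_f f(x,s)| = |W_f f(-A^{-s}x,-s)|$ and the bound $\|A^{-s}\|_\infty \lesssim \lambda_-^{-s}$ for $s \geq 0$ from Lemma~\ref{lem:expansive_cont_powers}, gives
\[
  |W_f f(x,s)| \lesssim |\det A|^{-|s|/2}\, \lambda_-^{-|s|N}
\]
for all $s \in \R$. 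Interpolating these two bounds with weights $1-\delta$ and $\delta$, and invoking Lemma~\ref{lem:HFunctionAlternative} to recognize $(1+\rho_A(A^{-\PosPart{s}}x))^{-L(1-\delta)} \asymp \eta_{L(1-\delta)}(x,s)$, produces
\[
  |W_f f(x,s)|
  \lesssim \Xi_{\sigma,\,L(1-\delta)}(x,s),
  \qquad
  \sigma := \big(|\det A|^{-1/2}\lambda_-^{-N\delta},\; |\det A|^{1/2}\lambda_-^{N\delta}\big).
\]

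Since the two-sided local maximal function preserves standard envelopes up to constants (Lemma~\ref{lem:StandardEnvelopeWienerMaximalFunction}), it suffices to show $\Xi_{\sigma,L(1-\delta)} \cdot w \in L^r(G_A)$. By Lemma~\ref{lem:ControlWeights}, $w \asymp \Xi_{\sigma_w,0} + \Xi_{\kappa,-\beta}$, and the product of standard envelopes is again a standard envelope whose $\eta$-exponents add and whose $\theta$-bases multiply componentwise. Hence $\Xi_{\sigma,L(1-\delta)}\cdot w$ is dominated by
\[
  \Xi_{\sigma\sigma_w,\,L(1-\delta)}
  \; + \;
  \Xi_{\sigma\kappa,\,L(1-\delta)-\beta},
\]
and it remains to apply Lemma~\ref{lem:StandardEnvelopeIntegrability} to each summand.

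The only delicate part is checking the numerical hypotheses of Lemma~\ref{lem:StandardEnvelopeIntegrability}. The $\eta$-exponent condition is $L(1-\delta) - \beta > 1/r$, which is exactly $L(1-\delta) > 1/r + \beta$ from the assumptions. The four $\theta$-base conditions ($\tau_1 < 1$ and $\tau_2 > |\det A|^{1/r}$ for each envelope) translate, after substituting the explicit values of $\sigma_w$ and $\kappa$ from Lemma~\ref{lem:ControlWeights} and performing the short case distinction $\alpha \gtreqless -(1/r + \beta - 2/q)/2$, into three lower bounds on $\lambda_-^{N\delta}$ which are precisely the three terms $|\det A|^{1/r-1/2+|\alpha+1/p-1/q|}$, $|\det A|^{-1/2+1/r+\alpha+\beta-1/q}$, and $|\det A|^{-1/2-(\alpha-1/q)}$ appearing in the maximum in \eqref{eq:lambdaN_condition}. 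Thus the assumption \eqref{eq:lambdaN_condition} is exactly what is needed to conclude, and the main obstacle is simply the careful bookkeeping between the cases in the definition of $\kappa$ and the three-term maximum.
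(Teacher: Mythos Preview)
Your proposal is correct and follows essentially the same route as the paper's proof: both obtain the two decay bounds from Lemma~\ref{lem:DiagonalWaveletDecay}, interpolate with exponents $\delta$ and $1-\delta$ to get a standard-envelope bound $\Xi_{\tau,L(1-\delta)}$, apply Lemma~\ref{lem:StandardEnvelopeWienerMaximalFunction} to pass through $M_Q$, multiply against $w \asymp \Xi_{\sigma_w,0} + \Xi_{\kappa,-\beta}$, and then verify the hypotheses of Lemma~\ref{lem:StandardEnvelopeIntegrability}. The bookkeeping you sketch for the $\theta$-base conditions is exactly what the paper does as well.
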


\begin{proof}
  We need to show that $M_Q ( W_f f) \in L_w^r (G_A)$.
  The proof is split into two steps.
\\\\
  \textbf{Step~1.}
  In this step, we show that $|W_ff(x,s)| \lesssim \Xi_{\tau,L(1-\delta)}(x,s)$,
  where $\tau = (\tau_1, \tau_2)$ with
  $\tau_1: =|\det A|^{-1/2}  \lambda_-^{-N \delta}$
  and $\tau_2:=|\det A|^{1/2}  \lambda_-^{N \delta}$.
  Assumptions \eqref{eq:assumption1} and \eqref{eq:assumption2}
  together with Lemma~\ref{lem:DiagonalWaveletDecay} imply that
  \begin{equation}
    \label{eq:bettervectors1}
    |W_ff(x,s)| \lesssim |\det A|^{-|s|/2}  \big(1+\rho_A(A^{-s^+}x)\big)^{-L},
    \quad x \in \R^d, s \in \R,
  \end{equation}
  and
  \(
    |W_ff(x,s)|
    \lesssim |\det A|^{-s/2} \|A^{-s}\|_{\infty}^N
    \lesssim |\det A|^{-s/2} \lambda_-^{-sN}
  \)
  for $ x \in \R^d, s \geq 0$, by Lemma~\ref{lem:expansive_cont_powers}.
  Applying this latter estimate to
  $W_ff(x,s) = \overline{ W_ff(-A^{-s}x,-s)}$ for $s<0$ yields immediately that
  $|W_ff(x,s)| = |W_ff(-A^{-s}x,-s)| \lesssim |\det A|^{s/2} \lambda_-^{sN}$ and therefore
  \begin{equation}\label{eq:bettervectors3}
    |W_ff(x,s)| \lesssim |\det A|^{-|s|/2} \lambda_-^{-|s|N},
    \quad x \in \R^d, s \in \R.
  \end{equation}
  Combining \eqref{eq:bettervectors1} and \eqref{eq:bettervectors3}
  gives
  \begin{align*}
    |W_ff(x,s)| &= |W_ff(x,s)|^{\delta} |W_ff(x,s)|^{1-\delta} \\
    &\lesssim |\det A|^{-|s|/2} \lambda_-^{-|s|N\delta} \big(1+\rho_A(A^{-s^+}x)\big)^{-L(1-\delta)},
  \end{align*}
  as desired.
 \\\\
  \textbf{Step 2.}
  Step 1 and Lemma~\ref{lem:StandardEnvelopeWienerMaximalFunction} yield
  $
    w  M_Q (W_f f)
    \lesssim w  M_Q (\Xi_{\tau,L(1-\delta)})
    \lesssim w  \Xi_{\tau,L(1-\delta)}.
  $
  Recall from Lemma~\ref{lem:ControlWeights}
  that the standard control weight satisfies
  $w \asymp \Xi_{\sigma,0} + \Xi_{\kappa,-\beta}$,
  where
  $\sigma, \kappa \in (0,\infty)^2$
  are as in the statement of Lemma~\ref{lem:ControlWeights}.
  Denote by $\tau \odot \sigma:=(\tau_1  \sigma_1, \tau_2  \sigma_2)$ component-wise multiplication. Then
  \begin{equation}
    \label{eq:bettervectors4}
    w  M_Q (W_f f)
    \lesssim w  \Xi_{\tau,L(1-\delta)}
    \lesssim \Xi_{\tau \odot \sigma,L(1-\delta)}
             + \Xi_{\tau \odot \kappa,L(1-\delta)-\beta}.
  \end{equation}

  It remains to verify the integrability conditions for standard
  envelopes of Lemma~\ref{lem:StandardEnvelopeIntegrability} for the
  right-hand side of \eqref{eq:bettervectors4}.
  The assumption $L \cdot (1 - \delta) > 1/r + \beta$ guarantees that
  \begin{equation*}
    \min\{L(1-\delta), L(1-\delta)-\beta\}  > 1/r ,
  \end{equation*}
  while the assumption \eqref{eq:lambdaN_condition} implies that both
  of the conditions
  \(
    \max\{\tau_1  \sigma_1, \tau_1  \kappa_1\}
    < 1
  \)
  and
  \(
    \min\{\tau_2 \sigma_2 , \tau_2  \kappa_2\}
    > |\det A|^{1/r}
  \)
  are satisfied.
  An application of Lemma~\ref{lem:StandardEnvelopeIntegrability}
  therefore yields $M_Q (W_f f) \in L_w^ r(G_A)$.
\end{proof}

\appendix

\section{The Peetre-type maximal function}%
\label{sec:PeetreMaximalFunction}

\begin{lemma}\label{lem:PeetreSupEssup}
  Let $A \in \GL(d,\R)$ be expansive and $\beta > 0$.
  Let either $s \in \Z$ or let $s \in \R$ and assume that $A$ is exponential.
  Let $f : \R^d \to [0,\infty)$ be continuous.
  Then
  \[
    \sup_{z \in \R^d} \frac{f(z)}{(1 + \rho_A (A^s z))^\beta}
    = \esssup_{z \in \R^d} \frac{f(z)}{(1 + \rho_A (A^s z))^\beta}
    .
  \]
\end{lemma}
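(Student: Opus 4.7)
The plan is to establish that the function $g(z) := f(z)/(1+\rho_A(A^s z))^\beta$ is lower semi-continuous on $\R^d$, and then deduce the equality $\sup g = \esssup g$ from a standard topological argument using that every nonempty open subset of $\R^d$ has positive Lebesgue measure.

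The crucial observation is that $\rho_A$ itself is upper semi-continuous on $\R^d$. To see this, recall from Lemma~\ref{lem:QuasiNorm} that $\Omega_A \subset r\Omega_A \subset A\Omega_A$ with $r > 1$. Since $r>1$, this yields $\overline{\Omega_A} \subset A\Omega_A$, and hence $\overline{A^j \Omega_A} \subset A^{j+1}\Omega_A$ for all $j \in \Z$. Fix $z_0 \in \R^d \setminus \{0\}$ and let $j \in \Z$ be the unique integer with $z_0 \in A^{j+1}\Omega_A \setminus A^j \Omega_A$, so that $\rho_A(z_0) = |\det A|^j$. If $z_0$ lies in the open set $A^{j+1}\Omega_A \setminus \overline{A^j\Omega_A}$, then $\rho_A \equiv |\det A|^j$ on a neighborhood of $z_0$. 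Otherwise $z_0 \in \partial(A^j\Omega_A) \subset A^{j+1}\Omega_A$, and any sufficiently small neighborhood $U$ of $z_0$ is contained in $A^{j+1}\Omega_A$; for $z \in U$, either $z \in A^j\Omega_A$, in which case $\rho_A(z) \leq |\det A|^{j-1} < \rho_A(z_0)$, or $z \in A^{j+1}\Omega_A \setminus A^j\Omega_A$, in which case $\rho_A(z) = \rho_A(z_0)$. Upper semi-continuity at $0$ follows from $\rho_A(z) \to 0$ as $z \to 0$, which is a direct consequence of Lemma~\ref{lem:quasi-norm-bounds}.

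Since $z \mapsto A^s z$ is continuous (either by the exponentiality of $A$ when $s \in \R$, or directly when $s \in \Z$), the composition $z \mapsto (1+\rho_A(A^s z))^\beta$ is upper semi-continuous and bounded below by $1$. Its reciprocal $z \mapsto (1+\rho_A(A^s z))^{-\beta}$ is therefore lower semi-continuous and non-negative, and multiplying by the non-negative continuous function $f$ produces a lower semi-continuous function $g$ on $\R^d$ (the product of two non-negative lower semi-continuous functions, one of them continuous, is lower semi-continuous).

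Finally, I show $\sup g = \esssup g$. The inequality $\esssup g \leq \sup g$ is automatic. For the reverse, suppose toward a contradiction that $\esssup g < \sup g$, and choose $z_0 \in \R^d$ and $\eta > 0$ with $g(z_0) > \esssup g + \eta$. Lower semi-continuity of $g$ implies that the set $\{z \in \R^d : g(z) > \esssup g + \eta/2\}$ is open and contains $z_0$, hence has positive Lebesgue measure; but this contradicts the definition of $\esssup g$. The main technical obstacle is establishing the upper semi-continuity of $\rho_A$ carefully at points on the boundaries $\partial(A^j \Omega_A)$; everything else is routine.
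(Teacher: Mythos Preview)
Your proof is correct and follows essentially the same strategy as the paper: establish that $g(z) = f(z)/(1+\rho_A(A^s z))^\beta$ is lower semi-continuous, then conclude from the fact that nonempty open subsets of $\R^d$ have positive Lebesgue measure. The only difference is in packaging: the paper computes the sublevel sets of $\rho_A$ explicitly as $\{ \rho_A < \lambda \} = A^{k+1}\Omega_A$ (for the appropriate $k$) and then verifies openness of $\{g > \theta\}$ by hand, whereas you prove upper semi-continuity of $\rho_A$ via a local case analysis at interior versus boundary points of $A^j\Omega_A$ and then invoke standard semi-continuity calculus for the product.
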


\begin{proof}
  First, we see from the definition of $\rho_A$ (see Equation \eqref{eq:step_norm})
  for arbitrary $\lambda \in \R$ that
  \begin{equation}
    \{ x \in \R^d \colon \rho_A (x) < \lambda \}
    = \begin{cases}
        \emptyset,         & \text{if } \lambda \leq 0, \\
        A^{k+1} \Omega_A , & \text{if } |\det A|^k < \lambda \leq |\det A|^{k+1} \text{ for } k \in \Z .
      \end{cases}
    \label{eq:QuasiNormUpperSemicontinuous}
  \end{equation}
  Since $\Omega_A \subset \R^d$ is open,
  this shows that $\{ x \in \R^d \colon \rho_A(x) < \lambda \}$
  is always open.

  We now show for arbitrary $\theta \in \R$ that
  $W := \bigl\{ z \in \R^d \colon f(z) / (1 + \rho_A(A^s z))^\beta > \theta \bigr\}$
  is open; this then easily implies the claim of the lemma,
  since every non-empty open set has positive Lebesgue measure.
  First, if $\theta < 0$, then $W = \R^d$ is open.
  Next, if $\theta = 0$, then $W = \{ z \in \R^d \colon f(z) \neq 0 \}$ is open.
  Finally, let $\theta > 0$, $z_0 \in W$, $a := f(z_0)$, and $b := (1 + \rho_A(A^{s} z_0))^\beta$.
  Then $a/b > \theta > 0$ and hence $a > 0$.
  We can thus choose $0 < a' < a$ with $a'/b > \theta$, meaning that
  $\rho_A (A^s z_0) < (a'/\theta)^{1/\beta} - 1$.
  Note that $U := \{ z \in \R^d \colon f(z) > a'\}$ is an open neighborhood of $z_0$.
  Similar, the considerations from the beginning of the proof show that
  $V := \{ z \in \R^d \colon \rho_A(A^s z) < (a'/\theta)^{1/\beta} - 1 \}$
  is an open neighborhood of $z_0$.
  Finally, note that $U \cap V \subset W$.
  Since $z_0 \in W$ was arbitrary, this shows that $W \subset \R^d$ is indeed open.
\end{proof}

\section{Peetre-type maximal function on \texorpdfstring{$\mathbb{R}^d \times \mathbb{R}$}{âáµÃâ}}

Let $A \in \mathrm{GL}(d, \mathbb{R})$ be an expansive exponential matrix and $\beta > 0$.
For any measurable function $F : \mathbb{R}^d \times \mathbb{R} \to \mathbb{C}$, define
\[
  \GroupMaximal (x,s)
  := \esssup_{z \in \mathbb{R}^d}
       \frac{|F(x+z, s)|}{(1 + \rho_A (A^{-s} z))^{\beta}}
\]
for $(x, s) \in \mathbb{R}^d \times \mathbb{R}$.
We use the following basic properties repeatedly.

\begin{lemma}\label{lem:GroupMaximalProperties}
  If $F : \mathbb{R}^d \times \mathbb{R} \to \mathbb{C}$ is measurable,
  then $\GroupMaximal : \mathbb{R}^d \times \mathbb{R} \to [0, \infty]$ is measurable.
  Furthermore, there is a constant $C = C(\beta,A) \geq 1$ such that for each $s \in \R$,
  there is a null-set $N_s \subset \R^d$ such that for all $x, w \in \R^d$
  with $x + w \notin N_s$, we have
  \[
    \frac{|F(x+w, s)|}{(1 + \rho_A (A^{-s} w))^\beta}
    \leq C  \GroupMaximal (x,s).
  \]
  In particular, if $\GroupMaximal = 0$ almost everywhere, then $F = 0$ almost everywhere.
\end{lemma}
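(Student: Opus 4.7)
The plan is to exploit that $\rho_A$ takes only the discrete values $\{0\} \cup \{|\det A|^j : j \in \mathbb{Z}\}$, so the weight $(1 + \rho_A(A^{-s} z))^\beta$ is piecewise constant on the shells $E_{s, j} := A^{s+j+1} \Omega_A \setminus A^{s+j} \Omega_A$ (on which $\rho_A(A^{-s} \cdot) \equiv |\det A|^j$). Measurability of $\GroupMaximal$ follows from Fubini: the quotient $Q(x, z, s) := |F(x+z, s)|/(1 + \rho_A(A^{-s} z))^\beta$ is jointly measurable, and $\GroupMaximal(x, s) > \alpha$ if and only if $|\{z \in \mathbb{R}^d : Q(x, z, s) > \alpha\}| > 0$, which is a measurable condition on $(x, s)$ by Fubini applied to the measurable set $\{(x, z, s) : Q(x, z, s) > \alpha\}$.

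For the pointwise estimate, the key step is a countable-cover argument producing a null set $N_s \subset \mathbb{R}^d$ independent of the decomposition $(x, w)$ of $y = x + w$. For each $j \in \mathbb{Z}$, introduce the open enlargement $\widetilde{U}_{s, j} := A^{s+j+2} \Omega_A \setminus \overline{A^{s+j-1} \Omega_A}$ of the closed shell $\overline{E_{s, j}}$; it is open by the strict inclusion $\overline{A^{s+j-1} \Omega_A} \subset A^{s+j} \Omega_A$ (a consequence of $\overline{\Omega_A} \subset r \Omega_A$ together with \Cref{lem:QuasiNorm}), and on $\widetilde{U}_{s, j}$ one has $\rho_A(A^{-s} z) \leq |\det A|^{j+1}$. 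Fix a countable family $\mathcal{B}$ of open balls in $\mathbb{R}^d$ with rational centers and radii; for each $B \in \mathcal{B}$, choose a null set $N_{B, s} \subset B$ so that $|F(y, s)| \leq \esssup_{y' \in B} |F(y', s)|$ for $y \in B \setminus N_{B, s}$, and set $N_s' := \bigcup_{B \in \mathcal{B}} N_{B, s}$. Given any pair $(x, w)$ with $w \neq 0$, $y := x + w \notin N_s'$, and $w \in E_{s, j}$, the openness of $\widetilde{U}_{s, j}$ provides $B \in \mathcal{B}$ with $y \in B \subset x + \widetilde{U}_{s, j}$, yielding
\[
  |F(y, s)|
  \leq \esssup_{z \in \widetilde{U}_{s, j}} |F(x+z, s)|
  \leq (1 + |\det A|^{j+1})^\beta \, \GroupMaximal(x, s)
  \leq |\det A|^\beta \, (1 + \rho_A(A^{-s} w))^\beta \, \GroupMaximal(x, s).
\]

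The remaining case $w = 0$ reduces to showing $|F(y, s)| \leq 2^\beta \GroupMaximal(y, s)$ for a.e.\ $y$; this follows from a Lebesgue-density argument applied to the sublevel sets $\{|F(\cdot, s)| > \alpha\}$ for rational $\alpha$, which gives $\esssup_{z \in B(y, \delta)} |F(z, s)| \geq |F(y, s)|$ for a.e.\ $y$ and every $\delta > 0$, combined with \Cref{lem:quasi-norm-bounds} to find $\delta > 0$ with $\rho_A(A^{-s} z) \leq 1$ on $B(0, \delta)$. Adjoining the resulting null set to $N_s'$ gives $N_s$, and $C := \max\{|\det A|^\beta, 2^\beta\}$ works. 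The final assertion ($\GroupMaximal = 0$ a.e.\ implies $F = 0$ a.e.) is then immediate from Fubini and the pointwise bound at $w = 0$. The main obstacle is precisely the $x$-independence of $N_s$: a naive pointwise-a.e.\ statement for the essential supremum would yield an $x$-dependent null set, and the countable family $\mathcal{B}$ together with the enlargement $E_{s, j} \subset \widetilde{U}_{s, j}$ (absorbing the shell boundaries into the constant factor $|\det A|^\beta$) is what bypasses this difficulty.
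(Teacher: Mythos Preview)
Your argument is correct, but takes a genuinely different route from the paper's proof. The paper avoids the shell decomposition entirely by introducing the \emph{local maximal function} $M^L_{Q_s} F_s(x) := \esssup_{q \in Q_s} |F_s(x+q)|$ with $Q_s := A^s \Omega_A$. Since $\rho_A(A^{-s} q) \leq 1$ for $q \in Q_s$, the quasi-triangle inequality gives $1 + \rho_A(A^{-s}(z+q)) \leq (1+C')(1+\rho_A(A^{-s}z))$, and from this one obtains $M^L_{Q_s} F_s(x+z) \leq (1+C')^\beta (1+\rho_A(A^{-s}z))^\beta \GroupMaximal(x,s)$ for \emph{all} $x,z$ simultaneously. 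The null set $N_s$ then comes from the general fact (cited from \cite[Lemma~2.3.3]{VoigtlaenderPhDThesis}) that $|F_s| \leq M^L_{Q_s} F_s$ a.e.\ for any open unit neighborhood, which absorbs both your $w \neq 0$ and $w = 0$ cases at once.

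Your shell-plus-countable-basis argument is more elementary in that it does not appeal to an external lemma about local maximal functions, and it makes transparent why the $x$-independence of $N_s$ works (the countable basis $\mathcal{B}$ does the job). The paper's approach is shorter and more conceptual, packaging the same idea into the single inequality $|F_s| \leq M^L_{Q_s} F_s$; it also avoids splitting off the $w = 0$ case. Both proofs hinge on the same mechanism---replacing the pointwise value by an essential supremum over a small translate-invariant neighborhood---but yours implements it by hand while the paper invokes a ready-made maximal-function estimate.
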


\begin{proof}
  Since the map $H : (x,z,s) \mapsto \frac{|F(x+z, s)|}{(1 + \rho_A(A^{-s} z))^\beta}$
  is measurable, it is well-known that $\GroupMaximal(x,s) = \esssup_{z \in \R^d} H(x,z,s)$
  is measurable as well; see, e.g.,~\cite[Lemma~B.4]{holighaus2020schur}.

  Let us fix $s \in \R$ and write $F_s (x) := F(x,s)$ and $Q_s := A^s \Omega_A$,
  with $\Omega_A \subset \R^d$ as in \Cref{lem:QuasiNorm}.
  For the open unit neighborhood $Q_s \subset \R^d$, we
  consider the local maximal function
  $M^L_{Q_s} f (x) = \esssup_{q \in Q_s} |f(x + q)|$ of a measurable $f : \R^d \to \CC$.

  Note that if $x \in \Omega_A \setminus \{ 0 \}$, then $x \in A^j \Omega_A$
  for some minimal $j \in \Z$, which necessarily satisfies $j \leq 0$.
  Hence, $x \in A^{(j-1)+1} \Omega_A \setminus A^{j-1} \Omega$,
  so that $\rho_A (x) = |\det A|^{j-1} \leq |\det A|^{-1} \leq 1$, by definition of $\rho_A$.
  Furthermore, \Cref{lem:QuasiNorm} yields $C' \geq 1$ with
  $\rho_A(x+y) \leq C' [\rho_A(x) + \rho_A(y)]$ for all $x,y \in \R^d$.
  For $q \in Q_s$, we then have $A^{-s} q \in \Omega_A$ and hence
  $\rho_A(A^{-s} q) \leq 1$.
  Thus, $1 + \rho_A (A^{-s} (z + q)) \leq (1 + C') (1 + \rho_A (A^{-s} z))$
  for $q \in Q_s$ and $z \in \R^d$.

  Now, by definition of $\GroupMaximal$, given $x \in \R^d$ and $s \in \R$, there is a null-set
  $N_{s,x} \subset \R^d$ with
  \[
    \frac{|F(x+z, s)|}{(1 + \rho_A(A^{-s} z))^\beta}
    \leq \GroupMaximal (x,s),
    \qquad  \, z \in \R^d \setminus N_{s,x} .
  \]
  Fix $x,z \in \R^d$ and $s \in \R$.
  Then, for $q \in Q_s \setminus (N_{s,x} - z)$,
  we have $z + q \in \R^d \setminus N_{s,x}$, and hence
  \[
    \GroupMaximal (x,s)
    \geq \frac{|F_s(x + z + q)|}{(1 + \rho_A(A^{-s} (z+q)))^\beta}
    \geq (1 + C')^{-\beta}
         \frac{|F_s (x + z + q)|}{(1 + \rho_A (A^{-s} z))^\beta} .
  \]
  Therefore,
  $M^L_{Q_s} F_s (x + z) \leq C  (1 + \rho_A(A^{-s} z))^\beta  \GroupMaximal (x,s)$
  for all $x,z \in \R^d$ and $s \in \R$, with $C := (1 + C')^\beta$.

  Lastly, it follows by \cite[Lemma~2.3.3]{VoigtlaenderPhDThesis} that there exists
  a null-set $N_s = N_{s,F} \subset \R^d$
  with $|F_s(x)| \leq M^L_{Q_s} F_s (x)$ for all $x \in \R^d \setminus N_s$.
  For $x,w \in \R^d$ with $x + w \notin N_s$, we then see that
  \begin{align*}
    |F(x+w, s)|
    &= |F_s (x + w)|
    \leq M^L_{Q_s} F_s (x + w) \\
    &\leq C  (1 + \rho_A (A^{-s} w))^\beta \, \GroupMaximal (x, s),
  \end{align*}
  which completes the proof.
\end{proof}

The following lemma allows to estimate maximal functions of the standard envelope defined in Definition~\ref{def:standard-env}.

\begin{lemma}\label{lem:StandardEnvelopeWienerMaximalFunction}
  For arbitrary $\sigma \in (0,\infty)^2$ and $L \geq 0$, let
  $\Xi_{\sigma,L}: G_A \to (0,\infty)$ denote the standard envelope
  defined in Definition~\ref{def:standard-env}.
  Then for any relatively compact unit-neighborhood $Q \subset G_A$,
  there exists a constant $C = C(Q,\sigma, L, A) > 0$
  such that $M_Q  \Xi_{\sigma,L} \leq C \cdot \Xi_{\sigma,L}$.
\end{lemma}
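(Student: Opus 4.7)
The approach is to factor the envelope as $\Xi_{\sigma,L}(x,s) = \theta_\sigma(s)\,\eta_L(x,s)$ and to control each factor pointwise under the substitution $(x,s) \mapsto ugv$ for $u,v \in Q$; a pointwise bound suffices, since it then transfers to the essential supremum defining $M_Q$.

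A direct computation of the group product yields $u g v = (X, S)$ with
\[
 X := y + A^t x + A^{t+s} y' \qquad \text{and} \qquad S := t + s + t',
\]
where we write $u = (y,t)$, $g = (x,s)$, $v = (y',t')$. Since $Q \subset G_A$ is relatively compact, the quantities $|t|$, $|t'|$, $\rho_A(y)$, and $\rho_A(y')$ are all uniformly bounded by a constant depending only on $Q$. In particular, $S - s = t + t'$ is uniformly bounded, and since $\theta_\sigma$ is multiplicative on each of the half-lines $[0,\infty)$ and $(-\infty,0)$, a brief case distinction on the signs of $s$ and $S$ shows that $\theta_\sigma(S)/\theta_\sigma(s)$ is uniformly bounded.

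For the $\eta_L$-factor, Lemma~\ref{lem:HFunctionAlternative} gives $\eta_L(x,s) \asymp (1 + \rho_A(A^{-\PosPart{s}} x))^{-L}$ and analogously for $(X,S)$. Since $L \geq 0$, it suffices to establish
\[
 1 + \rho_A(A^{-\PosPart{s}} x) \lesssim 1 + \rho_A(A^{-\PosPart{S}} X),
\]
with implicit constant depending only on $Q$. Expanding yields
\[
 A^{-\PosPart{S}} X
 = A^{-\PosPart{S}} y + A^{-\PosPart{S} + t} x + A^{-\PosPart{S} + t + s} y'.
\]
The plan is to verify three facts via a short case analysis on the signs of $s$ and $S$: first, the exponent difference $(-\PosPart{S} + t) - (-\PosPart{s})$ is uniformly bounded, so that Corollary~\ref{cor:quasi-norm_bound} yields $\rho_A(A^{-\PosPart{S} + t} x) \asymp \rho_A(A^{-\PosPart{s}} x)$; second, $-\PosPart{S} \leq 0$ automatically, so that $\rho_A(A^{-\PosPart{S}} y) \lesssim \rho_A(y) \lesssim 1$; and third, $-\PosPart{S} + t + s$ is uniformly bounded above (it equals $-t'$ if $S \geq 0$ and $t+s \leq S - t'$ if $S < 0$), so that $\rho_A(A^{-\PosPart{S} + t + s} y') \lesssim 1$. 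Then the quasi-triangle inequality applied to
\[
 A^{-\PosPart{S} + t} x
 = A^{-\PosPart{S}} X - A^{-\PosPart{S}} y - A^{-\PosPart{S} + t + s} y'
\]
yields $\rho_A(A^{-\PosPart{s}} x) \lesssim \rho_A(A^{-\PosPart{S}} X) + 1$, completing the estimate.

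The main (and only) difficulty is the case analysis on the signs of $s$ and $S$ that underlies the boundedness claims for $(-\PosPart{S} + t) - (-\PosPart{s})$ and $-\PosPart{S} + t + s$; however, the constraint $S = s + t + t'$ together with $|t|, |t'|$ being bounded forces each of these exponents to stay within a bounded range in every one of the four sign combinations, so the argument reduces to a finite inspection together with routine applications of Corollary~\ref{cor:quasi-norm_bound}.
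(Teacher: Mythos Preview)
Your proof is correct and follows essentially the same approach as the paper: factor $\Xi_{\sigma,L}=\theta_\sigma\cdot\eta_L$, handle $\theta_\sigma$ by a sign-based case analysis on $s$ and $S$, and handle $\eta_L$ via Lemma~\ref{lem:HFunctionAlternative} and Corollary~\ref{cor:quasi-norm_bound} by checking that the relevant exponents of $A$ stay bounded. The only organizational difference is that the paper treats the left and right translations separately (bounding $M_Q^R\eta_L$ and $M_Q^L\eta_L$ in two steps and then combining), whereas you compute $ugv$ directly; your combined treatment is slightly more efficient but otherwise equivalent.
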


\begin{proof}
  Since $Q \subset G_A$ is relatively compact, we have $Q \subset [-N,N]^d \times [-N,N]$
  for some $N \geq 1$.
Recall that
  $\Xi_{\sigma,L}(x,s)= \theta_\sigma (s) \cdot \eta_L(x,s)$ with
  \begin{equation*}
    \eta_L(x,s)
    \asymp \big(
    1 + \rho_A(A^{-s^{+}} x) \big)^{-L}
    \qquad \text{and} \qquad
    \theta_\sigma (s)
    = \begin{cases}
      \sigma_1^s , & \text{if } s \geq 0, \\
      \sigma_2^s , & \text{if } s < 0,
    \end{cases}
  \end{equation*}
  where the first estimate is due to
  Lemma~\ref{lem:HFunctionAlternative} and $s^{+}:= \max\{0,s\}$.

  We split the proof into several steps and treat the two factors separately.
\\\\
  \textbf{Step 1.}  We show that, for all $s \in \R$ and $t \in [-N,N]$, we have
  \(
    \theta_{\sigma} (s + t)
    \leq c^4  \theta_{\sigma} (s),
  \)
  where
  \(
    \strut c
    := \max
       \bigl\{
         \sigma_1^N, \sigma_1^{-N}, \sigma_2^N, \sigma_2^{-N}
       \bigr\}
    \in [1,\infty)
    .
  \)
  Note that if $s > N$, then $s + t > 0$, and hence
  \(
    \theta_{\sigma} (s+t)
    = \sigma_1^{s+t}
    = \sigma_1^t  \theta_\sigma (s)
    \leq c  \theta_\sigma (s)
    .
  \)
  Likewise, if $s < -N$, then  $s + t < 0$ and thus
  \(
    \theta_\sigma (s+t)
    = \sigma_2^{s+t}
    = \sigma_2^t  \theta_\sigma (s)
    \leq c  \theta_\sigma (s) .
  \)
  Lastly, if ${s \in [-N,N]}$, then $s, s + t \in [-2N,2N]$.
  But for $x \in [-2N,2N]$, it follows that ${c^{-2} \leq \theta_\sigma (x) \leq c^2}$,
  and hence $\theta_\sigma (s+t) \leq c^{4}  \theta_\sigma(s)$.
\\\\
  \textbf{Step 2.}
  We show that $ M_Q^R \eta_{L} \lesssim \eta_{L}$, where
  $M_Q^R F (g) := \esssup_{u \in Q} |F(u g)|$ for any measurable $F: G_A \to \CC$.
  Let ${(x,s) \in G_A}$ and $(y,t) \in Q$ be arbitrary.
  Then Corollary~\ref{cor:quasi-norm_bound} implies
  \begin{align*}
    \rho_A(A^{-s^+} x)
     & \asymp |\det A|^{-s^+ -t} \rho_A( A^t x) \\
     &\lesssim |\det A|^{-s^+ -t}  \Big(\rho_A( A^t x + y) + \rho_A(-y) \Big) \\
    & \lesssim |\det A|^{-s^+ +(s+t)^+ -t}
                \Big(\rho_A\big( A^{-(s+t)^+}(A^t x + y)\big) + 1 \Big),
  \end{align*}
  where we used in the last step that $\rho_A(y) \lesssim 1$ for
  $ y \in [-N,N]^d$; see Lemma~\ref{lem:quasi-norm-bounds}.

  Note that $(s+t)^+ \leq s^+ + t^+ \leq s^+ + N$
  for $t \in [-N,N]$, and hence $-s^+ +(s+t)^+ -t \leq 2 N$ for all
  $s \in \R$ and $t \in [-N,N]$.
  Therefore, $|\det A|^{-s^+ +(s+t)^+ -t} \lesssim 1$
  and consequently
  \begin{align*}
    \eta_{L}\big( (y,t) (x,s) \big)
    &= \eta_{L}( A^t x + y, s+t)
      \asymp \big( 1 + \rho_A\big( A^{-(s+t)^+}(A^t x + y) \big) \big)^{-L} \\
    & \lesssim \big( 1+ \rho_A(A^{-s^+} x) \big)^{-L}
      \asymp \eta_{L}(x,s)
  \end{align*}
  for all $(x,s) \in G_A$ and $(y,t) \in Q$; here, we used that $L \geq 0$.
\\\\
\textbf{Step 3.}
  Similar to Step 2, we show that $M_Q^L \eta_{L} \lesssim \eta_{L}$, for
  $M^L_Q F (g) = \esssup_{v \in Q} |F(gv)|$.
  Let ${(x,s) \in G_A}$ and $(y,t) \in Q$ be arbitrary.
  Then Corollary~\ref{cor:quasi-norm_bound} again implies
  \begin{align*}
    \rho_A(A^{-s^+} x)
     & \asymp |\det A|^{-s^+} \rho_A (x) \\
      & \lesssim |\det A|^{-s^+}  \Big(\rho_A( x + A^s y) + \rho_A(-A^s y) \Big)\\
     & \lesssim |\det A|^{-s^+ +(s+t)^+} \rho_A\big(A^{-(s+t)^+}( x + A^sy) \big)
      + |\det A|^{-s^+ + s}\rho_A(y)  \\
    & \lesssim \rho_A\big(A^{-(s+t)^+}( x + A^sy) \big) + 1,
  \end{align*}
  since we have $|\det A|^{-s^+ +(s+t)^+}, |\det A|^{-s^+ + s} \lesssim 1$
  for all $s \in \R$ and $t \in [-N,N]$,
  and since $\rho_A(y) \lesssim 1$ for $y \in [-N,N]^d$ by Lemma~\ref{lem:quasi-norm-bounds}.
  Therefore,
  \begin{align*}
    \eta_{L}\big(  (x,s) (y,t) \big)
    &= \eta_{L}( x + A^sy, s+t)
      \asymp \big( 1 + \rho_A\big( A^{-(s+t)^+}(x + A^sy) \big) \big)^{-L} \\
    & \lesssim \big( 1+ \rho_A(A^{-s^+} x) \big)^{-L}
      \asymp \eta_{L}(x,s)
  \end{align*}
  for all $(x,s) \in G_A$ and $(y,t) \in Q$.

  \smallskip{}

  In combination, the obtained estimates easily imply the claim.
\end{proof}

\section{Proof of Lemma~\ref{lem:SpecialWeight}}%
\label{sub:SpecialWeightProof}

\begin{proof}
  \textbf{Step~1.}
  In this step, we prove the bound
  \begin{equation}
    v(y,t)
    \lesssim \max
             \{
               1,
               |\det A|^{-t}
             \} \,
             \big(
               1 + \min
                   \{
                     \rho_A(y),
                     \rho_A (A^{-t} y)
                   \}
             \big)
    ,
    \label{eq:SpecialWeightUpperBound}
  \end{equation}
  which will also imply that $v$ is well-defined.
  To this end, first note by the quasi-triangle inequality for $\rho_A$
  that there exists $H \geq 1$ such that
  \begin{align*}
    1 + \rho_A (x)
    &\leq\! 1 + H  \big( \rho_A (x-y) + \rho_A(y) \big) \\
   & \leq\! H  (1 + \rho_A(x-y) + \rho_A(y)) \\
   & \leq\! H  (1 + \rho_A(x-y))  (1 + \rho_A(y)) ,
  \end{align*}
  and hence $(1 + \rho_A(x-y))^{-1} \leq H \cdot \frac{1 + \rho_A(y)}{1 + \rho_A(x)}$.
  Next, we note as a consequence of \Cref{cor:quasi-norm_bound} that
  \(
    1 + \rho_A (A^{-(u - t)} z)
    \gtrsim 1 + |\det A|^t \rho_A (A^{-u} z)
    \geq \min\{ 1 , |\det A|^t \}  (1 + \rho_A(A^{-u} z)) ,
  \)
  so that
  \begin{align*}
    \bigl(1 + \rho_A (A^{-(u-t)} z)\bigr)^{-1}
    & \lesssim \big( \min \bigl\{ 1, |\det A|^t \bigr\}\big)^{-1}
                \bigl(1 + \rho_A (A^{-u} z)\bigr)^{-1} \\
    & =        \max \bigl\{ 1, |\det A|^{-t} \bigr\}
                \bigl(1 + \rho_A (A^{-u} z)\bigr)^{-1} .
  \end{align*}
  Combining this with the previous estimate, we see
  \[
    \bigl(1 + \rho_A (A^{-u} A^t z - y)\bigr)^{-1}
    \lesssim \frac{1 + \rho_A (y)}
                  {1 + \rho_A (A^{-u} A^t z)}
    \lesssim \max \{ 1, |\det A|^{-t} \}
             \frac{1 + \rho_A (y)}
                  {1 + \rho_A (A^{-u} z)} ,
  \]
  which shows that $v(y,t) \lesssim \max \{ 1, |\det A|^{-t}  \}  (1 + \rho_A (y))$.

  On the other hand, using \Cref{cor:quasi-norm_bound}, we see
  \begin{align*}
    \bigl(1 \!+\! \rho_A (A^{-(u - t)} z - y)\bigr)^{-1}
    & = \bigl(1 \!+\! \rho_A (A^t [A^{-u} z - A^{-t} y]) \bigr)^{-1} \\
     & \asymp \bigl(1 \!+\! |\det A|^t \rho_A (A^{-u} z - A^{-t} y) \bigr)^{-1} \\
    & \leq \frac{\max \{ 1, |\det A|^{- t} \}}
                {1 +\rho_A (A^{-u} z - A^{-t} y)} \\
    & \lesssim \max \bigl\{ 1, |\det A|^{- t} \bigr\}
                \frac{1 + \rho_A(A^{-t} y)}
                          {1 + \rho_A(A^{-u} z)} ,
  \end{align*}
  which implies $v(y,t) \lesssim \max \{ 1, |\det A|^{-t} \}  (1 + \rho_A (A^{-t} y))$.
  This shows \Cref{eq:SpecialWeightUpperBound}.
\\\\
  \textbf{Step 2.}
  As a consequence of \eqref{eq:QuasiNormUpperSemicontinuous},
  we see for arbitrary $\theta > 0$ and $\lambda \in \R$ that
  \[
    \Big\{
      x \in \R^d
      \colon
      \tfrac{\theta}{1 + \rho_A(x)} > \lambda
    \Big\}
    = \Big\{
        x \in \R^d
        \colon
        \rho_A(x) < \tfrac{\theta}{\lambda} - 1
      \Big\}
  \]
  is open, meaning that $\frac{\theta}{(1+\rho_A)^\beta}$ is lower semi-continuous.
  Based on this, it is not hard to see that $v$ is lower semi-continuous
  as a supremum of continuous functions; see \cite[Proposition~7.11]{FollandRA}.
  This means that $\{ (y,t) \in G_A \colon v(y,t) > \lambda \}$ is open for all $\lambda \in \R$,
  so that $v$ is measurable.
\\\\
\textbf{Step~3.}
  Define $\gamma(x,s) := 1 + \rho_A (A^{-s} x)$ for brevity.
  Note that
  \begin{align*}
    \gamma \big( (z,u) (y,t)^{-1} \big)
    &= \gamma \bigl( z - A^{u} A^{-t} y, \,\, u - t \bigr) \\
    &= 1 + \rho_A \bigl(A^{-(u - t)} (z - A^{u} A^{-t} y)\bigr) \\
    &= 1 + \rho_A \bigl( A^{-u} A^t z - y \bigr) .
  \end{align*}
  Thus,
  \(
    v(y,t)
    = \sup_{(z,u) \in G_A}
        \gamma(z,u) / \gamma \bigl( (z,u) (y,t)^{-1} \bigr)
  \)
  and hence $v(g) = \sup_{\kappa \in G_A} \gamma(\kappa) / \gamma(\kappa g^{-1})$ for $g \in G_A$.
  This easily implies that $v$ is submultiplicative; indeed, for $g,h \in G_A$, we see
  for $\widetilde{\kappa} := \kappa \, h^{-1}$ that
  \begin{align*}
    v(g h)
    &= \sup_{\kappa \in G_A}
        \frac{\gamma(\kappa)}
             {\gamma(\kappa h^{-1} g^{-1})}
    = \sup_{\kappa \in G_A}
        \frac{\gamma(\kappa)}
             {\gamma(\kappa h^{-1})}
        \frac{\gamma(\kappa h^{-1})}
             {\gamma(\kappa h^{-1} g^{-1})} \\
    &\leq v(h)
         \sup_{\widetilde{\kappa} \in G_A}
           \frac{\gamma(\widetilde{\kappa})}
                {\gamma(\widetilde{\kappa} g^{-1})}
    \leq v(h) v(g) ,
  \end{align*}
  as claimed.
  \\\\
  \textbf{Step~4.}
  Starting from the definition \eqref{eq:SpecialWeightDefinition} of $v$,
  the substitutions $a = A^{-u} z$ and $b = A^t a - y$ show
  by \Cref{cor:quasi-norm_bound} that
  \[
    v(y,t)
    = \sup_{a \in \R^d}
        \frac{1 + \rho_A(a)}{1 + \rho_A (A^t a - y)}
    = \sup_{b \in \R^d}
        \frac{1 + \rho_A(A^{-t} (b + y))}{1 + \rho_A (b)}
    \asymp \sup_{b \in \R^d}
             \frac{1 + |\det A|^{-t} \rho_A(b + y)}{1 + \rho_A (b)} .
  \]
  By using the second-to-last expression and setting $b = 0$,
  we see $v(y,t) \geq 1 + \rho_A (A^{-t} y)$.

  Next, note that
  \({
    \rho_A (b)
    \leq H  \big( \rho_A (b + y) + \rho_A(-y) \big)
  }\)
  and hence ${\rho_A(b + y) \geq H^{-1} \rho_A(b) \!-\! \rho_A(y)}$,
  by the symmetry of $\rho_A$.
  Furthermore, \Cref{lem:quasi-norm-bounds} shows $\rho_A (b) \to \infty$ as $\| b \| \to \infty$.
  Therefore, as $\| b \| \to \infty$,
  \begin{align*}
    v(y,t)
    &\!\gtrsim\! \sup_{b \in \R^d}
                  \frac{1 \!+\! |\det A|^{-t} \rho_A(b \!+\! y)}
                       {1 \!+\! \rho_A (b)} \\
    &\geq    \frac{1 \!+\! |\det A|^{-t} \!\cdot\! (H^{-1} \rho_A(b) \!-\! \rho_A(y))}
                 {1 \!+\! \rho_A (b)} \\
    &\to
            H^{-1} |\det A|^{-t} ,
  \end{align*}
  so that we also get $v(y,t) \gtrsim |\det A|^{-t}$.
  Overall, we see $v(y,t) \gtrsim 1 + |\det A|^{-t} + \rho_A(A^{-t} y)$.

  There are now two cases:
  If $t \geq 0$, then \Cref{cor:quasi-norm_bound} shows that
  \begin{align*}
    \max \bigl\{ 1, |\det A|^{-t} \bigr\}
    & \bigl(1 + \min \{ \rho_A (y), \rho_A (A^{-t} y) \}\bigr) \\
    & = 1 + \min \{ \rho_A (y), \rho_A (A^{-t} y) \} \\
    & \asymp 1 + \min \{ \rho_A (y), |\det A|^{-t} \rho_A (y) \} \\
    & \asymp 1 + \rho_A(A^{-t} y) \\
    & \asymp 1 + |\det A|^{-t} + \rho_A(A^{-t} y) \\
    &\lesssim v(y,t) .
  \end{align*}
  Otherwise, in case of $t < 0$, we see
  \begin{align*}
    \max \bigl\{ 1, |\det A|^{-t} \bigr\}
    & \bigl(1 \!+\! \min \{ \rho_A (y), \rho_A (A^{-t} y) \}\bigr) \\
    & \asymp |\det A|^{-t}
             \big( 1 \!+\! \min \{ \rho_A(y), |\det A|^{-t} \rho_A(y) \} \big) \\
    & =      |\det A|^{-t}  \big( 1 + \rho_A(y) \big) \\
    & \asymp |\det A|^{-t} + \rho_A(A^{-t} y) \\
    & \asymp 1 + |\det A|^{-t} + \rho_A(A^{-t} y) \\
     &\lesssim v(y,t).
  \end{align*}
  In combination with \Cref{eq:SpecialWeightUpperBound}, this proves \Cref{eq:VExplicitBound}.
\end{proof}

\section{Independence of coorbit reservoir} \label{sec:abstract_reservoir}
Let $\psi \in L^2 (\mathbb{R}^d)$ be an admissible vector satisfying $W_{\psi} \psi \in \WLwr$ for the standard control weight
$w = w^{\alpha, \beta}_{p,q} : G_A \to [1, \infty)$  provided by \Cref{lem:ControlWeights}.
Define the space
\[ \mathcal{H}^1_w (\psi) = \big\{ f \in L^2 (\mathbb{R}^d) : W_{\psi} f \in L^1_w (G_A) \big\}
\]
 and equip it with the norm $\| f \|_{\mathcal{H}^1_w (\psi)} := \| W_{\psi} f \|_{L^1_w}$.
 Let $\mathcal{R}_w (\psi) :=   ( \mathcal{H}^1_w (\psi) )^*$ be the anti-dual space of $\mathcal{H}^1_w (\psi)$ and write $V_{\phi} f := \langle f, \pi(\cdot) \phi \rangle_{\mathcal{R}_w , \mathcal{H}^1_w}$
 for $f \in \mathcal{R}_w (\psi)$ and $\phi \in \mathcal{H}_w^1 (\psi)$.

 The following lemma is a special case of \cite[Corollary 4.9]{velthoven2022quasi}.

\begin{lemma} \label{lem:abstract_identification}
Let $\alpha \in \R$, $\beta > 0$, and $p \in (0,\infty),q \in (0,\infty]$,
with $r := \min \{1, p ,q\}$.
Let $w = w^{\alpha, \beta}_{p,q} : G_A \to [1, \infty)$ be a standard control weight
for $\PT$ as provided by \Cref{lem:ControlWeights}.
Suppose $\varphi \in \mathcal{S}_0 (\mathbb{R}^d)$ is admissible and $\psi \in L^2 (\mathbb{R}^d)$ is admissible satisfying $W_{\psi} \psi \in \WLwr$ and $W_{\varphi} \psi \in \WLwr$. Then
\[
\Co_{\psi}^{\mathcal{H}} (\PT) := \big\{ f  \in \mathcal{R}_w (\psi) : M^L_Q V_{\psi} f \in  \PT \big\} = \Co_{\varphi} (\PT)
\]
in the sense that the restriction $f \mapsto f|_{\mathcal{S}_0}$ is a well-defined bijection.
Furthermore, given the unique extension $\widetilde{f} \in \Co_{\psi}^{\mathcal{H}} (\PT)$  of $f \in \Co_{\psi} (\PT)$, then
\[
\langle \widetilde{f}, \phi \rangle_{\mathcal{R}_w, \mathcal{H}^1_w} = \langle f, \phi \rangle_{\varphi} \quad \phi \in \mathcal{H}^1_w (\varphi),
\]
where $\langle \cdot , \cdot \rangle_{\varphi}$ denotes the extended pairing of \Cref{def:ExtendedDualPairing}.
\end{lemma}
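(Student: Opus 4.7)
The plan is to reduce the claim to \cite[Corollary~4.9]{velthoven2022quasi} by verifying that $\PT(G_A)$ fits into the abstract coorbit framework used there, and then to make the identification between the abstract reservoir $\mathcal{R}_w(\psi) = (\mathcal{H}_w^1(\psi))^*$ and the concrete reservoir $\mathcal{S}_0'(\R^d)$ via restriction. First, I would verify the structural hypotheses: Lemma~\ref{lem:TranslationNormBounds} shows $\PT$ is a solid quasi-Banach function space with two-sided translation bounds; Lemma~\ref{lem:PeetreNormIsRNorm} gives the required $r$-norm property; and Lemma~\ref{lem:ControlWeights} supplies $w = w^{\alpha,\beta}_{p,q}$ as a valid standard control weight. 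For the window side, since $\varphi \in \mathcal{S}_0(\R^d)$ is admissible, Lemma~\ref{lem:wavelet-Lpq} yields $W_\varphi \varphi \in \WLwr$. Combined with the hypotheses $W_\psi \psi, W_\varphi \psi \in \WLwr$ and the symmetry \eqref{eq:WienerSpaceSymmetry} applied to the identity $|W_\psi \varphi(g)| = |W_\varphi \psi(g^{-1})|$, this also gives $W_\psi \varphi \in \WLwr$. These are precisely the assumptions required to invoke \cite[Corollary~4.9]{velthoven2022quasi}, which yields that $\Co_\psi^{\mathcal{H}}(\PT)$ is a well-defined quasi-Banach space, independent of the window within the class of admissible vectors satisfying the above envelope conditions, and in particular $\Co_\psi^{\mathcal{H}}(\PT) = \Co_\varphi^{\mathcal{H}}(\PT)$ as subspaces of $\mathcal{R}_w$.

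Second, I would identify $\Co_\varphi^{\mathcal{H}}(\PT)$ with the concrete coorbit space $\Co_\varphi(\PT)$ from Definition~\ref{def:PeetreCoorbitDefinition} via restriction. The continuous embedding $\mathcal{S}_0(\R^d) \hookrightarrow \mathcal{H}_w^1(\varphi)$ (a consequence of Lemma~\ref{lem:wavelet-Lpq}) makes $f \mapsto f|_{\mathcal{S}_0}$ a well-defined map $\mathcal{R}_w(\varphi) \to \mathcal{S}_0'(\R^d)$. Since $\pi(g) \varphi \in \mathcal{S}_0(\R^d)$ for every $g \in G_A$, the two wavelet transforms coincide:
\[
  V_\varphi f(g)
  = \langle f, \pi(g) \varphi \rangle_{\mathcal{R}_w, \mathcal{H}_w^1}
  = \langle f|_{\mathcal{S}_0}, \pi(g) \varphi \rangle
  = W_\varphi (f|_{\mathcal{S}_0})(g) .
\]
Hence $M_Q^L V_\varphi f = M_Q^L W_\varphi (f|_{\mathcal{S}_0})$, so the restriction map sends $\Co_\varphi^{\mathcal{H}}(\PT)$ into $\Co_\varphi(\PT)$ isometrically.

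To see that the restriction is a bijection, injectivity is immediate: if $f|_{\mathcal{S}_0} = 0$, then $V_\varphi f \equiv 0$ on $G_A$, and the abstract reconstruction formula for $\mathcal{R}_w(\varphi)$ in \cite{velthoven2022quasi} forces $f = 0$. For surjectivity, given $f \in \Co_\varphi(\PT)$, the standard-control-weight property of $w$ yields the embedding $\PT \hookrightarrow L_{1/w}^\infty$, so $W_\varphi f \in L_{1/w}^\infty(G_A)$. I would then define
\[
  \widetilde{f}(\phi)
  := \int_{G_A}
       W_\varphi f(g) \, \overline{W_\varphi \phi(g)}
     \, d\mu_{G_A}(g),
  \qquad \phi \in \mathcal{H}_w^1(\varphi),
\]
noting that the integrand is bounded by $\| W_\varphi f \|_{L_{1/w}^\infty} \cdot w \cdot |W_\varphi \phi|$, which is integrable because $\phi \in \mathcal{H}_w^1(\varphi)$ together with the submultiplicativity of $w$. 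Thus $\widetilde{f} \in \mathcal{R}_w(\varphi)$, and applying Lemma~\ref{lem:weak-continuity} together with the regularization from Lemma~\ref{lem:PseudoWeakStarDensity} gives $\widetilde{f}|_{\mathcal{S}_0} = f$. Comparing the displayed formula with Definition~\ref{def:ExtendedDualPairing} directly yields $\langle \widetilde{f}, \phi \rangle_{\mathcal{R}_w, \mathcal{H}_w^1} = \langle f, \phi \rangle_\varphi$ for every $\phi \in \mathcal{H}_w^1(\varphi)$, which is the pairing identity claimed.

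The main obstacle is the bookkeeping around the two different reservoirs and their pairings: one must verify carefully that the abstract dual bracket on $\mathcal{R}_w \times \mathcal{H}_w^1$ restricts to the concrete one on $\mathcal{S}_0' \times \mathcal{S}_0$, and simultaneously extends uniquely to the extended pairing on objects of the form $(f, \phi) \in \mathcal{S}_0' \times L^2$ in agreement with Definition~\ref{def:ExtendedDualPairing}. Once the embedding $\PT \hookrightarrow L_{1/w}^\infty$ and the window-independence from \cite[Corollary~4.9]{velthoven2022quasi} are in place, the bijectivity of restriction together with the pairing identity follows along the above lines.
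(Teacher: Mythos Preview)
Your proposal is correct and follows essentially the same route as the paper: verify that $\PT$ with the weight $w$ from Lemma~\ref{lem:ControlWeights} satisfies the standing hypotheses of \cite{velthoven2022quasi}, then invoke the abstract machinery there to identify $\Co_\psi^{\mathcal{H}}(\PT)$ with $\Co_\varphi(\PT)$ via restriction and to recover the pairing identity. The only notable difference is one of packaging: the paper cites \cite[Lemma~4.3, Proposition~4.8]{velthoven2022quasi} for the equality $\mathcal{H}_w^1(\psi)=\mathcal{H}_w^1(\varphi)$ (hence window-independence of the abstract coorbit), then applies \cite[Corollary~4.9]{velthoven2022quasi} directly to obtain the bijection $f\mapsto f|_{\mathcal{S}_0}$, and finally reads off the pairing identity from \cite[Lemma~4.6(iii)]{velthoven2022quasi}; you instead attribute window-independence to \cite[Corollary~4.9]{velthoven2022quasi} and then reprove the bijection and the pairing formula by hand via the explicit extension $\widetilde{f}(\phi)=\int W_\varphi f\,\overline{W_\varphi\phi}\,d\mu$. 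Both arrive at the same place. One small point to tighten in your write-up: the step ``$\PT\hookrightarrow L_{1/w}^\infty$, so $W_\varphi f\in L_{1/w}^\infty$'' should be justified through the $L_w^r$-compatibility of $(\PT,w)$ established via \cite[Corollary~3.9]{velthoven2022quasi} (together with $|W_\varphi f|\le M_Q^L W_\varphi f$), rather than asserted as a direct consequence of $w$ being a control weight.
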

\begin{proof}
We first verify that the Peetre-type spaces $\PT$ satisfy the standing assumptions of \cite{velthoven2022quasi}. As shown in \Cref{lem:TranslationNormBounds},
  the Peetre space $\PT$
  is a solid, translation-invariant quasi-Banach space, and \Cref{lem:PeetreNormIsRNorm}
  shows that $\| \cdot \|_{\PT}$ is an $r$-norm.
  Moreover, the standard control weight $w := w_{p,q}^{\alpha, \beta} : G_A \to [1,\infty)$
  defined in \Cref{lem:ControlWeights} is continuous, submultiplicative  and satisfies $w(g) = w(g^{-1}) \Delta^{1/r}(g^{-1})$.
  Furthermore, \Cref{lem:ControlWeights} shows that
  $\| L_{h^{-1}} \|_{\PT \to \PT} \leq w(h)$ and $\| R_h \|_{\PT \to \PT} \leq w(h)$
  for all $h \in G_A$. Together, this shows that $w$ is a \emph{strong control weight}
  for $\PT$ in the terminology of \cite[Definition~3.1]{velthoven2022quasi}.
  By \cite[Corollary~3.9]{velthoven2022quasi}, this implies that the pair
  $(\PT, w)$ is $L_w^r$-compatible
  in the sense of \cite[Definition~3.5]{velthoven2022quasi}.

  Since $\psi, \varphi \in L^2 (\mathbb{R}^d)$ are admissible and satisfy
  $W_\psi \psi, W_{\varphi} \varphi, W_{\varphi} \psi \in \WLwr$,
  it follows from \cite[Lemma~4.3 and Proposition~4.8]{velthoven2022quasi}
  that $\mathcal{H}_w^1(\psi) = \mathcal{H}_w^1(\varphi)$
  and hence also $\mathcal{R}_w(\psi) = \mathcal{R}_w (\varphi)$. Therefore,
  \begin{align} \label{eq:independence_coorbit}
    \Co_\psi^{\mathcal{H}} (\PT)
     = \bigl\{
          f \in \mathcal{R}_w (\psi)
          \colon
          M^L_Q V_\psi f \in \PT
        \bigr\}
     = \bigl\{
          f \in \mathcal{R}_w (\varphi)
          \colon
          M^L_Q V_{\varphi} f \in \PT
        \bigr\}
    .
  \end{align}
   \Cref{lem:wavelet-Lpq} easily shows that
  $\mathcal{S}_0(\R^d) \hookrightarrow \mathcal{H}_w^1(\varphi)$,
  and \Cref{lem:weak-continuity} shows that \cite[Equation~(4.14)]{velthoven2022quasi}
  is satisfied for $ \mathcal{S}_0(\R^d)$.
  Therefore, invoking \cite[Corollary~4.9]{velthoven2022quasi}, it follows that the restriction map
  $
    f \mapsto f|_{\mathcal{S}_0(\R^d)}
  $ is a bijection from  $\Co_{\varphi}^{\mathcal{H}} \bigl(\PT \bigr)$ onto $\Co_{\varphi} \bigl(\PT \bigr)$.
  Combining this with \Cref{eq:independence_coorbit}
 yields that
  \[
    \Co_\psi \bigl(\PT\bigr)
    \to \Co_{\varphi}\bigl(\PT\bigr), \quad
    f \mapsto f|_{\Schwartz_0 (\R^d)}
  \]
  is a well-defined bijection.

  By the above, any $f \!\in\! \Co_{\varphi} (\PT)$ uniquely extends to an element of
  \( \widetilde{f} \in
    \Co_\psi \bigl( \PT)
    \!\subset\! \mathcal{R}_w (\psi)
    ,
  \)
  denoted by $\widetilde{f}$.
  Note that $V_{\varphi} \widetilde{f} = W_{\varphi} f$.
  Then, a combination of \cite[Lemma~4.6(iii)]{velthoven2022quasi}
  and \Cref{def:ExtendedDualPairing} shows for
  any $\phi \in L^2(\R^d)$ with $W_{\varphi} \phi \in L_w^1 (\R^d)$
  (i.e., $\phi \in \mathcal{H}_w^1 (\varphi)$) that
  \begin{equation}
    \langle \widetilde{f}, \phi \rangle_{\mathcal{R}_w, \mathcal{H}_w^1}
    = \langle W_{\varphi} f, W_{\varphi} \phi \rangle_{L_{1/w}^\infty, L_w^1}
    = \langle f, \phi \rangle_{\varphi}
    .
    \label{eq:DualPairingClarification}
  \end{equation}
  This completes the proof.
  \end{proof}

\section*{Acknowledgements}
S.~K. ~was supported by projects P 30148 and I 3403 of the Austrian Science Fund (FWF).
J.~v.~V. ~gratefully acknowledges the support
from the Austrian Science Fund (FWF) projects P 29462 and J-4445
and the Research Foundation - Flanders (FWO) Odysseus 1 grant G.0H94.18N

\end{document}